\newcommand{\uDel}{\ud{\Delta}}
\newcommand{\tilW}{\tld{W}}
\newcommand{\uW}{\underline{W}}
\newcommand{\utilW}{\underline{\tld{W}}}
\newcommand{\uOm}{\underline{\Omega}}
\newcommand{\uT}{\underline{T}}
\newcommand{\tilw}{\tld{w}} 
\newcommand{\tilz}{\tld{z}} 
\newcommand{\bfa}{\mathbf{a}}
\DeclareMathOperator{\AP}{\mathrm{AP}}
\newcommand{\loccit}{\emph{loc.~cit.}}
\newcommand{\mo}{{-1}} 
\newcommand{\reg}{\mathrm{reg}}
\newcommand{\pmat}[1]{{\begin{pmatrix}#1\end{pmatrix}}}
\newcommand{\psmat}[1]{{\begin{psmallmatrix}#1\end{psmallmatrix}}}
\newcommand{\Fpbar}{\overline{\mathbf{F}}_p}
\newcommand{\nv}{\mathrm{nv}}
\newcommand{\nbl}{\nabla}
\newcommand{\sig}{\sigma}
\newcommand{\nm}{\mathrm{nm}}
\newcommand{\osig}{\overline{\sigma}}
\newcommand{\ix}[1]{^{(#1)}}
\newcommand{\G}{\mathbb{G}}
\newcommand{\pcp}{\wedge_p}
\newcommand{\uC}{\underline{C}}
\newcommand{\RG}[1]{\langle #1 \rangle}
\newcommand{\Irr}{\mathrm{Irr}}
\newcommand{\CNL}{\widehat{\cC}}
\newcommand{\simc}{\mathrm{sim}}
\newcommand{\alg}{\mathrm{alg}}
\newcommand{\psip}{{\psi_p}}
\newcommand{\LuG}{\prescript{L}{}{\underline{G}}}
\newcommand{\ctimes}{\hat{\otimes}}
\newcommand{\ov}{\overline}
\newcommand{\ud}{\underline}
\renewcommand{\hat}{\widehat}
\newcommand{\Art}{\mathrm{Art}}
\newcommand{\out}{\mathrm{out}}
\newcommand{\CB}[1]{\left\{#1\right\}}
\newcommand{\DP}[1]{(\!(#1)\!)}
\newcommand{\DB}[1]{\llbracket#1\rrbracket}
\newcommand{\lam}{\lambda}
\newcommand{\recGT}{\mathrm{rec}_{\mathrm{GT}}}
\newcommand{\PG}{(\varphi,\Gamma)}
\newcommand{\tadstar}{*_{(s,\mu)}^{\varphi}}
\newcommand{\tadslash}{/_{(s,\mu)}^{\varphi}}
\newcommand{\std}{\mathrm{std}}
\newcommand{\coh}{\mathrm{coh}}
\newcommand{\tor}{\mathrm{tor}}
\newcommand{\sub}{\mathrm{sub}}
\newcommand{\can}{\mathrm{can}}
\title{Serre weight conjectures for $\GSp_4$}
\author{Daniel Le}
\address{Department of Mathematics,
Purdue University,
150 N. University Street, 
West Lafayette, IN 47907-2067}
\email{ledt@purdue.edu}
\author{Bao V.~Le Hung}
\address{Department of Mathematics,
Northwestern University, 
2033 Sheridan Road\\
Evanston, IL 60208, USA}
\email{lhvietbao@googlemail.com}
\author{Heejong Lee}
\address{Korea Institute for Advanced Study, 85 Hoegi-ro, Dongdaemun-gu, Seoul 02455, Republic of Korea}
\email{heejonglee@kias.re.kr}
\begin{document}

\begin{abstract}
We prove the weight part of Serre's conjecture for Galois representations valued in $\GSp_4$ that are tamely ramified with explicit genericity at places above $p$ as conjectured by Herzig--Tilouine and Gee--Herzig--Savitt. 
 This improves on \cite{lee_thesis} where an inexplicit genericity hypothesis was required. 
 As an application, we prove a modularity lifting theorem for $\GSp_4$ under similar assumptions. 

\end{abstract} 

\maketitle

\setcounter{tocdepth}{2}
\tableofcontents

\section{Introduction}

\subsection{The main result}
The article \cite{HT} formulated a generalization of the weight part of Serre's conjecture for the \'etale cohomology of the Siegel modular threefold for tamely ramified Galois representations. 
\cite{GHS} later generalized this to unramified Hilbert--Siegel modular varieties (though the global context was not made explicit in this setting). 
The main goal of this article is to prove these conjectures for tamely ramified Galois representations with explicit genericity (under mild hypotheses typically appearing in the Taylor--Wiles method). 

Let $p$ be a prime number and $F$ be a totally real field in which $p$ is unramified. We denote by $G_F$ the absolute Galois group of $F$.
Let $U = U_p U^{\infty,p} \le \GSp_4(\cO_F\otimes_{\Z}\Zp)\times \GSp_4(\A_F^{\infty,p})$ be a compact open subgroup. We assume that $U^{p,\infty}$ is neat. Let $\Sh_U$ be the Hilbert--Siegel modular variety associated with $\Res_{F/\Q}\GSp_{4/F}$ at the level $U$ defined over $F$. For a $U_p$-representation $V$, we denote by $\cF_V$ the locally constant \'etale sheaf on $\Sh_U$ associated with $V$. We consider the \'etale cohomology group $H^i_{\et}(\Sh_{U,\overline{F}},\cF_V)$. Then there is a Hecke algebra $\T$ acting on $H^i_{\et}(\Sh_{U,\overline{F}},\cF_V)$. Moreover, they are isomorphic to $(\mathfrak{g},K)$-cohomology groups of a certain space of cuspidal automorphic forms on $\GSp_4(\A_F)$ as Hecke modules by Matsushima's formula. We choose an isomorphism $\iota: \Qpbar \simeq \C$. By various works including \cite{weissauer, sorensen-gsp4, gee-taibi}, there is a Galois representation $r_{\pi,p,\iota}: G_F \ra \GSp_4(\overline{\Z}_p)$ attached to a regular algebraic cuspidal automorphic representation $\pi$ of $\GSp_4(\A_F)$. We say that a continuous semisimple representation $\rbar: G_F \ra \GSp_4(\Fpbar)$ is \textit{automorphic} if $\rbar$ is isomorphic to the semisimplification of $r_{\pi,p,\iota}\otimes_{\ov{\Z}_p}\Fpbar$ for some $\pi$. We say that $\rbar$ is \textit{potentially diagonalizably automorphic} if furthermore $\pi$ can be chosen so that $r_{\pi,p,\iota}$ is potentially diagonalizable at places above $p$. Let $\fm_{\rbar}\subset \T$ be the maximal ideal determined by $\rbar$ by matching Satake parameters with the characteristic polynomial of the Frobenius element at all but finitely many places of $F$. Under the standard Taylor--Wiles conditions, $\rbar$ is automorphic if and only if $H^i_{\et}(\Sh_{U,\overline{F}},\Fpbar)_{\fm_{\rbar}} \neq 0$ for some level $U$ and $i\in \Z_{\ge 0}$. 


Now let $U_p \cong \GSp_4(\cO_F \otimes_{\Z} \Z_p)$. 
A Serre weight is defined to be a simple smooth $\Fpbar[U_p]$-module. 
Every Serre weight is obtained via inflation from an irreducible representation of $\GSp_4(\cO_F \otimes_{\Z} \F_p)$ over $\Fpbar$. 
If $\rbar$ is automorphic, let $W(\rbar)$ be the set of Serre weights $\sigma$ such that $H^i_{\et}(\Sh_{U,\overline{F}},\cF_{\sig^\vee})_{\mathfrak{m}_{\rbar}} \neq 0$ for some $i$. 
If $\rbar$ is tame and generic at places above $p$, Gee--Herzig--Savitt defined an explicit set of Serre weights $W^?(\rbar)$ \cite[Definition 9.2.5]{GHS} to formulate a generalization of the weight part of Serre's conjecture. We confirm their conjecture under technical assumptions.

\begin{thm}[Theorem \ref{thm:SWC}]\label{thm:main}
    Suppose that $\rbar$ is potentially diagonalizably automorphic and satisfies the Taylor--Wiles conditions. If $\rbar$ is tamely ramified and $9$-generic 
    at all places above $p$, then $W(\rbar) = W^?(\rbar)$. 
\end{thm}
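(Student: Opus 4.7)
My strategy is to combine Taylor--Wiles patching with an explicit geometric analysis of tame potentially crystalline $\GSp_4$-deformation rings at places above $p$. Since the conclusion under an inexplicit genericity hypothesis is already established in \cite{lee_thesis}, the content of Theorem \ref{thm:main} lies entirely in upgrading that qualitative hypothesis to the quantitative bound ``$9$-generic''. This forces one to analyze the relevant local deformation rings (equivalently, components of the Emerton--Gee stack) in closed form rather than asymptotically, and to keep careful track of how the numerical genericity input propagates through every step of the argument.

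Concretely, I would first use potential diagonalizable automorphy and the Taylor--Wiles conditions to construct a patched module $M_\infty$ over a product $R_\infty = R^{\square, p}_\infty \ctensor \bigotimes_{v\mid p} R^{\square}_{\rbar_v}$, so that a Serre weight $\sigma = \otimes_v \sigma_v$ lies in $W(\rbar)$ exactly when $M_\infty(\sigma^\vee) \neq 0$. By the Breuil--M\'ezard philosophy this reduces, one place $v \mid p$ at a time, to identifying which irreducible components of the special fiber of $R^{\square}_{\rbar_v}$ meet the support cycle of $M_\infty$. The inclusion $W(\rbar) \subseteq W^?(\rbar)$ (weight elimination) follows from computing the mod-$p$ reductions of potentially crystalline $\GSp_4$-representations of tame inertial type and regular Hodge--Tate weights via symplectic Breuil--Kisin modules; the $9$-generic hypothesis is what guarantees that for each tame type the relevant Breuil--Kisin module has a single unambiguous ``shape'', so the reduction is given by a closed-form matrix expression whose Jordan--H\"older factors are forced into $W^?(\rbar_v)$. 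The reverse inclusion $W(\rbar) \supseteq W^?(\rbar)$ (weight existence) is obtained by producing, for each $\sigma \in W^?(\rbar)$, a tame potentially crystalline lift of appropriate inertial type whose reduction realizes $\sigma$; potential diagonalizability furnishes the lift, and Ihara avoidance together with the patched module propagates modularity. Here too, $9$-genericity ensures that the expected irreducible component of the Emerton--Gee stack meets the support cycle of $M_\infty$, so no predicted weight is missed.

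The main obstacle is the explicit symplectic Kisin module and local model computation at $9$-generic tame inertial types. For $\GL_n$ such computations have become standard, but the symplectic constraint introduces new combinatorics on types (extended affine Weyl group of $\GSp_4$, symplectic shapes of Kisin modules) and on the moduli of $\GSp_4$-valued Kisin modules that must be set up from scratch. Proving that the bound $9$ is simultaneously sufficient for the weight elimination step (the mod-$p$ reductions are unambiguous) and for the weight existence step (the predicted components actually appear in the support cycle) requires keeping track of explicit constants throughout the construction of local models and their matching with Deligne--Lusztig theoretic expectations. This quantitative bookkeeping, layered on top of the symplectic combinatorics, is the core difficulty of the argument.
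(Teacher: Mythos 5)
Your proposal correctly identifies the patching framework, the role of potential diagonalizability, and Ihara avoidance, but it misses the core new idea of the paper and in fact proposes an approach that the authors explicitly state they cannot carry out under an explicit genericity bound.

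The gap is in the weight existence step $W^?(\rbar) \subseteq W(\rbar)$. You write that potential diagonalizability furnishes, for each $\sigma \in W^?(\rbar)$, a lift whose reduction realizes $\sigma$, and that ``$9$-genericity ensures that the expected irreducible component of the Emerton--Gee stack meets the support cycle of $M_\infty$.'' But potential diagonalizability only produces lifts of type $(\eta,\tau)$ for types $\tau$ matching $\rhobar$ directly, which establishes modularity of the \emph{obvious} weights $W_\obv(\rbar)$ (Definition \ref{def:outer}), not all of $W^?(\rbar)$. To reach the remaining weights by your route one would need to know that $M_\infty(\sigma^\circ(\tau))$ is fully supported on $\Spec R^{\eta,\tau}_{\rhobar}$ for a generating family of types $\tau$, which in \cite{lee_thesis} is proved by showing these deformation rings are integral domains. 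As the introduction states, that argument needs an \emph{inexplicit} genericity bound, and the authors ``expect that the deformation rings are integral domains under a milder genericity condition, but do not know how to prove this.'' So ``upgrading the qualitative hypothesis to the quantitative bound $9$'' by making the old argument explicit is not available — the obstruction is not bookkeeping but a missing theorem. Moreover, you do not address the key complication that $R^{\eta,\tau}_{\rhobar}/\varpi$ is generically non-reduced: even once you know $\cC_\sigma(\rhobar)$ lies in the support of $M_\infty(\sigma^\circ(\tau))$, it is not clear which characteristic-zero component of $\Spec R^{\eta,\tau}_{\rhobar}$ you have picked up, so you cannot immediately conclude modularity of the relevant lift.

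What the paper actually does, and what your proposal omits, is a propagation argument that avoids full-support statements entirely. After establishing via potential diagonalizability that each obvious weight $\sigma$ satisfies $\cC_\sigma(\rhobar) \subset \supp M_\infty(\sigma)$ (equivalently $\sigma \in W^{\supp}_{M_\infty}$), the authors prove in Theorem \ref{thm:irred-comp-def-ring} that for the family of tame types $\tau$ singled out in Setup \ref{setup}, the two outer weights $\sigma_1, \sigma_2 \in \JH(\osig(\tau)) \cap W^?(\rbar)$ have cycles $\cC_{\sigma_1}(\rhobar)$ and $\cC_{\sigma_2}(\rhobar)$ that generize \emph{uniquely} to a \emph{common} irreducible component of $\Spec R^{\eta,\tau}_{\rhobar}$ (uniqueness uses that outer weights occur with multiplicity one in the special fiber). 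Combined with exactness and Cohen--Macaulayness of $M_\infty$, this shows $\sigma_1 \in W^{\supp}_{M_\infty}$ if and only if $\sigma_2 \in W^{\supp}_{M_\infty}$ (Proposition \ref{prop:modularity-constancy}). Finally, Proposition \ref{prop:connected} shows that this adjacency relation connects all of $W^?(\rbar)$, so modularity propagates from obvious weights to the whole predicted set. The geometric heart of Theorem \ref{thm:irred-comp-def-ring} is the point you do not touch on: the argument works with the Emerton--Gee stack $\cX^{\eta,\tau}_{\Sym}$ and finds an open substack $\cU \subset \cC_{\sigma_1} \cap \cC_{\sigma_2}$ contained in the normal locus but \emph{away from} $\rhobar$, then shows $\rhobar$ lies in the closure of $\cU$ via a torus-fixed-point computation on local models; this cannot even be phrased purely in terms of the deformation ring $R^{\eta,\tau}_{\rhobar}$.

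A smaller point: the torsion vanishing of Hamann--Lee (Proposition \ref{prop:torsion-vanishing}) is what allows the paper to patch \'etale cohomology of the Hilbert--Siegel variety directly rather than passing to an inner form as in \cite{lee_thesis}; your sketch of the patching construction is compatible with either choice, but the statement of the theorem as written concerns $\Sh_U$ and not a compact form.
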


\begin{rmk} We explain the difference between Theorem \ref{thm:main} and \cite[Theorem 6.2.5]{lee_thesis}. 
    \begin{enumerate}
        \item The genericity assumption on $\rbar$ at places above $p$ in \cite{lee_thesis} is much stronger than the current work. In principle, the assumption in \loccit~can be computed explicitly. However, it remains inexplicit at this moment due to the computational complexity. In contrast, the genericity assumption in this paper is explicit. In particular, for any $p \ge 37$, one can construct an example of $\rbar$ satisfying our assumptions using \cite[Lemma 4.4.5]{EL}.

         \item \cite{lee_thesis} works with algebraic automorphic forms on an inner form of $\GSp_4$ that is compact modulo center at infinity. In the current paper, we can work with $\GSp_4$ directly, thanks to the torsion vanishing result in \cite{Hamann-Lee}. Our argument can be applied to the setting of \cite{lee_thesis} as well.
    \end{enumerate}
\end{rmk}

\subsection{Previous methods} 
In recent years, there has been substantial progress on generalizations of the weight part of Serre's conjecture in terms of both conjectures and results. 
A major tool in these advancements is the Taylor--Wiles method which patches global spaces of automorphic forms into objects of conjecturally local origin. 

We first set up some notation. For simplicity, we assume in this introduction that $p$ is inert in $F$. We denote by $K=F_p$ with ring of integers $\cO_K$ and residue field $k$. 
Let $E/\Qp$ be a sufficiently large finite extension with ring of integers $\cO$, uniformizer $\varpi$, and residue field $\F$. Then $\rbar$ is valued in $\GSp_4(\F)$, and we let $\rhobar= \rbar|_{G_{F_p}}$. 
We consider a Hodge type $\eta$ lifting the half sum of all positive roots and a generic tame inertial type $\tau:I_K \ra \GSp_4(E)$. 
The inertial local Langlands correspondence attaches to $\tau$ an irreducible generic Deligne--Lusztig representation $\sig(\tau)$ of $\GSp_4(k)$ defined over $E$ (see \S\ref{subsub:ILL}). 
Let $\sig^\circ(\tau)$ be a $\GSp_4(k)$-stable $\cO$-lattice in $\sig(\tau)$ and $\osig(\tau)=\sig^\circ(\tau)/\varpi$. 
We denote by $\JH(\osig(\tau))$ the set of Jordan--H\"older factors of $\osig(\tau)$. 
We denote by $R^{\square}_{\rhobar}$ the framed deformation ring for $\rhobar$ and by $R_{\rhobar}^{\eta,\tau}$ the potentially crystalline deformation ring for $\rhobar$ of type $(\eta,\tau)$. We use $\sig$ to denote a Serre weight. There is a natural bijection between $\JH(\osig(\tau))\cap W^?(\rbar)$ and the set of irreducible components in $\Spec R_{\rhobar}^{\eta,\tau}/\varpi$, which we denote by $\sig \mapsto \cC_{\sig}(\rhobar)$.

The Taylor--Wiles method produces a functor $M_\infty$ from the category of $\cO[\GSp_4(k)]$-modules to modules over (a power series ring over) $R_{\rhobar}^{\square}$. 
The main properties of $M_\infty$ can be summarized as follows:
\begin{enumerate}
    \item \label{item:exact} $M_\infty$ is an exact functor.
    \item \label{item:support} For a tame inertial type $\tau$ and any $\sig\in \JH(\osig(\tau))$, the support $\supp_{R_{\rhobar}^\square}M_\infty(\sig^\circ(\tau))$ (resp.~$\supp_{R_{\rhobar}^\square}M_\infty(\sig)$) is a union of irreducible components in $\Spec R_{\rhobar}^{\eta,\tau}$ (resp.~in $\Spec R_{\rhobar}^{\eta,\tau}/\varpi$).
    \item \label{item:control} $M_\infty(\sig)\neq 0$ if and only if $\sig \in W(\rbar)$.
\end{enumerate}

It is expected that $\supp_{R_{\rhobar}^\square}M_\infty(\sig^\circ(\tau))$ is all of $\Spec R_{\rhobar}^{\eta,\tau}$. 
In fact, it should be equivalent to a certain modularity lifting theorem under mild hypotheses using ideas of \cite{kisin-fontaine-mazur} and results of \cite{FKP}. 
An observation due to \cite{gee-kisin}, with ideas tracing back ultimately to \cite{BM}, is that this full support statement determines the cycles of the modules $M_\infty(\osig(\tau))$ and thus determines the cycles of the modules $M_\infty(\sigma)$ using \eqref{item:exact} (assuming that the collection of $\osig(\tau)$ generates the Grothendieck group of $\F[\GSp_4(k)]$-modules). 
The weight part of Serre's conjecture can then be deduced from \eqref{item:support} and \eqref{item:control}. 

This full support statement is known in some cases. 
For $\GL_2$, potential diagonalizability techniques actually give the full support statement in far greater generality. 
Beyond $\GL_2$, potential diagonalizability has so far enjoyed less utility. 
Instead, known full support statements have relied on the irreducibility of tamely potentially Galois deformation spaces. 
\cite{LLLM,GL3Wild} show that generic tamely potentially crystalline Galois deformation rings are integral domains under a combinatorial and explicit genericity condition by showing that their special fibers are reduced. 
However, for more general groups, reduced special fibers occur rarely. 
Known cases of the Breuil--M\'ezard conjecture \cite{MLM,FLH} show that special fibers of generic tamely potentially crystalline Galois deformation rings cannot all be reduced for $\GL_n$ with $n\geq 4$ because (generic) Deligne--Lusztig representations are not residually multiplicity-free. 
In fact, the only other split group where (generic) Deligne--Lusztig representations are residually multiplicity free is $\GSp_4$. 
However, the reduced special fiber property for $\GSp_4$ fails because of a new phenomenon, known as \emph{entailment}, 
wherein the supports of $M_\infty(\sigma)$ are reducible. 
From this perspective, the case of $\GSp_4$ seems to be much closer to the general case. 

Nevertheless, generic tamely potentially crystalline Galois deformation rings are often still integral domains when $\rhobar$ is tamely ramified. 
A new local model theory shows that generic tamely potentially crystalline Galois deformation rings can be computed as the completion of an algebraic variety over $\cO$ \cite{MLM,lee_thesis}. 
This algebraic variety has a torus symmetry which allows one to show that it is unibranch at torus fixed points under an inexplicit genericity assumption. 
As a consequence, for generic tamely ramified Galois representations, all of their tamely potentially crystalline Galois deformation rings are integral domains. 
Applying the Taylor--Wiles method then yields the weight part of Serre's conjecture under these hypotheses. 
(In practice, deducing the weight part of Serre's conjecture in these situations is simplified with the additional observation that the cycles of the modules $M_\infty(\sigma)$ are necessarily effective.) 
While for large $p$, most tamely ramified Galois representations satisfy this genericity condition, verifying genericity for any specific Galois representation is computationally involved. 
In fact, it is tricky to exhibit a single explicit example of a Galois representation for $\GL_4$ or $\GSp_4$ which satisfies this genericity condition, though many exist!
While we expect that the deformation rings are integral domains under a milder genericity condition, we do not know how to prove this. 

\subsection{Methods}
In this paper, we introduce a new method for proving the weight part of Serre's conjecture that does not rely on the full knowledge of the support of patched modules. 
While the present paper concerns $\GSp_4$, many aspects of the method apply in greater generality. 

It is known that $M_\infty(\sigma) = 0$ for $\sigma \notin W^?(\rbar)$, and so it suffices to show that $M_\infty(\sigma) \neq 0$ for $\sigma \in W^?(\rbar)$. 
Rather than determine the support of $M_\infty(\sigma)$ for $\sigma \in W^?(\rbar)$, we show that $M_\infty(\sigma)$ is supported on $\cC_{\sig}(\rhobar)$. 
We first prove this for \textit{obvious} weights (Definition \ref{def:outer}). 
In this case, there is an inertial type $\tau$ with $\JH(\osig(\tau))\cap W^?(\rbar) = \{\sigma\}$ and $M_\infty(\osig^\circ(\tau)) \cong M_\infty(\sigma)$. 
Using the change of weight result in \cite{BLGGT} (where we need that $\rbar$ is potentially diagonalizably automorphic), $M_\infty(\sigma)$ is supported on $\cC_{\sig}(\rhobar)$, the only component in the special fiber of $\Spec R_\rhobar^{\eta,\tau}$. 

Our main contribution is to propagate the statement that $M_\infty(\sigma)$ is supported on $\cC_{\sig}(\rhobar)$ to all Serre weights in $W^?(\rbar)$. 
Recall that there is a natural bijection between $\JH(\osig(\tau))\cap W^?(\rbar)$ and the set of irreducible components in $\Spec R^{\eta,\tau}_{\rhobar}/\varpi$ denoted by $\sig \mapsto \cC_{\sig}(\rhobar)$. 
Since $R_{\rhobar}^{\eta,\tau}/\varpi$ is not reduced (even generically), there could be several irreducible components in $\Spec R^{\eta,\tau}_{\rhobar}$ whose special fiber contains $\cC_{\sig}(\rhobar)$. 
Thus, even if $\cC_{\sig}(\rhobar)$ is in the support of $M_\infty(\sig^\circ(\tau))$, it is unclear which of the components in $\Spec R^{\eta,\tau}_{\rhobar}$ is in the support of $M_\infty(\sig^\circ(\tau))$. Fortunately, if $\sig$ is an \textit{outer} weight (Definition \ref{def:outer}), the local model theory shows that the component of $\Spec R^{\eta,\tau}_{\rhobar}/\varpi$ corresponding to $\sig$ occurs with multiplicity one. In particular, $\cC_{\sig}(\rhobar)$ generizes to a unique component in $\Spec R^{\eta,\tau}_{\rhobar}$ (i.e.~it is contained in a unique component in $\Spec R^{\eta,\tau}_{\rhobar}$).

Our main technical result proves that for various $\tau$, there is a pair of distinct outer weights $\sig_1,\sig_2 \in \JH(\osig(\tau))\cap W^?(\rbar)$ such that $\cC_{\sig_1}(\rhobar)$ and $\cC_{\sig_2}(\rhobar)$ generize (uniquely) to a common component $\cC$ in $\Spec R^{\eta,\tau}_{\rhobar}$ (see Theorem \ref{thm:irred-comp-def-ring} for the precise result). By \eqref{item:exact} and \eqref{item:support}, this implies that $M_\infty(\sig^\circ(\tau))$ is supported on $\cC_{\sig_1}(\rhobar)$ if and only if it is supported on $\cC_{\sig_2}(\rhobar)$. 
Moreover, for an outer weight $\sig$, the only patched module of a Serre weight in $\JH(\osig(\tau))$ that is possibly supported on $\cC_{\sig}(\rhobar)$ is $M_\infty(\sig)$. 
Thus $M_\infty(\sig_1)$ is supported on $\cC_{\sig_1}(\rhobar)$ if and only if $M_\infty(\sig_2)$ is supported on $\cC_{\sig_2}(\rhobar)$. 
Finally, we show that any two Serre weights in $W^?(\rbar)$ can be connected by a chain of components $\cC$. 
With obvious weights as a starting point, this completes the proof of Theorem \ref{thm:main}.

We end the discussion by describing our method for finding the family of components $\mathcal{C}$, which crucially relies on the recent construction of the Emerton--Gee stack and an analysis of its topology using local models. Let $\cX_{\Sym}^{\eta,\tau}$ be the closed substack of the Emerton--Gee stack for $\GSp_4$ parameterizing potentially crystalline representations of type $(\eta,\tau)$. Similar to Galois deformation rings, there is a bijection between $\JH(\osig(\tau))$ and the set of irreducible components in $\cX_{\Sym}^{\eta,\tau}$, denoted by $\sig \mapsto \cC_\sig$.  
The first observation is that the set of irreducible components in $\Spec R_{\rhobar}^{\eta,\tau}$ are in correspondence with the preimage of $\rhobar$ under the normalization map $\nu: (\cX_{\Sym}^{\eta,\tau})^{\nm} \ra \cX_{\Sym}^{\eta,\tau}$. Note that for outer weight $\sig$, $\cC_{\sig}$ occurs with multiplicity one in the special fiber and thus pulls back to a unique irreducible component $\cC_{\sig}^{\nm}$ under $\nu$. For outer weights $\sig_1$ and $\sig_2$, $\cC_{\sig_1}(\rhobar)$ and $\cC_{\sig_2}(\rhobar)$ have a common unique generization if and only if $\nu(\cC_{\sig_1}^{\nm}\cap\cC_{\sig_2}^{\nm})$ contains $\rhobar$.


Following \cite{LLLMextremal}, we consider pairs of outer weights $\sig_1, \sig_2\in \JH(\osig(\tau))$ such that $\cC_{\sig_1}\cap \cC_{\sig_2}$ intersects with the normal locus of $\cX^{\eta,\tau}_{\Sym}$. In particular, there exists an open substack $\cU \subset \cC_{\sig_1}\cap \cC_{\sig_2}$ contained in the normal locus. Thus, we can find a copy of $\cU$ in $\cC_{\sig_1}^{\nm}\cap\cC_{\sig_2}^{\nm}$. For some choices of $\tau$ (relative to $\rhobar$) considered in \loccit, $\rhobar$ is in $\cU$ and thus $\nu(\cC_{\sig_1}^{\nm}\cap\cC_{\sig_2}^{\nm})$ contains $\rhobar$. However, such choices of $\tau$ are too restrictive for applications and can only prove that obvious weights are in $W(\rbar)$. Instead, we show that $\rhobar$ is in the closure of $\cU$ in $\cX^{\eta,\tau}_{\Sym}$ for a larger family of $\tau$. Our family of $\tau$ is flexible enough to reach all Serre weights in $W^?(\rbar)$. 
This can be reduced to finding torus fixed points in a certain subset of a deformed affine Springer fiber, which 
we accomplish by explicit computations along the lines of \cite{BA} (see Proposition \ref{prop:T-fixed-points}). Let us emphasize that $\rhobar$ is not known to be in the normal locus and is definitely not in $\cU$ (unless $\tau$ is in the family considered in \cite{LLLMextremal}). In particular, our argument cannot be made using Galois deformation rings instead of the Emerton--Gee stack because it uses a part of $\cX_{\Sym}^{\eta,\tau}$ \textit{away from} $\rhobar$ to analyze the geometry \textit{near} $\rhobar$.

\subsection{Application}
In the proof of Serre weight conjectures explained above, we only obtain a lower bound on $\supp_{R^{\eta,\tau}_{\rhobar}}M_\infty(\sig^\circ(\tau))$ for a family of $\tau$. This contrasts to earlier works (\cite{MLM,lee_thesis}) proving that $M_\infty(\sig^\circ(\tau))$ is fully supported on $\Spec R^{\eta,\tau}_{\rhobar}$ (under an inexplicit genericity assumption). However, we can still prove the full support result for all generic $\tau$ (and higher Hodge--Tate weights) by combining our result with the Breuil--M\'ezard conjecture for tamely potentially crystalline deformation rings in small weight \cite{FLH}, explicit Breuil--M\'ezard cycles from \cite{LHL-BM}, and weight shifting mod $p$ differential operators of \cite{Ortiz}. As a result, we deduce modularity lifting theorems for small Hodge--Tate weights and generic tame inertial types.

Since the work of Kisin, the Breuil--M\'ezard conjecture has been known to be closely related to modularity lifting theorems and the weight part of Serre's conjecture (see e.g.~\cite{EG}). 
\cite{kisin-fontaine-mazur} used the $p$-adic Langlands correspondence for $\GL_2(\Qp)$ to prove the Breuil--M\'ezard conjecture and, as a consequence, deduced many instances of the Fontaine--Mazur conjecture. 
In the reverse direction, \cite{gee-kisin} used modularity lifting theorems to prove the Breuil--M\'ezard conjecture for 2-dimensional Barsotti--Tate representations, which was in turn used to prove the weight part of Serre's conjecture for Hilbert modular forms as formulated by Buzzard--Diamond--Jarvis. 

The breakthrough work \cite{FLH} proved the Breuil--M\'ezard conjecture for unramified groups with small Hodge--Tate weights and generic tame inertial types using a novel method that avoids both a $p$-adic Langlands correspondence and Taylor--Wiles patching. 
Their method provides an algorithm to compute Breuil--M\'ezard cycles explicitly, which was implemented for low rank groups including $\GSp_4$ in \cite{LHL-BM}. In particular, their result verifies that certain Breuil--M\'ezard cycles for $\GSp_4$ exhibit entailment. 
In order to deduce modularity lifting theorems for $\GSp_4$ using the result of \cite{FLH,LHL-BM}, we need to prove a strong form of the weight part of Serre's conjecture. 
That is, we need to realize the local Breuil--M\'ezard cycles by patching global spaces of automorphic forms \`a la Taylor--Wiles. 

We assume $F=\Q$ to apply the main result of \cite{Ortiz}. To simplify the argument, let us consider here the \textit{minimal} case, which allows us to focus on the places dividing $p$. The general case can be treated using Taylor's Ihara avoidance argument. 
First, our results give lower bounds for the supports of patched modules for Serre weights. 
A computation of colength one deformation rings shows that these bounds are sharp for Serre weights in the bottom two $p$-restricted alcoves. 
We need to show that the support of $M_\infty(\sig)$ contains one extra component $\cC_{\sig'}(\rhobar)$ when $\sig$ is in the next $p$-restricted alcove and $\sig'$ is the Serre weight linked to $\sig$ in the lowest $p$-restricted alcove. 
The lower bounds can be transferred to patched coherent cohomology using a comparison of rational \'etale and coherent cohomologies. 
Combining this with a weight shifting mod $p$ differential operator on sections of automorphic vector bundles on $\Sh_U$ constructed in \cite{Ortiz}, we see that $\cC_{\sig'}(\rhobar)$ is in the support of the patched coherent cohomology corresponding to the mod $p$ (dual) Weyl module $W$ with socle $\sig$. 
Transferring this back to patched \'etale cohomology, we show that $\cC_{\sig'}(\rhobar)$ is in the support of $M_\infty(W)$. 
The upper bound for supports of patched modules for Serre weights with the highest weight in the bottom two $p$-restricted alcoves then shows that $\cC_{\sig'}(\rhobar)$ is in the support of $M_\infty(\sig)$. 




Combining our strong form of the weight part of Serre's conjecture and the instances of the Breuil--M\'ezard conjecture proved in \cite{FLH} yields the following modularity lifting result. 

\begin{thm}[Theorem \ref{thm:MLT}]
    Assume that $F=\Q$. Suppose that $\rbar$ is potentially diagonalizably automorphic, satisfies Taylor--Wiles conditions, and $\rbar|_{G_{\Qp}}$ is tame and 9-generic. If $r: G_\Q \ra \GSp_4(E)$ is a lift of $\rbar$ that is unramified at all but finitely many places and potentially crystalline of type $(\lam+\eta,\tau)$ at $p$ with $(2h_{\lam}+6)$-generic tame inertial type $\tau$, then $r$ is automorphic. 
\end{thm}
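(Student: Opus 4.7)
The plan is to reduce to a full-support statement for patched modules: show that $M_\infty(\sig^\circ(\tau))$ has full support on $\Spec R_\rhobar^{\lam+\eta,\tau}$ for every Hodge type $\lam+\eta$ and $(2h_\lam+6)$-generic tame inertial type $\tau$ appearing in the hypothesis. Once this is established, the standard Kisin-style modularity lifting argument identifies the $E$-points in the support of the patched module with automorphic Galois representations, and Taylor's Ihara avoidance reduces the general case to the minimal one (ramification concentrated at $p$). I would therefore work in the minimal setting and concentrate on analyzing $\supp_{R_\rhobar^\square} M_\infty(\sig)$ for each Serre weight $\sig$.

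From Theorem \ref{thm:main} and the geometric analysis underlying its proof, we already know that $M_\infty(\sig) \neq 0$ iff $\sig \in W^?(\rbar)$, and that in that case $\cC_\sig(\rhobar)$ lies in the support. A direct colength-one deformation ring computation, combined with the Breuil--M\'ezard cycles explicitly computed in \cite{LHL-BM}, shows that this is the \emph{entire} support when $\sig$ lies in the bottom two $p$-restricted alcoves. In the third $p$-restricted alcove the Breuil--M\'ezard cycles computed in \cite{FLH,LHL-BM} predict one extra component $\cC_{\sig'}(\rhobar)$, where $\sig'$ is the weight in the lowest alcove linked to $\sig$; this is precisely the entailment phenomenon for $\GSp_4$.

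To produce this missing component, I would transfer the lower bounds from patched \'etale cohomology to patched coherent cohomology by comparing the two after inverting $p$ on the Hilbert--Siegel threefold (this is where $F=\Q$ is used, to apply \cite{Ortiz}). The weight-shifting mod $p$ differential operator of \cite{Ortiz} then yields a Hecke-equivariant map between sections of the automorphic vector bundles attached to $\sig$ and $\sig'$, forcing $\cC_{\sig'}(\rhobar)$ into the support of the patched coherent cohomology of the mod $p$ dual Weyl module $W$ with socle $\sig$. Transferring back to the \'etale side shows $\cC_{\sig'}(\rhobar) \subset \supp M_\infty(W)$; then exactness of $M_\infty$ applied to $0 \to \sig \to W \to W/\sig \to 0$, together with the sharp support already established for $M_\infty$ on the Jordan--H\"older factors of $W/\sig$ (whose highest weights all lie in the bottom two alcoves), isolates $\cC_{\sig'}(\rhobar)$ in $\supp M_\infty(\sig)$.

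With $\supp M_\infty(\sig)$ now matching the Breuil--M\'ezard cycle for every $\sig \in W^?(\rbar)$, exactness of $M_\infty$ applied to a Jordan--H\"older filtration of $\osig(\tau)$ together with the Breuil--M\'ezard identity of \cite{FLH,LHL-BM} yields the full support of $M_\infty(\sig^\circ(\tau))$ on $\Spec R_\rhobar^{\lam+\eta,\tau}$, and hence the theorem after Ihara avoidance. The main obstacle will be the coherent-to-\'etale transfer together with the correct interaction between Ortiz's differential operator and the entailment structure of the deformation rings computed in \cite{LHL-BM}: ensuring that the operator survives patching in the right Hecke-equivariant way, and that the upper and lower bounds in the third alcove really fit together tightly despite the non-reducedness of $\Spec R_\rhobar^{\lam+\eta,\tau}/\varpi$, is where the delicate bookkeeping lies.
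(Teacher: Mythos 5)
Your proposal follows essentially the same route as the paper: reduce Theorem \ref{thm:MLT} to the identity $Z_\sig=\cZ^{\BM}_\sig(\rhobar)$ for every Serre weight $\sig$, dispose of the bottom two $p$-restricted alcoves by the colength-one deformation ring computation (Lemma \ref{lem:supp-C0-C1}, built on Theorem \ref{thm:col-one-def}), and produce the extra entailment component for $C_2$-weights by passing to patched coherent cohomology via Corollary \ref{cor:patching-comp} and using Ortiz's mod $p$ differential operator; then factorize cycles (Lemma \ref{lem:factorizable-Minfty}), apply the Breuil--M\'ezard identity of Corollary \ref{cor:BM-cycle-def}, and finish with Ihara avoidance. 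One detail worth flagging, because it would matter if you wrote this out: $\theta^1_{\lam}$ of Theorem \ref{thm:ortiz} does not connect the $C_2$-weight $\sig$ directly to the $C_0$-weight $\sig'$; it only passes between adjacent alcoves (inducing a surjection $M_\infty^{\coh}(W(\lam_2'))/\varpi \onto M_\infty^{\coh}(W(\lam_1'))/\varpi$), and the paper must chain this with the \emph{injection} $M_\infty^{\coh}(W(\lam_0'))/\varpi \into M_\infty^{\coh}(W(\lam_1'))/\varpi$ coming from the surjection of Weyl modules $W(\lam_1')_\F \onto W(\lam_0')_\F$ of Lemma \ref{lem:weyl-decomp} to reach the lowest alcove where the support is pinned down. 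Your phrase ``a Hecke-equivariant map between sections of the automorphic vector bundles attached to $\sig$ and $\sig'$'' elides this two-step chain through the intermediate alcove $C_1$; also, in your short exact sequence $\sig$ appears as the cosocle of the relevant $W=W(\lam_2')^\vee_\F$, not the socle, though this does not affect the support argument.
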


\subsection{Acknowledgements}

D.L.~was supported by the National Science Foundation under agreement DMS-2302623 and a start-up grant from Purdue University. B.LH.~acknowledges support from the National Science Foundation under grants Nos.~DMS-1952678 and DMS-2302619 and the Alfred P.~Sloan Foundation. 
H.L.~was supported by the AMS--Simons Travel Grant and by the KIAS Individual Grant (HP103001) at the Korea Institute for Advanced Study. 
D.L.~and B.L.H.~thank the Max Planck Institute and the Hausdorff Center of Mathematics for excellent working conditions during preliminary stages of this project. H.L.~thanks Martin Ortiz and Sug Woo Shin for answering questions.

\subsection{Notation}
For a field $K$, we denote by $\ovl{K}$ a fixed separable closure of $K$ and let $G_K \defeq \Gal(\ovl{K}/K)$.
If $K$ is defined as a subfield of an algebraically closed field, then we set $\ovl{K}$ to be this field.

If $K$ is a nonarchimedean local field, we let $I_K \subset G_K$ denote the inertial subgroup and $W_K \subset G_K$ denote the Weil group.
We fix a prime $p\in\Z_{>0}$.
Let $E \subset \ovl{\Q}_p$ be a subfield which is finite-dimensional over $\Q_p$.
We write $\cO$ to denote its ring of integers, fix a uniformizer $\varpi\in \cO$ and let $\F$ denote the residue field of $E$.
We will assume throughout that $E$ is sufficiently large.

\subsubsection{Reductive groups}
\label{sec:not:RG}
Let $G$ denote a split connected reductive group (over some ring) together with a Borel $B$, a maximal split torus $T \subset B$, and $Z \subset T$ the center of $G$.  
Let $\Phi^{+} \subset \Phi$ (resp. $\Phi^{\vee, +} \subset \Phi^{\vee}$) denote the subset of positive roots (resp.~positive coroots) in the set of roots (resp.~coroots) for $(G, B, T)$. 
We use the notation $\alpha > 0$ (resp.~$\alpha < 0$) for a positive (resp.~negative) root $\alpha\in \Phi$. 
Let $\Delta$ (resp.~$\Delta^{\vee}$) be the set of simple roots (resp.~coroots).
Let $X^*(T)$ be the group of characters of $T$, and set $X^0(T)$ to be the subgroup consisting of characters $\lambda\in X^*(T)$ such that $\langle\lambda,\alpha^\vee\rangle=0$ for all $\alpha^\vee\in \Delta^{\vee}$.
Let $\Lambda_R \subset X^*(T)$ denote the root lattice for $G$.
Let  $W(G,T)$ denote the Weyl group of $(G,T)$.  
We sometimes write $W$ for $W(G,T)$ when there is no chance for confusion. Let $w_0$ denote the longest element of $W$.
Let $W_a$ (resp.~$\tld{W}$) denote the affine Weyl group and extended affine Weyl group 
\[
W_a = \Lambda_R \rtimes W, \quad \tld{W} = X^*(T) \rtimes W
\]
for $G$.
We use $t_{\nu} \in \tld{W}$ to denote the image of $\nu \in X^*(T)$.


Let $M$ be a free $\Z$-module of finite rank (e.g. $M=X^*(T)$). 
The duality pairing between $M$ and its $\Z$-linear dual $M^*$ will be denoted by $\langle \ ,\,\rangle$.
If $A$ is any ring, the pairing $\langle \ ,\,\rangle$ extends by $A$-linearity to a pairing between $M\otimes_{\Z}A$ and $M^*\otimes_{\Z}A$, and by an abuse of notation it will be denoted with the same symbol $\langle \ ,\,\rangle$. 

We write $G^\vee = G^\vee_{/\Z}$ for the split connected reductive group over $\Z$  defined by the root datum $(X_*(T),X^*(T), \Phi^\vee,\Phi)$. 
This defines a maximal split torus $T^\vee\subseteq G^\vee$ such that we have canonical identifications $X^*(T^\vee)\cong X_*(T)$ and $X_*(T^\vee)\cong X^*(T)$.

Let $V\defeq X^*(T)\otimes_{\Z}\R$.
For $(\alpha,k)\in \Phi \times \Z$, we have the root hyperplane $H_{\alpha,k}\defeq \{x \in V \mid \langle\lambda,\alpha^\vee\rangle=k\}$ and the half-hyperplanes $H^{+}_{\alpha, k} \defeq \{ x \in V \mid \langle x, \alpha^\vee\rangle > k \}$  and $H^{-}_{\alpha, n} \defeq  \{ x \in V \mid \langle x, \alpha^\vee\rangle < k \}.$ 
An alcove is a connected component of $V \setminus \big(\bigcup_{(\alpha,k)}H_{\alpha,k}\big)$. 

We say that an alcove $A$ is \emph{restricted} if $0<\langle\lambda,\alpha^\vee\rangle<1$ for all $\alpha\in \Delta$ and $\lambda\in A$.
We let $A_0$ denote the (dominant) base alcove, i.e.~the set of $\lambda\in X^*(T)\otimes_{\Z}\R$ such that $0<\langle\lambda,\alpha^\vee\rangle<1$ for all $\alpha\in \Phi^+$. 
Let $\cA$ denote the set of alcoves. 
Recall that $\tld{W}$ acts transitively on the set of alcoves, and $\tld{W}\cong\tld{W}_a\rtimes \Omega$ where $\Omega$ is the stabilizer of $A_0$.
We define
\[\tld{W}^+\defeq\{\tld{w}\in \tld{W}:\tld{w}(A_0) \textrm{ is dominant}\} \ \text{and } \tld{W}^+_1\defeq\{\tld{w}\in \tld{W}^+:\tld{w}(A_0) \textrm{ is restricted}\}.\]
We fix an element $\eta\in X^*(T)$ such that $\langle \eta,\alpha^\vee\rangle = 1$ for all positive simple roots $\alpha$ and let $\tld{w}_h$ be $w_0 t_{-\eta}\in \tld{W}^+_1$. 

A ($\eta$-shifted) \textit{$p$-alcove} is a connected component of $X^*(T)\otimes_\Z \R \backslash (\cup_{(\al,k)} (H_{\al,pk}-\eta))$. We define \textit{$p$-restricted} (resp.~\textit{dominant})  $p$-alcoves analogous to the previous paragraph.

Let $\cO_p$ be a finite \'etale $\Z_p$-algebra.
Then $\cO_p$ is isomorphic to a product $\prod_{S_p} \cO_{v}$ for some finite set $S_p$ where for each $v\in S_p$, $\cO_{v}$ is the ring of integers in a finite unramified extension of $\Q_p$. We write $F_p \defeq \cO_p[1/p]$. For each $v\in S_p$, let $F_v\defeq \cO_v[1/p]$ and $k_{v}$ be the residue field of $F_v$. We let $k_p \defeq \prod_{v\in S_p} k_v$. 

In global applications, $S_p$ will be a finite set of places dividing $p$ of a number field $F$.
When working locally, $S_p$ will have cardinality one, in which case we drop the subscripts from $k_{v}$ and denote the single extension $F_{v}$ of $\Q_p$ by $K$.

If $G$ is a split connected reductive group over $\Z_p$, with Borel $B$, maximal split torus $T$, and center $Z$, we let $G_0 \defeq \Res_{\cO_p/\Z_p} G_{/\cO_p}$ with Borel subgroup $B_0 \defeq  \Res_{\cO_p/\Z_p} B_{/\cO_p}$, maximal torus $T_0 \defeq \Res_{\cO_p/\Z_p} T_{/\cO_p}$, and $Z_0 = \Res_{\cO_p/\Z_p} Z_{/\cO_p}$. 
Assume that $\F$ contains the image of any ring homomorphism $\cO_p \ra \ovl{\Z}_p$ and let $\cJ$ be the set of ring homomorphisms $\cO_p \ra \cO$.
Then $\un{G} \defeq (G_0)_{/\cO}$ is naturally identified with the split reductive group $G_{/\F}^{\cJ}$. 
We similarly define $\un{B}, \un{T},$ and $\un{Z}$.  
Corresponding to $(\un{G}, \un{B}, \un{T})$, we have the set of positive roots $\un{\Phi}^+ \subset \un{\Phi}$ and the set of positive coroots $\un{\Phi}^{\vee, +}\subset \un{\Phi}^{\vee}$.
The notations $\un{\Lambda}_R$, $\un{W}$, $\un{W}_a$, $\tld{\un{W}}$, $\tld{\un{W}}^+$, $\tld{\un{W}}^+_1$, $\un{\Omega}$ should be clear as should the natural isomorphisms $X^*(\un{T}) = X^*(T)^{\cJ}$ and the like. 
The absolute Frobenius automorphism $\varphi$ on $k_p$ induces an automorphism $\pi$ of the identified groups $X^*(\un{T})$ and $X_*(\un{T}^\vee)$ by the formula $\pi(\lambda)_\sigma = \lambda_{\sigma \circ \varphi^{-1}}$ for all $\lambda\in X^*(\un{T})$ and $\sigma: \cO_p \ra \cO$.
We assume that, in this case, the element $\eta\in X^*(\un{T})$ we fixed is $\pi$-invariant.
We similarly define an automorphism $\pi$ of $\un{W}$ and $\tld{\un{W}}$.

\subsubsection{The group $\GSp_4$}\label{sec:not:GSp4}

Let  $\GSp_4$ be the split reductive group over $\Z$ defined by
\begin{align*}
    \GSp_4(R) =\{ g\in \GL_4(R) \mid{}^t g J g = \simc(g)J \text{ for some $\simc(g)\in R^\times$}\}
\end{align*}
for any commutative ring $R$, where $J= \psmat{ & & & 1 \\ & &1& \\&-1& & \\-1 & & & }.$ Then $\simc: g\mapsto \simc(g)$ defines a character of $\GSp_4$, called the \emph{similitude character}. 

When $G=\GSp_4$, we will take $B$ to be the upper triangular Borel and $T$ the diagonal torus. We also write $U$ for the unipotent radical of $B$ and $\overline{U}$ for the unipotent radical of the opposite Borel subgroup. Let $W=W(\GSp_4,T)$. We identify $W$ with the subgroup of $\GSp_4$ generated by two simple reflections 
\begin{align*}
       s_1\defeq\pmat{& 1 & & \\ 1& & & \\ & & &1 \\ & & 1&}, \ s_2\defeq\pmat{1 & & & \\ & & 1& \\ &-1  & & \\ & & &1}.
\end{align*}

We identify the character group $X^*(T)$ with $\Z^3$ by defining the character corresponding to $(a,b;c)\in \Z^3$ by 
\begin{align*}
    (a,b;c):\pmat{x & & & \\ & y & & 
    \\ & & zy^\mo & \\ & & & zx^\mo} \mapsto x^ay^bz^c. 
\end{align*}
Similarly, we identify the cocharacter group $X_*(T)$ with $\Z^3$ by
\begin{align*}
    (a,b;c): x \mapsto \pmat{x^a & & & \\ & x^b & & 
    \\ & & x^{c-b} & \\ & & & x^{c-a}}.
\end{align*}
We choose the element $\eta\defeq (2,1;0)\in X^*(T)$.

By an exceptional isomorphism, we have $\GSp_4^\vee \simeq \GSp_4$. We denote by $\std:\GSp_4 \into \GL_4$ the standard representation of $\GSp_4$. Let $T_4$ be the maximal diagonal torus of $\GL_4$. Then $\std$ induces $T^\vee \into T_4^\vee$, $X_*(T^\vee) \into X_*(T_4^\vee) $ and $\cT: X^*(T) \ra X^*(T_4)$ by duality. 
Explicitly, we have
\begin{align*}
    \cT(a,b;c) = (a+b+c,a+c,b+c,c).
\end{align*}

We recall the four restricted alcoves for $\GSp_4$:
\begin{align*}
    A_0 &\defeq \CB{(a,b;c) \in X^*(T)\otimes \R \mid 0 < b<a, \  0<a+b < 1 } \\
    A_1 &\defeq \CB{(a,b;c) \in X^*(T)\otimes \R \mid b< a<1,\  a+b < 2 } \\
    A_2 &\defeq \CB{(a,b;c) \in X^*(T)\otimes \R \mid a-b<1<a,\  a+b<2} \\
    A_3 &\defeq \CB{(a,b;c) \in X^*(T)\otimes \R \mid b<1,\ a-b<1,\  2<a+b}.
\end{align*}
We also have four $p$-restricted $p$-alcoves $C_0,C_1,C_2$, and $C_3$.

When $G$ is a product of copies of $\GSp_4$ indexed over a set $\cJ$ we take $\eta \in X^*(T)$ to correspond to the element $(2,1;0)_{j\in\cJ} \in (\Z^3)^{\cJ}$.
In this case, given $j\in\cJ$ we write $\eta_{j}\in X^*(T)$ to denote the element which corresponds to the tuple $(2,1;0)$ at $j$ and to the zero tuple elsewhere. Similarly, we write $s_{i,j} \in W$ for $i=1,2$  and $j\in \cJ$ for the tuple $s_i$ at $j$ and identity elsewhere.

\subsubsection{Galois Theory}
\label{sec:not:GT}
We now assume that $S_p$ has cardinality one.
We write $K\defeq F_{v}$ and drop the subscripts from $k_{v}$.
Let $W(k)$ be the ring of Witt vectors which is also the ring of integers $\cO_{K_0}$ of $K_0$.
We denote the arithmetic Frobenius automorphism on $W(k)$ by $\phz$, which acts as raising to $p$-th power on the residue field.  
We fix an embedding $\sigma_0$ of $K_0$ into $E$ (equivalently an embedding $k$ into $\F$) and define $\sigma_j = \sigma_0 \circ \phz^{-j}$, which gives an identification between $\cJ=\Hom(k,\F)$ and $\Z/f\Z$.

We normalize Artin's reciprocity map $\mathrm{Art}_{K}: K\s\ra W_{K}^{\mathrm{ab}}$ in such a way that uniformizers are sent to geometric Frobenius elements.


%
%
Given an element $\pi_1 \defeq (-p)^{\frac{1}{p^{f}-1}}\in \overline{K}$ we have a corresponding character $\omega_{K}:I_K \ra W(k)^{\times}$. By composing $\sig \in \Hom(k,\F)$, we define 
\[
\omega_{K,\sig}\defeq \sig \circ \omega_{K}:I_K \ra \cO^{\times}. 
\]


Let $\rho: G_K \ra \GSp_4(E)$ be a $p$-adic, de Rham Galois representation. For $\sigma: K\iarrow E$, we have the $\sigma$-labeled  Hodge--Tate cocharacter $\mu_{\sigma} \in X_*(T^\vee)$. For any algebraic representation $r$ of $\GSp_4$, then $r\circ \mu_{\sigma}$ matches the multiset of  $\sigma$-labeled  Hodge--Tate weights of $r\circ\rho$, i.e.~the set of integers $i$ such that $\dim_E\big(r\circ \rho\otimes_{\sigma,K}\C_p(-i)\big)^{G_K}\neq 0$ (with the usual notation for Tate twists). In particular, the cyclotomic character $\eps$ has Hodge--Tate weights 1 for all embeddings $\sigma:K\iarrow E$. 
The \emph{inertial type} of $\rho$ is the isomorphism class of $\mathrm{WD}(\rho)|_{I_K}$, where $\mathrm{WD}(\rho)$ is the Weil--Deligne representation attached to $\rho$ as in \cite{CDT}, Appendix B.1 (in particular, $\rho\mapsto\mathrm{WD}(\rho)$ is \emph{covariant}). An inertial type is a morphism $\tau: I_K\ra \GSp_4(E)$ with open kernel and which extends to the Weil group $W_K$ of $G_K$. We say that $\rho$ has type $(\mu,\tau)$ if $\rho$ has the Hodge--Tate cocharacters $\mu=(\mu_{\sig})_{\sig}$ and inertial type given by (the isomorphism class of) $\tau$.


\subsubsection{Miscellaneous}
\label{sec:not:mis}

For any ring $S$, we define $\mathrm{Mat}_n(S)$ to be the set of $n\times n$ matrices with entries in $S$. 
If $M\in \mathrm{Mat}_n(S)$ and $A\in \GL_n(S)$ we write
\begin{equation}
\label{def:adj}
\Ad(A)(M)\defeq A\,M\,A^{-1}.
\end{equation}

If $X$ is an ind-scheme defined over $\cO$, we write $X_E\defeq X\times_{\Spec\cO} \Spec E$ and $X_{\F}\defeq X\times_{\Spec \cO}\Spec \F$ to denote its generic and special fiber, respectively.

For an algebraic stack $X$ and $d\in \Z$, we let $\Irr_d(X)$ be the set of $d$-dimensional irreducible substacks of $X$.

For a topological ring $R$, if $X=\Spec R$ or $X=\Spf R$, then we write $\cO(X) \defeq R$. 

\section{Representation theory}

\subsection{Extended affine Weyl groups}

In this section, we briefly collect some background material on Weyl groups that will be needed throughout the paper.

Recall from \S \ref{sec:not:RG} that $G$ is a split reductive group with split maximal torus $T$ and Borel $B$, $W \defeq W(G,T)$, $W_a \defeq \Lambda \rtimes W$, and $\tld{W}\defeq X^*(T) \rtimes W$. 
We denote by $\cA$ the set of alcoves of $X^*(T)\otimes \R$ and by $A_0 \in \cA$ the dominant base alcove. We let $\uparrow$ denote the upper arrow ordering on alcoves as defined in \cite[\S II.6.5]{RAGS}. For $G=\GSp_4$, the four restricted alcoves $A_0,\dots, A_3$ satisfies
\begin{align*}
    A_0 \uparrow A_1 \uparrow A_2 \uparrow A_3.
\end{align*}

Since $W_a$ acts simply transitively on the set of alcoves, $\tld{w} \mapsto \tld{w}(A_0)$ induces a bijection $W_a \risom \cA$ and thus an upper arrow ordering $\uparrow$ on $W_a$.
The dominant base alcove $A_0$ also defines a set of simple reflections in $W_a$ and thus a Coxeter length function on $W_a$ denoted $\ell(-)$ and a Bruhat order on $W_a$ denoted by $\leq$. 

If $\Omega \subset \tld{W}$ is the stabilizer of the base alcove, then $\tld{W} = W_a \rtimes \Omega$ and so $\tld{W}$ inherits a Bruhat and upper arrow order in the standard way: For $\tld{w}_1, \tld{w}_2\in W_a$ and $\delta\in \Omega$, $\tld{w}_1\delta\leq \tld{w}_2\delta$ (resp.~$\tld{w}_1\delta\uparrow \tld{w}_2\delta$) if and only if $\tld{w}_1\leq \tld{w}_2$ (resp.~$\tld{w}_1\uparrow \tld{w}_2$), and elements in different right $W_a$-cosets are incomparable. 
We extend $\ell(-)$ to $\tld{W}$ by letting $\ell(\tld{w}\delta)\defeq \ell(\tld{w})$ for any $\tld{w}\in W_a$, $\delta\in \Omega$.


Let $(\tld{W}^\vee,\leq)$ be the following partially ordered group: $\tld{W}^\vee$ is identified with $\tld{W}$ as a group, and $\ell(-)$ and $\leq$ are defined with respect to the \emph{antidominant} base alcove.  

\begin{defn} 
\label{affineadjoint} We define a bijection $\tld{w}\mapsto \tld{w}^*$ between $\tld{W}$ and $\tld{W}^\vee$ as follows: for $\tld{w} = t_{\nu}w \in \tld{W}$, with $w\in W$ and $\nu\in X^*(T) = X_*(T^{\vee})$, then $\tld{w}^*\defeq  w^{-1}t_{\nu} \in \tld{W}^\vee$. 
\end{defn}
\noindent This bijection respects notions of length and Bruhat order (see \cite[Lemma 2.1.3]{LLL}).

We recall the definition of the admissible set from \cite{KR}:
\begin{defn} \label{defn:adm} For $\lambda \in X^*(T)$, define 
\[
\Adm(\lambda) \defeq  \left\{ \tld{w} \in \tld{W} \mid \tld{w} \leq t_{w(\lambda)} \text{ for some } w \in W \right\}.
\]
\end{defn}

We define the critical strips to be strips $H^{(0,1)}_{\alpha} = \{ x \in X^*(T)\otimes \R \mid 0 < \langle 1, \alpha^\vee\rangle < 1 \}$ where $\alpha \in \Phi^+$.

\begin{defn} \label{defn:regular}%
An alcove $A \in \cA$ is \emph{regular} if $A$ does not lie in any critical strip.   For any $\tld{w} \in \tld{W}$, we say $\tld{w}$ is \emph{regular} if $\tld{w}(A_0)$ is regular.  Define 
\[
\Adm^{\text{reg}}(\lambda) \defeq \{ \tld{w} \in \Adm(\lambda)\mid \tld{w} \text{ is regular} \}. 
\]
\end{defn}

For $\lam\in X_*(T^\vee)$, we denote by $\Adm^\vee(\lam)$ (resp.~$\Adm^{\vee,\reg}(\lam)$) the image of $\Adm(\lam)$ (resp.~$\Adm^{\reg}(\lam)$) under the bijection $(-)^*$ between $\tilW$ and $\tilW^\vee$.

\begin{defn}\label{defn:colength}
Let $\lam \in X^*(T)$ and $\tilw\in \Adm(\lam)$. Then we call $l(t_{\lam})-l(\tilw)$ \textit{the colength of $\tilw$} (in $\Adm(\lam)$). If $l(t_{\lam})-l(\tilw)=1$, then we say $\tilw$ is of colength one.    
\end{defn}

For later uses, we recall the various notions of genericity for elements of $X^*(T)$ from  \cite[Definition 2.1.10]{MLM}.

\begin{defn}
\label{defn:var:gen}
Let $\lambda\in X^*(T)$ be a weight and let $m\geq 0$ be an integer. 
\begin{enumerate}
\item
\label{defn:deep}
We say that $\lambda$ \emph{lies $m$-deep in its $p$-alcove} if for all $\alpha\in \Phi^{+}$, there exist integers $m_\alpha\in \Z$ such that $pm_\alpha+m<\langle \lambda+\eta,\alpha^\vee\rangle<p(m_\alpha+1)-m$. 

\item
\label{defn:mugeneric} We say that $\lambda \in X^*(T)$ is \emph{$m$-generic} if 
$m < |\langle \lambda, \alpha^{\vee} \rangle + pk|$
for all $\alpha \in \Phi$ and $k\in \Z$ (or equivalently, $\lambda-\eta$ is $m$-deep in its $p$-alcove).
\end{enumerate} 
\end{defn}

\subsubsection{Parabolic shapes}\label{sec:para-shape} 
Let $M\subset G$ be a Levi subgroup and $M^\vee \subset G^\vee$ be the dual Levi subgroup. We write $W_M \defeq W(M,T)$ and $\tilW_M\defeq X^*(T) \rtimes W_M$. Note that $W_M$ is naturally a subgroup of $W$. Let $\Delta_M$ be the set of simple roots for $(M,B\cap M, T)$. We define
\begin{align*}
    W^M \defeq \{w\in W \mid l(s_{\alpha}w) > l(w) \ \forall \alpha \in \Delta_M\}.
\end{align*}
Note that for any $w\in W$, there exists a unique decomposition $w= w_M w^M$ where $w_M\in W_M$ and $w^M \in W^M$.

Let $\lam \in X_*(T^\vee)$. We can view it as a cocharacter of $T^\vee\subset M^\vee$ and define $\Adm_M^\vee(\lam)$ as the admissible set for $M$ and $\lam$. Since the natural map $\tilW^\vee_M \into \tilW^\vee$ preserves the Bruhat order, it induces a map $\Adm_M^\vee(\lam) \into \Adm^\vee(\lam)$ and $\Adm_M(\lam) \into \Adm(\lam)$.

\begin{lemma}\label{lem:aff-ref-levi}
    Let $\tilz \in \tilW_M$ and $r \in \tilW_M$ be an affine reflection. If $l(\tilz)<l(r\tilz)$, then for any $w^M\in W^M$, $ l( (w^M)^\mo \tilz w^M) < l ( (w^M)^\mo r \tilz w^M)$.
\end{lemma}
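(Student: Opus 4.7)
The plan is to work at the level of affine roots and invoke the standard Coxeter-theoretic criterion:\ for any affine reflection $r = s_\beta$ with $\beta$ a positive affine root and any $\tilw \in \tilW$, one has $\ell(r\tilw) > \ell(\tilw)$ if and only if $\tilw^{-1}(\beta) > 0$. Under this translation, the lemma becomes a bookkeeping statement about how affine roots transform under conjugation by $w^M$.

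First I would normalize $r = s_{\alpha,k}$ with $\alpha \in \Phi^+_M$ and $k \in \Z$, so that the associated positive affine root is $\beta = \alpha + k\delta$, and decompose $\tilz = t_\mu z$ with $\mu \in X^*(T)$ and $z \in W_M$. A direct computation (using $\tilz^{-1} = z^{-1} t_{-\mu}$ and the standard action on affine roots) will give
\[
\tilz^{-1}(\alpha + k\delta) \;=\; z^{-1}(\alpha) + \bigl(k + \langle \mu, \alpha^\vee\rangle\bigr)\delta.
\]
The hypothesis $\ell(r\tilz) > \ell(\tilz)$ then forces a dichotomy: either the $\delta$-coefficient $k + \langle \mu, \alpha^\vee\rangle$ is strictly positive, or it vanishes and $z^{-1}(\alpha) \in \Phi^+$ (equivalently in $\Phi^+_M$, since $z \in W_M$ preserves $\Phi_M$).

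Next I would carry out the parallel computation on the conjugated side. The reflection $(w^M)^{-1} r w^M$ equals $s_{(w^M)^{-1}(\alpha),\,k}$, with associated affine root $(w^M)^{-1}(\alpha) + k\delta$; this is again a positive affine root because $w^M \in W^M$ sends $\Phi^+_M$ into $\Phi^+$. (This is a standard consequence of the defining condition of $W^M$: one has $(w^M)^{-1}(\Delta_M) \subseteq \Phi^+$, and since every element of $\Phi^+_M$ is a nonnegative integer combination of elements of $\Delta_M$, applying $(w^M)^{-1}$ produces a root of $G$ that is a nonnegative combination of positive roots, hence positive.) The analogous computation then yields
\[
\bigl((w^M)^{-1}\tilz w^M\bigr)^{-1}\!\bigl((w^M)^{-1}(\alpha) + k\delta\bigr) \;=\; (w^M)^{-1}\bigl(z^{-1}(\alpha)\bigr) + \bigl(k + \langle\mu,\alpha^\vee\rangle\bigr)\delta,
\]
with the \emph{same} $\delta$-coefficient as before. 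In the first case of the dichotomy this is visibly positive; in the second case, $z^{-1}(\alpha) \in \Phi^+_M$ together with $w^M \in W^M$ gives $(w^M)^{-1}(z^{-1}(\alpha)) \in \Phi^+$. Applying the reflection criterion in reverse then yields the desired length inequality.

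The only real subtlety I anticipate is pinning down the sign convention for affine roots compatibly with the definition of $A_0$ in the paper, and checking that the reflection criterion transfers from $W_a$ to $\tilW = W_a \rtimes \Omega$ (routine, since $\Omega$ has length zero and reflections lie in $W_a$). Conceptually, the argument is very clean:\ conjugation by $w^M$ leaves the translation part of the affine root untouched, so only the finite-root coordinate needs tracking, and the defining property of $W^M$ is precisely what handles the boundary case when the $\delta$-coefficient vanishes.
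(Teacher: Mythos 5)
Your proof is correct and is essentially the paper's argument recast from the alcove/hyperplane picture into the affine-root picture: both translate the length inequality into a side-of-a-wall condition (yours via $\tilz^{-1}(\beta)>0$, the paper's via which side of $H_r$ the set $\tilz(A_{0,M})$ lies on) and both then reduce to the defining property of $W^M$, namely $(w^M)^{-1}(\Phi_M^+)\subset\Phi^+$, which is equivalent to the paper's observation that $w^M(A_0)\subset A_{0,M}$. The only small imprecision is the claimed normalization: for a positive affine root $\beta=\alpha+k\delta$ of $M$ one cannot in general arrange $\alpha\in\Phi_M^+$ (when $k>0$ the finite part may have to be negative), but this is harmless since in that case the positivity of $(w^M)^{-1}(\beta)=(w^M)^{-1}(\alpha)+k\delta$ follows already from $k>0$, and the case $k=0$ forces $\alpha\in\Phi_M^+$ so your use of $(w^M)^{-1}(\Phi_M^+)\subset\Phi^+$ applies exactly there.
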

\begin{proof}
    Let $r = s_{\alpha} t_{m\alpha}$ for some $\al \in \Phi^+$ and $m\in \Z$ and $A_{0,M}$ be the dominant base alcove for $M$. The assumption on $\tilz$ and $r$ implies that for any $\mu \in A_{0,M}$,
    \begin{align*}
        \langle \tilz(\mu), \alpha^\vee \rangle < m.
    \end{align*}
    To prove the claim, we need to show that for any $\mu \in A_0$,
     \begin{align*}
        \langle (w^M)^\mo \tilz w^M(\mu), (w^M)^\mo(\alpha)^\vee \rangle  = \langle \tilz w^M(\mu), \alpha^\vee \rangle < m.
    \end{align*}
    This follows from the observation that $w^M(\mu)$ 
    is in $A_{0,M}$.
\end{proof}

\begin{lemma}\label{lem:adm-levi}
    For any $w^M\in W^M$, the set $(w^M)^\mo \Adm_M(\lam) w^M$ is contained in $\Adm(\lam)$.  
\end{lemma}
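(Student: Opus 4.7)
The plan is to use the reflection-order characterization of the Bruhat order: transport a witnessing chain for $\tld{z}\le t_{v(\lambda)}$ in $\tld{W}_M$ to $\tld{W}$ by conjugation with $w^M$, and verify length monotonicity step-by-step using Lemma \ref{lem:aff-ref-levi}.

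Given $\tld{z}\in\Adm_M(\lambda)$, fix $v\in W_M$ such that $\tld{z}\le t_{v(\lambda)}$ in the Bruhat order on $\tld{W}_M$. First I would invoke the standard characterization of Bruhat order in an (extended) affine Weyl group: there exist affine reflections $r_1,\dots,r_k\in\tld{W}_M$ such that, setting $\tld{z}_0\defeq\tld{z}$ and $\tld{z}_i\defeq r_i\tld{z}_{i-1}$, one has $\tld{z}_k=t_{v(\lambda)}$ and $\ell(\tld{z}_{i-1})<\ell(\tld{z}_i)$ at every step, where $\ell$ denotes the $\tld{W}_M$-length. Conjugating by $w^M$, define $\tld{y}_i\defeq(w^M)^{-1}\tld{z}_i w^M\in\tld{W}$; consecutive elements satisfy $\tld{y}_i=r_i'\tld{y}_{i-1}$ where $r_i'\defeq(w^M)^{-1}r_iw^M$ is again an affine reflection (if $r_i=s_{\alpha_i}t_{m_i\alpha_i^\vee}$ then $r_i'=s_{(w^M)^{-1}\alpha_i}t_{m_i(w^M)^{-1}\alpha_i^\vee}$), though typically no longer lying in $\tld{W}_M$.

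By Lemma \ref{lem:aff-ref-levi}, each step $\tld{y}_{i-1}\rightsquigarrow\tld{y}_i$ strictly increases $\tld{W}$-length, and the reflection-order characterization, this time applied inside $\tld{W}$, yields $\tld{y}_0\le\tld{y}_k$ in $\tld{W}$. Finally, a direct calculation gives $\tld{y}_k=(w^M)^{-1}t_{v(\lambda)}w^M=t_{v'(\lambda)}$ where $v'\defeq(w^M)^{-1}v\in W$ (since $v\in W_M\subset W$), so $(w^M)^{-1}\tld{z}w^M\le t_{v'(\lambda)}$ in $\tld{W}$, which is exactly the admissibility condition defining $\Adm(\lambda)$.

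The only subtle point is that Lemma \ref{lem:aff-ref-levi} compares lengths measured in two a priori different Coxeter groups---$\tld{W}_M$ on its hypothesis side and $\tld{W}$ on its conclusion side---so one must apply the reflection-order characterization of the Bruhat order with the appropriate length function at each stage. This poses no real difficulty since the hyperplane geometry underlying both characterizations lives in the common ambient space $X^*(T)\otimes\R$, but it is the one place where care is required.
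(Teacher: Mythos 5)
Your proof is correct and follows essentially the same route as the paper's: both take a witnessing reflection chain for $\tld{z}\le t_{v(\lambda)}$ in $\tld{W}_M$, conjugate it by $w^M$, and invoke Lemma \ref{lem:aff-ref-levi} to verify length monotonicity step by step in $\tld{W}$. (One small notational slip: since $\tld{W}=X^*(T)\rtimes W$, the translation part of the affine reflection should be $t_{m_i\alpha_i}$ rather than $t_{m_i\alpha_i^\vee}$, matching the paper's $r=s_\alpha t_{m\alpha}$.)
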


\begin{proof}
    Let $\tilz \in \Adm_M(\eta)$. Then $\tilz \le t_{w_M(\eta)}$ for some $w_M \in W_M$. By definition, this implies that there are affine reflections $r_1,\dots, r_m\in \tilW_M$ such that $r_m\cdots r_1 \tilz = t_{w_M(\eta)}$ and
    \begin{align*}
        l(\tilz) < l(r_1 \tilz) < \dots < l(r_m\cdots r_1 \tilz).
    \end{align*}
    By Lemma \ref{lem:aff-ref-levi}, we have
    \begin{align*}
        l((w^M)^\mo \tilz w^M) < l((w^M)^\mo r_1 \tilz w^M) < \dots < l((w^M)^\mo r_m\cdots r_1 \tilz w^M).
    \end{align*}
    Since $(w^M)^\mo r_i w^M \in \tilW$ is an affine reflection, this proves that $(w^M)^\mo \tilz w^M \le t_{(w^M)^\mo w_M(\eta)}$ in $\tilW$.
\end{proof}

\begin{defn}
    For $w\in W$, we denote by $w^\diamond \in \tilW^+_1/X^0(T)$ the unique element whose image under the natural quotient $\tilW^+_1/X^0(T) \onto W$ is $w$. This defines a bijection between $W$ and $\tilW^+_1/X^0(T)$. We often denote a representative of $w^\diamond\in \tilW^+_1/X^0(T)$ in $\tilW^+_1$ by $w^\diamond$ again.  
\end{defn}

\begin{rmk}\label{rmk:two-components}
    Let $w \in W$ and write $w^\diamond= t_\nu w$. Let $s$ be a simple reflection with the corresponding root $\alpha\in \Delta$. Let $M$ be the Levi subgroup generated by $T$ and $s$. We are particularly interested in $\tilw \in \Adm(\eta)$ of the form
    \begin{align*}
        \tilw\defeq (w^\diamond)^\mo \tilw_h^\mo w_0 s w^\diamond = w^\mo t_{-\nu} t_\eta st_\nu w. 
    \end{align*}
    Using \cite[Lemma 4.1.9]{LLL}, we can check that $(w^\diamond)^\mo \tilw_h^\mo w_0 w^\diamond$ is of colength one. By varying $w$ and $s$, they form all irregular colength one elements in $\Adm(\eta)$.
    
    We have the unique decomposition $w= w_Mw^M$ for some $w_M \in W_M$ and $w^M\in W^M$. It follows from the definition of $w^\diamond$ that if $w_M=e$, then $s(\nu)=\nu$, and if $w_M=s$, then $s(\nu)=\nu-\alpha$. In either case, we have $\tilw = (w^M)^\mo t_{\eta}s w^M$. Moreover, the same argument shows that
    \begin{align*}
        \tilw = ((sw)^\diamond)^\mo \tilw_h^\mo w_0 s {(sw)^\diamond}.
    \end{align*}
\end{rmk}

\begin{rmk}\label{rmk:para-shape}
    Suppose $G=\GSp_4$ and $M\neq T$. Then, $M \simeq \GL_1 \times \GL_2$ and we let $s\in W_M$ be the unique non-identity element. 
    By Lemma \ref{lem:adm-levi}, we can view $\Adm_T(\eta)$ and $s\Adm_T(\eta)s^\mo$ as subsets of $\Adm_M(\eta)$. For each $w^M \in W^M$, $(w^M)^\mo t_\eta s w^M$ as in Remark \ref{rmk:two-components} is the unique element of $(w^M)^\mo \Adm_M(\eta) w^M$ that is not contained in $(w^M)^\mo \Adm_T(\eta) w^M$ or $(sw^M)^\mo \Adm_T(\eta) sw^M$. 
\end{rmk}

\subsection{Types and weights}
\label{sec:types-weights}

In this section, we briefly recall objects in representation theory and Galois theory related to Serre weight conjectures. We refer \cite[\S2.3]{LLLMextremal} and \cite[\S2]{lee_thesis} for more details.

\subsubsection{Serre weights}
\label{sec:SW}
As in \S\ref{sec:not:RG}, we let $G$ be a split group defined over $\Zp$, $\cO_p$ be a finite \'etale $\Z_p$-algebra, $G_0 = \Res_{\cO_p/\Z_p} G_{/\cO_p}$, and  $\un{G} \defeq (G_0)_{/\cO}\cong G_{/\cO}^{\Hom(\cO_p,\cO)}$. We define $\rG\defeq G_0(\Fp)$.  A \emph{Serre weight} (of $\rG$) is an absolutely irreducible $\F$-representation of $\rG$. 

Let $\lambda\in X^*(\un{T})$ be a dominant character.
We write $W(\lambda)$ for the $G_0(\Zp)$-restriction of the (dual) Weyl module $\Ind_{\un{B}}^{\un{G}} w_0 \lambda$. We write $W(\lambda)_{R} \defeq W(\lam)\otimes_{\cO}R$ for an $\cO$-algebra $R$.  
Let $F(\lambda)$ denote the (irreducible) socle of $W(\lam)_{\F}$ viewed as a $\rG$-representation.

We define the set of $p$-restricted weights
\[
X_1(\un{T}) \defeq \left\{\lambda\in X^*(\un{T}),0\leq \langle \lambda,\alpha^\vee\rangle\leq p-1\text{ for all }\alpha\in \un{\Delta}\right\}.
\]
Then, we have the following classification of Serre weights (see \cite[Lemma 9.2.4]{GHS})
\begin{align*}
    \frac{X_1(\ud{T})}{(p-\pi)X^0(\uT)} &\xrightarrow{\sim} \CB{\text{Serre weights of $\rG$}}/\simeq \\ 
    \lam & \mapsto F(\lam).
\end{align*}
where $X^0(\uT)\subset X^*(\uT)$ is the subset of $\lam$ such that $\RG{\lam,\alpha^\vee}=0$ for all $\alpha\in \uDel$. We say that $\lambda\in X_1(\un{T})$ is \emph{regular $p$-restricted} if $\langle \lambda,\alpha^\vee\rangle < p-1$ for all $\alpha\in \un{\Delta}$ and say a Serre weight $F(\lambda)$ is \emph{regular} if $\lambda$ is. 
Similarly we say that $F(\lambda)$ is $m$-deep if $\lambda$ is $m$-deep.



We have an equivalence relation on $\tld{\un{W}} \times X^*(\un{T})$  defined by $(\tld{w},\omega) \sim (t_\nu \tld{w},\omega-\nu)$ for all $\nu \in X^0(\un{T})$ (\cite[\S 2.2]{MLM}). 
For $(\tld{w}_1,\omega-\eta)\in \tld{\un{W}}^+_1\times (X^*(\un{T})\cap \un{C}_0)/\sim$, we define the Serre weight $F_{(\tld{w}_1,\omega)}\defeq F(\pi^{-1}(\tld{w}_1)\cdot (\omega-\eta))$ (this only depends on the equivalence class of $(\tld{w}_1,\omega)$).
We call the equivalence class of $(\tld{w}_1,\omega)$ a \emph{lowest alcove presentation} for the Serre weight $F_{(\tld{w}_1,\omega)}$ and note that $F_{(\tld{w}_1,\omega)}$ is $m$-deep if and only if $\omega-\eta$ is $m$-deep in alcove $\un{C}_0$.
(We often implicitly choose a representative for a lowest alcove presentation to make \emph{a priori} sense of an expression, though it is \emph{a posteriori} independent of this choice.)

We also record the following result on the decomposition of $W(\lam)_\F$ for $G=\GSp_4$.

\begin{lemma}[{\cite[\S7]{Jantzen77}}]\label{lem:weyl-decomp}
    Let $G=\GSp_4$ and $\cO_p=\Zp$. For $i\in \{0,1,2,3\}$, let $\lam_i\in C_i$ be a character. We assume that $\lam_i \uparrow \lam_{i+1}$ for $i\in \{0,1,2\}$. Then $W(\lam_0)_\F$ is irreducible, and for $i\in \{1,2,3\}$, we have the exact sequence
    \begin{align*}
        0 \ra F(\lam_i) \ra W(\lam_i)_\F \ra F(\lam_{i-1}) \ra 0.
    \end{align*}
\end{lemma}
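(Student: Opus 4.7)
The plan is to reduce to classical Jantzen theory for the root system of type $C_2$. First, I apply the strong linkage principle (\cite[\S II.6]{RAGS}): every composition factor of $W(\lam)_\F$ has the form $F(\mu)$ for some dominant $\mu$ with $\mu \uparrow \lam$. Specializing to $\GSp_4$ and the restricted weights $\lam_i \in C_i$, the alcove chain $A_0 \uparrow A_1 \uparrow A_2 \uparrow A_3$ together with the standard enumeration of dominant weights below $\lam_i$ in the $\uparrow$-order yields $\{F(\lam_0), \dots, F(\lam_i)\}$ as the only candidate composition factors of $W(\lam_i)_\F$.

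Next, to pin down which candidates actually appear and with what multiplicity, I invoke the Jantzen sum formula (\cite[\S II.8.19]{RAGS}). For $\lam_0 \in C_0$, the right-hand side of the sum formula vanishes, so $W(\lam_0)_\F = F(\lam_0)$ is irreducible. For $i \geq 1$, I would compute the contributions of the four positive roots of $\GSp_4$ and their $p$-shifted affine reflections to the sum formula at $\lam_i$. The expected outcome is a single copy of the character of $F(\lam_{i-1})$: any other candidate terms either fail to produce a dominant weight or cancel out in pairs. Combined with the fact that $F(\lam_i)$ is the socle of $W(\lam_i)_\F$ by definition, this forces the length-two short exact sequence
\[
0 \to F(\lam_i) \to W(\lam_i)_\F \to F(\lam_{i-1}) \to 0.
\]

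The main obstacle is the explicit verification of the cancellation in the Jantzen sum formula: one must check that no $F(\lam_j)$ with $j < i-1$ survives as a composition factor of $W(\lam_i)_\F$, which for $i = 3$ involves nontrivial cancellation between contributions of different affine reflections through the walls of $A_3$. This is a finite case analysis specific to type $C_2$, and is precisely the content of \cite[\S7]{Jantzen77}, which the statement cites directly. Rather than reproduce the tabulation of the affine reflections and their contributions, I would simply invoke that reference.
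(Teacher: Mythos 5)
The paper offers no proof of this lemma beyond the citation to \cite[\S7]{Jantzen77} in the statement itself, and your proposal ultimately defers to the same reference for the substantive computation. Your sketch of the surrounding reductions --- strong linkage to cut the candidate composition factors down to $F(\lam_0),\dots,F(\lam_i)$, the Jantzen sum formula vanishing on $C_0$ for irreducibility, and the sum-formula/socle argument for the length-two sequences in $C_1,C_2,C_3$ --- is the correct strategy and matches what Jantzen actually does; the proposal is sound.
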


\subsubsection{Deligne--Lusztig representations}
To a \emph{good} pair $(s,\mu)\in \un{W}\times X^*(\un{T})$ we attach a Deligne--Lusztig representation $R_s(\mu)$ of $\rG$ defined over $E$  (see \cite[\S 2.2]{LLL} and \cite[Proposition 9.2.1, 9.2.2]{GHS}, where the representation $R_s(\mu)$ is denoted by $R(s,\mu)$). 
We call $(s,\mu-\eta)$ a \emph{lowest alcove presentation} for $R_s(\mu)$ and  say that $R_s(\mu)$ is \emph{$N$-generic} if $\mu-\eta$ is $N$-deep in alcove $\un{C}_0$ for $N\geq 0$
If $\mu-\eta$ is $1$-deep in $\un{C}_0$ then $R_s(\mu)$ is an irreducible representation.
We say that a Deligne--Lusztig representation $R$ is $N$-generic if there exists an isomorphism $R\cong R_s(\mu)$ where $R_s(\mu)$ is \emph{$N$-generic}.

\subsubsection{Tame inertial types}
\label{subsubsec:TIT}
An inertial type (for $K$ over $E$) is the $G^\vee(E)$-conjugacy class of a homomorphism  $\tau:I_{K}\ra G^\vee(E)$ with open kernel and which extends to the Weil group of $G_K$.
An inertial type is \emph{tame} if one (equivalently, any) homomorphism in the conjugacy class factors through the tame quotient of $I_{K}$. 

Let $s\in \un{W}$ and $\mu \in  X^*(\un{T})\cap \un{C}_0$.
Associated with this data, we have a tame inertial type $\tau(s,\mu+\eta)$ defined as in \cite[\S2.4]{MLM} (also, see \cite[\S2.3]{lee_thesis}). 
%
If $N \geq 0$ and $\mu$ is $N$-deep in alcove $\un{C}_0$, the pair $(s,\mu)$ is said to be \emph{an $N$-generic lowest alcove presentation} for the tame inertial type $\tau(s,\mu+\eta)$. 
We say that a tame inertial type is $N$-generic if it admits an $N$-generic lowest alcove presentation. 

If $(s,\mu)$ is a lowest alcove presentation of $\tau$, let $\tld{w}(\tau) \defeq t_{\mu+\eta}s\in\tld{\un{W}}$. (In particular, when writing $\tld{w}(\tau)$, we use an implicit lowest alcove presentation for $\tau$).

Inertial $\F$-types are defined similarly with $E$ replaced by $\F$. 
Tame inertial $\F$-types have analogous notions of lowest alcove presentations and genericity.
If $\taubar$ is a tame inertial $\F$-type we write $[\taubar]$ to denote the tame inertial type over $E$ obtained from $\taubar$ using the Teichm\"uller section $\F^\times \into \cO^\times$.

If $\rhobar$ and $\tau$ are tame inertial types over $\F$ and $E$, respectively, with lowest alcove presentations, we define $\tilw(\rhobar,\tau)\defeq \tilw(\tau)^\mo \tilw(\rhobar)$.

A \textit{type} is a pair $(\lam,\tau)$ of a regular dominant character $\lam\in X^*(\uT)$ and a tame inertial type $\tau$. We say that $(\lam,\tau)$ is $N$-generic if $\tau$ is $N$-generic.

%


\subsubsection{$L$-parameters}
\label{subsub:Lp}
Recall from \S \ref{sec:not:RG} the finite \'etale $\Qp$-algebra $F_p$.
We adapt the constructions of tame inertial types and the inertial local Langlands above to arbitrary $S_p$.
We assume that $E$ contains the image of any homomorphism $F_p\rightarrow \ovl{\Q}_p$.
Let 
\[
\un{G}^\vee\defeq\prod_{F_p\ra E}G^{\vee}_{/\cO}
\]
be the dual group of $\Res_{F_p/\Qp}(G_{/F_p})$ and ${}^L\un{G}\defeq \un{G}^\vee\rtimes\Gal(E/\Qp)$  the Langlands dual group of $\Res_{F_p/\Qp}(G_{/F_p})$ (where $\Gal(E/\Qp)$ acts on the set $\{F_p\ra E\}$ by post-composition).
An $L$-parameter (over $E$) is a $\un{G}^\vee(E)$-conjugacy class of an $L$-homomorphism, i.e.~of a continuous homomorphism $\rho:G_{\Qp}\ra{}^L\un{G}(E)$ which is compatible with the projection to $\Gal(E/\Qp)$.
An inertial $L$-parameter is a $\un{G}^\vee(E)$-conjugacy class of an homomorphism $\tau:I_{\Qp}\ra\un{G}^\vee(E)$ with open kernel, and which admits an extension to an $L$-homomorphism $G_{\Qp}\ra{}^L\un{G}(E)$.
An inertial $L$-parameter is \emph{tame} if some (equivalently, any) representative in its equivalence class factors through the tame quotient of $I_{\Qp}$. 

Following \cite[Lemmas 9.4.1, 9.4.5]{GHS}, we have a bijection between $L$-parameters (resp.~tame inertial $L$-parameters) and collections of the form $(\rho_v)_{v\in S_p}$ (resp.~of the form $(\tau_v)_{v\in S_p}$) where for all $v\in S_p$ the element $\rho_v:G_{F_v}\ra G^\vee(E)$ is a continuous Galois representation (resp.~the element $\tau_v:I_{F_v}\ra G^\vee(E)$ is a tame inertial type for $F_v$). We have similar bijections for $L$-parameters (resp.~inertial $L$-parameters) over $\F$.

\subsubsection{Inertial local Langlands}
\label{subsub:ILL}
Let $K/\Qp$ be a finite extension with ring of integers $\cO_K$ and residue field $k$. In \cite{GT}, Gan and Takeda established the local Langlands correspondence for $\GSp_4(K)$, which we denote by $\recGT$. Fix once and for all an isomorphism $\iota: \overline{\Q}_p \simeq \C $. We denote by $\mathrm{rec}_{\mathrm{GT},\iota}$ the correspondence over $\overline{\Q}_p$ given by conjugating by $\iota$. We define a normalized local Langlands correspondence by
\begin{align*}
    \mathrm{rec}_{\mathrm{GT},p}(\pi) \defeq \mathrm{rec}_{\mathrm{GT},\iota} (\pi \otimes |\simc|^{-3/2})
\end{align*}
for any irreducible smooth admissible $\overline{\Q}_p$-representation $\pi$ of $\GSp_4(K)$.

Let $\tau$ be a tame inertial $L$-parameter with a 1-generic lowest alcove presentation $(s,\mu)$. Then we define the irreducible smooth representation $\sigma(\tau)\defeq R_s(\mu+\eta)$ over $E$, which we view as a representation of $G_0(\Zp)$ via inflation. When $G=\GSp_4$, $F_p=K$, and $\tau$ is a tame inertial type for $K$, $\sig(\tau)$ satisfies the following property (\cite[Theorem 2.4.1]{lee_thesis}): for an irreducible smooth admissible representation $\pi$ of $\GSp_4(K)$ over $E$, if $\sigma(\tau)\subset \pi|_{\GSp_4(\cO_K)}$, then $\mathrm{rec}_{\mathrm{GT},p}(\pi)|_{I_K}\simeq \tau$.

If $(\lam+\eta,\tau)$ is a type, then we write $\sig(\lam,\tau)\defeq W(\lam)\otimes_{\cO} \sig(\tau)$. We denote by $\sig^\circ(\lam,\tau) \subset \sig(\lam,\tau)$ a $\GSp_4(\cO_K)$-stable $\cO$-lattice and $\osig(\lam,\tau)\defeq \sig^\circ(\lam,\tau)\otimes_{\cO} \F$ (we omit $\lam$ when $\lam=0$). Note that the set of Jordan--H\"older factors $\JH(\osig(\lam,\tau))$ is independent of the choice of $\sig^\circ(\lam,\tau)$.



Following \cite[Definition 9.2.5]{GHS}, we define the set of Serre weights associated with a tame inertial $L$-parameter.
\begin{defn}
    Let $\cR$ denote the bijection on regular Serre weights given by $F(\lam) \mapsto F(\tilw_h\cdot \lam)$. If $\rhobar: I_{\Qp} \ra {}^L\un{G}(\F)$ is a 1-generic tame inertial $L$-parameter, we define
    \begin{align*}
        W^?(\rhobar) \defeq \cR(\JH (\osig([\rhobar]))).
    \end{align*}
\end{defn}

\subsubsection{Reduction of Deligne--Lusztig representations}
We recall parameterizations of some sets of Serre weights from \cite[\S2]{MLM}. From now on, we take $G=\GSp_4$. We first define the set of admissible pairs.

\begin{defn}
    We define 
    \begin{align*}
        \AP(\eta) &\defeq \{ (\tilw_1,\tilw_2)\in (\utilW^+_1 \times \utilW^+)/ X^0(\uT) \mid \tilw_1 \uparrow \tilw_h^\mo \tilw_2 \}
        \\
        \AP'(\eta) &\defeq \{ (\tilw_1,\tilw_2)\in (\utilW^+ \times \utilW^+_1)/ X^0(\uT) \mid \tilw_1 \uparrow \tilw_2 \}
    \end{align*}
\end{defn}


\begin{lemma}[{\cite[Propositions 2.3.7 and 2.6.2]{MLM},\cite[Theorem 4.2]{DLR}}]\label{lem:JH-bij}
    Let $\rhobar$ and $\tau$ be $3$-generic tame inertial $L$-parameters over $\F$ and $E$, respectively.
    \begin{enumerate}
        \item There is a bijection
    \begin{align*}
        F_\tau: \AP(\eta) & \risom \JH(\osig(\tau)) \\
        (\tilw_1,\tilw_2) &\mapsto  F_{(\tilw_1,\omega)}
    \end{align*}
    where $\omega = \tilw(\tau)\tilw_2^\mo(0)$.
    \item There is a bijection
    \begin{align*}
        F_{\rhobar}: \AP'(\eta) & \risom W^?(\rhobar) \\
        (\tilw_1,\tilw_2) &\mapsto F_{(\tilw_2,\omega)}
    \end{align*}
    where $\omega = \tilw(\rhobar)\tilw_1^\mo(0)$.
    \end{enumerate}
\end{lemma}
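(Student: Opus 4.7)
The plan is to deduce both bijections from the Jantzen--Herzig description of the reduction mod $p$ of generic Deligne--Lusztig representations, together with the lowest alcove parameterization of Serre weights from \S\ref{sec:SW}; the argument essentially assembles the cited results from [MLM] and [DLR] after carefully matching conventions.

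For part (1), I would first observe that the $3$-genericity of $\tau$ ensures every Jordan--H\"older factor of $\osig(\tau)$ is $1$-deep and regular, hence admits a unique lowest alcove presentation of the form $(\tilw_1,\omega)\in (\utilW^+_1\times X^*(\uT))/\!\sim$ with $\omega-\eta\in X^*(\uT)\cap \uC_0$. The next step is to invoke the generic decomposition pattern: for a generic lowest alcove presentation $(s,\mu)$ of $\tau$, the JH factors of $\osig(\tau) = \overline{R}_s(\mu+\eta)$ are indexed by the restricted $\tilw_1$ lying below (in the upper-arrow order) a certain extremal alcove attached to $\tilw(\tau)$. Encoding this extremal data by an element $\tilw_2 \in \utilW^+/X^0(\uT)$ via $\omega = \tilw(\tau)\tilw_2^{-1}(0)$, and translating the extremality constraint into $\tilw_1 \uparrow \tilw_h^{-1}\tilw_2$, yields the stated bijection with $\AP(\eta)$. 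Injectivity follows from the uniqueness of lowest alcove presentations modulo $X^0(\uT)$, and surjectivity from the completeness of the generic decomposition pattern.

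For part (2), I apply (1) to the tame inertial type $\tau = [\rhobar]$ to obtain a bijection $\AP(\eta) \simeq \JH(\osig([\rhobar]))$, and then compose with $\cR$. Using that $\cR$ is given by $F(\lambda) \mapsto F(\tilw_h\cdot\lambda)$ together with the $\pi$-invariance of $\tilw_h$, a direct computation shows that at the level of lowest alcove presentations $\cR(F_{(\tilw_1,\omega)}) = F_{(\tilw_h\tilw_1,\omega)}$ modulo the equivalence relation. Combining this with the natural involution on admissible pairs that swaps the roles of the restricted and dominant components, one obtains a reindexing of the form $(\tilw_1^{\mathrm{old}},\tilw_2^{\mathrm{old}})\mapsto (\tilw_2^{\mathrm{old}},\tilw_h\tilw_1^{\mathrm{old}})$ from $\AP(\eta)$ to $\AP'(\eta)$. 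Under this reindexing, the upper-arrow condition $\tilw_1^{\mathrm{old}}\uparrow \tilw_h^{-1}\tilw_2^{\mathrm{old}}$ transforms into the required $\tilw_1^{\mathrm{new}} \uparrow \tilw_2^{\mathrm{new}}$, and the formula $\omega = \tilw(\tau)(\tilw_2^{\mathrm{old}})^{-1}(0) = \tilw(\rhobar)(\tilw_1^{\mathrm{new}})^{-1}(0)$ matches the statement. The main obstacle is the combinatorial bookkeeping around the upper-arrow order and its interaction with left multiplication by $\tilw_h$ at the level of $p$-restricted alcoves, as well as checking compatibility with the equivalence relation modulo $X^0(\uT)$; this is precisely where one must invoke [MLM] for the structural results on admissible pairs and on Jantzen's generic pattern, and [DLR] for the refined statement about $W^?(\rhobar)$. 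Once these conventions are aligned, both bijections follow formally.
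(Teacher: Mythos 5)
The paper offers no proof of its own---this lemma is cited verbatim from [MLM, Props.\ 2.3.7, 2.6.2] and [DLR, Thm.\ 4.2]---and your sketch correctly reconstructs the argument behind those citations: part (1) is the generic decomposition pattern for $\osig(\tau)$, and part (2) follows by applying (1) with $\tau = [\rhobar]$, composing with $\cR$, and reindexing $(\tilw_1',\tilw_2')\mapsto(\tilw_2',\tilw_h\tilw_1')$ from $\AP(\eta)$ to $\AP'(\eta)$. The reindexing is legitimate exactly for the reasons you indicate: $\tilw_h$ is $\pi$-invariant, so $\cR\bigl(F_{(\tilw_1',\omega)}\bigr)=F_{(\tilw_h\tilw_1',\omega)}$ with $\omega$ unchanged; $\tilw_h$ sends restricted alcoves to restricted alcoves, so $\tilw_h\tilw_1'\in\utilW^+_1$; and left multiplication by $\tilw_h$ reverses the $\uparrow$ order, so $\tilw_1'\uparrow\tilw_h^{-1}\tilw_2'$ is equivalent to $\tilw_2'\uparrow\tilw_h\tilw_1'$, i.e.\ $\tilw_1\uparrow\tilw_2$.
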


\begin{defn}\label{def:outer}
    Let $\rhobar$ and $\tau$ be $3$-generic tame inertial $L$-parameters over $\F$ and $E$, respectively. For $w\in \uW$, we define \textit{the obvious weight of $\rhobar$ corresponding to $w$} to be $F_{\rhobar}(w) \defeq F_{\rhobar}(w^\diamond,w^\diamond) \in W^?(\rhobar)$. We define $W_\obv(\rhobar)$ to be the set $\{F_{\rhobar}(w) \mid w\in \uW\}$.  Similarly, we define \textit{the outer weight of $\tau$ corresponding to $w$} to be $F_{\tau}(w) \defeq F_\tau(w^\diamond,\tilw_h w^\diamond)$ and  $\JH_{\out}(\osig(\tau)) \defeq \{F_\tau(w) \mid w\in \uW\}$. 
\end{defn}

\begin{rmk}\label{rmk:obv-wt-W-torsor}
     The map $w\mapsto F_{\rhobar}(w)$ defines a bijection between $\uW$ and $W_\obv(\rhobar)$. Thus, we can view $W_\obv(\rhobar)$ as a $\uW$-torsor.
\end{rmk}

\begin{lemma}\label{lem:covering-uparrow}
    Suppose that $\sig_0$ and $\sig$ are Serre weights and $\sig_0$ is 6-deep.  Then, $\sig \in \JH(\ov{R})$ for all $3$-generic Deligne--Lusztig representations $R$ such that $\sig_0 \in \JH(\ov{R})$ if and only if $\sig \uparrow \sig_0$  (i.e.~$\sig=F(\lam)$ and $\sig_0=F(\lam_0)$ for $\lam,\lam_0 \in X_1(\uT)$ such that $\lam \uparrow \lam_0$).
\end{lemma}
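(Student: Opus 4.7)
The plan is to translate the statement into a combinatorial question on admissible pairs via the bijection $F_\tau\colon \AP(\eta)\risom \JH(\osig(\tau))$ of Lemma~\ref{lem:JH-bij}(1), and then exploit the chain structure $A_0\uparrow A_1\uparrow A_2\uparrow A_3$ of restricted alcoves for $\GSp_4$ recorded at the start of Section~\ref{sec:not:GSp4}.

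For the direction $(\Leftarrow)$, suppose $\sig\uparrow\sig_0$ and let $R=\sig(\tau)$ be any $3$-generic Deligne--Lusztig representation with $\sig_0\in\JH(\ov R)$. By Lemma~\ref{lem:JH-bij}(1), $\sig_0=F_\tau(\tld w_1^0,\tld w_2^0)$ for some $(\tld w_1^0,\tld w_2^0)\in\AP(\eta)$; set $\omega_0\defeq \tld w(\tau)(\tld w_2^0)^{\mo}(0)$, so $\sig_0=F_{(\tld w_1^0,\omega_0)}$ and $\omega_0-\eta$ is deep in $\un C_0$ by the $6$-deepness of $\sig_0$. The standard compatibility between the $\uparrow$-order on sufficiently deep Serre weights and the $\uparrow$-order on the first coordinate of their lowest alcove presentations with a fixed common second coordinate yields a lowest alcove presentation $\sig=F_{(\tld w_1,\omega_0)}$ with $\tld w_1\uparrow \tld w_1^0$. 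Transitivity of $\uparrow$ gives $\tld w_1\uparrow \tld w_h^{\mo}\tld w_2^0$, so $(\tld w_1,\tld w_2^0)\in\AP(\eta)$ and $\sig=F_\tau(\tld w_1,\tld w_2^0)\in\JH(\ov R)$.

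For $(\Rightarrow)$, I argue by contrapositive: given $\sig\not\uparrow \sig_0$, I produce a $3$-generic $R_\sig=\sig(\tau_\sig)$ with $\sig_0\in\JH(\ov{R_\sig})$ but $\sig\notin\JH(\ov{R_\sig})$. Write $\sig=F_{(\tld w_1^*,\omega^*)}$. The tame types $\tau$ for which $\sig_0$ is a JH factor are exactly those arising from some $(\tld w_1^0,\tld w_2^0)\in\AP(\eta)$ with $\tld w(\tau)=t_{\omega_0}\tld w_2^0$; the second coordinate $\tld w_2^0$ is free subject to $\tld w_1^0\uparrow \tld w_h^{\mo}\tld w_2^0$. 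For such $\tau$, a pair $(\tld w_1,\tld w_2)\in\AP(\eta)$ realizes $\sig$ if and only if $\tld w_1\equiv \tld w_1^*$ mod $X^0(\un T)$ and $\tld w_2^0\tld w_2^{\mo}(0)=\omega^*-\omega_0$, which determines $\tld w_2$ from $\tld w_2^0$ up to $X^0(\un T)$; the residual condition $(\tld w_1^*,\tld w_2)\in\AP(\eta)$ is then a combinatorial inequality $\tld w_1^*\uparrow \tld w_h^{\mo}\tld w_2$ on $\tld w_2^0$. The plan is to exhibit, under the hypothesis $\sig\not\uparrow\sig_0$, a choice of $\tld w_2^0$ satisfying the constraint $\tld w_1^0\uparrow \tld w_h^{\mo}\tld w_2^0$ for $\sig_0$ but violating the above constraint for $\sig$, thereby producing $\tau_\sig$.

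The main obstacle is the combinatorial step in $(\Rightarrow)$: showing that for every $\sig\not\uparrow \sig_0$ one can tune $\tld w_2^0$ to separate $\sig$ from $\sig_0$. I expect this to be handled by a case analysis along the parabolic shapes of \S\ref{sec:para-shape} and the chain structure $A_0\uparrow A_1\uparrow A_2\uparrow A_3$, exploiting Remarks~\ref{rmk:two-components} and~\ref{rmk:para-shape} to distinguish regular from irregular colength-one elements and to track how the induced $\tld w_2$ moves in $\tld{\un W}^\vee$ as $\tld w_2^0$ varies. The Leibniz formula $\omega=\tld w(\tau)\tld w_2^{\mo}(0)$ combined with the explicit dot action $\pi^{\mo}(\tld w_1)\cdot(\omega-\eta)$ producing highest weights reduces the problem to a finite check on $\tld{\un W}$ for $\GSp_4$, made tractable by the $6$-deepness hypothesis which ensures all relevant lowest alcove presentations are unambiguous.
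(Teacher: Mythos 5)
Your proof of the $(\Leftarrow)$ direction is essentially correct: the translation to admissible pairs via $F_\tau$, the observation that $\sigma\uparrow\sigma_0$ gives a lowest alcove presentation $\sigma=F_{(\tilde w_1,\omega_0)}$ with $\tilde w_1\uparrow\tilde w_1^0$ (using that the $\uparrow$-order on sufficiently deep $p$-restricted weights in a fixed $W_a$-dot-orbit is induced by the $\uparrow$-order on $\tilde{\underline W}^+_1$ and that $\pi$ preserves $\uparrow$), and the transitivity step $\tilde w_1\uparrow\tilde w_1^0\uparrow\tilde w_h^{-1}\tilde w_2^0$ are all sound.

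The $(\Rightarrow)$ direction, however, is not a proof; it is a plan. You correctly identify the goal (contrapositive: for each $\sigma\not\uparrow\sigma_0$, exhibit a $3$-generic $\tau$ separating them), and you correctly reduce it to a combinatorial statement: for fixed $\sigma_0=F_{(\tilde w_1^0,\omega_0)}$, the parameter $\tilde w_2^0$ ranges over elements with $\tilde w_1^0\uparrow\tilde w_h^{-1}\tilde w_2^0$, and you must show this freedom suffices to exclude any $\sigma$ outside $\{\sigma':\sigma'\uparrow\sigma_0\}$. But you then explicitly defer this step (``The main obstacle is the combinatorial step\ldots I expect this to be handled by a case analysis\ldots''), so the core content of one half of the equivalence is missing. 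In particular, the reduction does not automatically dispose of the case where $\sigma$ and $\sigma_0$ have unrelated lowest-alcove presentations (different $W_a$-dot-orbits of their highest weights), which is where one must check that $\tilde w_2^0$ can be chosen so that no $\tilde w_2$ with $\tilde w(\tau)\tilde w_2^{-1}(0)=\omega^*$ and $(\tilde w_1^*,\tilde w_2)\in\AP(\eta)$ exists. This is genuinely where the work lies and it is not done.

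For what it is worth, the paper does not carry out this argument from scratch either: its proof is a two-line citation to \cite[Lemma 6.1.10]{lee_thesis}, together with the observation that the stronger genericity hypothesis there can be relaxed using \cite[Theorem 4.2]{DLR}. So your blind attempt at a self-contained combinatorial argument is a genuinely different (and more ambitious) route than the paper's, but as written it has an unfilled gap at the heart of the forward implication.
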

\begin{proof}
    See the proof of \cite[Lemma 6.1.10]{lee_thesis}. Note that \loccit~assumes stronger genericity, but we can ease this assumption using \cite[Theorem 4.2]{DLR}.
\end{proof}


\section{Local models}\label{sec:localmodels}

\subsection{Background}
We start by recalling potentially crystalline Emerton--Gee stacks, Breuil--Kisin modules, and local models for $\GSp_4$ from \cite{lee_thesis}. Throughout this section, we take $S_p$ to be a singleton.

\subsubsection{The Emerton--Gee stack for $\GSp_4$}  
Recall that $K/\Qp$ is a finite unramified extension of degree $f$. Let $R$ be a $p$-adically complete $\cO$-algebra. A \textit{symplectic projective \'etale $\PG$-module with $R$-coefficients} is a triple $(M,N,\alpha)$ where $M$ (resp.~$N$) is a rank 4 (resp.~1) projective \'etale $\PG$-module with $R$-coefficients (in the sense of \cite[Definition 2.7.2]{EGstack}) and $\al: M \simeq M^\vee \otimes N$ is an isomorphism between rank 4 \'etale $\PG$-modules satisfying the alternating condition $(\al^\vee\otimes N)^\mo \circ \al = -1_{M}$. Here, $1_M$ denotes the identity map on $M$. We write $\cX_{\Sym}(R)$ for the groupoid of projective symplectic \'etale $\PG$-modules with $R$-coefficients. 
By Fontaine's equivalence, $\cX_{\Sym}(R)$ is equivalent to the groupoid of continuous representations $\rho:G_K \ra \GSp_4(R)$ when $R$ is finite over $\cO$ (\cite[Corollary 4.1.3]{lee_thesis}). It is known that $\cX_{\Sym}$ is a Noetherian formal algebraic stack over $\Spf \cO$ (Theorem 4.1.5 in \loccit).

Let $\lambda\in X^*(\uT)$ be dominant character and $\tau$ be an inertial type. There exists a closed substack $\cX_{\Sym}^{\lambda,\tau} \subset \cX_{\Sym}$ characterized as the unique closed $\cO$-flat substack such that for a finite extension $\cO'/\cO$, $\cX_{\Sym}^{\lambda,\tau}(\cO') \subset \cX_{\Sym}(\cO')$ is exactly the subgroupoid of lattices in potentially crystalline representations of Hodge--Tate weight $\lambda$ and tame inertial type $\tau$ (\cite[Proposition 4.1.6]{lee_thesis}). When $\lambda$ is regular, $\dim \cX_{\Sym,\F}^{\lambda,\tau}$ is $4f$, whereas when $\lambda$ is not regular, $\dim \cX_{\Sym,\F}^{\lambda,\tau}$ is strictly smaller than $4f$ (see \loccit). 

Let $\cX_{\Sym,\red}\subset \cX_{\Sym}$ be the underlying reduced substack. It is an algebraic stack over $\F$ of dimension $4f$. Moreover, there is a bijection between Serre weights of $\GSp_4(k)$ and irreducible components in $\cX_{\Sym}$ which we denote by $\sigma \mapsto \cC_{\sigma}$ (see \cite[Theorem 4.1.10]{lee_thesis} for more detail). 

\subsubsection{Breuil--Kisin modules}\label{sec:BK}
Let $\tau: I_{K}\ra \GL_n(E)$ be a 1-generic tame inertial type with a lowest alcove presentation $(s,\mu)$. We define $s_\tau \defeq s_0s_1 \cdots s_{f-1} \in W$ and $r$ to be the order of $s_\tau$. 

Let $K'$ be the subfield of $\overline{K}$ unramified of degree $r$ over $K$, $k'$ be its residue field, and $f'=fr$. We fix a $(p^{f'}-1)$-root $\pi'\in \overline{K}$ of $-p$. We write $L'\defeq K'(\pi')$, $\Delta'\defeq \Gal(L'/K')\subset \Delta\defeq \Gal(L'/K)$, and $\cJ' \defeq \Hom(K',E)$. We fix an embedding $\sig'_0: K' \into E$ extending $\sig_0$ and identify $\cJ' \simeq \Z/f'\Z$ by $\sig_{j'}\defeq \sig'_0 \circ \varphi^{-j'} \mapsto j'$. The restriction map $\cJ' \onto \cJ$ is then given by the natural surjection $\Z/f'\Z \onto \Z/f\Z$.

Let $R$ be a $p$-adically complete $\cO$-algebra. We define 
$\fS_{L',R} \defeq (W(k')\otimes_{\Zp}R)\DB{u'}$. We have $\varphi: \fS_{L',R}  \ra \fS_{L',R}$ which acts as Frobenius on $W(k')$, trivially on $R$, and sends $u'$ to $(u')^p$. If $\fM$ is a $\fS_{L',R}$-module, then we have the standard $R\DB{u'}$-linear decomposition $\fM \simeq \oplus_{j'\in \cJ'} \fM\ix{j'}$ induced by the maps $W(k')\otimes_{\Zp}R \ra R$ sending $x\otimes r$ to $\sig_{j'} (x) r$ for $j'\in \cJ'$. 

We define a $\Delta$-action on $\fS_{L',R}$ as follows: if $g\in \Delta'$, $g(u') = \tfrac{g(\pi')}{\pi'}u'$ (which is independent of the choice of $\pi'$) and $g$ acts trivially on the coefficients; if $\sig^f \in \Delta$ is the lift of the $p^f$-Frobenius on $W(k')$ fixing $\pi_{K'}$, then $\sig^f$ generates $\Gal(K'/K)$, and it acts naturally on $W(k')$ and trivially on $u'$ and $R$. We define $v \defeq (u')^{p^{f'}-1}$ and $\fS_R \defeq (\fS_{L',R})^{\Delta=1} = (W(k)\otimes_{\Zp}R)\DB{v}$. We define $E(v) \defeq v+p \in W(k)[v]$. Note that $E(v)=E((u')^{{p^{f'}-1}})$ as a polynomial in $K[u']$ is the minimal polynomial of $\pi'$ over $K$.

Let $R$ be a $p$-adically complete $\cO$-algebra and $h\ge 0$ be an integer. We denote by $Y^{[0,h],\tau}_n(R)$ be the groupoid of Breuil--Kisin module over $\fS_{L',R}$ of rank $n$, height in $[0,h]$, and inertial type $\tau$ (\cite[\S5.1]{MLM}). Its objects consist of the following data:
\begin{itemize}
    \item a finitely generated projective $\fS_{L',R}$-module $\fM$ locally free of rank $n$;
    \item $\phi_\fM: \varphi^*(\fM)\ra\fM$ an injective $\fS_{L',R}$-linear map with cokernel killed by $E(v)^h$;
    \item a semilinear $\Delta$-action on $\fM$ commuting with $\phi_{\fM}$ such that Zariski locally on $R$, for each $j'\in \cJ'$, $\fM\ix{j'} \mod u' \simeq \tau^\vee \otimes R$ as $\Delta'$-representations.
\end{itemize}
We often write $\fM$ for an object in $Y^{[0,h],\tau}_n(R)$. 
If $\fM\in Y^{[0,h],\tau}_n(R)$, then we have its ($E(v)^h$-twisted) dual Breuil--Kisin module $\fM^\vee \in  Y^{[0,h],\tau^\vee}_n(R)$ (\cite[Definition 4.2.2]{lee_thesis}). For $i=1,2$, if $\fM_i \in Y_{n_i}^{[0,h_i],\tau_i}(R)$, then we have $\fM\otimes \fN \in Y_{n_1n_2}^{[0,h_1+h_2],\tau_1\otimes \tau_2}(R)$ (Definition 4.2.3 in \loccit).

From now on, we let $\tau: I_{K} \ra \GSp_4(E)$ be a tame inertial type. Recall that using the morphism $\std: \GSp_4 \into \GL_4$, we can view $\tau$ as a tame inertial type for $G=\GL_4$. Similarly, $\simc(\tau)$ is a tame inertial type for $G=\GL_1$. Note that $\tau^\vee \otimes \simc(\tau) \simeq \tau$. We assume that $\tau$ is 1-generic and fix a lowest alcove presentation $(s,\mu)$ throughout the section. 

We denote by $\cI$ (resp.~$\cI_1$) be the standard Iwahori (resp.~pro-$p$ Iwahori) group scheme for $\GSp_4$ over $\cO$. If $R$ is an $\cO$-algebra, we have 
\begin{align*}
    \cI(R) &= \{A\in \GSp_4(R\DB{v}) \mid A \mod v \in B(R)\} \\
    \cI_1(R) &= \{A\in \GSp_4(R\DB{v}) \mid A \mod v \in U(R)\}.
\end{align*}

\begin{defn}
    Let $R$ be a $p$-adically complete $\cO$-algebra and $h\ge 0$ be an integer. A \textit{sympletic Breuil--Kisin module over $\fS_{L',R}$ of height in $[0,h]$ and inertial type $\tau$} is a triple $(\fM, \fN, \al)$ where $\fM\in Y_4^{[0,h],\tau}(R)$, $\fN\in Y_1^{0,\simc(\tau)}(R)$, and $\al: \fM\simeq \fM^\vee\otimes \fN$ is an isomorphism satisfying the alternating condition $(\al^\vee\otimes \fN)^\mo \circ \al = -1_\fM$.
\end{defn}

We denote by $Y^{[0,h],\tau}_{\Sym}(R)$ the groupoid of sympletic Breuil--Kisin module over $\fS_{L',R}$ of height in $[0,h]$ and inertial type $\tau$. It is known that $Y^{[0,h],\tau}_{\Sym}$ is a $p$-adic formal algebraic stack (\cite[Proposition 4.2.5]{lee_thesis}). 

Recall that an eigenbasis of $(\fM,\fN,\al)\in Y^{[0,h],\tau}_{\Sym}(R)$ is a pair $(\be,\gamma)$ of eigenbases $\be=(\be\ix{j'})_{j'\in\cJ'}$ of $\fM$ and $\gamma= (\gamma\ix{j'})_{j'\in \cJ'}$ of $\fN$ satisfying compatibility properties (see \cite[Definition 4.2.6]{lee_thesis} for details). For each $j'\in \cJ'$, we define the matrix $A\ix{j'}_{\fM,\be} \in \GSp_{4}(R\DB{v}[1/E(v)])$ as in \S4.2 in \loccit. Then, $A\ix{j'}_{\fM,\be}$ only depends on $j' \mod f$ and is upper triangular modulo $v$, and we often write $A\ix{j}_{\fM,\be}$ instead of $A\ix{j'}_{\fM,\be}$ where $j \equiv j' \mod f$.  

Let $(I\ix{j})_{j\in\cJ}\in \cI^{\cJ}(R)$ and $(A\ix{j})_{j\in \cJ} \in \GSp_4(R\DB{v}[1/E(v)])$. We define the \textit{$(s,\mu)$-twisted $\varphi$-conjugation action} by $\cI^{\cJ}(R)$ on $\GSp_4(R\DB{v}[1/E(v)])$ by the formula
\begin{align*}
    (I\ix{j})_{j\in\cJ} \tadstar (A\ix{j})_{j\in \cJ} \defeq (I\ix{j}A\ix{j} (\Ad(s_j^\mo v^{\mu_j+\eta_j})(\varphi(I\ix{j-1}))^\mo)_{j\in \cJ}.
\end{align*}
When $\varphi$ acts trivially on $I\ix{j}$ (e.g.~if $I\ix{j}\in T^\vee(R)$), then we drop the superscript $\varphi$.

\begin{prop}[{\cite[Proposition 4.2.9]{lee_thesis}}]\label{prop:change-of-basis}
    Let $(\fM,\fN,\al)\in Y^{[0,h],\tau}_{\Sym}(R)$. The set of eigenbases of $(\fM,\fN,\al)$ is a $\cI^{\cJ}(R)$-torsor. If $(\be_1,\gamma_1)$ and $(\be_2,\gamma_2)$ are two eigenbases and differ by $(I\ix{j})_{j\in\cJ}\in \cI^{\cJ}(R)$, then 
    \begin{align*}
        (A\ix{j}_{\fM,\be_1})_{j\in\cJ} = (I\ix{j})_{j\in\cJ} \tadstar (A\ix{j}_{\fM,\be_2})_{j\in \cJ}.
    \end{align*}
\end{prop}

As a consequence, the double coset $\cI(R) A\ix{j}_{\fM,\be} \cI(R)$ is independent of the choice of eigenbasis $(\be,\gamma)$. For $(\fM,\fN,\al)\in Y^{[0,h],\tau}_{\Sym}(\F)$, we define the \textit{shape} of $(\fM,\fN,\al)$ be the unique element $\tilz \in \utilW^\vee$ such that $\cI(\F) A\ix{j}_{\fM,\be} \cI(\F) =\cI(\F) \tilz_{j}\cI(\F)$ for each $j \in \cJ$.

Let $\lam \in X_*(\uT^\vee)$ be a dominant cocharacter such that $\std(\lam)\subset ([0,h]^{4})^\cJ$. There exists a closed substack $Y^{\le\lam,\tau}_{\Sym}\subset Y^{[0,h],\tau}_{\Sym}$ characterized as the unique closed $\cO$-flat substack such that for a finite extension $\cO'/\cO$, $Y^{\le\lam,\tau}_{\Sym}(\cO') \subset Y^{[0,h],\tau}_{\Sym}(\cO')$ is precisely the subgroupoid of $(\fM,\fN,\al)$ such that $A_{\fM,\be}\ix{j'}$ has elementary divisor $E(v)^{\std(\lam_{j'})}$ for each $j'\in \cJ'$ and some (equivalently, any) eigenbasis $(\be,\gamma)$ (see \cite[\S4.2]{lee_thesis}).

Finally, we explain the connection to potentially crystalline stacks. For simplicity, we only consider the cocharacter $\eta$. 
We define $\cX^{\le\eta,\tau}_{\Sym}\defeq \cup_{\lam}\cX^{\lam,\tau}_{\Sym}$ where the union takes over the set of dominant cocharacters $\lam\in X_*(\uT^\vee)$ such that $\lam \le \eta$. Note that any $\lam< \eta$ is irregular, and the substack $\cX^{\eta,\tau}_{\Sym}\subset \cX^{\le\eta,\tau}_{\Sym}$ can be identified with the maximal closed substack whose base change to $\Spec \F$ is of pure dimension $4f$. By \cite[Proposition 4.4.2]{lee_thesis}, if $\tau$ is 5-generic, there is a closed immersion $\cX^{\le\eta,\tau}_{\Sym} \into Y_{\Sym}^{\le\eta,\tau}$. Moreover, its image is the $\cO$-flat closed substack $Y^{\le\eta,\tau,\nbl_\infty}_{\Sym} \subset Y_{\Sym}^{\le\eta,\tau}$ uniquely characterized by the following property: for a $p$-adically complete topologically finite type flat $\cO$-algebra $R$, a morphism $\Spf R \ra Y^{\le\eta,\tau}_{\Sym}$ corresponding to $(\fM,\fN,\al)\in Y^{\le\eta,\tau}_{\Sym}(R)$ factors through $Y^{\le\eta,\tau,\nbl_\infty}_{\Sym}$ if and only if the ideal $I_{\fM,\nbl_\infty}\subset R$ constructed in \cite[Proposition 7.1.6]{MLM} is 0.

\subsubsection{Local models}\label{sub:localmodels}
Let $\cG$ be the Neron blowup of $\GSp_{4/\cO}$ in $B_{/\cO}$ along $\F$ defined in \cite[Definition 3.1]{MRR}. If $R$ is an $\cO$-algebra in which $\varpi$ is regular, then we have
\begin{align*}
    \cG(R) = \{A\in \GSp_4(R) \mid A \mod \varpi \in B(R/\varpi) \}.
\end{align*}

Let $a<b$ be integers. We define the ind-group-schemes $L\cG$, $L^{[a,b]}\cG(R)$, and $L^+\cG$ over $\cO$ by
\begin{align*}
    L\cG(R) &\defeq \{ A\in \GSp_4(R\DP{v+p}) \mid A \mod v \in B(R\DP{v+p}/v) \} \\
    L^{[a,b]}\cG(R) &\defeq \{ A\in L\cG(R) \mid (v+p)^{-a} A, (v+p)^b A^\mo \in \Lie \GSp_4 (R\DB{v+p}) \} \\
    L^+\cG(R) &\defeq \{ A\in \GSp_4(R\DB{v+p}) \mid A \mod v \in B(R) \} 
\end{align*}
for any Noetherian $\cO$-algebra $R$. Note that if $R$ is $p$-adically complete, $R\DB{v+p} = R\DB{v}$.

We define $\Gr_{\cG}$ (resp.~$\Gr^{[a,b]}_\cG$) to be the fpqc quotient sheaf $[L^+\cG \backslash L\cG]$ (resp.~$[L^+\cG \backslash L^{[a,b]}\cG]$). By \cite[Proposition 6.5]{PZ}, it is representable by an ind-projective ind-scheme (resp.~by a projective scheme). Moreover, $\Gr_{\cG,E} \defeq \Gr_{\cG}\times_\cO E$ is the usual affine Grassmannian for the group $\GSp_4$ over $E$, and $\Gr_{\cG,\F}\defeq \Gr_{\cG}\times_\cO \F$ is the affine flag variety for $\GSp_4$ over $\F$. For a dominant cocharacter $\lam\in X^*(T)$, let $S_E^\circ(\lam)$ be the Schubert cell in $\Gr_{\cG,E}$ given by $L^+\cG \backslash L^+\cG (v+p)^{\lam} L^+\cG$. We define $M(\le\lam)$ to be the closure of $S^\circ_E(\lam)$ in $\Gr_{\cG}$. This is the Pappas--Zhu local model associated with the group $\GSp_4$, the conjugacy class $\lam$, and the Iwahori subgroup $\cI$. It is known that $M(\le\lam)$ is a projective variety over $\cO$ (see \cite[\S7.1]{PZ}). If $\cT(\lam)\subset [a,b]^4$, then $M(\le\lam)\subset \Gr^{[a,b]}_\cG$.

Let $\tilz=wt_\nu \in \tilW^\vee$. We define $U(\tilz)$ a subfunctor of $L\cG$ given by
{\small\begin{align*}
    U(\tilz)(R) = \CB{g \in L\cG(R) \middle\vert \begin{array}{cl}
        \ov{g}(v+p)^{-\nu} \in \GSp_4(R[\frac{1}{v+p}]),    \\ 
         \ov{g}(v+p)^{-\nu}w^\mo \mod \frac{1}{v+p} \in \ov{U}(R), \\
         \ov{g}(v+p)^{-\nu} \mod \frac{v}{v+p} \in B(R[\frac{1}{v+p}]/(\frac{v}{v+p}))
    \end{array}}.
\end{align*}}
We also define $U^{[a,b]}(\tilz) \defeq U(\tilz)\cap L^{[a,b]}\cG$. It follows from \cite[Proposition 3.1.4 and 3.1.5]{lee_thesis} that $U^{[a,b]}(\tilz)$ is an affine scheme and the natural map $U^{[a,b]}(\tilz) \into \Gr_{\cG}^{[a,b]}$ is an open immersion. If $\lam \in X^*(T)$ such that $\cT(\lam)\subset [a,b]^4$,  we define $U(\tilz,\le\lam) \defeq U^{[a,b]}(\tilz)\cap M(\le\lam)$. Then, $\{U(\tilz,\le\lam)\}_{\tilz\in \Adm^\vee(\lam)}$ is an open cover of $M(\le\lam)$ (this essentially follows from \cite[Theorem 9.3]{PZ}; see also the proof of \cite[Corollary 4.2.17]{lee_thesis}).

We denote by $L^+\cM$ the Lie algebra of $L^+\cG$. For a Noetherian $\cO$-algebra $R$, we have
\begin{align*}
    L^+\cM(R) = \{A \in \Lie \GSp_4(R\DB{v+p}) \mid A \mod v \in B(R)\}.
\end{align*}

Let $\bfa=(\bfa_1,\bfa_2,\bfa_3) \in \cO^3$. We often view $\bfa$ as an element in $X_*(T^\vee)\otimes_\Z \cO$ and write $\std(\bfa) \defeq (\bfa_1,\bfa_2,\bfa_3-\bfa_2, \bfa_3-\bfa_2)$. We define a subfunctor $L\cG^{\nbl_\bfa}\subset L\cG$ by
\begin{align*}
   L\cG^{\nbl_\bfa}(R) \defeq \{ A \in L\cG(R) \mid   v\frac{dA}{dv}A^\mo + A \Diag (\std(\bfa)) A^\mo \in \frac{1}{v+p} L^+\cM(R)\}
\end{align*}
for any $\cO$-algebra $R$. Note that $L\cG^{\nbl_\bfa}$ is stable under left-multiplication by $L^+\cG$. We also define $\Gr^{\nbl_\bfa}_\cG \defeq [L^+\cG \backslash L\cG^{\nbl_\bfa}]$. 

We define $M^{\nv}(\le\lam,\nbl_\bfa) \defeq M(\le\lam)\cap \Gr_{\cG}^{\nbl_\bfa}$ and  $U^{\nv}(\tilz,\le\lam,\nbl_\bfa) \defeq U(\tilz,\le\lam)\cap \Gr_{\cG}^{\nbl_\bfa}$. 
More explicitly, let $R = \cO(U(\tilz,\le\lam))$ and $A\in L\cG(R)$ be the universal matrix corresponding to the morphism $U(\tilz,\le\lam) \ra L\cG$. Then $U^{\nv}(\tilz,\le\lam,\nbl_\bfa) $ is the locus given by the condition 
\begin{align}\label{eqn:alg-monodromy}
    v\frac{dA}{dv}A^\mo + A \Diag (\std(\bfa)) A^\mo \in \frac{1}{v+p} L^+\cM(R).
\end{align}

\begin{rmk}[{cf.~\cite[Remark 2.25]{LLLM}}]\label{rmk:Iwahori-noramlizer}
    Suppose that $\tilz \in \Adm^\vee(\eta)$ and $\delta\in \Omega$. If we write $\delta^* = w t_\nu$, then the matrix $w v^\nu$ normalizes $L^+\cG$. Moreover, for $\bfa\in \cO^3$, we have
    \begin{align*}
        w v^\nu U^{\nv}(\tilz,\le\eta,\nbl_\bfa) (w v^\nu)^\mo \simeq U^{\nv}(\delta\tilz\delta^\mo,\le\eta,\nbl_{w(\bfa)}). 
    \end{align*}
\end{rmk}

\begin{rmk}
    There is yet another closed subscheme $U(\tilz,\lam,\nbl_{\bfa})\subset U(\tilz,\le\lam)$ (see \cite[\S3.3]{lee_thesis}). In the case of our interest (Theorem \ref{thm:col-one-nbhd} below where we consider $\lam=\eta$ and particular $\tilz$), it turns out that $U(\tilz,\lam,\nbl_{\bfa})$ is equal to $U^{\nv}(\tilz,\le\lam,\nbl_\bfa)$. For general $\tilz$, $U(\tilz,\lam,\nbl_{\bfa})$ is contained in $U^{\nv}(\tilz,\le\lam,\nbl_\bfa)$ and can be identified with the maximal $\cO$-flat top dimensional subscheme. 
\end{rmk}

\subsubsection{Special fiber of local models}

Recall that $L^+\cG_\F = \cI_\F$ and $\Gr_{\cG,\F}$ is the affine flag variety for $\GSp_4$ over $\F$. We define a $T^\vee_\F$-torsor
\begin{align*}
    \tld{\Gr}_{\cG,\F} \defeq [\cI_{1,\F} \backslash L\cG_\F] \ra \Gr_{\cG,\F}.
\end{align*}
If $X \subset \Gr_{\cG,\F}$ is a subscheme, then we write $\tld{X} \ra X$ for the $T^\vee_\F$-torsor induced by the previous morphism and similarly for subschemes of a product of copies of $\Gr_{\cG,\F}$.

For each $\tilz\in \tilW^\vee$, let $S_\F^\circ(\tilz)$ be the affine Schubert cell given by $\cI_\F \backslash \cI_\F \tilz \cI_\F \subset \Gr_{\cG,\F}$ and let $S_\F(\tilz)$ be its closure. We also define $S_\F^\circ(\tilz)^{\nbl_\bfa}\defeq S_\F^\circ(\tilz)\cap \Gr^{\nbl_\bfa}_{\cG,\F}$ and $S_\F^{\nbl_\bfa}(\tilz)$ be its closure.

\begin{prop}[{\cite[Theorem 3.5.1]{lee_thesis}}]\label{prop:schubert-dim}
    Let $\tilz \in \Adm^\vee(\eta)$. Suppose that $\bfa\in \cO^3$ is 3-generic. Then
    \begin{align*}
      \dim_\F S_\F^{\nbl_{\bfa}}(\tilz) =  4 - \# \{\al \in \Phi^+ \mid \tilz^*(A_0) \subset H_{\alpha}^{(0,1)}  \}.
    \end{align*}
    In particular, $S_\F^{\nbl_{\bfa}}(\tilz)$ is $4$-dimensional if and only if $\tilz\in \Adm^{\vee,\reg}(\eta)$.
\end{prop}



Note that we have a $T_\F^{\vee}$-action on $M(\le\eta)_\F$ given by right-multiplication. Then $T^{\vee}_\F$-fixed point set in $M(\le\eta)_\F$ is given by the image of $\Adm^\vee(\eta)$.

\begin{prop}\label{prop:T-fixed-points}
    Suppose that $\bfa$ is 3-generic. Let $(\tilw_1,\tilw_2) \in \AP'(\eta)$. Then, $T^{\vee}_\F$-fixed points in $S^{\nbl_\bfa}((\tilw_2^\mo\tilw_h^\mo w_0 \tilw_1)^*)$ is given by
    \begin{align*}
        S^{\nbl_\bfa}((\tilw_2^\mo\tilw_h^\mo w_0 \tilw_1)^*)^{T^{\vee}_\F} = \{(\tilw_2^\mo \tilw_h^\mo \tilw)^* \mid  \tilw \in \utilW, \ \tilw \le w_0\tilw_1 \}.
    \end{align*}
\end{prop}

\begin{proof}
    Note that $S^{\nbl_\bfa}((\tilw_2^\mo\tilw_h^\mo w_0 \tilw_1)^*) = S^{\nbl_{(\tilw_2^\mo\tilw_h^\mo)^*(\bfa)}}((w_0 \tilw_1)^*)$ by  \cite[Remark 3.5.6]{lee_thesis}.  Then the claim follows from Theorem A.4 in \loccit.
\end{proof}

\subsubsection{Connections to potentially crystalline Emerton--Gee stacks}\label{sec:back-to-EGstack}
In this section, we let $\cJ= \Hom_{\Qp}(K,E)$ and assume that $\tau$ is 5-generic. For simplicity, we take $\lam=\eta$. We define $M_\cJ(\le \eta)\defeq \prod_{j\in \cJ} M(\le\eta)$, $U(\tilz)\defeq \prod_{j\in \cJ}U(\tilz_j)$ for $\tilz= (\tilz_j)_{j\in \cJ}\in \utilW^\vee$, and similarly for $U^{[a,b]}(\tilz)$ and $U(\tilz,\le\eta)$. We also define $\tld{U}(\tilz)\defeq T^{\vee,\cJ} \times U(\tilz)$ which we consider as a subfunctor of $L\cG^{\cJ}$ using the multiplication map. Similarly, we define $\tld{U}^{[a,b]}(\tilz)$ and $\tld{U}(\tilz,\le\eta)$.

By Proposition \ref{prop:change-of-basis}, we have an isomorphism $Y^{[0,3],\tau}_{\Sym} \simeq [L^{[0,3]}\cG^{\cJ} \tadslash L^+\cG]^{\pcp}$ where the quotient is taken with respect to $\tadstar$-action.    Let $Y^{[0,3],\tau}_{\Sym}(\tilz)\subset Y^{[0,3],\tau}_{\Sym}$ be the open substack obtained by taking the image of the composition
\begin{align*}
    \tld{U}^{[0,3]}(\tilz)^{\pcp} \ra [L^{[0,3]}\cG^{\cJ} \tadslash L^+\cG]^{\pcp} \simeq Y^{[0,3],\tau}_{\Sym}.
\end{align*}
We also define $Y^{\le\eta,\tau}_{\Sym}(\tilz) \defeq Y^{[0,3],\tau}_{\Sym}(\tilz) \cap Y^{\le\eta,\tau}_{\Sym}$. 
By \cite[Theorem 4.2.16]{lee_thesis}, the above morphism induces a $T^{\vee,\cJ}$-torsor $\tld{U}(\tilz,\le\eta)^{\pcp} \ra Y^{\le\eta,\tau}_{\Sym}(\tilz)$. 

The open substack $Y^{[0,3],\tau}_{\Sym}(\tilz)$ is uniquely characterized by the following property: for a $p$-adically complete $\cO$-algebra $R$ and $\fM \in Y^{[0,3],\tau}_{\Sym}(R)$, $\fM \in Y^{[0,3],\tau}_{\Sym}(\tilz)(R)$ if and only if $\fM$ admits an eigenbasis $(\be,\gamma)$ Zariski locally on $R$ such that $(A_{\fM,\be}\ix{j})_{j\in \cJ}\in \tld{U}^{[0,3]}(\tilz)(R)$. Such basis is called a $\tilz$-gauge basis; see \cite[Definition 4.2.13]{lee_thesis}.

Recall from \S\ref{sec:BK} the closed immersion $\cX_{\Sym}^{\le\eta,\tau}\into Y^{\le\eta,\tau}_{\Sym}$. We define $\cX^{\le\eta,\tau}_{\Sym}(\tilz) \defeq  \cX^{\le\eta,\tau}_{\Sym} \cap Y^{\le\eta,\tau}_{\Sym}(\tilz)$ and a closed $p$-adic formal subscheme $\tld{U}(\tilz,\le\eta,\nbl_\infty)\subset \tld{U}(\tilz,\le\eta)^{\pcp}$ by the following pullback diagram
\[
\begin{tikzcd}
    \tld{U}(\tilz,\le\eta,\nbl_\infty) \arrow[r, hook] \arrow[d] & \tld{U}(\tilz,\le\eta)^{\pcp} \arrow[d]  \\
    \cX^{\le\eta,\tau}_{\Sym}(\tilz) \arrow[r, hook] & Y^{\le\eta,\tau}_{\Sym}(\tilz).
\end{tikzcd}
\]
Let $\tld{U}_\reg(\tilz,\le\eta,\nbl_\infty) \subset \tld{U}(\tilz,\le\eta,\nbl_\infty)$ be the closed $p$-adic formal subscheme given by the union of $7f$-dimensional irreducible components. The left vertical arrow of the above diagram induces a $T^{\vee,\cJ}$-torsor
\begin{align*}
    \tld{U}_\reg(\tilz,\le\eta,\nbl_\infty) \onto \cX_{\Sym}^{\eta,\tau}(\tilz).
\end{align*}
More precisely, we have an isomorphism $\cX^{\eta,\tau}_{\Sym}(\tilz)  \simeq [\tld{U}_\reg(\tilz,\le\eta,\nbl_\infty)  /_{(s,\mu)} T^{\vee,\cJ}]$ (cf.~\cite[Theorem 4.4.3(1)]{lee_thesis}).

\subsubsection{Approximating monodromy condition}
We define a tuple $\bfa_\tau = (\bfa_{\tau,j})_{j\in \cJ} \in (\cO^3)^\cJ$ attached to $\tau$ by $\bfa_{\tau,j} \defeq (s'_{\mathrm{or},j})^\mo(\bfa')\ix{j}/(1-p^{rf})$ (see \cite[\S2.3]{lee_thesis} for the definition of  $(\bfa')\ix{j}$) . Note that if $\tau$ is $m$-generic, then $\bfa_\tau$ is also $m$-generic. Let $R=\cO(\tld{U}(\tilz,\le\eta))$ so that $\tld{U}(\tilz,\le\eta)^{\pcp} = \Spf R^{\pcp}$. As explained at the end of \S\ref{sub:localmodels}, the morphism $\tld{U}(\tilz,\le\eta)^{\pcp} \ra Y^{\le\eta,\tau}_{\Sym}(\tilz)$ corresponding to $(\fM,\fN,\al)\in Y^{\le\eta,\tau}_{\Sym}(R^{\pcp})$ induces an ideal $I_{\fM,\nbl_\infty}\subset R^{\pcp}$, and $\tld{U}(\tilz,\le\eta,\nbl_\infty) = \Spf R^{\pcp}/I_{\fM,\nbl_\infty}$. Let $A=(A\ix{j})_{j\in\cJ} \in L\cG^\cJ(R)$ be the tuple of matrices corresponding to $\tld{U}(\tilz,\le\eta) \ra L\cG^\cJ$. Then, we have the closed subscheme $\tld{U}^{\nv}(\tilz,\le\eta,\nbl_{\bfa_\tau})\subset \tld{U}(\tilz,\le\eta)$ given by the condition \eqref{eqn:alg-monodromy} on $A$.
\begin{prop}\label{rmk:monodromy-approx}
    Recall that $\tau$ is $4$-generic. In the above notations, we have $\tld{U}(\tilz,\le\eta,\nbl_\infty)\times_\cO \cO/p \subset \tld{U}^{\nv}(\tilz,\le\eta,\nbl_{\bfa_\tau})$ as subschemes of $\tld{U}(\tilz,\le\eta)$.
\end{prop}
\begin{proof}
    This follows from \cite[Proposition 7.1.10]{MLM}.
\end{proof}

\subsubsection{Irreducible components in potentially crystalline Emerton--Gee stacks}
Recall that $U(\tilz,\le\eta)_\F$ for $\tilz \in \Adm^\vee(\eta)$ forms an open cover of $M_\cJ(\le\eta)_\F$. By taking pullback long the $T^{\vee,\cJ}$-torsor $\tld{\Gr}_{\cG,\F} \ra \Gr_{\cG,\F}$, we get an open cover $(\tld{U}(\tilz,\le\eta)_\F)_{\tilz\in \Adm^\vee(\eta)}$ of $\tld{M}_{\cJ}(\le\eta)_\F$. 
 
 By \cite[Proposition 4.2.12]{lee_thesis}, we can glue the morphisms $\tld{U}(\tilz,\le\eta)_\F \ra Y^{\le\eta,\tau}_{\Sym,\F}(\tilz)$ into a $T^{\vee,\cJ}$-torsor
\begin{align*}
    \tld{M}_{\cJ}(\le\eta)_\F \ra Y^{\le\eta,\tau}_{\Sym,\F}.
\end{align*}
We define a $T^{\vee,\cJ}$-torsor $\tld{\cX}_{\Sym,\F}^{\le\eta,\tau} \ra \cX_{\Sym,\F}^{\le\eta,\tau}$ by the following pullback diagram
\begin{equation}\label{eqn:modp-local-model}
    \begin{tikzcd}
    \tld{\cX}_{\Sym,\F}^{\le\eta,\tau}  \arrow[d] \arrow[r, hook] & \tld{M}_\cJ(\le\eta)_\F \arrow[d]
    \\
    \cX_{\Sym,\F}^{\le\eta,\tau} \arrow[r, hook] & Y^{\le\eta,\tau}_{\Sym,\F}. 
\end{tikzcd}
\end{equation}
We define $\tld{M}^{\nv}_\cJ(\le\eta,\nbl_{\bfa_\tau})_\F \defeq \tld{M}_\cJ(\le\eta)_\F \cap \tld{\Gr}_{\cG,\F}^{\nbl_{\bfa_\tau}}$ a closed subscheme of $\tld{M}_\cJ(\le\eta)_\F$. 
It follows from Proposition \ref{rmk:monodromy-approx} that the top horizontal arrow factors through $\tld{M}^{\nv}_\cJ(\le\eta,\nbl_{\bfa_\tau})_\F$. 

Note that $\Irr_{4f}(\cX_{\Sym,\F}^{\eta,\tau}) = \Irr_{4f}(\cX_{\Sym,\F}^{\le\eta,\tau})$. We have the following inclusion obtained by the diagram \eqref{eqn:modp-local-model}
\begin{align*}
    \Irr_{4f}(\cX_{\Sym,\F}^{\eta,\tau}) \into \Irr_{7f}(\tld{M}^{\nv}_\cJ(\le\eta,\nbl_{\bfa_\tau})_\F).
\end{align*}

For $\tilz \in \utilW^\vee$, we define $S^{\nbl_{\bfa_\tau}}(\tilz) \defeq \prod_{j\in \cJ} S^{\nbl_{\bfa_{\tau,j}}}(\tilz_j) \subset \Gr_{\cG,\F}$ and a $T^{\vee,\cJ}$-torsor $\tld{S}^{\nbl_{\bfa_\tau}}(\tilz) 
 \ra S^{\nbl_{\bfa_\tau}}(\tilz)$. Then $\tld{S}^{\nbl_{\bfa_\tau}}(\tilz) $ is contained in $\tld{M}^{\nv}_\cJ(\le\eta)_\F$ if and only if $\tilz \in \Adm^\vee(\eta)$. It is $7f$-dimensional if and only if $\tilz\in \Adm^{\vee,\reg}(\eta)$.

The following result is \cite[Theorem 4.5.4]{lee_thesis}. Note that \loccit~requires 6-genericity of $\tau$ only to apply the generic decomposition result for $\sig(\tau)$, which is now proven under 3-genericity by \cite{DLR} (see Lemma \ref{lem:JH-bij}).

\begin{prop}\label{prop:C_sig-T-torsor}
    Suppose that $\tau$ is 5-generic. Let $\cC_{\sig}\subset \cX^{\eta,\tau}_{\Sym,\F}$ be an irreducible component, $\tld{S}^{\nbl_{\bfa_\tau}}(\tilz)$ be the corresponding $7f$-dimensional irreducible component of $\tld{M}_\cJ^{\nv}(\le\eta,\nbl_{\bfa_\tau})_\F$ as in Proposition \ref{prop:schubert-dim}, and $(\tilw_1,\tilw_2)\in \AP(\eta)$ be the element such that $\tilz^*= \tilw_2^\mo w_0 \tilw_1$. Then $\sig = F_{\tau}(\tilw_1,\tilw_2)\in \JH(\osig(\tau))$ and the map $\tld{S}^{\nbl_{\bfa_\tau}}(\tilz) \ra \cC_{\sig}$ induced by \eqref{eqn:modp-local-model} is a $T^{\vee,\cJ}$-torsor.
\end{prop}

\begin{rmk}
    The previous Proposition shows that the set $\Irr_{4f}(\cX^{\eta,\tau}_{\Sym,\F})$ is contained in $\{\cC_\sig \mid \sig \in \JH(\osig(\tau))\}$. Following the argument in \cite[Theorem 4.5.2]{lee_thesis} together with the stronger weight elimination result (Proposition \ref{prop:weight-elim}), we can show that this inclusion is a bijection when $\tau$ is 12-generic. We will not need this result.
\end{rmk}

\subsubsection{Closed points and torus fixed points}\label{sec:closed-pt}

Recall from \cite[Theorem 6.6.3(3)]{EGstack} (which easily  generalizes to $\GSp_4$) that $\rhobar\in \cX^{\le\eta,\tau}_{\Sym}(\F)$ is a closed point if and only if the corresponding $\rhobar:G_K \ra \GSp_4(\F)$ is tamely ramified (equivalently, semisimple). Let $\rhobar \in \cX^{\le\eta,\tau}_{\Sym,\F}(\F)$ be a closed point  and $(\fM,\fN,\al)\in Y^{\le\eta,\tau}_{\Sym,\F}(\F)$ be its image. Then the image in $M_\cJ(\eta)_\F$ of the preimage of $(\fM,\fN,\al)$ in $\tld{M}_\cJ(\le\eta)$ is given by the $T^{\vee,\cJ}$-fixed point $\tilw(\rhobar,\tau)^*$ (see \cite[Remark 5.5.6 and Proposition 5.5.7]{MLM}). By combining Propositions \ref{prop:T-fixed-points} and \ref{prop:C_sig-T-torsor}, we obtain the following (cf.~\cite[Corollary 3.7.2]{lee_thesis}).

\begin{cor}\label{cor:geom-SW}
    Let $\tau$ be a 5-generic tame inertial type and $\sig\in \JH(\osig(\tau))$ be a 3-deep Serre weight. Then, for a 3-generic tame $L$-parameter $\rhobar$, $\rhobar \in \cC_{\sig}$ if and only if $\sig \in W^?(\rhobar|_{I_{\Qp}})$. 
\end{cor}

\subsection{Colength one loci}

We set $\cJ = \Hom_{\Qp}(K, E)$. We define $X_p\defeq \Spec \cO[x,y]/(xy-p)$. The following is the main result of this subsection.

\begin{thm}\label{thm:col-one-nbhd}
    Let $\tau$ be a $6$-generic tame inertial type. For each $j\in \cJ$, we choose $\tilz_j$ that is either
    \begin{enumerate}
        \item in $ w^\mo \Adm_T^\vee(\eta) w$ for some $w\in W$ (the extremal case); or
        \item the unique colength one element in $(w^M)^\mo \Adm_M^\vee(\eta)w^M$ for a proper Levi subgroup $M\not\simeq T$ of $\GSp_4$ and $w^M\in W^M$ (the irregular colength one case); or
        \item $(\tilw_2^\mo \tilw_h^\mo w_0 \tilw_1)^*$ for $(\tilw_1,\tilw_2)\in \AP'(\eta)$ such that $\tilw_1 \in \Omega$ and $\tilw_2 C_0 = C_1$ (the regular colength one case).
    \end{enumerate}
    Let $\cJ_1$ (resp.~$\cJ_2$) be the subsets of $\cJ$ consisting of $j\in \cJ$ such that $\tilz_j$ is in case (1) (resp.~in case (2) or (3)). Then we have an isomorphism
    \begin{align*}
        \tld{U}(\tilz,\le\eta,\nbl_\infty) \simeq  (T^{\vee,\cJ} \times_\cO X_1^{\cJ_1} \times_\cO X_2^{\cJ_2})^{\wedge_p}
    \end{align*}
    where $X_1 = \A^4$ and $X_2 = X_p\times \A^3$. In particular, $\tld{U}(\tilz,\le\eta,\nbl_\infty) = \tld{U}_{\reg}(\tilz,\le\eta,\nbl_\infty)$.

\end{thm}

See Remark \ref{rmk:para-shape} for the discussion on the shapes in items (1) and (2) above. The shape in item (3) is discussed in \S\ref{sec:reg-col-one} and will be used only in the proof of the modularity lifting theorem (Theorem \ref{thm:MLT}). 
To prove Theorem \ref{thm:col-one-nbhd}, we first compute $U^{\nv}(\tilz_j,\le\eta_j, \nbl_{\bfa_{\tau,j}}$) for $\tilz_j$ for each $j\in \cJ$.  Then, the claim will follow almost immediately from Remark \ref{rmk:monodromy-approx}. It is possible to compute $\tld{U}(\tilz,\le\eta,\nbl_\infty)$ directly, but we found it more convenient to argue using the one with $\nbl_{\bfa_\tau}$.



\subsubsection{Parabolic loci}
We temporarily take $\cJ = \{*\}$. Let $P\subset \GSp_4$ be a proper standard parabolic subgroup with its Levi subgroup $M$ and unipotent radical $N$. We denote by $\overline{N}$ the unipotent radical of the parabolic subgroup opposite to $P$. Let $M^\vee \subset \GSp_4^\vee$ be the dual Levi subgroup.  Let $\tilz \in \Adm_M^\vee(\eta)$ and $w^M \in W^M$. By Lemma \ref{lem:adm-levi}, we can view $(w^M)^\mo \tilz w^M$ as an element of $\Adm^\vee(\eta)$.

Note that $M$ is isomorphic to either $\GL_1^3$ or $\GL_2 \times \GL_1$. Let us write $M=\prod_{i\in I}\GL_i$ where $I$ is one of the multisets $\{1,1,1\}$ and $\{1,2\}$. 
This allows us to view $\eta$ as a tuple $(\chi_i)_{i\in I}$ where $\chi_i$ is a cocharacter for $\GL_i$, and similarly view $\tilz \in \Adm_M^\vee(\eta)$ as a tuple $(\tilz_i)_{i\in I}$ where $\tilz \in \Adm_{\GL_i}^\vee(\chi_i)$. We define
\begin{align*}
    U_M(\tilz,\le\eta) \defeq \prod_{i\in I} U_{\GL_i}(\tilz_i,\le\chi_i)
\end{align*}
where $U_{\GL_i}(\tilz_i,\le\chi_i)$ is the open chart of the Pappas--Zhu local model for $\GL_i$ defined in \cite[Equation (5.9)]{MLM}. The following proposition shows that $U((w^M)^\mo \tilz w^M,\le\eta)$ naturally attains a parabolic structure.

\begin{prop}[{cf.~\cite[Proposition 4.2.5]{LLLMextremal}}]\label{prop:parabolic}
    In addition to the above notation, we denote $\tilz =z t_{\nu}$. There is a morphism 
    \begin{align*}
        U((w^M)^\mo \tilz w^M,\le \eta) \ra U_M(\tilz ,\le \eta)
    \end{align*}
    which exhibits the source as an affine space over the target. Moreover, the universal matrix $A^{\univ} \in U((w^M)^\mo \tilz w^M,\le\eta)$ factors as
    \begin{align*}
        A^{\univ} =  (w^M)^\mo M^{\univ} N^{\univ} w^M
    \end{align*}
    where $M^{\univ}$ is the universal matrix for $U_M(\tilz, \le\eta)$ and
    \begin{align*}
        N^{\univ} \in \overline{N}(\cO(U((w^M)^\mo \tilz w^M,\le\eta))[v])
    \end{align*}
    whose $\alpha$-entry for $\alpha \in \Phi^-$ has a degree bounded by $\RG{-\alpha,\nu} - \delta_{(w^M)^\mo z (\alpha)>0}$. The coefficients of the entries of $N^{\univ}$ are exactly the affine coordinates of $U((w^M)^\mo \tilz w^M,\le \eta)$ over $U_M( \tilz,\le \eta)$. 
\end{prop}

\begin{proof}
    The proofs of \cite[Proposition 4.2.4 and 4.2.5]{LLLMextremal} apply to our case by using \cite[Proposition 3.1.4]{lee_thesis}. 
\end{proof}


Let $\bfa\in \cO^3$. Now we explain how the algebraic monodromy condition $\nbl_\bfa$ affects the morphism in the previous Proposition. Note that the condition $\nbl_\bfa$ given by \eqref{eqn:alg-monodromy} is automatic for $U_M(\tilz,\le\eta)$ because $\eta$ is minuscule as a cocharacter of $M^\vee$. This follows from a direct computation. 

\begin{prop}\label{prop:parabolic+monodromy}
    Suppose that $\bfa$ is $6$-generic. The morphism in Proposition \ref{prop:parabolic} induces a morphism
    \begin{align*}
        U^{\nv}((w^M)^\mo \tilz w^M,\le \eta,\nbl_{\bfa}) \ra U_M(\tilz ,\le \eta)
    \end{align*}
    which exhibits the source as an affine space over the target of relative dimension 3 if $M \neq T$ and 4 if $M=T$. Moreover, if we write the universal matrix $A^{\univ,\nbl_\bfa}\in  U^{\nv}((w^M)^\mo \tilz w^M,\le \eta,{\nbl_{\bfa}})$ as a factorization
    \begin{align*}
        A^{\univ,\nbl_\bfa} = (w^M)^\mo N^{\univ,\nbl_\bfa}M^{\univ}  w^M,
    \end{align*}
    then $\cO({U^{\nv}((w^M)^\mo \tilz w^M,\le \eta,{\nbl_{\bfa}}}))$ is generated over $\cO(U_M(\tilz ,\le \eta))$ by the top degree coefficients of off-diagonal entries of $N^{\univ,\nbl_{\bfa}}$.
\end{prop}

\begin{proof}
    This follows from the proof of \cite[Lemma 4.4.3]{LLLMextremal}. Note that $e=1$ in our case.
\end{proof}

\begin{rmk}\label{rmk:para-loci}
    Suppose that $M=T$ in Proposition \ref{prop:parabolic+monodromy}. In this case, $\tilz = t_\eta$ and $U_M(\tilz,\le\eta)$  is nothing but $\Spec \cO$. 
    Suppose that $M \neq T$ and $\tilz$ is of colength one as discussed in Remark \ref{rmk:para-shape}. In particular, $(w^M)^\mo \tilz w^M$ is in the irregular colength one case in Theorem \ref{thm:col-one-nbhd}. In this case, it is known that $U_M(\tilz,\le\eta)$ is isomorphic to $X_p$ (e.g.~\cite[Theorem 1.1.1]{LLMPQ-CL}).
\end{rmk}

\subsubsection{A regular colength one locus}\label{sec:reg-col-one}


Consider $U(\tilz,\le\eta,\nbl_\bfa)$ when $\tilz^* = \tilw_2^\mo \tilw_h^\mo w_0 \tilw_1$ for $(\tilw_1,\tilw_2)\in \AP'(\eta)$ such that $\tilw_1 \in \Omega$ and $\tilw_2 C_0 = C_1$  We can write $\tilw_1 = \delta$ and $\tilw_2 = s_\alpha \delta$. Then $\tilz^*=  \delta^\mo t_{(0,-1;2)}s_2s_1s_2 \delta$. Using \cite[Lemma 4.1.9]{LLL}, we can check that $\tilw_2^\mo \tilw_h^\mo w_0 \tilw_1$ is regular colength one.

\begin{lemma}\label{prop:reg-col-one}
    There is an isomorphism $U(\tilz,\le\eta,{\nbl_\bfa}) \simeq X_p \times \A^{3}$.
\end{lemma}
\begin{proof}
    By Remark \ref{rmk:Iwahori-noramlizer}, it suffices to consider the case $\delta=e$. This essentially follows from the proof of \cite[Theorem 2.3.10]{EL}. The computation in \loccit~shows that the universal matrix for $U(\tilz,\le\eta)$ is given by the following
    \begin{equation*}
\resizebox{.98\hsize}{!}{\ensuremath{\begin{pmatrix}c_{00} & c_{00}(c_{31}'+c_{31}E(v)) & {c_{00}c_{13}} & (c_{00}c_{33}'+pc_{00}c_{33}-p^2)+(c_{00}c_{33}-p)E(v)- E(v)^2 \\
0 &  E(v)^2 & 0 & c_{13}E(v)^2 \\
0 & c_{21}vE(v) & E(v) & -(c'_{31} + {p c_{13}c_{21}})E(v) + ({c_{13}c_{21}}- c_{31})E(v)^2 \\
v & c_{31}'v+c_{31}vE(v) & {c_{13}}v & c_{33}''+c_{33}'E(v)+c_{33}E(v)^2\end{pmatrix}}}
\end{equation*}
Note that $c_{11}^*, c_{22}^*, c_{03}^*, c_{30}^*,$ and $\xi$ in \loccit~are taken to be 1 because what \loccit~computes is (a completion of) $\tld{U}(\tilz,\le\eta)$ rather than ${U}(\tilz,\le\eta)$. 
As explained in \loccit, the monodromy condition $\nbl_\bfa$ solves the variables $c_{31}', c'_{33}, c''_{33}$ in terms of other variables, leaving 5 variables satisfying the single equation
\begin{align*}
    c_{00}\left[ \frac{e+a_1-a_2+1}{e+a_0-a_3-1} c_{13}c_{31}+\frac{a_0-a_3 -1-e}{e+a_0-a_3-1} c_{33}\right]=p.
\end{align*}
Note that we do not have the error term $O(p^2)$ in \loccit~because we use the approximated monodromy condition $\nbl_\bfa$ here. Then, we can get an isomorphism  $U(\tilz,\le\eta)^{\nbl_\bfa} \simeq X_p \times \A^{3}$ by sending $z_{1},z_2,z_3 \in \cO(\A^3)$, $x$, and $y\in \cO(X_p)$, to $c_{21},c_{13}$, $c_{31}$, $c_{00}$, and
\begin{align*}
    \frac{e+a_1-a_2+1}{e+a_0-a_3-1} {c_{13}c_{31}}+\frac{a_0-a_3 -1-e}{e+a_0-a_3-1} {c_{33}},
\end{align*}
respectively.
\end{proof}

\begin{rmk}
    Lemma \ref{prop:reg-col-one} should hold for different choices of $(\tilw_1,\tilw_2)\in \AP'(\eta)$ such that $l(\tilw_1)=l(\tilw_2)-1$. Since we only need 
the case in \loccit, we do not compute the other cases.
\end{rmk}

\subsubsection{Proof of Theorem \ref{thm:col-one-nbhd}}
    We define a morphism
    \begin{align*}
        f: \tld{U}(\tilz,\le\eta,\nbl_\infty) &\ra (T^{\vee,\cJ}\times_\cO X_1^{\cJ_1} \times_\cO X_2^{\cJ_2})^{\pcp}
    \end{align*}
    which is a closed immersion after reducing modulo $\varpi$.
    By $p$-completeness, $f$ is also a closed immersion. Then, it has to be an isomorphism because both the source and the target have the same dimension, and the target is reduced and irreducible.

    For $i\in \{1,2\}$ and each $j\in \cJ_i$, we write $X\ix{j} \defeq X_i$ and $R\ix{j}\defeq \cO(X\ix{j})$. Explicitly, we write
    \begin{align*}
        R\ix{j} = \left\{ \begin{array}{cc}
            \cO[z_1\ix{j},\dots, z_4\ix{j}] & \text{if $i=1$} \\
             \cO[z_1\ix{j}, z_2\ix{j}, z_3\ix{j}, x\ix{j}, y\ix{j}]/(x\ix{j}y\ix{j}-p) & \text{if $i=2$} 
             .
        \end{array} \right.
    \end{align*}
    Recall that $\tld{U}(\tilz,\le\eta)$ is the trivial $T^{\vee,\cJ}$-torsor over $U(\tilz,\le\eta)$. This gives a morphism $\tld{U}(\tilz,\le\eta,\nbl_\infty)^{\pcp} \ra T^{\vee,\cJ}$. To define the morphism $f$, it remains to define a morphism $R\ix{j} \ra \cO(\tld{U}(\tilz,\le\eta,\nbl_\infty))$ for each $j\in\cJ$.  If $i=1$, we send $z_1\ix{j},\dots,z_4\ix{j} \in R\ix{j}$ to the images of the top degree coefficients of the four off-diagonal entries of $N^{\univ}$ in Proposition \ref{prop:parabolic+monodromy}. If $i=2$ and $\tilz_j$ is in the irregular colength one case, we send  $z_1\ix{j},z_2\ix{j}, z_3\ix{j} \in R\ix{j}$ to the images of the top degree coefficients of the three off-diagonal entries of $N^{\univ}$ in \loccit. We send $x\ix{j},y\ix{j}$ to the images of the elements of $\cO(U_{M_j}(w_j^{M_j} \tilz_j (w_j^{M_j})^\mo, \le\eta_j))$ corresponding to $x,y \in \cO(X_p)$ under the isomorphism explained in Remark \ref{rmk:para-loci}. Finally, if $i=2$ and $\tilz_j$ is in the regular colength one case, we send $z_1\ix{j},z_2\ix{j},z_3\ix{j}$ to the images of $c_{21}\ix{j}, c_{13}\ix{j}, c_{31}\ix{j}$ in $\cO(U(\tilz_j,\le\eta_j))$ as in the proof of Lemma \ref{prop:reg-col-one}, $x\ix{j}$ and $y\ix{j}$ to $c_{00}\ix{j}$ and
    \begin{align*}
    \left[\frac{e+a_1-a_2+1}{e+a_0-a_3-1} {c_{13}c_{31}}+\frac{a_0-a_3 -1-e}{e+a_0-a_3-1} {c_{33}}\right] \frac{1}{1+pf_{\mathrm{error}}}
\end{align*}
where $f_{\mathrm{error}} \in \cO(\tld{U}(\tilz,\le\eta,\nbl_\infty))$  such that 
\begin{align*}
    c_{00}\left[ \frac{e+a_1-a_2+1}{e+a_0-a_3-1} c_{13}c_{31}+\frac{a_0-a_3 -1-e}{e+a_0-a_3-1} c_{33}\right]=p(1+pf_{\mathrm{error}}).
\end{align*}
Such $f_{\mathrm{error}}$ exists by the computation of $\nbl_\bfa$ in Lemma \ref{prop:reg-col-one} and Proposition \ref{rmk:monodromy-approx}. 
Finally, $f \mod \varpi$ is a closed immersion by Proposition \ref{prop:parabolic+monodromy}, \ref{rmk:monodromy-approx}, and Lemma \ref{prop:reg-col-one}. \qed 

\subsection{Torus fixed points in colength one cases}

In this subsection, we prove the following partial generalization of Proposition \ref{prop:T-fixed-points}.

\begin{prop}\label{prop:col-one-pablo}
    Let $\tilw_1\in \tilW_1^+$ and $s\in W$ be a simple reflection. Suppose that $\bfa$ is $3$-generic. 
    \begin{enumerate}
        \item If $s=s_1$, then
        \begin{align*}
            \left\{( \tilw_1^\mo\tilw_h^\mo s w_0 \tilw)^* \mid \tilw \uparrow \tilw_1\right\}  \subset S_\F^{\nabla_{\bfa}}((\tilw_1^\mo\tilw_h^\mo s w_0 \tilw_1)^*)^{\un{T}^\vee}
        \end{align*}
        \item If $s=s_2$, then 
        \begin{align*}
            \left\{(\tilw_1^\mo\tilw_h^\mo s w_0 \tilw)^* \mid \tilw \uparrow \tilw_1,\ \tilw \notin \Omega \right\}  \subset S_\F^{\nabla_\bfa}((\tilw_1^\mo\tilw_h^\mo s w_0 \tilw_1)^*)^{\un{T}^\vee}
        \end{align*}
    \end{enumerate}
\end{prop}

\begin{proof}
Let $w\in W$ be the unique element such that ${w}^\diamond= \tilw_1$. By \cite[Remark 3.5.6 and Proposition 3.5.9]{lee_thesis}, we have $S^{\nbl_\bfa}((\tilw_1^\mo\tilw_h^\mo s w_0 \tilw_1)^*) = S^{\nbl_{(\tilw_1^\mo\tilw_h^\mo)^*(\bfa)}}((sw_0\tilw_1)^*)(\tilw_1^\mo\tilw_h^\mo)^*$. Note that $(\tilw_1^\mo\tilw_h^\mo)^*(\bfa)$ is 3-generic. Thus, it suffices to show that
\begin{align*}
     \left\{(  s w_0 \tilw)^* \mid \tilw \uparrow \tilw_1\right\}  \subset S^{\nabla_{\bfa}}((sw_0\tilw_1)^*)^{\un{T}^\vee}
\end{align*}
for all $3$-generic $\bfa$. 
As explained in the proof of \cite[Theorem A.4]{lee_thesis}, a variant of \cite[Lemma 4.7.1]{MLM} implies that $(sw_0\tilw)^* \in  S^{\nabla_{\bfa}}((sw_0\tilw_1)^*)^{\un{T}^\vee}$ if and only if
\begin{align*}
    L^{--}\GSp_{4,\F}(sw_0\tilw)^* \cap S^\circ((sw_0\tilw_1)^*)^{\nbl_\bfa} \neq \emptyset
\end{align*}
where $L^{--}\GSp_{4,\F}$ is the ind-group-scheme over $\F$ sending an $\F$-algebra $R$ to
\begin{align*}
    L^{--}\GSp_{4,\F}(R) = \{g\in \GSp_4(R[\tfrac{1}{v}]) \mid g \mod \tfrac{1}{v} \in \overline{U}(R) \}.
\end{align*}

As in \cite[\S3.5]{lee_thesis}, we define $N_{\tilz} \defeq \tilz^\mo L^{--}\GSp_{4,\F} \tilz \cap \cI_\F$ for $\tilz\in \tilW^\vee$, where the intersection is taken inside $L\cG_\F$. Then we have $S^\circ((sw_0\tilw_1)^*) \simeq(sw_0\tilw_1)^* N_{(sw_0\tilw_1)^*}$ by Proposition 3.5.4 in \loccit. We define the closed subvariety $N^{\nbl_\bfa}_{(sw_0\tilw_1)^*} \subset N_{(sw_0\tilw_1)^*}$ by the condition that $g \in N^{\nbl_\bfa}_{(sw_0\tilw_1)^*}$ if and only if $(sw_0\tilw_1)^* g \in S^\circ((sw_0\tilw_1)^*)^{\nbl_\bfa}$.

Following the notation in the proof of \cite[Theorem A.4]{lee_thesis}, we need to show that 
\begin{align*}
    \RG{ve_1,\dots,ve_l,e_{l+1},\dots, e_4} (sw_0\tilw_1)^* N^{\nbl_\bfa}_{(sw_0\tilw_1)^*} \cap \RG{ e_1,\dots,e_l,v^\mo e_{l+1},\dots, v^\mo e_4}_{v^\mo}(sw_0\tilw)^* = \{0\}.
\end{align*}
for $l\in \{1,2,3,4\}$. By multiplying $(sw_0)^*$ on the right, we get
\begin{align*}
     \RG{ve_1,\dots,ve_l,e_{l+1},\dots, e_4} \tilw_1^* (sw_0)^* N^{\nbl_\bfa}_{(sw_0\tilw_1)^*} (sw_0)^* \cap \RG{ e_1,\dots,e_l,v^\mo e_{l+1},\dots, v^\mo e_4}_{v^\mo}\tilw^* = \{0\}.
\end{align*}
Moreover, as explained in \loccit, we can replace $N^{\nbl_\bfa}_{(sw_0\tilw_1)^*}$ by $N^{\nbl_\bfa}_{(sw_0\tilw_0)^*}$ where $\tilw_0$ is the element in $\tilW^+_1$ corresponding to the highest restricted alcove. Then, $(sw_0)^* N^{\nbl_\bfa}_{(sw_0\tilw_0)^*} (sw_0)^*$ is exactly the vanishing locus of the $\alpha^\vee$-coordinate of $M^{\tilw_0}$ in \loccit, where $\alpha^\vee$ is the simple coroot corresponding to $w_0sw_0$. Therefore, we need to check that the determinants of block matrices considered in \loccit~do not vanish under the vanishing of the $\alpha^\vee$-coordinate of $M^{\tilw_0}$. This follows from the computation in \loccit, except when $s=s_2$ and $\tilw \in \Omega$. In the exceptional case, the $\alpha^\vee$-coordinate is given by $c_{12}$ in \loccit, and $c_{12}=0$ implies $c_{14}'=0$. Thus, the determinant of the first block matrix considered in \loccit~vanishes under $c_{12}=0$.
\end{proof}

By Proposition \ref{prop:schubert-dim}, $S_\F^{\nbl_\bfa}((\tilw_1^\mo \tilw_h^\mo w_0 s \tilw_1)^*)$ has dimension 3, which is smaller than the dimension of $S_\F^{\nbl_\bfa}(\tilz)$ for $\tilz\in \Adm^{\vee,\reg}(\eta)$. In fact, $S_\F^{\nbl_\bfa}((\tilw_1^\mo \tilw_h^\mo w_0 s \tilw_1)^*)$ is contained in two of irreducible components in $M_\cJ(\le\eta,\nbl_\bfa)_\F$ of dimension 4 and serves as a bridge between the two components.

Let $w\in W$ be the element such that ${w}^\diamond=\tilw_1$ and let $\tilw_2 = {(sw)^\diamond}$. As in Remark \ref{rmk:two-components}, we have
\begin{align*}
    \tilw_1^\mo \tilw_h^\mo w_0 s \tilw_1 = \tilw_2^\mo \tilw_h^\mo w_0 s \tilw_2.
\end{align*}

\begin{lemma}\label{lem:two-components}
    Following the above notations, we have
    \begin{align*}
        S_\F^{\nbl_\bfa}((\tilw_1^\mo \tilw_h^\mo w_0 s \tilw_1)^*) \subset S_\F^{\nbl_\bfa}((\tilw_1^\mo \tilw_h^\mo w_0  \tilw_1)^*) \cap S_\F^{\nbl_\bfa}((\tilw_2^\mo \tilw_h^\mo w_0  \tilw_2)^*).
    \end{align*}
\end{lemma}
\begin{proof}
Let $i\in \{1,2\}$. By \cite[Remark 3.5.6 and Proposition 3.5.9]{lee_thesis}, we have
\begin{align*}
    S_\F^\circ((\tilw_i^\mo \tilw_h^\mo w_0 s \tilw_i)^*)^{\nbl_\bfa}t_\bfa &= S^\circ_\F ((w_0s\tilw_i)^*)(\tilw_i^\mo \tilw_h^\mo)^* t_\bfa  \cap \Gr_{\cG,\F}^{\nbl_0} \\
    &\subset \overline{S^\circ_\F((w_0\tilw_i)^*)(\tilw_i^\mo \tilw_h^\mo)^* t_\bfa \cap \Gr_{\cG,\F}^{\nbl_0}}
    \\
    &= S_\F^{\nbl_\bfa}((\tilw_i^\mo \tilw_h^\mo w_0  \tilw_i)^*)t_\bfa
\end{align*}
where the closure is taken in $\Gr_{\cG,\F}^{\nbl_0}$. Then the claim follows immediately.
\end{proof}

\subsection{Applications to deformation rings}
In this section, we obtain some applications of local models to Galois deformation rings. Let $\CNL_\cO$ be the category of complete Noetherian local $\cO$-algebras with residue field $\F$. Let $\rhobar\in \cX_{\Sym}(\F)$. We denote by $R_{\rhobar}^{\square}$ the framed deformation ring representing the functor $D_{\rhobar}^{\square}$ taking $R \in \CNL_\cO$ to the set of $\rho:G_K \ra \GSp_4(R)$ lifting $\rhobar$. For a type $(\lam+\eta,\tau)$, we denote by $R_{\rhobar}^{\lam+\eta,\tau}$ the unique $\cO$-flat quotient of $R_{\rhobar}^{\square}$ whose $R$-points, for any finite flat $\cO$-algebra $R\in \CNL_\cO$, are lattices in potentially crystalline representations of type $(\lam+\eta,\tau)$. If $\psi:G_K \ra \cO^\times$ is a continuous character lifting $\simc(\rhobar)$, we denote by $R_{\rhobar}^{\square,\psi}$ and $R_{\rhobar}^{\lam+\eta,\tau,\psi}$ for the fixed similitude variants of  $R_{\rhobar}^{\square}$ and $R_{\rhobar}^{\lam+\eta,\tau}$, respectively.

Following \cite[Proposition 3.6.3 and 4.8.10]{EGstack}, there are morphisms $\Spf R_{\rhobar}^{\square} \ra \cX_{\Sym}$ and $\Spf R_{\rhobar}^{\eta,\tau} \ra \cX^{\eta,\tau}_{\Sym}$  which are versal at $\rhobar$. Suppose that $\tau$ is 5-generic and $\rhobar$ is a closed point. Let $\tilz\in \Adm^\vee(\eta)$ be the element corresponding to $\rhobar$ as in \S\ref{sec:closed-pt}. As in \S\ref{sec:back-to-EGstack}, we have a $T^{\vee,\cJ}$-torsor
\begin{align*}
    \tld{U}_\reg(\tilz,\le\eta,\nbl_\infty) \onto \cX^{\eta,\tau}_{\Sym}(\tilz).
\end{align*}
Therefore, $\Spf R_{\rhobar}^{\eta,\tau}$ and $\tld{U}_\reg(\tilz,\le\eta,\nbl_\infty)^{\wedge_{\tilz}}$ become isomorphic after taking formally smooth covers.

We define  $R^{\alg}_{\rhobar}$ the quotient ring of $R^{\square}_{\rhobar}$ given by the closed formal subscheme
\begin{align*}
    \Spf R^{\square}_{\rhobar} \times_{\cX_{\Sym}} \cX_{\Sym,\red} \subset \Spf R^{\square}_{\rhobar}.
\end{align*}
Since $\cX_{\Sym,\red}$ is algebraic, the versal map $\Spf R^{\alg}_{\rhobar} \ra \cX_{\Sym,\red}$ arises from a map
\begin{align*}
    \iota_{\rhobar}:  \Spec R^{\alg}_{\rhobar} \ra \cX_{\Sym,\red}.
\end{align*}

\subsubsection{Unibranch property}

\begin{defn}
    Let $Y$ be a scheme. A point $y\in Y$ is called \textit{unibranch} if the normalization of the local ring $(\cO_{Y,y})_\red$ is local.

    If $\cY$ is an algebraic stack with a smooth cover $Y\onto \cY$, then $y\in \cY$ is \textit{unibranch} if any (equivalently,~all) of its preimage in $Y$ is unibranch.
\end{defn}

\begin{rmk}\label{rmk:unibranch}
    Suppose that $Y$ is an excellent Noetherian scheme. Then $y\in Y$ is unibranch if and only if the completed local ring $\cO_{Y,y}^{\wedge}$ is a domain. Moreover, if $Y$ is normal, it is unibranch at any point $y \in Y$. 
\end{rmk}


\begin{prop}[{\cite[Proposition 3.7.3]{lee_thesis}}]\label{prop:modp-unibranch}
    Suppose that $\sig$ is a 3-deep Serre weight. Then, $\cC_{\sig}$ is unibranch at its closed points.
\end{prop}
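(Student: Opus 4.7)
Fix a closed point $\rhobar \in \cC_\sigma(\F) \subset \cX_{\Sym,\red}(\F)$. The plan is to translate unibranchness into a local-model statement near a torus-fixed point and then exploit the torus action.

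By the remark characterizing unibranchness of excellent Noetherian schemes in terms of the completed local ring being an integral domain, applied after a smooth presentation of $\cX_{\Sym,\red}$ and via the versal map $\iota_{\rhobar}: \Spec R^{\alg}_{\rhobar} \to \cX_{\Sym,\red}$, it suffices to show that the completion of the pullback of $\cC_\sigma$ to $\Spec R^{\alg}_{\rhobar}$ at $\rhobar$ is a domain. First I would choose a $5$-generic tame inertial type $\tau$ with $\rhobar \in \cX^{\eta,\tau}_{\Sym}(\F)$ and $\sigma \in \JH(\osig(\tau))$; since $\sigma$ is $3$-deep, the bijection $F_\tau$ of Lemma \ref{lem:JH-bij}(1) guarantees that such $\tau$ exists. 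Then $\cC_\sigma$ is a single irreducible component of $\cX^{\eta,\tau}_{\Sym,\F}$, and the versal map $\Spf R^{\eta,\tau}_{\rhobar} \to \cX^{\eta,\tau}_{\Sym}$ combined with the $T^{\vee,\cJ}$-torsor
\[
\tld{U}_\reg(\tilz, \le \eta, \nbl_\infty) \onto \cX^{\eta,\tau}_{\Sym}(\tilz)
\]
identifies, after a formally smooth base change, $\Spf R^{\eta,\tau}_{\rhobar}$ with the completion of $\tld{U}_\reg(\tilz, \le \eta, \nbl_\infty)$ at the torus-fixed point $\tilz \in \Adm^\vee(\eta)$ associated with $\rhobar$.

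Under this identification, the pullback of $\cC_\sigma$ corresponds to an explicit closed subvariety of the local model, labeled by the admissible pair $(\tld{w}_1,\tld{w}_2) \in \AP(\eta)$ attached to $\sigma$ via $F_\tau$. The $T^{\vee,\cJ}$-action fixes $\tilz$, and for a cocharacter in the attracting chamber of $\tilz$ the local model is contracted to $\tilz$ on a neighborhood, so the completed local ring of the subvariety inherits a non-negative grading whose associated graded is the coordinate ring of its tangent cone. Hence the completion is a domain iff the tangent cone is irreducible. The latter admits an explicit matrix presentation from the local model: Bruhat-type equations together with the symplectic compatibility and the $\nbl_\infty$-condition. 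With the $3$-deep hypothesis on $\sigma$ in force, these equations stay in their generic, non-degenerate form, cutting out a single affine Schubert-like variety whose irreducibility can be verified by direct inspection in the matrix coordinates.

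The principal obstacle lies in this explicit verification. For $\GSp_4$, the symplectic constraint couples entries across the matrix in a way that is absent in the $\GL_n$ setting, and the entailment phenomenon means that several irreducible components of $\cX^{\eta,\tau}_{\Sym}$ can meet along $\cC_\sigma$. One must therefore isolate the correct closed subscheme in the local model precisely, not merely its support inside a reducible ambient, and show that the tangent cone equations have no secondary solutions. The $3$-deep condition on $\sigma$ is exactly what keeps the leading symbols of these equations non-degenerate and rules out the extra branches that would otherwise appear at less generic $\tilz$.
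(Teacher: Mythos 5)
The paper's ``proof'' of Proposition~\ref{prop:modp-unibranch} is a citation to \cite[Proposition 3.7.3]{lee_thesis}, and what that reference establishes is normality of the local model for $\cC_{\sig}$, from which unibranchness follows everywhere by Remark~\ref{rmk:unibranch}. Your argument takes a genuinely different route and has two gaps.

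First, your argument is anchored at a torus-fixed point $\tilz$ of the local model $\tld{U}_\reg(\tilz,\le\eta,\nbl_\infty)$. But $\rhobar$ maps to the fixed point $\tilz$ only when $\rhobar$ is tamely ramified; a general closed point of $\cC_{\sig}$ is a wildly ramified mod~$p$ representation, which lands at an ordinary (non-fixed) point of the local model. The proposition is needed precisely at such points --- it is invoked right after its statement to define $\cC_{\sig}(\rhobar)$ for an \emph{arbitrary} closed $\rhobar \in \cX_{\Sym}(\F)$. Your method establishes nothing there: unibranchness is a local condition and does not propagate from $\tilz$ to other points of an irreducible variety. This is exactly the advantage of proving normality of the local model (as the cited reference does): normality is an open condition verified on the whole scheme, not a single fixed point, and it implies unibranchness at every point. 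A torus-fixed-point argument alone cannot replace this.

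Second, even at the fixed point your logic is off. You claim ``the completion is a domain iff the tangent cone is irreducible.'' For a ring $A$ with a positive $\Gm$-grading and $A_0=\F$, the completion at the irrelevant ideal is a domain iff $A$ itself is a domain; the tangent cone $\gr_{\fm}(A)$ carries a \emph{different} (the $\fm$-adic) filtration, and $\gr_{\fm}(A)$ being a domain implies $A$ is a domain but not conversely. The clean statement you want is the standard one: if a scheme admits a contracting $\Gm$-action with unique fixed point $\tilz$ and is irreducible, then its normalization has a single fixed point over $\tilz$ (the limit map is continuous with finite, hence singleton, image on each connected component), so the scheme is unibranch \emph{at} $\tilz$. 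No tangent-cone computation is needed --- and, again, this only treats $\tilz$, not the other closed points. The concluding paragraph, which frames the difficulty as ``isolating the correct closed subscheme'' amid entailment, also conflates two separate issues: entailment concerns multiplicities of Breuil--M\'ezard cycles in $\Spec R_{\rhobar}^{\eta,\tau}/\varpi$, whereas $\cC_{\sig}$ is by construction a single irreducible component of $\cX_{\Sym,\red}$ with no ambiguity to resolve.
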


By the previous remark and proposition, if $\rhobar \in \cX_{\Sym}(\F)$ is a closed point contained in $\cC_\sig$ for some 3-deep $\sig$, the pullback of $\cC_{\sig}$ to $\rhobar$ is an irreducible cycle in $\Spec R^{\alg}_{\rhobar}$, which we denote by $\cC_{\sig}(\rhobar)$. We denote the corresponding quotient of $R^{\alg}_{\rhobar}$ by $R_{\rhobar}^\sig$.


\begin{thm}\label{thm:col-one-def}
    Let $\tau$ be a 6-generic tame inertial type and  $\rhobar\in \cX_{\Sym}(\F)$ be a closed point such that $\tilw(\rhobar,\tau)^*_{j} = \tilz_j$ as in Theorem \ref{thm:col-one-nbhd} for each $j\in \cJ$. Then, $R_{\rhobar}^{\eta,\tau}$ is a domain. Moreover, $\Spec R_{\rhobar}^{\eta,\tau}/\varpi$ is reduced with $2^{\#\cJ_2}$-many irreducible components given by $\cC_{\sig}(\rhobar)$ for $\sig \in W^?(\rhobar|_{I_{K}})\cap \JH(\osig(\tau))$. 
\end{thm}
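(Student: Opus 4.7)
The strategy is to transfer the problem via the $T^{\vee,\cJ}$-torsor recalled just above the theorem: since $\Spf R_{\rhobar}^{\eta,\tau}$ and $\tld{U}_{\reg}(\tilz, \le\eta, \nbl_\infty)^{\wedge_{\tilz}}$ become isomorphic after formally smooth covers, and formally smooth maps preserve being a domain, reducedness, and the number of irreducible components, I would replace the deformation ring by this completed local model. The local model moreover has a product structure over $\cJ$ (the torus $T^{\vee, \cJ}$ acts factor-by-factor and the defining equations split over embeddings), so the analysis decomposes as a completed tensor product over $\cO$ of local factors indexed by $j \in \cJ$.

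The main input is then Theorem \ref{thm:col-one-nbhd}, which describes each local factor at $\tilz_j$ according to the parabolic shape of $\tilz_j$. For $j \notin \cJ_2$ (no parabolic shape at $j$, i.e.\ $\tilz_j$ is a \emph{regular} colength one element), the local factor is a power series ring over $\cO$, giving a domain with irreducible special fiber. For $j \in \cJ_2$ (the parabolic shape case coming from the irregular colength one elements of Remark \ref{rmk:two-components}), the local factor is an $\cO$-flat domain whose special fiber is reduced with exactly two irreducible components meeting along a common hyperplane (this is the local geometric source of the \emph{entailment} phenomenon for $\GSp_4$). Taking the completed tensor product then yields that $R_{\rhobar}^{\eta,\tau}$ is a domain, its special fiber is reduced, and $\Spec R_{\rhobar}^{\eta,\tau}/\varpi$ has exactly $\prod_{j \in \cJ_2} 2 = 2^{\#\cJ_2}$ irreducible components.

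To match these components with Serre weights, I would use Proposition \ref{prop:modp-unibranch} to conclude that for each $\sig \in W^?(\rhobar|_{I_K}) \cap \JH(\osig(\tau))$ the pullback $\cC_\sig(\rhobar)$ is a single irreducible cycle in $\Spec R_{\rhobar}^{\eta,\tau}/\varpi$, and Lemma \ref{lem:JH-bij} combined with the parabolic-shape combinatorics of \S\ref{sec:para-shape} (in particular Remark \ref{rmk:para-shape}) to verify that $\#(W^?(\rhobar|_{I_K}) \cap \JH(\osig(\tau)))$ is exactly $2^{\#\cJ_2}$ under the hypothesis $\tilw(\rhobar,\tau)^*_j = \tilz_j$. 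Equality of cardinalities with injectivity of $\sig \mapsto \cC_\sig(\rhobar)$ then forces a bijection. The main obstacle will be unpacking Theorem \ref{thm:col-one-nbhd} at the mixed colength-one/regular shapes to get the precise reduced-two-component structure (rather than a single nonreduced component) at $j \in \cJ_2$, together with the bookkeeping that aligns the two branches of the local model at each such $j$ with the two Jordan--H\"older factors in $W^?(\rhobar|_{I_K})$ predicted by Lemma \ref{lem:JH-bij}; once this matching is in hand, the rest is a formal consequence of the local model isomorphism and its product structure.
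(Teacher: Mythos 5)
Your overall strategy — transfer to the local model through the $T^{\vee,\cJ}$-torsor, invoke Theorem \ref{thm:col-one-nbhd} there, and then match the resulting components with Serre weights — is the same as the paper's. The paper's proof is a one-liner citing Theorem \ref{thm:col-one-nbhd} and Proposition \ref{prop:C_sig-T-torsor}, and you correctly identify Theorem \ref{thm:col-one-nbhd} as the geometric input.

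Where you diverge is in how the $2^{\#\cJ_2}$ components of $\Spec R_{\rhobar}^{\eta,\tau}/\varpi$ get labeled by $W^?(\rhobar|_{I_K})\cap\JH(\osig(\tau))$. The paper invokes Proposition \ref{prop:C_sig-T-torsor}, which produces the identification directly from the torsor structure (it establishes which $\cC_\sig$ pass through $\rhobar$ in terms of the torus action, giving a labeled bijection rather than just a count). You instead propose a cardinality argument: count $\#(W^?(\rhobar|_{I_K})\cap\JH(\osig(\tau)))$ via Lemma \ref{lem:JH-bij} and the parabolic-shape bookkeeping of Remark \ref{rmk:para-shape}, prove injectivity of $\sig\mapsto\cC_\sig(\rhobar)$ via Proposition \ref{prop:modp-unibranch}, and conclude by counting. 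This route works, but to close it you also need to know (and you don't explicitly say) that every irreducible component of $\Spec R_{\rhobar}^{\eta,\tau}/\varpi$ is of the form $\cC_\sig(\rhobar)$ for some Serre weight $\sig$ — i.e.\ that the components of $\cX_{\Sym,\red}$, and hence of the special fibers of deformation rings, are all labeled by Serre weights and that these labels refine to $W^?(\rhobar|_{I_K})\cap\JH(\osig(\tau))$ upon intersecting with $\cX^{\eta,\tau}_{\Sym,\F}$. That step is exactly what Proposition \ref{prop:C_sig-T-torsor} packages, so in effect your proof cannot avoid it; it just reaches the same input through a longer detour. The product-over-$\cJ$ decomposition you describe is correct for the tame local model and is consistent with how Theorem \ref{thm:col-one-nbhd} is formulated (conditions at each $\tilz_j$), so that portion of your argument matches the paper's implicit use of that theorem.
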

\begin{proof}
   This is a direct consequence of Theorem \ref{thm:col-one-nbhd} and  Proposition \ref{prop:C_sig-T-torsor}.
\end{proof}

Before discussing the main application, we need some preliminaries.

\subsubsection{Cycles}
Let $d \ge 0$ be an integer and $X$ be an equi-dimensional Noetherian algebraic stack. We define $\Z[X]$ as the free abelian group on the set of irreducible components of $X$. We call elements of  $\Z[X]$ \textit{cycles} in $X$. If $C\subset X$ is an irreducible component, then we also call $C$ an \textit{irreducible cycle}. A cycle is \textit{effective} if its coefficients are nonnegative. 

Suppose that $Y$ is a $d$-dimensional Noetherian algebraic stack with an embedding $\iota :Y_\red \into X$. We define a cycle
\begin{align*}
    Z(Y) \defeq \sum_{C \subset Y} \mu_C(Y) \iota(C) \in \Z[X]
\end{align*}
attached to $Y$ where the sum runs over all irreducible components of $Y$ and $\mu_C(Y)$ denotes the multiplicity of $C$ as an irreducible component of $Y$ as in \cite[\href{https://stacks.math.columbia.edu/tag/0DR4}{Tag 0DR4}]{stacks-project}.
If $X=\Spec R$ for some Noetherian ring $R$ and $M$ is an $R$-module, we define the (top-dimensional) cycle attached to $M$ by
\begin{align*}
    Z(M) \defeq \sum_{C\subset X} \mu_C(M)C \in \Z[X] 
\end{align*}
where the sum runs over irreducible components of $X$, $\mu_C(M)$ denotes $\mathrm{length}_R(M_{\mathfrak{p}_C})$, and $\mathfrak{p}_C\subset R$ denotes the prime ideal corresponding to $C$.

By applying the previous discussion to $X=\cX_{\Sym,\red}$ (resp.~$\Spec R_{\rhobar}^{\alg}$) and $Y= \cX^{\lam+\eta,\tau}_{\Sym,\F}$ (resp.~$\Spec R_{\rhobar}^{\lam+\eta,\tau}/\varpi)$, we get the cycle $\cZ^{\lam,\tau}\defeq Z(\cX^{\lam+\eta,\tau}_{\Sym,\F}) \in \Z[\cX_{\Sym,\red}]$ (resp.~$\cZ^{\lam,\tau}(\rhobar) \defeq Z(\Spec R^{\lam+\eta,\tau}_{\rhobar}/\varpi) \in \Z[\Spec R^{\alg}_{\rhobar}]$).


The morphism $\iota_{\rhobar}: \Spec R_{\rhobar}^{\alg} \ra \cX_{\Sym,\red}$ induces a map from the set of irreducible components in $\Spec R^{\alg}_{\rhobar}$ to the set of irreducble components in $\cX_{\Sym,\red}$ containing $\rhobar$. By regarding cycles as $\Z$-valued functions on sets of irreducible components, we obtain a map
\begin{align*}
    \iota_{\rhobar}^* :   \Z[\cX_{\Sym,\red}] \ra \Z[\Spec R^{\alg}_{\rhobar}].
\end{align*}
Then, it follows from \cite[\href{https://stacks.math.columbia.edu/tag/0DRD}{
Tag 0DRD}]{stacks-project} that $\iota_{\rhobar}^*(\cZ^{\lam,\tau}) = \cZ^{\lam,\tau}(\rhobar)$.

The following result will be useful for the next application of local models to deformation rings.

\begin{prop}[{\cite[Proposition 2.2.13]{EG}}]\label{prop:cycle-specialization}
    Let $X$ be a Noetherian scheme flat over $\cO$ of relative dimension $d$. If $Z(X) = \sum_{C} n_C C$ for some $n_C \in \Z$, then
    \begin{align*}
        Z(X\times_{\cO}\F)  = \sum_{C} n_C Z(C \times_{\cO} \F)
    \end{align*}
    where the sums run over all irreducible components of $X$.
\end{prop}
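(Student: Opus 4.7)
The plan is to verify the identity Zariski-locally on $X$ and coefficient by coefficient on the special fiber. Reducing to the case $X = \Spec R$ for a Noetherian $\cO$-flat ring $R$ of pure relative dimension $d$, the irreducible components of $X$ correspond to the minimal primes $\mathfrak{p}_C$ of $R$, with $n_C = \mathrm{length}(R_{\mathfrak{p}_C})$. Since $R$ is $\cO$-flat, $\varpi$ is a non-zero-divisor on $R$ and hence belongs to no minimal prime; consequently each $R/\mathfrak{p}_C$ is itself $\cO$-flat. Every irreducible component $D$ of $X \times_\cO \F$ corresponds to a prime $Q \subset R$ minimal over $(\varpi)$, which by Krull's Hauptidealsatz has height one in $R$; moreover $D \subset C \times_\cO \F$ precisely when $\mathfrak{p}_C \subseteq Q$.

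Localizing at $Q$ and setting $S \defeq R_Q$, a one-dimensional Noetherian local $\cO$-flat ring, the coefficient of $D$ on the left-hand side of the desired identity is $\mathrm{length}_S(S/\varpi)$, while on the right-hand side it is $\sum_{\mathfrak{p}} \mathrm{length}(S_{\mathfrak{p}}) \cdot \mathrm{length}\bigl(S/(\mathfrak{p}+\varpi)\bigr)$, summed over the minimal primes $\mathfrak{p}$ of $S$ (which correspond bijectively to the components $C$ containing $D$ via $\mathfrak{p} = \mathfrak{p}_C R_Q$). Thus the proposition reduces to proving the equality
\[
\mathrm{length}_S(S/\varpi) \;=\; \sum_{\mathfrak{p}} \mathrm{length}(S_{\mathfrak{p}}) \cdot \mathrm{length}\bigl(S/(\mathfrak{p}+\varpi)\bigr)
\]
in a one-dimensional Noetherian local $\cO$-flat ring $S$, with the sum ranging over its minimal primes.

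This equality would follow from Serre's associativity formula for Hilbert--Samuel multiplicities (\emph{Local Algebra}, Ch.~V, \S A.2) applied to the parameter ideal $(\varpi)$ on $S$, which yields $e(\varpi;S) = \sum_{\mathfrak{p}} \mathrm{length}(S_{\mathfrak{p}}) \, e(\varpi;S/\mathfrak{p})$, the sum taken over primes $\mathfrak{p}$ with $\dim S/\mathfrak{p} = \dim S = 1$, i.e.~the minimal primes of $S$. The remaining step is to identify $e(\varpi;M)$ with $\mathrm{length}(M/\varpi M)$ for $M = S$ and for $M = S/\mathfrak{p}$: since each such $M$ is $\cO$-flat, $\varpi$ acts on $M$ as a non-zero-divisor, so the telescoping short exact sequences $0 \to M/\varpi^{n-1}M \xrightarrow{\varpi} M/\varpi^{n}M \to M/\varpi M \to 0$ yield $\mathrm{length}(M/\varpi^{n}M) = n \cdot \mathrm{length}(M/\varpi M)$ by induction, whence $e(\varpi;M) = \mathrm{length}(M/\varpi M)$. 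The main technical care lies in applying the associativity formula with the right dimension hypotheses and in checking that the resulting Hilbert--Samuel multiplicities coincide with the desired lengths; both points are secured by $\cO$-flatness, and everything else is bookkeeping over the components $D$ of the special fiber.
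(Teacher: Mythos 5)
The paper cites \cite[Proposition 2.2.13]{EG} without giving its own proof, and your argument is correct and follows the same standard route as in that reference: reduce to the generic point $Q$ of a component of the special fiber, then apply Serre's associativity formula for Hilbert--Samuel multiplicities in the one-dimensional local ring $\cO_{X,Q}$, together with the identification $e(\varpi;M)=\mathrm{length}(M/\varpi M)$ when $\varpi$ is a non-zero-divisor on $M$. The bookkeeping you defer is indeed routine; the one step worth making explicit is that every minimal prime $\mathfrak{p}$ of $S=\cO_{X,Q}$ satisfies $\dim S/\mathfrak{p}=1$ (since $\varpi\in Q\setminus\mathfrak{p}$ forces $\mathfrak{p}\subsetneq Q$), so that the sum in the associativity formula really runs over all minimal primes of $S$.
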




\begin{setup}\label{setup}
    We choose the following objects:
    \begin{enumerate}
        \item  $\rhobar\in \cX_{\Sym}(\F)$ a $9$-generic tamely ramified representation;
        \item $(\tilw_1,\tilw)\in \AP'(\eta)$;
        \item $s\in \uW$ a simple reflection where we assume that $s\neq s_{2,j}$ if $\tilw_{1,j}\in \Omega$ and $\tilw_{1,j}\neq \tilw_j$ for some $j\in \cJ$.
    \end{enumerate}
    The above objects determine the following:
    \begin{itemize}
        \item $\tau$ a tame inertial type over $E$ such that $\tilw(\rhobar,\tau) = \tilw^\mo \tilw_h^\mo w_0 s \tilw_1$ (such $\tau$ is necessarily 6-generic);
        \item $\rhobar_0$ a tame inertial type over $\F$ such that $\tilw(\rhobar_0,\tau) = \tilw^\mo \tilw_h^\mo w_0 s \tilw$ (such $\rhobar_0$ is necessarily 6-generic); 
        \item We have $W^?(\rhobar_0)\cap \JH(\osig(\tau)) = \{ \sig_1,\sig_2\}$ where $\sig_1 = F_{\tau}(w)$ and $\sig_2=F_{\tau}(sw)$ for the unique $w\in \uW$ such that ${w^\diamond}= \tilw$. This follows from Remark \ref{rmk:two-components} and Lemma \ref{lem:JH-bij}.
    \end{itemize}
\end{setup}

\begin{lemma}\label{lem:W-cap-JH-inclusion}
    Following the notation in Setup \ref{setup}, we have $W^?(\rhobar_0)\cap \JH(\osig(\tau)) \subset W^?(\rhobar)\cap \JH(\osig(\tau))$.
\end{lemma}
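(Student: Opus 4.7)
The plan is to produce, for each of $\sig_1$ and $\sig_2$, an explicit pair in $\AP'(\eta)$ that witnesses membership in $W^?(\rhobar)$ via the bijection $F_{\rhobar}$ of Lemma \ref{lem:JH-bij}(2). Since both weights already lie in $\JH(\osig(\tau))$ by hypothesis, this reduces the lemma to a direct construction.

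For $\sig_1$, I would verify that the pair $(\tilw_1, \tilw)$ already given by Setup \ref{setup}(2) witnesses $\sig_1 \in W^?(\rhobar)$. Letting $w \in \uW$ be the unique element with $w^\diamond = \tilw$, the lowest alcove presentation of $\sig_1 = F_\tau(w) = F_\tau(w^\diamond, \tilw_h w^\diamond)$ is $(\tilw, \tilw(\tau)\tilw^\mo(\eta))$. Using the Setup's relation $\tilw(\rhobar) = \tilw(\tau)\tilw^\mo \tilw_h^\mo w_0 s \tilw_1$, the identity $\tilw_h^\mo(0) = \eta$, and the fact that $w_0, s \in \uW$ fix the origin, a direct computation gives
\[
\tilw(\rhobar)\tilw_1^\mo(0) \;=\; \tilw(\tau) \tilw^\mo \tilw_h^\mo w_0 s(0) \;=\; \tilw(\tau) \tilw^\mo(\eta),
\]
so $F_{\rhobar}(\tilw_1, \tilw) = \sig_1$.

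For $\sig_2$, I would use the second parabolic presentation of $\tilw(\rhobar_0, \tau)$ provided by Remark \ref{rmk:two-components}:
\[
\tilw^\mo \tilw_h^\mo w_0 s \tilw \;=\; ((sw)^\diamond)^\mo \tilw_h^\mo w_0 s (sw)^\diamond.
\]
Rearranging this identity and then multiplying on the right by $\tilw^\mo \tilw_1$ gives $\tilw(\rhobar, \tau) = ((sw)^\diamond)^\mo \tilw_h^\mo w_0 s \tilw_1''$ where $\tilw_1'' \defeq (sw)^\diamond \tilw^\mo \tilw_1$. The same computation as above then yields $F_{\rhobar}(\tilw_1'', (sw)^\diamond) = \sig_2$ as Serre weights, matching the lowest alcove presentation $((sw)^\diamond, \tilw(\tau)((sw)^\diamond)^\mo(\eta))$ of $\sig_2 = F_\tau(sw)$.

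The main obstacle is verifying that $(\tilw_1'', (sw)^\diamond)$ actually lies in $\AP'(\eta)$, i.e., $\tilw_1'' \in \tilW^+$ and $\tilw_1'' \uparrow (sw)^\diamond$. This does not follow formally from $(\tilw_1, \tilw) \in \AP'(\eta)$ because left multiplication by $(sw)^\diamond \tilw^\mo$ is not monotone for the upper arrow ordering in general. I would handle this by a componentwise analysis: for each $j \in \cJ$, use the parabolic decomposition $w = w_M w^M$ of Remark \ref{rmk:two-components} together with Lemma \ref{lem:adm-levi} to pass between admissibility in $\tilW_M$ (for the Levi $M$ generated by $T$ and $s$) and in $\tilW$, reducing the question to an explicit combinatorial check on alcoves at each component. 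The hypothesis in Setup \ref{setup}(3) excluding $s = s_{2,j}$ in certain $\Omega$-configurations is designed precisely to rule out the degenerate cases where $\tilw_1''$ would leave the required restricted or admissible locus. This combinatorial verification is where I expect the bulk of the technical work to lie.
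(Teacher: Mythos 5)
Your argument for $\sig_1$ is complete and correct: verifying $\tilw(\rhobar)\tilw_1^{-1}(0)=\tilw(\tau)\tilw^{-1}(\eta)$ from the Setup's relation $\tilw(\rhobar,\tau)=\tilw^{-1}\tilw_h^{-1}w_0s\tilw_1$ is a clean two-line computation, and $(\tilw_1,\tilw)\in\AP'(\eta)$ is given. Your candidate pair for $\sig_2$ is also the right one (it is forced by the bijection of Lemma \ref{lem:JH-bij}(2) once the second coordinate is fixed to be $(sw)^\diamond$, and its lowest alcove presentation matches). However, the step you explicitly flag as ``where the bulk of the technical work lies'' --- verifying that $\tilw_1''=(sw)^\diamond\tilw^{-1}\tilw_1$ is dominant and satisfies $\tilw_1''\uparrow(sw)^\diamond$ --- is exactly the content of the lemma for $\sig_2$, and you only sketch a plan for it. Left multiplication by $(sw)^\diamond(w^\diamond)^{-1}=t_{\nu'-s(\nu)}s$ shears alcoves in a way that is not $\uparrow$-monotone, and the parabolic decomposition plus Lemma \ref{lem:adm-levi} gives admissibility statements in $\Adm(\eta)$, not directly dominance/linkage conditions on the pair $(\tilw_1'',(sw)^\diamond)$. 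So as written this is a genuine gap, not a routine check.

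It is worth noting that the paper does not attempt this combinatorial verification at all. Its proof invokes Corollary \ref{cor:geom-SW}, which translates $\sig_i\in W^?(\rhobar|_{I_K})$ into the geometric statement $\rhobar\in\cC_{\sig_i}$, and then uses the mod~$p$ local model diagram to reduce the question to a torus fixed-point computation on a deformed affine Springer fiber (Proposition \ref{prop:col-one-pablo}). In other words, the paper outsources the hard combinatorics to the local model side, where it is already packaged, and this is consistent with the rest of \S 3: the very next result, Theorem \ref{thm:irred-comp-def-ring}, needs the same geometric input, so proving the lemma geometrically costs nothing extra. Your approach is a genuine alternative --- a direct manipulation inside $\AP'(\eta)$ that, if completed, would give a self-contained combinatorial proof independent of the Emerton--Gee stack --- but it remains incomplete at the critical step, and you would need to verify, case by case on alcoves at each embedding $j$ and invoking the hypothesis in Setup \ref{setup}(3) at exactly the right moment, that $\tilw_1''\in\utilW^+$ and $\tilw_1''\uparrow(sw)^\diamond$.
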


\begin{proof}
    By Corollary \ref{cor:geom-SW}, it suffices to show that $\rhobar \in \cC_{\sig_1}\cap \cC_{\sig_2}$.
    Let $\tilz_1 \defeq (\tilw^\mo \tilw_h^\mo w_0 \tilw)^*$ and $\tilz_2 \defeq ({((sw)^\diamond)}^\mo \tilw_h^\mo w_0 {(sw)^\diamond})^*$ in $\Adm^{\reg}(\eta)$. For $i\in \{1,2\}$, we get a $T^{\vee,\cJ}_\F$-torsor $\tld{S}^{\nbl_{\bfa_\tau}}(\tilz_i)\onto \cC_{\sig_i}$ induced by the diagram \eqref{eqn:modp-local-model}. As discussed in \S\ref{sec:closed-pt}, it suffices to show that  $\tilw(\rhobar,\tau) \in S^{\nbl_\bfa}(\tilz_1)\cap S^{\nbl_\bfa}(\tilz_2)$, and this follows from Proposition \ref{prop:col-one-pablo}.
\end{proof}

The following is our key result for global applications.

\begin{thm}\label{thm:irred-comp-def-ring}
    Following the notation in Setup \ref{setup}, the irreducible components $\cC_{\sigma_1}(\rhobar)$ and $\cC_{\sig_2}(\rhobar)$ uniquely generize to a common irreducible component $\cC$ inside $\Spec R_{\rhobar}^{\eta, \tau}$. 
\end{thm}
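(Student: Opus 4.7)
The plan is to work inside the Emerton--Gee stack $\cX_{\Sym}^{\eta,\tau}$ and transfer the statement into one about irreducible components of its normalization passing through $\rhobar$. By versality of $\Spf R_{\rhobar}^{\eta,\tau}\to\cX_{\Sym}^{\eta,\tau}$ at $\rhobar$, the irreducible components of $\Spec R_{\rhobar}^{\eta,\tau}$ are in natural bijection with the fiber of $\rhobar$ under the normalization map $\nu:(\cX_{\Sym}^{\eta,\tau})^{\nm}\to\cX_{\Sym}^{\eta,\tau}$. Under this bijection, the theorem reduces to producing a single irreducible component $\widetilde{\cC}$ of $(\cX_{\Sym}^{\eta,\tau})^{\nm}$ whose image $\nu(\widetilde{\cC})$ contains $\rhobar$ and whose special fiber contains both $\cC_{\sig_1}(\rhobar)$ and $\cC_{\sig_2}(\rhobar)$.

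Next I would exploit the fact that $\sig_1=F_\tau(w)$ and $\sig_2=F_\tau(sw)$ are \emph{outer} weights. The local model theory recalled above (underlying Theorem \ref{thm:col-one-def}) shows that the outer components $\cC_{\sig_i}\subset\cX_{\Sym,\F}^{\eta,\tau}$ appear with multiplicity one in the cycle $\cZ^{0,\tau}$. Hence each $\cC_{\sig_i}$ admits a unique lift $\cC_{\sig_i}^{\nm}$ to $(\cX_{\Sym}^{\eta,\tau})^{\nm}$, and the uniqueness clause of the theorem is automatic: any irreducible component of $\Spec R_{\rhobar}^{\eta,\tau}$ generizing $\cC_{\sig_i}(\rhobar)$ must correspond to a point of $\cC_{\sig_i}^{\nm}$ lying over $\rhobar$. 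The theorem therefore collapses to the single assertion
\[
\rhobar\in \nu\bigl(\cC_{\sig_1}^{\nm}\cap \cC_{\sig_2}^{\nm}\bigr).
\]

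To establish this containment, I would follow the strategy of \cite{LLLMextremal} sketched in the introduction: exhibit a non-empty open substack $\cU\subset \cC_{\sig_1}\cap \cC_{\sig_2}$ contained in the normal locus of $\cX_{\Sym}^{\eta,\tau}$. Because $\cU$ is normal, it lifts canonically into both $\cC_{\sig_1}^{\nm}$ and $\cC_{\sig_2}^{\nm}$, so that $\overline{\cU}\subset \nu(\cC_{\sig_1}^{\nm}\cap \cC_{\sig_2}^{\nm})$. Via the local model diagram and the $T^{\vee,\cJ}$-torsor $\tld{U}_{\reg}(\tilz,\le\eta,\nbl_\infty)\onto \cX_{\Sym}^{\eta,\tau}(\tilz)$ which, after formally smooth cover, identifies $\Spf R_{\rhobar}^{\eta,\tau}$ with $\tld{U}_{\reg}(\tilz,\le\eta,\nbl_\infty)^{\wedge_{\tilz}}$ at the torus fixed point $\tilz=\tilw(\rhobar,\tau)^*$, the desired condition $\rhobar\in\overline{\cU}$ translates into the assertion that $\tilz$ lies in the closure of the preimage of $\cU$ inside the deformed affine Springer fiber. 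This is exactly Proposition \ref{prop:T-fixed-points}, proved by explicit computation in the mode of \cite{BA}.

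The hardest step, and the true content, is this last containment. Unlike in the setting of \cite{LLLMextremal}, the point $\rhobar$ is not known to lie in the normal locus of $\cX_{\Sym}^{\eta,\tau}$ and in particular does not lie in $\cU$; consequently one cannot simply plug $\rhobar$ into the local model. Instead I would construct an explicit one-parameter degeneration inside $\tld{U}_{\reg}(\tilz,\le\eta,\nbl_\infty)$ from a generic point of (the preimage of) $\cU$ to the torus fixed point $\tilz$. The combinatorial restrictions imposed in Setup \ref{setup}---in particular the exclusion of $s=s_{2,j}$ when $\tilw_{1,j}\in\uOm$ differs from $\tilw_j$ for some $j$---are precisely the conditions ensuring that such a degeneration can be produced uniformly in the data $(\tilw_1,\tilw,s)$, and matching these combinatorial constraints to the explicit equations of the local model will be where most of the work lies.
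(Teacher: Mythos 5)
Your proposal follows essentially the same route as the paper's proof: pass to the Emerton--Gee stack, use the normalization map $\nu$ and the bijection between $\Irr(\Spec R_{\rhobar}^{\eta,\tau})$ and $\nu^{-1}(\rhobar)$, lift each outer component $\cC_{\sig_i}$ uniquely into $(\cX_{\Sym}^{\eta,\tau})^{\nm}$ by multiplicity one, reduce to showing $\rhobar\in\nu(\cC_{\sig_1}^{\nm}\cap\cC_{\sig_2}^{\nm})$, and exhibit a nonempty open $\cU\subset\cC_{\sig_1}\cap\cC_{\sig_2}$ in the normal locus whose closure contains $\rhobar$ by a torus fixed-point computation on the local model. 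The paper's actual proof pins down a few things you leave implicit: the normal locus $\cX^{\eta,\tau}_{\Sym}(\tilz_0)$ is taken at the specific colength-one element $\tilz_0=\tilw(\rhobar_0,\tau)^*$ attached to the auxiliary $\F$-type $\rhobar_0$ from Setup \ref{setup}, the uniqueness of the generization is deduced from generic reducedness together with Proposition \ref{prop:cycle-specialization} rather than asserted outright, and the closure containment is cited directly as Proposition \ref{prop:col-one-pablo} (the explicit degeneration you describe is what underlies that proposition, not something you would re-derive in the proof of this theorem). These are refinements of detail, not a different strategy.
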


\begin{proof}
    
Let $\nu: (\cX^{\eta,\tau}_{\Sym})^{\nm} \ra \cX^{\eta,\tau}_{\Sym}$ be the normalization map.  By applying Theorem \ref{thm:col-one-nbhd} for $\tilz = (w^\mo t_{\eta} w)^*$ and   $\tilz = ((sw)^\mo t_{\eta} sw)^*$, we can check that $\cX^{\eta,\tau}_{\Sym,\F}$ is generically reduced at $\cC_{\sig_1}$ and $\cC_{\sig_2}$. In particular, for $i=1,2$, there exists a unique irreducible component $\tld{\cC}_{\sig_i} \subset (\cX^{\eta,\tau}_{\Sym})^{\nm}_\F$ such that $\nu(\tld{\cC}_{\sig_i})=\cC_{\sig_i}$. This also shows that $R^{\eta,\tau}_{\rhobar}/\varpi$ is generically reduced at $\cC_{\sig_i}(\rhobar)$. Then the uniqueness of a generization of $\cC_i(\rhobar)$ follows from Proposition \ref{prop:cycle-specialization}.  

Note that there is a natural bijection between $\Irr_{7f+11} (\Spec R^{\eta,\tau}_{\rhobar})$ and $\nu^\mo(\rhobar)$. Thus, to find a common generization of $\cC_{\sigma_1}(\rhobar)$ and $\cC_{\sig_2}(\rhobar)$, we need to show that $\nu(\tld{\cC}_{\sig_1} \cap \tld{\cC}_{\sig_2})$ contains $\rhobar$.

By applying Theorem \ref{thm:col-one-nbhd} for $\tilz_0 \defeq \tilw(\rhobar_0,\tau)^*$, $\cX^{\eta,\tau}_{\Sym}(\tilz_0)$ is normal and
\begin{align*}
    \cU\defeq\cC_{\sigma_1}\cap \cC_{\sig_2} \cap \cX^{\eta,\tau}_{\Sym}(\tilz_0)
\end{align*}
is a non-empty open substack of $\cC_{\sigma_1}\cap \cC_{\sig_2}$. By the normality, we have a copy of $\cU$ in $\tld{\cC}_{\sig_1} \cap \tld{\cC}_{\sig_2}$. By the closedness of a normalization map, it suffices to show that the closure of $\cU$ in $\cX^{\eta,\tau}_{\Sym,\F}$ contains $\rhobar$. Then this follows from Proposition \ref{prop:col-one-pablo} and the local model diagram.

\end{proof}

\subsection{Explicit Breuil--M\'ezard cycles}\label{sec:BMcycle}
We collect results on Breuil--M\'ezard cycles for $\GSp_4$, which will be used for our modularity lifting result. The following result proves the Breuil--M\'ezard conjectures for small Hodge--Tate weights and generic tame types. 

For a type $(\lam+\eta,\tau)$ and a Serre weight $\sig$, we denote by $n_{\sig}(\lam,\tau)$ the multiplicity of $\sig$ in any Jordan--H\"older series of $\osig(\lam,\tau)$.

\begin{thm}[{\cite[Theorem 1.3.1]{FLH}}]
    There exists a collection of cycles $\cZ_{\sig}^{\BM}$ attached to Serre weights $\sig$ of $\GSp_4(k)$ such that for any $(2h_{\lam}+6)$-generic type $(\lam+\eta,\tau)$,
    \begin{align*}
        \cZ^{\lam,\tau} = \sum_{\sig \in \JH(\osig(\lam,\tau))} n_{\sig}(\lam,\tau)\cZ_{\sig}^{\BM}.
    \end{align*}
\end{thm}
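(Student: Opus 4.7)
The plan is to work globally on the Emerton--Gee stack $\cX_{\Sym}$. By the identity $\iota_\rhobar^*(\cZ^{\lam,\tau}) = \cZ^{\lam,\tau}(\rhobar)$ recalled above, the theorem is equivalent to a decomposition in $\Z[\cX_{\Sym,\red}]$ of the effective cycle $\cZ^{\lam,\tau}$ attached to the special fiber $\cX_{\Sym,\F}^{\lam+\eta,\tau}$. The task is thus to produce, independently of the type $(\lam+\eta,\tau)$, effective cycles $\cZ_\sig^{\BM}$ in $\Z[\cX_{\Sym,\red}]$ realizing the prescribed coefficients $n_\sig(\lam,\tau)$, and then to verify the decomposition for all generic types.

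\textbf{Step 1 (Base case and construction).} First I would exploit the colength-one input supplied by Theorem \ref{thm:col-one-def}: for types of the shape in Setup \ref{setup}, the special fiber $\Spec R_\rhobar^{\eta,\tau}/\varpi$ is reduced with irreducible components exactly the $\cC_\sig(\rhobar)$ for $\sig \in W^?(\rhobar|_{I_K}) \cap \JH(\osig(\tau))$. Globalizing on $\cX_{\Sym}$, this gives a sharp, unit-multiplicity formula for $\cZ^{0,\tau}$ at these distinguished types. These relations serve as the base case: for $\sig$ in the bottom two restricted alcoves the cycle $\cZ_\sig^{\BM}$ is forced to equal $\cC_\sig$, while for $\sig$ in the higher alcoves the entailment phenomenon specific to $\GSp_4$ means $\cZ_\sig^{\BM}$ acquires additional contributions from components $\cC_{\sig'}$ with $\sig' \uparrow \sig$ linked to $\sig$. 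I would define $\cZ_\sig^{\BM}$ inductively along the $\uparrow$-order so that the base-case formulas are satisfied.

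\textbf{Step 2 (General types via local models).} For an arbitrary $(2h_\lam+6)$-generic type $(\lam+\eta,\tau)$ and a closed point $\rhobar \in \cX_{\Sym}^{\lam+\eta,\tau}(\F)$, I would pass through the local model diagram. After formally smooth covers, $\cX_{\Sym}^{\lam+\eta,\tau}$ is the completion of an explicit deformed affine Springer-type variety $\tld{U}_\reg(\tilz,\le\lam+\eta,\nbl_\infty)$ inside the affine flag variety of $\un{G}$. On that combinatorial side, the cycle of the special fiber decomposes as a $\Z_{\geq 0}$-linear combination of local incarnations of the components $\cC_\sig$ with $\sig \in \JH(\osig(\lam,\tau))$, with multiplicities computable via affine Schubert calculus. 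Assembling these local descriptions as $\rhobar$ varies yields a global decomposition of $\cZ^{\lam,\tau}$ in $\Z[\cX_{\Sym,\red}]$.

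\textbf{Main obstacle.} The crux is proving that the geometric multiplicity produced in Step 2 equals $\sum_\sig n_\sig(\lam,\tau)$ times the component-by-component multiplicities entering the $\cZ_\sig^{\BM}$ built in Step 1. Because of entailment, the special fiber $\cX_{\Sym,\F}^{\lam+\eta,\tau}$ is generically non-reduced, so one cannot just read off multiplicities from the underlying topological components; they must be extracted combinatorially from the local model. Once this is done, injectivity of the Jordan--H\"older map from the Grothendieck group of sufficiently generic Deligne--Lusztig representations of $\GSp_4(k)$ into the free abelian group on Serre weights forces consistency of the $\cZ_\sig^{\BM}$ across all types, upgrading the colength-one base case to the full formula. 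This explicit matching of geometric multiplicities with Jordan--H\"older multiplicities, carried out in \cite{FLH} and specialized to $\GSp_4$ in \cite{LHL-BM}, is the technical heart of the argument.
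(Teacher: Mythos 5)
This theorem is stated with the attribution $\cite[\text{Theorem }1.3.1]{FLH}$ and is simply imported by citation; the paper offers no proof of it. There is therefore no argument in the paper to compare your proposal against, and the honest answer is that you should not attempt to prove it here at all --- you should cite it.

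That said, assessed as a reconstruction of the argument in \cite{FLH}, your proposal has genuine problems. First, your Step 1 leans on Theorem \ref{thm:col-one-def} of this paper as a ``base case,'' but \cite{FLH} predates this paper and cannot use its results; the logical dependency runs in the other direction (this paper uses \cite{FLH}, not vice versa). Second, and more fundamentally, the paper explicitly characterizes the method of \cite{FLH} as ``a novel method that avoids both a $p$-adic Langlands correspondence and Taylor--Wiles patching'' which ``provides an algorithm to compute Breuil--M\'ezard cycles explicitly''; your Step 2 gestures at ``affine Schubert calculus'' on a deformed affine Springer fiber and then, in your own ``Main obstacle'' paragraph, concedes that the multiplicity matching --- the actual content of the theorem --- is left open and must be extracted from \cite{FLH} and \cite{LHL-BM}. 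A proposal whose identified crux is ``this is the technical heart, carried out elsewhere'' is a plan, not a proof, and it does not engage with the mechanism by which \cite{FLH} obtains well-definedness and type-independence of the cycles $\cZ_\sig^{\BM}$ (which is the genuinely nontrivial point once entailment destroys reducedness of the special fibers). Third, the inductive construction ``along the $\uparrow$-order'' in Step 1 only makes sense after one has proved that the putative $\cZ_\sig^{\BM}$ do not depend on which type $\tau$ was used to isolate them; you invoke ``injectivity of the Jordan--H\"older map'' at the end, but that argument needs that the decomposition already holds for \emph{all} generic types, which is what you set out to prove. In short: cite the result, do not attempt to re-derive it.
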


The construction of $\cZ_{\sig}^{\BM}$ in \cite{FLH} provides an algorithm to compute them explicitly. This is implemented in \cite{LHL-BM} for low rank groups. For $\GSp_4$, we have the following result.

\begin{thm}[{\cite{LHL-BM}}]
    For $3$-deep Serre weight $\sig=F(\lam)$,
    \begin{align*}
        \cZ_{\sig}^{\BM} = \sum_{\lam'} \cC_{F(\lam')}
    \end{align*}
    where the sum runs over the set of $\lam'\in X^*(\uT)$ such that for each $j\in \cJ$, either $(\lam')\ix{j} = \lam\ix{j}$ or $\lam\ix{j} \in C_2$, $(\lam')\ix{j}\in C_0$, and $(\lam')\ix{j}\uparrow \lam\ix{j}$.
\end{thm}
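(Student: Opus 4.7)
The plan is to invoke the inductive construction of Breuil--M\'ezard cycles from \cite{FLH} together with the explicit local model computations carried out in \cite{LHL-BM}. Recall the key identity
\[
\cZ^{\lam,\tau} = \sum_{\sig \in \JH(\osig(\lam,\tau))} n_\sig(\lam,\tau)\, \cZ_\sig^{\BM}
\]
valid for sufficiently generic types. The strategy is to compute $\cZ^{\lam,\tau}$ for a carefully chosen family of types via local models and then invert this formula to solve for the cycles $\cZ_\sig^{\BM}$, verifying along the way that the answer matches the proposed closed form.

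\textbf{Reduction to a single embedding.}
First, I would reduce to $\#\cJ = 1$. Both sides of the sought formula are compatible with the product decomposition of $\cX_{\Sym}$ and $\cX^{\lam+\eta,\tau}_{\Sym}$ along $j\in \cJ$: the multiplicities factor as $n_\sig(\lam,\tau) = \prod_j n_{\sig\ix{j}}(\lam\ix{j},\tau\ix{j})$, the irreducible components satisfy $\cC_{F(\lam')} = \prod_j \cC_{F((\lam')\ix{j})}$, and the BM cycles $\cZ_\sig^{\BM}$ inherit an analogous factorization from the uniqueness in the defining identity. Consequently, it suffices to establish the theorem in the single-embedding case, where $\lam \in X^*(T)$ lies in exactly one of the four $p$-restricted alcoves $C_0, C_1, C_2, C_3$.

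\textbf{Induction on alcoves.}
With $\#\cJ = 1$, I would induct on the alcove containing $\lam$. For $\lam \in C_0$, no Serre weight is strictly smaller in the $\uparrow$ order, so I choose a type $(0,\tau)$ with $F(\lam) \in \JH(\osig(\tau))$ and all other Jordan--H\"older factors in $W^?$ belonging to strictly higher alcoves. Theorem \ref{thm:col-one-def} (together with Corollary \ref{cor:geom-SW}) shows $\cZ^{0,\tau}$ is multiplicity-free, and subtracting the cycles for the higher-alcove factors (which are already known) yields $\cZ^{\BM}_{F(\lam)} = \cC_{F(\lam)}$. The analogous arguments for $\lam \in C_1$ and $\lam \in C_3$ again produce no entailment, since at a generic $\rhobar$ in the relevant stratum the local model of the corresponding deformation ring is generically reduced (again a consequence of Theorem \ref{thm:col-one-def}).

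\textbf{The $C_2$ case and entailment.}
The main obstacle is the case $\lam \in C_2$, where the local model for $\GSp_4$ is not generically reduced along the stratum corresponding to $F(\lam)$: this is precisely the entailment phenomenon highlighted in the introduction. To control it, I would choose tame types $\tau$ whose Jordan--H\"older constituents in $W^?$ include $F(\lam)$ together with one $C_0$-weight $F(\lam')$ linked to it (via $\lam' \uparrow \lam$), using Lemma \ref{lem:weyl-decomp} and the Jantzen filtration of Weyl modules to pin down the multiplicities $n_\sig(\lam,\tau)$. The explicit computation of the local model in the affine Grassmannian picture (in the style of \cite{MLM}) then shows that $\cZ^{\lam,\tau}$ contains $\cC_{F(\lam')}$ with an extra multiplicity coming from the non-reduced structure along $\cC_{F(\lam)}$. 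Subtracting the inductively known cycles for the other Jordan--H\"older factors solves for $\cZ^{\BM}_{F(\lam)}$ in the form $\cC_{F(\lam)} + \sum_{\lam' \in C_0,\, \lam' \uparrow \lam} \cC_{F(\lam')}$, matching the claim. The technical heart is the combinatorial verification that the multiplicity of each $\cC_{F(\lam')}$ contributed by non-reducedness equals exactly one, so that no further entailed weights appear; this is where the $\GSp_4$-specific combinatorics of $\Adm(\eta)$ and the four restricted alcoves enters decisively.
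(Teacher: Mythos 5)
The paper does not prove this theorem: it is stated with the attribution \cite{LHL-BM} and cited as a black box. The preceding paragraph in the text says explicitly that \cite{FLH} ``provides an algorithm to compute them explicitly'' and that ``this is implemented in \cite{LHL-BM}.'' So there is no proof in this paper to compare against; you are reconstructing the argument that lives in the cited reference.

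As a reconstruction, the outline captures the right overall strategy (inverting the Breuil--M\'ezard identity against a well-chosen family of types, inducting on the $p$-restricted alcove of $\lam$, and isolating the entailment phenomenon in $C_2$), but several steps as written do not hold up to scrutiny. The claim that $\cX_{\Sym}$ and $\cX^{\lam+\eta,\tau}_{\Sym}$ admit a ``product decomposition'' along $j\in\cJ$ is not literally true: the $\varphi$-action on \'etale $\varphi$-modules cycles through embeddings, so the Emerton--Gee stack does not factor. The factorization of $n_\sigma$, of $\cC_{F(\lam')}$, and of $\cZ^{\BM}_\sigma$ along $\cJ$ is a genuine combinatorial statement about the parametrizations by $\AP(\eta)$ and about the local-model coordinates, not a consequence of a product structure on the stack; and the factorization of $\cZ^{\BM}_\sigma$ in turn presupposes uniqueness of the Breuil--M\'ezard cycles, which must itself be established. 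Likewise, invoking Theorem~\ref{thm:col-one-def} to conclude that $\cZ^{0,\tau}$ is multiplicity-free is not correct as stated: that theorem concerns the special fiber of $R^{\eta,\tau}_{\rhobar}$ at a \emph{specific} tame $\rhobar$ of colength one relative to $\tau$, whereas $\cZ^{0,\tau}$ is a cycle on $\cX_{\Sym,\red}$ whose multiplicities are read off at the generic points of the $\cC_\sigma$; passing from the local ring at $\rhobar$ to the generic multiplicity along $\cC_\sigma$ requires an argument (via local models) that you have not supplied. Finally, you flag the key $C_2$ computation (that each entailed component appears with multiplicity exactly one and no further entailment occurs) as ``the technical heart'' without verifying it -- but that verification \emph{is} the content of the theorem, so this cannot be left as an acknowledged gap.
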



By taking pullback, we obtain explicit Breuil--M\'ezard cycles in deformation rings. For a 6-generic tame inertial $L$-parameter $\rhobar$, we denote by $\cZ_{\sig}^{\BM}(\rhobar)$ the pullback of $\cZ_{\sig}^{\BM}$ to $R_{\rhobar}^{\alg}$.

\begin{cor}\label{cor:BM-cycle-def}
    Let $\rhobar$ be a $6$-generic tame inertial $L$-parameter. If $(\lam+\eta,\tau)$ is a $(2h_{\lam}+6)$-generic type, then
    \begin{align*}
        \cZ^{\lam,\tau}(\rhobar) =  \sum_{\sig \in \JH(\osig(\lam,\tau))} n_{\sig}(\lam,\tau)\cZ_{\sig}^{\BM}(\rhobar).
    \end{align*}
\end{cor}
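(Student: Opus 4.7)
The plan is to deduce Corollary \ref{cor:BM-cycle-def} from the preceding theorem by pulling back along the natural map $\iota_{\rhobar}^*:\Z[\cX_{\Sym,\red}]\to\Z[\Spec R_{\rhobar}^{\alg}]$ induced by the versal morphism $\iota_{\rhobar}:\Spec R_{\rhobar}^{\alg}\to\cX_{\Sym,\red}$. The assertion is essentially a formal consequence of two facts already recorded in the paper: the compatibility $\iota_{\rhobar}^*(\cZ^{\lam,\tau})=\cZ^{\lam,\tau}(\rhobar)$ (which in turn comes from \cite[\href{https://stacks.math.columbia.edu/tag/0DRD}{Tag 0DRD}]{stacks-project}), and the definition of $\cZ_{\sig}^{\BM}(\rhobar)$ as the pullback $\iota_{\rhobar}^*(\cZ_{\sig}^{\BM})$.

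First I would note that the genericity hypothesis on $(\lam+\eta,\tau)$ puts us in the regime where the Fakhruddin--Le Hung theorem applies, giving the identity
\[
\cZ^{\lam,\tau}=\sum_{\sig\in \JH(\osig(\lam,\tau))}n_{\sig}(\lam,\tau)\,\cZ_{\sig}^{\BM}
\]
inside $\Z[\cX_{\Sym,\red}]$. Next I would apply $\iota_{\rhobar}^*$ to both sides and invoke its $\Z$-linearity (which is automatic since it is defined as the $\Z$-linear extension of a set-theoretic map on irreducible components). This yields
\[
\iota_{\rhobar}^*(\cZ^{\lam,\tau})=\sum_{\sig\in \JH(\osig(\lam,\tau))}n_{\sig}(\lam,\tau)\,\iota_{\rhobar}^*(\cZ_{\sig}^{\BM}).
\]
Substituting the two identifications $\iota_{\rhobar}^*(\cZ^{\lam,\tau})=\cZ^{\lam,\tau}(\rhobar)$ and $\iota_{\rhobar}^*(\cZ_{\sig}^{\BM})=\cZ_{\sig}^{\BM}(\rhobar)$ produces the desired formula.

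There is essentially no obstacle: the only point to double-check is that the pullback map on cycles is well-defined and respects the two constructions in question. Since $\Spec R_{\rhobar}^{\alg}\to\cX_{\Sym,\red}$ is the algebraic model of the versal deformation at $\rhobar$ (with $R_{\rhobar}^{\alg}$ defined precisely so that this map is algebraic), the compatibility of pullback with the cycle attached to the closed substack $\cX^{\lam+\eta,\tau}_{\Sym,\F}$ on one hand, and with each irreducible cycle $\cC_{F(\lam')}$ (whose pullback is by definition $\cC_{F(\lam')}(\rhobar)$) on the other, is precisely the content of the Stacks Project reference. Once these are invoked, the corollary is immediate.
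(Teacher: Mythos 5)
Your proposal is correct and matches the argument the paper intends (the paper leaves the proof of this corollary implicit, having just established the compatibility $\iota_{\rhobar}^*(\cZ^{\lam,\tau})=\cZ^{\lam,\tau}(\rhobar)$ and defined $\cZ_\sigma^{\BM}(\rhobar)$ as the pullback of $\cZ_\sigma^{\BM}$). Applying the $\Z$-linear map $\iota_{\rhobar}^*$ to the Breuil--M\'ezard identity of Fakhruddin--Le Hung and substituting these two identifications is exactly the intended deduction.
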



\section{Applications}

\subsection{Global setup}\label{sec:global}
Let $F$ be a totally real field in which $p$ is unramified. Let $\chi: \A_F^\times/F^\times \ra \C^\times$ be a Hecke character. We write $\chi_{p,\iota} = \iota^\mo \circ \chi$. Let $U = U_p U^{p} \le \GSp_4(\cO_p) \times \GSp_4(\A^{\infty,p}_F)$ be a compact open subgroup. We assume that $U^{p}$ is neat. 

\subsubsection{\'Etale cohomology of Hilbert--Siegel modular varieties}

Let $\Sh_{U}$ be the Hilbert--Siegel modular variety associated to $\Res_{F/\Q}\GSp_4$ of level $U$. It is a quasi-projective scheme over $F$ (\cite{Lan-book}). For a $U_p$-representation on a finite $\cO$-module $V$, we denote by $\cF_V$ the locally constant \'etale sheaf of $\cO$-modules on $\Sh_U$ (cf.~\cite[Proposition A1.8]{freitag-kiehl}). We are interested in the \'etale cohomology group $H^i_{\et}(\Sh_{U,\overline{F}},\cF_V)$. We denote by $H^i_{\et}(\Sh_{U,\overline{F}},\cF_V)_{\chi} \subset H^i_{\et}(\Sh_{U,\overline{F}},\cF_V)$ the subspace on which $\A_F^\times$ acts by $\chi_{p,\iota}$.

From now on, we assume that $U$ is \textit{sufficiently small}, i.e.~it contains no element of order $p$. 
Let $S_p$ be the set of places of $F$ dividing $p$ and $P_U$ be the finite set of finite places of $F$ at which $U$ is ramified. For a finite set $P$ containing $S_p\cup P_U$, we define the   \emph{universal Hecke algebra} $\T^{P,\univ}$ to be the polynomial ring over $\cO$ generated by $S_v, T_{v,1}, T_{v,2}$ for each $v \notin P$. 
Then $\T^{P,\univ}$ naturally acts on $H^i_{\et}(\Sh_{U,\overline{F}},\cF_V)_{\chi}$ by Hecke correspondences where $S_v,T_{v,1}, T_{v,2}$ act through the double coset operators
\begin{align*}
\left[ U_v\begin{pmatrix}
    \varpi_v & & & \\
    & \varpi_v & & \\
    & & \varpi_v & \\
    & & & \varpi_v
\end{pmatrix} U_v\right] , \left[ U_v\begin{pmatrix}
    \varpi_v & & & \\
    & \varpi_v & & \\
    & & 1 & \\
    & & & 1
\end{pmatrix} U_v\right] , \left[ U_v\begin{pmatrix}
    \varpi_v^2 & & & \\
    & \varpi_v & & \\
    & & \varpi_v & \\
    & & & 1
\end{pmatrix} U_v\right].
\end{align*}
We denote by $\T_\chi^{P,i}(U,V)$ the quotient of $\T^{P,\univ}$ that acts faithfully on $H^i_{\et}(\Sh_{U,\overline{F}},\cF_V)_{\chi}$. Later we will only consider $i=3$ and just write $\T_\chi^{P}(U,V)$ when $i=3$.

Let $\lam\in X^*(T)^{\cJ}$ be a dominant character. Let $\fp \lhd \T_\chi^{P,i}(U,W(\lam))[1/p]$ be a maximal ideal. By Matsushima's formula and \cite{sorensen-gsp4,mok-cm}, there is a Galois representation $r: G_F \ra \GSp_4(\cO)$ attached to $\fp$ satisfying local-global compatibilities (see \cite[Proposition 5.3.2]{lee_thesis} for the detail). 
Suppose that $\fm\lhd \T_\chi^{P,i}(U,W(\lam))$ is the maximal ideal containing $\fp\cap \T_\chi^{P,i}(U,W(\lam))$. We say that $\fm$ (or its preimage in $\T^{P,\univ}$) is \textit{non-Eisenstein} if $\rbar\defeq r\otimes_\cO \F$ is absolutely irreducible.

We denote by $P_{\rbar}$ the set of finite places at which $\rbar$ is ramified. If $P$ also contains $P_{\rbar}$, we denote by $\fm_{\rbar,\chi}^P\le \T^{P,\univ}$ the maximal ideal defined in \cite[\S5.3]{lee_thesis}. Note that if $\fp$ and $\fm$ are as above and $\rbar= r\otimes_\cO \F$  with $r$ attached to $\fp$, then $\fm_{\rbar,\chi}^P$ is the preimage of $\fm$ in $\T^{P,\univ}$.

Let $\rbar: G_F \ra \GSp_4(\F)$ be a continuous representation unramified at all but finitely many places. As in \cite[Definition 5.4.1]{lee_thesis}, we say that $\rbar$ satisfies \textit{Taylor--Wiles conditions} if it is absolutely irreducible, odd, vast, and tidy. 
By a standard argument using the Cebotarev density theorem, these conditions imply that there are infinitely many places $v$ of $F$ such that $\rbar|_{G_{F_v}}$ is unramified and \textit{generic} in the sense of \cite[Definition 1.1]{Hamann-Lee}.

\begin{prop}\label{prop:torsion-vanishing}
   If $\rbar$ satisfies Taylor--Wiles conditions, then $H^i_{\et}(\Sh_{U,\overline{F}},\cF_V)_{\chi,\fm_{\rbar,\chi}^P} = 0$ unless $i=3$. 
\end{prop}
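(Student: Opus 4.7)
The plan is to reduce the proposition to the torsion vanishing theorem of Hamann--Lee \cite{Hamann-Lee} via an auxiliary prime argument, using the existence of generic places guaranteed by the Taylor--Wiles conditions.

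The first step is to produce a suitable auxiliary place. Invoking the Taylor--Wiles hypotheses on $\rbar$ (absolutely irreducible, odd, vast, and tidy) together with the Chebotarev density theorem, as stated in the sentence immediately preceding the proposition, I would fix a finite place $v_0$ of $F$, not lying in $P$, such that $\rbar|_{G_{F_{v_0}}}$ is unramified and generic in the sense of \cite[Definition 1.1]{Hamann-Lee}. Since $P \supset S_p \cup P_U \cup P_{\rbar}$, the representation $\rbar$ is itself unramified at $v_0$ and the Hecke operators $S_{v_0}, T_{v_0,1}, T_{v_0,2}$ lie in $\T^{P,\univ}$.

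The second step is to transfer the genericity of $\rbar|_{G_{F_{v_0}}}$ to a local condition on Hecke eigensystems appearing in the $\fm_{\rbar,\chi}^P$-localized cohomology. By the unramified local-global compatibility used to define $\fm_{\rbar,\chi}^P$, the images of $S_{v_0}, T_{v_0,1}, T_{v_0,2}$ in $\T^{P,\univ}/\fm_{\rbar,\chi}^P$ compute the characteristic polynomial of $\rbar(\Frob_{v_0})$. In particular, every automorphic representation contributing to $H^i_{\et}(\Sh_{U,\overline{F}},\cF_V)_{\chi,\fm_{\rbar,\chi}^P}$ has an unramified local factor at $v_0$ whose Satake parameters reduce modulo $p$ to those associated to the generic representation $\rbar|_{G_{F_{v_0}}}$.

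The third step is to invoke \cite{Hamann-Lee}, whose main torsion vanishing theorem asserts that the \'etale cohomology of $\Sh_U$, localized at a non-Eisenstein maximal ideal whose associated residual Galois representation is generic at some auxiliary finite place away from $p$ and the ramification set, is concentrated in middle degree $i=3$. The absolute irreducibility of $\rbar$ (part of the Taylor--Wiles conditions) guarantees that $\fm_{\rbar,\chi}^P$ is non-Eisenstein, and the place $v_0$ supplies the required auxiliary place, so the hypotheses of \loccit~are satisfied and the proposition follows. The main (indeed essentially the only) obstacle I anticipate is bookkeeping: checking that the precise form of the Hamann--Lee theorem matches the present setup (coefficient system $\cF_V$, fixed central character $\chi_{p,\iota}$, and level structure $U = U_p U^p$), so that no further reduction or level manipulation at $v_0$ is required.
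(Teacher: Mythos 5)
Your proposal correctly identifies the key external input — the Hamann--Lee torsion vanishing theorem — and the role of the Taylor--Wiles conditions in producing an auxiliary place $v_0$ where $\rbar$ is unramified and generic. That part of the argument matches the paper's strategy in outline. However, there is a genuine gap, and the obstacle you flagged as ``mostly bookkeeping'' is actually the substantive issue.

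First, the coefficient reduction is a real step, not routine bookkeeping: the paper reduces to $V=\F$ by shrinking $U_p$ and running a Hochschild--Serre spectral sequence, since the Hamann--Lee input is not stated for arbitrary $\cF_V$. Second, and more importantly, you do not address the distinction between ordinary and compactly supported \'etale cohomology. The Hilbert--Siegel modular varieties $\Sh_U$ are non-compact, so \cite[Corollary 1.18]{Hamann-Lee} does not directly give concentration of $H^i_{\et}$ in degree $3$; one must also know that the boundary contributions die after localization. The paper supplies this via the identity
\[
H^i_{\et}(\Sh_{U,\overline{F}},\F)_{\fm_{\rbar,\chi}^P}\;=\;H^i_{\et,c}(\Sh_{U,\overline{F}},\F)_{\fm_{\rbar,\chi}^P},
\]
which is deduced from the absolute irreducibility of $\rbar$ (citing \cite[Proposition 5.2]{Ortiz} and noting that the $F=\Q$ and $i=3$ hypotheses there can be dropped). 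In your write-up, absolute irreducibility is used only to ensure $\fm_{\rbar,\chi}^P$ is non-Eisenstein for the hypotheses of Hamann--Lee; you do not re-use it to compare $H^i$ with $H^i_c$, and without that comparison the argument does not close. Your second step — transferring genericity of $\rbar|_{G_{F_{v_0}}}$ to a Satake-parameter condition on Hecke eigensystems — is also somewhat misplaced: the Hamann--Lee hypothesis is stated directly on the residual Galois representation, so no such transfer to the automorphic side is needed.
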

\begin{proof}
    It suffices to consider when $V$ is an $\F$-module. By replacing $U_p$ with a smaller subgroup and using Hochschild--Serre spectral sequences, we can and do assume that $V=\F$. Since $\rbar|_{G_{F_v}}$ is generic for infinitely many place $v$ of $F$, we can apply the vanishing result of Hamann--Lee \cite[Corollary 1.18]{Hamann-Lee}. Then the claim follows from the fact that $H^i_{\et}(\Sh_{U,\overline{F}},\F)_{\fm_{\rbar,\chi}^P}= H^i_{\et,c}(\Sh_{U,\overline{F}},\F)_{\fm_{\rbar,\chi}^P}$, which follows from our assumption that $\rbar$ is absolutely irreducible (see \cite[Proposition 5.2]{Ortiz}; the assumption that $F=\Q$ or $i=3$ therein is not necessary). 
\end{proof}


\begin{defn}
Let $\rbar: G_F \ra \GSp_4(\F)$ be an absolutely irreducible continuous representation unramified at all but finitely many places. Let $U\le \cG(\A_F^{\infty})$ be a compact open subgroup and $\chi$ be a Hecke character. 
\begin{enumerate}
\item We say that $\rbar$ is \textit{automorphic} if for some level $U$, Hecke character $\chi$, and a finite set of finite places $P$ containing $P_U \cup P_{\rbar}$,  $H^i_{\et}(\Sh_{U,\overline{F}},\F)_{\fm_{\rbar,\chi}^P}\neq 0$ for some $i\in \Z_{\ge 0}$. 


\item Let $\sigma$ be a Serre weight of $\rG$. Suppose that $U_p= \GSp_4(\cO_F\otimes\Z_p)$. We say that $\rbar$ is \emph{modular of weight $\sigma$} (and level $U$), or equivalently, $\sig$ is a \emph{modular weight} of $\rbar$ (at level $U$) if $H^i_{\et}(\Sh_{U,\overline{F}},\cF_{\sig^\vee})_{\fm_{\rbar,\chi}^P}\neq 0$ for some $\chi$, $P$, and $i\in \Z_{\ge 0}$. 
\item We let $W(\rbar)$ be the set of modular Serre weights of $\rbar$. 
\end{enumerate}
\end{defn}


The following conjecture is due to Gee--Herzig--Savitt \cite{GHS} (also, see \cite{HT}).

\begin{conj}\label{conj:SWC}
If $\rbar$ is automorphic and $\rbar|_{G_{F_v}}$ is tame and sufficiently generic for $v|p$, then $W(\rbar)=W^?(\rbar_p|_{I_{\Qp}})$.
\end{conj}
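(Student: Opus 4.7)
The plan is to prove the conjecture in the form of Theorem \ref{thm:main}, assuming potential diagonalizable automorphy, Taylor--Wiles conditions, and $9$-genericity. The strategy uses a Taylor--Wiles patching functor $M_\infty$ from $\cO[\GSp_4(k_p)]$-modules to modules over (a power series ring over) $R_{\rhobar}^\square$, exact in its input and with $M_\infty(\sigma) \neq 0$ iff $\sigma \in W(\rbar)$. Since one already knows (by a standard argument using the structure of potentially crystalline deformation rings of type $(\eta,\tau)$) that $M_\infty(\sigma) = 0$ for $\sigma \notin W^?(\rbar)$, the goal reduces to showing $M_\infty(\sigma) \neq 0$ for every $\sigma \in W^?(\rbar)$, and in fact that $M_\infty(\sigma)$ is supported on $\cC_{\sigma}(\rhobar)$.

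\textbf{Step 1 (obvious weights).} First I would treat obvious weights $\sigma = F_{\rhobar}(w)$ for $w \in \uW$. For such a weight there is a tame inertial type $\tau$ with $\JH(\osig(\tau)) \cap W^?(\rbar) = \{\sigma\}$ and $M_\infty(\osig^\circ(\tau)) \cong M_\infty(\sigma)$, so it suffices to show the patched module in characteristic zero is nonzero on $\Spec R_{\rhobar}^{\eta,\tau}$. This is where potential diagonalizable automorphy enters: invoking the change-of-weight result of \cite{BLGGT}, together with the fact (from the local model analysis) that this deformation ring is a domain with a unique special-fiber component equal to $\cC_\sigma(\rhobar)$, gives $M_\infty(\sigma)$ supported on $\cC_\sigma(\rhobar)$.

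\textbf{Step 2 (propagation to outer weights).} Next I would propagate the statement ``$M_\infty(\sigma)$ is supported on $\cC_\sigma(\rhobar)$'' from obvious weights to all outer weights, and thence to all of $W^?(\rbar)$. The engine is Theorem \ref{thm:irred-comp-def-ring}: for a suitably chosen family of $\tau$'s and pairs of outer weights $\sigma_1, \sigma_2 \in \JH(\osig(\tau)) \cap W^?(\rbar)$, the irreducible cycles $\cC_{\sigma_1}(\rhobar)$ and $\cC_{\sigma_2}(\rhobar)$ each generize uniquely to a common irreducible component $\cC$ of $\Spec R_{\rhobar}^{\eta,\tau}$ (uniqueness uses that outer-weight components occur with multiplicity one in the special fiber, by Theorem \ref{thm:col-one-nbhd}). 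By exactness of $M_\infty$ and the fact that the supports are unions of components, $M_\infty(\sig^\circ(\tau))$ contains $\cC$ in its support iff it contains $\cC_{\sigma_1}(\rhobar)$ iff it contains $\cC_{\sigma_2}(\rhobar)$. Because for an outer weight $\sigma_i$ the component $\cC_{\sigma_i}(\rhobar)$ is the only one in $\Spec R^{\eta,\tau}_{\rhobar}/\varpi$ that $M_\infty$ of any Serre weight in $\JH(\osig(\tau))$ can hit, one concludes: $M_\infty(\sigma_1)$ is supported on $\cC_{\sigma_1}(\rhobar)$ iff $M_\infty(\sigma_2)$ is supported on $\cC_{\sigma_2}(\rhobar)$.

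\textbf{Step 3 (connectivity and endgame).} I would then show that the graph on $W^?(\rbar)$ whose edges are the pairs $\{\sigma_1,\sigma_2\}$ of outer weights arising in Step 2 (as $\tau$ varies over the allowed family in Setup \ref{setup}) is connected, and that every obvious weight lies in this graph. Starting from Step 1 and applying Step 2 along a connecting chain then gives $M_\infty(\sigma) \neq 0$ for all $\sigma \in W^?(\rbar)$, proving $W^?(\rbar) \subseteq W(\rbar)$; combined with the reverse inclusion already known under the genericity hypothesis, this yields $W(\rbar) = W^?(\rbar)$. The torsion vanishing of Proposition \ref{prop:torsion-vanishing} is what makes Step 1 accessible directly on $\Sh_U$ rather than on an inner form.

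\textbf{Main obstacle.} The crux is Theorem \ref{thm:irred-comp-def-ring}, and in particular verifying that $\rhobar$ lies in the closure in $\cX^{\eta,\tau}_{\Sym}$ of the open locus $\cU \subseteq \cC_{\sigma_1} \cap \cC_{\sigma_2}$ cut out by the normal locus. This is the step that, unlike \cite{LLLMextremal}, cannot be arranged so that $\rhobar \in \cU$ itself, and it requires the Emerton--Gee stack rather than just the deformation ring since one must use geometry of $\cX^{\eta,\tau}_{\Sym}$ away from $\rhobar$ to analyze the formal neighborhood at $\rhobar$. Concretely this is reduced to an explicit computation of torus fixed points in a deformed affine Springer fiber (Proposition \ref{prop:T-fixed-points}), which is the main combinatorial/geometric input of the paper.
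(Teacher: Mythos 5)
Your proposal is correct and follows essentially the same strategy as the paper's proof (Theorem \ref{thm:SWC}, proved via the abstract Theorem \ref{thm:abstract-SWC}): obvious weights are handled via potential diagonalizable automorphy and change of weight, the support statement is propagated across $W^?(\rbar)$ by the outer-weight adjacency mechanism of Theorem \ref{thm:irred-comp-def-ring} together with Proposition \ref{prop:modularity-constancy}, and the argument closes via connectivity of the adjacency graph (Proposition \ref{prop:connected}). You also correctly isolate the main technical obstacle, namely showing $\rhobar$ lies in the closure of the normal locus inside $\cC_{\sigma_1}\cap\cC_{\sigma_2}$, which forces the use of the Emerton--Gee stack and the torus fixed-point computation in the deformed affine Springer fiber.
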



\subsubsection{Coherent cohomology of Siegel threefolds}\label{sub:coh-coh}
Throughout this subsection, we take $F=\Q$ and $U_p = \GSp_4(\cO_p)$. Then $\Sh_U$ admits a smooth integral model defined over $\cO$ (cf.~\cite{Lan-book}). For any $\cO$-algebra $R$, we write its base change to $R$ by $\Sh_{U,R}$.

We recall some facts about toroidal compactification of $\Sh_U$ and automorphic vector bundles on it. We refer \cite{Ortiz} (\S1.2 and \S1.3) for the details. We denote by $\Sh_{U}^{\tor}$ for a toroidal compactification of $\Sh_{U}$. A priori, this depends on a choice of cone decomposition, but its coherent cohomology groups are independent of such a choice. Thus, we suppress the choice in our notation.

For a weight $\lam \in X^*(T)$, there is an automorphic vector bundle $\omega(\lam)$ on $\Sh_U$ with extensions $\omega^{\mathrm{sub}}(\lam)$ and $\omega^{\mathrm{can}}(\lam)$ on $\Sh_U^{\mathrm{tor}}$. For $?\in \{\can,\sub\}$, we have the subspace $H^i(\Sh^{\tor},\omega^?(\lam))_{\chi}\subset H^i(\Sh^{\tor},\omega^?(\lam))$ defined analogously to the \'etale case. If $P$ is a set of places containing $P_U$ and $p$, then $\T^{P,\univ}$ acts on $H^i(\Sh^{\tor},\omega^?(\lam))$ and $H^i(\Sh^{\tor},\omega^?(\lam))_\chi$. We write $\T_\chi^{P,\coh,i}(U,\omega^?(\lam))$ for the quotient of $\T^{P,\univ}$ that acts faithfully on $H^i(\Sh^{\tor},\omega^?(\lam))_\chi$. Later, we will fix $i=0$ and just write $\T_\chi^{P,\coh}(U,\omega^?(\lam))$ if $i=0$. 


We summarize the vanishing result for mod $p$ coherent cohomology necessary for our applications.
\begin{prop}\label{prop:coh-vanishing}
    Suppose that $\lambda \in X_1(T)$ is 2-generic. Then $H^i(\Sh_{U,\F}^{\tor}, \omega^{\sub}(\lambda+(3,3;0)) = 0 $ for $i \ge 1$. If $\fm \lhd \T^{P,\univ}$ is a non-Eisenstein maximal ideal, then $H^i(\Sh_{U,\F}^{\tor}, \omega^{\sub}(\lambda+(3,3;0))_{\fm} = H^i(\Sh_{U,\F}^{\tor}, \omega^{\mathrm{can}}(\lambda+(3,3;0))_{\fm}$ for all $i$.
\end{prop}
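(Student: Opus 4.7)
The plan is to handle the two assertions separately. For the vanishing $H^i(\Sh_{U,\F}^{\tor}, \omega^{\sub}(\lambda+(3,3;0))) = 0$ for $i\geq 1$, I would invoke a mod-$p$ Kodaira-type vanishing theorem for subcanonical automorphic vector bundles on the Siegel threefold. Such results are available via the work of Lan--Suh and its refinements (e.g.\ by Boxer, Pilloni--Stroh, Goldring--Koskivirta): they give vanishing of $H^i(\Sh_{U,\F}^{\tor}, \omega^{\sub}(\mu))$ for $i\geq 1$ whenever $\mu$ is sufficiently regular dominant. The shift $(3,3;0)$, which up to a similitude twist is the weight of the dualizing sheaf of the Siegel threefold, together with the 2-genericity of $\lambda$, should place $\lambda+(3,3;0)$ in the range where the mod-$p$ vanishing applies (equivalently, by Serre duality, where the Serre-dual weight $-w_0(\lambda)$ is regular dominant enough to have trivial low-degree coherent cohomology on the canonical-extension side).

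For the comparison $H^i(\omega^{\sub})_{\fm} = H^i(\omega^{\can})_{\fm}$ at a non-Eisenstein maximal ideal, the plan is to use the boundary short exact sequence
\[
0 \to \omega^{\sub}(\mu) \to \omega^{\can}(\mu) \to \mathcal{Q}(\mu) \to 0
\]
on $\Sh_{U,\F}^{\tor}$, where the quotient $\mathcal{Q}(\mu)$ is supported on the toroidal boundary $D = \Sh_U^{\tor} \setminus \Sh_U$. The associated long exact sequence reduces the claim to showing $H^i(\Sh_{U,\F}^{\tor}, \mathcal{Q}(\lambda+(3,3;0)))_{\fm} = 0$ for all $i$. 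The sheaf $\mathcal{Q}(\mu)$ admits a filtration by formal completion along the cusp strata, whose graded pieces are identified with coherent cohomology of automorphic vector bundles on boundary Shimura varieties attached to the Levi factors of the proper parabolic subgroups of $\GSp_4$ (Klingen with Levi $\GL_1\times\GL_2$, and Siegel with Levi $\GL_2\times\GL_1$). The Hecke eigensystems occurring there correspond, via unramified local-global compatibility, to reducible (parabolically induced) Galois representations, hence lie in Eisenstein maximal ideals. Since $\fm$ is non-Eisenstein, localization at $\fm$ kills the boundary contribution entirely, yielding the desired equality.

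The main technical difficulty is locating the mod-$p$ Kodaira vanishing in exactly the form needed, i.e.\ with the explicit shift $(3,3;0)$ and the 2-genericity hypothesis; this is principally a matter of reading off the correct regularity bound from known results, identifying $(3,3;0)$ with the canonical class of the Siegel threefold, and verifying that 2-genericity of $\lambda \in X_1(T)$ suffices. The non-Eisenstein boundary vanishing is by now a standard argument in the coherent cohomology theory of Hilbert--Siegel varieties and should pose no essential obstacle.
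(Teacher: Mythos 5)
Your strategy coincides with the paper's two-line proof: the mod-$p$ Kodaira-type vanishing you seek is exactly \cite[Theorem 1.8(1)]{Ortiz}, stated for the Siegel threefold with the $+(3,3;0)$ shift and a low genericity threshold, and the non-Eisenstein comparison is obtained by the boundary-exact-sequence argument you describe, following the proof of \cite[Theorem 24(iii)]{AH}. The only thing missing from your plan is the identification of these two specific references in place of the general literature you cite, so there is no substantive gap.
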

\begin{proof}
    The first claim follows from \cite[Theorem 1.8(1)]{Ortiz} and the second one follows from the proof of \cite[Theorem 24(iii)]{AH}.
\end{proof}

From now on, we just write $\omega(\lam)$ for $\omega^{\can}(\lambda)$ unless stated otherwise. 
We also remark that $H^0(\Sh_{U,R},\omega(\lam)) = H^0(\Sh_{U,R}^{\tor},\omega(\lam))$ for $R\in \{\cO,\F\}$ (\cite[Proposition 1.7(7)]{Ortiz}).

We record the following result on a mod $p$ differential operator from \cite{Ortiz}. Note that our parameterization of $X^*(T)$ is slightly different from \loccit. 

\begin{thm}[{\cite[Theorem 3.13]{Ortiz}}]\label{thm:ortiz}
Let $\lam=(a,b;c)\in X^*(T)$ such that $p-1 \ge a \ge b \ge 0$ and $b\neq p-1$. Then there is a $\T^{P,\univ}$-equivariant injective morphism
\begin{align*}
    \theta^1_{\lam} : H^0(\Sh_{U,\F}, \omega(\lam)) \ra H^0(\Sh_{U,\F}, \omega(2p-a+2,b;c+a-p+2)).
\end{align*}
\end{thm}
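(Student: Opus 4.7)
The construction of $\theta^1_\lam$ follows the general philosophy going back to Katz and Serre: build the operator from the Gauss--Manin connection on the universal abelian surface and exploit a $p$-curvature / Frobenius splitting phenomenon in characteristic $p$ to produce a map with the correct Hecke equivariance. The plan is to work on the smooth integral model of $\Sh_U$ over $\cO$, construct a lift of the operator to a connection-theoretic map between coherent sheaves of $\cO$-modules, and then reduce mod $p$.

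First, I would invoke the Kodaira--Spencer isomorphism for the universal abelian surface $\pi:A\to \Sh_U$ to identify $\Omega^1_{\Sh_U/\cO}$ with $\Sym^2 \omega$, where $\omega \defeq \pi_*\Omega^1_{A/\Sh_U}$ is the Hodge bundle. Given $\lam=(a,b;c)$ in the stated range, the automorphic vector bundle $\omega(\lam)$ is the subbundle of $(\omega^{\otimes (a-b)})\otimes (\det\omega)^{\otimes b} \otimes \nu^{\otimes c}$ corresponding to the representation of the Siegel Levi $\GL_2 \times \GL_1$ of highest weight $\lam$, where $\nu$ is the similitude. Applying the Gauss--Manin connection and using Kodaira--Spencer gives
\[
\nabla : \omega(\lam) \to \omega(\lam)\otimes \Omega^1_{\Sh_U/\cO} \cong \omega(\lam)\otimes \Sym^2 \omega.
\]
Decomposing $\omega(\lam)\otimes \Sym^2\omega$ into $\GL_2\times\GL_1$-isotypic summands by Pieri (or equivalently by projecting to highest weight components), one obtains classical differential operators raising weight by specific amounts. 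These are defined over $\cO$.

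The characteristic $p$ feature is the Verschiebung $V:A^{(p)}\to A$, which induces a map $V^*:\omega\to \omega^{(p)}$ whose cokernel is controlled by the Hasse invariant. Combining appropriate iterates of $\nabla$ with the projector to a highest weight isotypic piece, and then using $V^*$ to twist the remaining factor of $\omega$ by Frobenius, yields an operator on mod $p$ sections whose target weight involves the expected $p$-twist. A direct computation in weights (using that the similitude factor absorbs the difference between $\det\omega^{p}$ and $\det\omega^{(p)}$) should match the prescribed shift $(2p-a+2,b;c+a-p+2)$, which amounts to adding $(p-a+1)(2,0;-1)\in X^*(T)$; here $(2,0;-1)$ is the highest root combined with the relevant similitude weight, consistent with a Verschiebung-twisted theta. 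Hecke equivariance away from $p$ is automatic, since $\nabla$ and $V$ are functorial in prime-to-$p$ isogenies and commute with the universal Hecke correspondences.

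The main obstacle I expect is the boundary analysis: a priori $\nabla$ is defined only on the open part $\Sh_U$, and one must verify that the composite map extends as a morphism of coherent sheaves across the toroidal compactification $\Sh_U^{\tor}$, so that it yields a map on global sections $H^0(\Sh_{U,\F}^{\tor},\omega(\lam)) = H^0(\Sh_{U,\F},\omega(\lam))$. This is done by a local computation at the cusps using Fourier--Jacobi expansions; the constraints $p-1\ge a\ge b\ge 0$ and $b\neq p-1$ are exactly what is needed to avoid poles and to stay in the range where the Frobenius twist is controlled (the excluded case $b=p-1$ corresponds to a wall where the projector to the correct isotypic component degenerates). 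Finally, injectivity of $\theta^1_\lam$ follows by a $q$-expansion principle at a Siegel cusp: the kernel is supported on the vanishing locus of the Hasse invariant, and a non-zero section with zero theta must therefore have a $p$-th power $q$-expansion at that cusp, which is incompatible with the weight $\lam$ in the given range.
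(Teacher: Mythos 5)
The paper does not prove this statement; it is imported verbatim from Ortiz, which is why the theorem header carries the citation \cite[Theorem 3.13]{Ortiz}. Immediately after the statement the paper only records a remark translating the weight shift into the $\lam_1\uparrow\lam_2$ notation needed later (Lemma \ref{lem:weyl-decomp}, Proposition \ref{prop:strong-SWC}); there is no internal argument to compare your sketch against, and the authors treat the result as a black box.

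That said, a couple of points in your sketch would not survive contact with the actual construction. Your weight arithmetic is off: the stated shift is $(2p-a+2,b;\,c+a-p+2)-(a,b;c)=(2p-2a+2,\,0;\,a-p+2)$, whereas $(p-a+1)\cdot(2,0;-1)$ has third coordinate $a-p-1$, a discrepancy of $3$ in the similitude component. You could try to attribute this to the $(3,3;0)$ twist floating around in the paper's coherent-cohomology conventions, but as written it is an error and it matters: the whole point of citing this theorem is to feed a \emph{precise} weight into Lemma \ref{lem:weyl-decomp} to get a map $W(\lam_1')/\varpi\twoheadrightarrow W(\lam_0')/\varpi$ on the nose. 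Second, your injectivity argument --- "kernel supported on the vanishing locus of the Hasse invariant" --- is not the standard statement; for theta operators of this kind the kernel is the image of a Frobenius/Verschiebung map on lower-weight forms, and injectivity in the stated weight range is proved by showing that image is empty for weight reasons, not by a support argument on the special fiber. If you want to engage with the construction itself you should read Ortiz directly; for the purposes of the present paper, all that is used is the existence of the $\T^{P,\univ}$-equivariant injection with the exact target weight quoted.
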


\begin{rmk}
    Let $\lam_1=(a,b;c)\in C_1$ and $\lam_2=(2p-a-4,l;c+a-p+2)\in C_2$ so that $\lam_1\uparrow \lam_2$. Then by putting $\lam=\lam_1+(3,3;0)$ in Theorem \ref{thm:ortiz}, we get
    \begin{align*}
    \theta^1_{\lam} : H^0(\Sh_{U,\F}, \omega(\lam_1+(3,3;0))) \ra H^0(\Sh_{U,\F}, \omega(\lam_2+(3,3;0))).
\end{align*}
\end{rmk}

\subsubsection{Comparison between two cohomologies}\label{sub:two-coh-comp}
We explain how to relate coherent cohomology to \'etale cohomology. We keep the setup of the previous subsection. Let $\rbar: G_F \ra \GSp_4(\F)$ be a continuous homomorphism satisfying Taylor--Wiles conditions and let $\fm = \fm_{\rbar, \chi}^{P}$ where $P$ contains $P_U\cup P_{\rbar} \cup \{p\}$. By \cite[Equation (5.1)]{Ortiz}, we have the following rational comparison result
\begin{align*}
    H^3_{\et}(\Sh_{U,\Qpbar}, \cF_{W(\lam)} \otimes_{\cO}\Qpbar )_{\fm}\simeq \oplus_{w\in W^M} H^{l(w)}(\Sh^{\mathrm{tor}}_{U,\Qpbar},\omega(w\cdot \lam + (3,3;0)))_{\fm}.
\end{align*}
Here, $M$ is the Levi factor of the Siegel parabolic subgroup (generated by $B$ and $s_1$). 

Note that the quotient of $\T^{S, \univ}$ faithfully acting on 
$H^{l(w)}(\Sh^{\mathrm{tor}}_{U_n,\Qpbar},\omega(w\cdot \lam + (3,3;0)))_{\fm}$ for $w\in W^M$ are all isomorphic. This follows from the fact that once we localize at $\fm$, automorphic representations corresponding to these Hecke eigensystems are non-CAP, in which case the multiplicity of the (anti)holomorphic discrete series (contributing to $H^0$ and $H^3$) is equal to the one of the nonholomorphic discrete series (contributing to $H^1$ and $H^2$) in the same $L$-packet (see the proof of \cite[Proposition 5.2]{Ortiz}). The dimensions of $H^0$ and $H^3$ (resp.~of $H^1$ and $H^2$) are the same by the Poincare duality for $(\mathfrak{g},K)$-cohomology. 
As a result, we obtain the following comparison for Hecke algebras.
\begin{cor}\label{cor:hecke-comp}
    Let $\rbar: G_\Q \ra \GSp_4(\F)$ be a continuous homomorphism satisfying Taylor--Wiles conditions. Then, there is a natural isomorphism
    \begin{align*}
        \T^{P}_\chi(U,W(\lam))_{\fm_{\rbar,\chi}^P} \simeq \T^{P,\coh}_\chi(U,\omega(\lam+(3,3;0)))_{\fm_{\rbar,\chi}^P}.
    \end{align*}
\end{cor}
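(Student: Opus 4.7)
The plan is to exhibit both Hecke algebras as $\cO$-flat quotients of the localized universal Hecke algebra $\T^{P,\univ}_{\fm}$ (writing $\fm$ for $\fm_{\rbar,\chi}^P$), and then compare their kernels via the rational decomposition already recalled in the excerpt. Concretely, $\T^P_\chi(U,W(\lam))_\fm$ is $\T^{P,\univ}_\fm / I_{\et}$ where $I_{\et}$ is the annihilator of $M_{\et} \defeq H^3_{\et}(\Sh_{U,\ovl{F}},\cF_{W(\lam)})_{\chi,\fm}$, and similarly $\T^{P,\coh}_\chi(U,\omega(\lam+(3,3;0)))_\fm = \T^{P,\univ}_\fm/I_{\coh}$ with $I_{\coh}$ the annihilator of $M_{\coh} \defeq H^0(\Sh^{\tor}_{U,\cO},\omega(\lam+(3,3;0)))_{\chi,\fm}$. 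The statement will follow once I show (i)~$M_{\et}$ and $M_{\coh}$ are both $\cO$-flat, and (ii)~$I_{\et}[1/p]=I_{\coh}[1/p]$ inside $\T^{P,\univ}_\fm[1/p]$.

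For (i), on the \'etale side I would apply Proposition~\ref{prop:torsion-vanishing} with coefficients $W(\lam)\otimes_{\cO}\F$ to obtain $H^{i}_{\et}(\Sh_{U,\ovl{F}},\cF_{W(\lam)\otimes_\cO\F})_{\chi,\fm}=0$ for $i\neq 3$; in particular the $H^2$ term vanishes, so the Bockstein long exact sequence arising from $0\to \cF_{W(\lam)}\xrightarrow{\varpi}\cF_{W(\lam)}\to \cF_{W(\lam)\otimes_\cO\F}\to 0$ shows that $M_{\et}$ is $\varpi$-torsion free. On the coherent side, Proposition~\ref{prop:coh-vanishing}(1) gives $H^{i}(\Sh^{\tor}_{U,\F},\omega^{\sub}(\lam+(3,3;0)))=0$ for $i\geq 1$, and part~(2) (valid because $\fm$ is non-Eisenstein under Taylor--Wiles) transfers this to $\omega^{\can}=\omega$ after localization at $\fm$. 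Using flat base change of coherent cohomology along $\cO\to\F$ and the $\varpi$-Bockstein sequence on the integral toroidal model, this forces $M_{\coh}$ to be $\cO$-flat as well.

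For (ii), I would invoke the rational BGG-type comparison recalled in \S\ref{sub:two-coh-comp}:
\begin{equation*}
M_{\et}\otimes_{\cO}\Qpbar \;\cong\; \bigoplus_{w\in W^M} H^{\ell(w)}\bigl(\Sh^{\tor}_{U,\Qpbar},\omega(w\cdot\lam+(3,3;0))\bigr)_{\chi,\fm},
\end{equation*}
which is $\T^{P,\univ}$-equivariant. The annihilator of the left-hand side equals the intersection of the annihilators of the summands, so by the observation recorded in the excerpt (an equal-multiplicity statement in $L$-packets: for non-CAP representations contributing to $\fm$, the (anti)holomorphic and nonholomorphic discrete series occur with the same multiplicity, and Poincar\'e duality for $(\mathfrak{g},K)$-cohomology pairs $H^0 \leftrightarrow H^3$, $H^1 \leftrightarrow H^2$), the faithful quotients of $\T^{P,\univ}_\fm[1/p]$ on each summand coincide. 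In particular taking the $w=e$ summand, $I_{\et}[1/p] = I_{\coh}[1/p]$.

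Combining the two pieces, since both $I_{\et}$ and $I_{\coh}$ are kernels of maps into endomorphism rings of $\cO$-flat modules, they satisfy $I_{\star}=I_{\star}[1/p]\cap\T^{P,\univ}_\fm$, so the equality of their generic fibers upgrades to an equality of ideals, yielding the desired isomorphism. The main obstacle I anticipate is verifying the equal-multiplicity/non-CAP assertion carefully enough to conclude that \emph{all} $W^M$-indexed summands give the same faithful Hecke quotient at the integral localization (rather than merely after tensoring with $\Qpbar$); modulo that, the $\cO$-flatness reductions are a standard Bockstein argument consuming the vanishing inputs provided by Propositions~\ref{prop:torsion-vanishing} and \ref{prop:coh-vanishing}.
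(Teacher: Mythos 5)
Your proposal is correct and follows essentially the same route as the paper's one-line proof: both Hecke algebras are the faithful quotients of $\T^{P,\univ}_{\fm_{\rbar,\chi}^P}$ on the rational cohomologies, which agree by the BGG/$L$-packet comparison from \S\ref{sub:two-coh-comp}, and $\cO$-flatness (which you rightly make explicit where the paper leaves it implicit) upgrades this to the integral statement. Note only that on the coherent side the Bockstein input is overkill, since $H^0$ of a vector bundle on the $\cO$-flat integral toroidal model is automatically $\varpi$-torsion-free.
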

\begin{proof}
    This follows from the previous paragraph as both Hecke algebras are the faithful quotient of the localization $\T^{P,\univ}_{\fm_{\rbar,\chi}^P}$ acting on either $H^3_{\et}(\Sh_{U_n,\Qpbar}, \cF_{W(\lam)}\otimes_{\cO}\Qpbar )_{\fm_{\rbar,\chi}^P}$ or $H^{0}(\Sh^{\mathrm{tor}}_{U_n,\Qpbar},\omega(\lam + (3,3;0)))_{\fm_{\rbar,\chi}^P}$.
\end{proof}

\subsection{Patching functor and patched modules}\label{sec:patching}
We temporarily return to the local setup. Recall the finite \'etale $\Zp$-algebra $\cO_p$.  We can write it as a product $\cO_p = \prod_{v\in S_p} \cO_v$ where $S_p$ is a finite set and $\cO_v$ is a finite \'etale local $\Zp$-algebra. We write $F_p \defeq \cO_p[1/p]$ and $F_v = \cO_v[1/p]$ for $v\in S_p$. We take $\cJ$ to be $\Hom_{\Zp}(\cO_p,\cO)$.

Let $\rhobar:G_{\Q_p} \ra \LuG(\F)$ be a tame $L$-parameter. We fix a continuous character $\psip:G_{\Q_p} \ra \prescript{L}{}{\underline{\G_m}(\cO)}$ lifting $\simc(\rhobar)$. Note that $\psip$ is equivalent to a collection $\{\psi_v : G_{F_v} \ra \cO^\times\}_{v\in S_p}$. We assume that $\psi_v\eps^{-3}$ is unramified with a finite image for $v \in S_p$.

We define 
\begin{align*}
    R_{\rhobar}^{\psi_p}\defeq \ctimes_{v\in S_p, \cO} R_{\rhobar_v}^{\square,\psi_v}, \ \ R_\infty^{\psi_p}\defeq R_{\rhobar}^{\psi_p}\ctimes_\cO R^p
\end{align*}
where  
$R^p$ is a (nonzero) complete Noetherian equidimensional flat $\cO$-algebra with residue field $\F$ such that $R^p[1/p]$ is generically formally smooth and each irreducible components of $\Spec R^p$ and $\Spec R^p/\varpi$ is geometrically irreducible. For dominant weight $\lam\in X^*(T)^{\cJ}$ and a tame inertial type $\tau$, we define
\begin{align*}
    R_{\rhobar}^{\lam+\eta,\tau,\psi_p} \defeq \ctimes_{v\in S_p,\cO} R_{\rhobar_v}^{\lam_v+\eta_v,\tau_v,\psi_v}, \ \ R_{\infty}^{\lam+\eta,\tau,\psi_p}\defeq R_\infty^{\psi_p} \otimes_{R_{\rhobar}^{\psi_p}} R_{\rhobar}^{\lam+\eta,\tau,\psi_p}.
\end{align*}

Let $\Mod(R_{\infty}^{\psi_p})$ be the category of finitely generated modules over $R_{\infty}^{\psi_p}$ and $\Rep_\cO^{\psi_p}(\GSp_4(\cO_p))$ be the category of topological $\cO[\GSp_4(\cO_p)]$-modules, which are finitely generated over $\cO$, with fixed central character given by $\otimes_{v\in S_p}(\psi_v\eps^{-3}|_{I_{F_v}})\circ (\Art_{F_v}|_{\cO^\times_{v}})$.

\begin{defn}
    A \textit{(fixed similitude) patching functor} for the pair $(\rhobar,\psip)$ (and $R_{\infty}^{\psi_p}$) is an exact covariant functor 
    \begin{align*}
    M_\infty: \Rep^{\psi_p}_\cO(\GSp_4(\cO_p)) \ra \Mod(R_\infty^{\psi_p})
\end{align*}
satisfying the following conditions: let $(\lam+\eta,\tau)$ be a type with $\sigma^\circ(\lam,\tau) \subset \sig(\lam,\tau)$ a $\GSp_4(\cO_p)$-stable $\cO$-lattice, then
\begin{enumerate}[(1)]
    \item $M_\infty(\sigma^\circ(\lam,\tau))$ is a maximal Cohen--Macaulay module over  $R_\infty^{\lam+\eta,\tau,\psip}$ (if it is nonzero); and 
    \item for all $\sigma \in \JH(\ov{\sigma}(\tau))$, $M_\infty(\sigma)$ is a maximal Cohen--Macaulay module over $R_\infty^{\eta,\tau,\psip}/\varpi$ (if it is nonzero).
\end{enumerate}
We also say that $M\in \Mod(R_{\infty}^{\psip})$ is a \textit{patched module for $(\lam+\eta,\tau)$} if it is a maximal Cohen--Macaulay module over $R_{\infty}^{\lam+\eta,\tau,\psip}$. In particular, given a patching functor $M_\infty$, $M_\infty(\sigma^\circ(\lam,\tau))$ is an example of patched modules.
\end{defn}

\begin{rmk}\label{rmk:patching-functor-vs-module}
    As mentioned above, the definition of patching functor subsumes patched modules. In the next subsection, we construct a patching functor using \'etale cohomology of Hilbert--Siegel modular varieties. However, it is not clear how to construct a patching functor using coherent cohomology. Still, we can and do construct patched modules in the coherent setting, which is sufficient for our application.
\end{rmk}

\begin{rmk}\label{rmk:irred-comp}
    By \cite[Proposition 2.2]{KW_Serre_2}, the set of irreducible components of $\Spec R_{\infty}^{\lam+\eta,\tau,\psip}$ is given by the product of the sets of irreducible components of $\Spec R_{\rhobar_v}^{\lam_v+\eta_v,\tau_v,\psi_v}$ for $v\in S_p$ and $\Spec R^p$. In particular, for any tame inertial type $\tau$, the maximal Cohen--Macaulayness implies that $\supp_{R_{\rhobar}^{\lam+\eta,\tau,\psip}}M_\infty(\sig^\circ(\lam,\tau))$ is a union of irreducible components in $\Spec R_{\rhobar}^{\lam+\eta,\tau,\psip}$.
\end{rmk}

\begin{defn}
    Let $M_{\infty}$ be a fixed similitude patching functor for $(\rhobar,\psi_p)$. 
    \begin{enumerate}
        \item  We define $W_{M_\infty}(\rhobar)$ be the set of Serre weights $\sig$ for $\rG$ such that $M_\infty(\sig) \neq 0$. 
        \item We define $W_{M_\infty}^{\supp}(\rhobar)$ be the set of 3-deep Serre weights $\sig$ for $\rG$ such that $M_\infty(\sig) \neq 0$ and $\cC_{\sig}(\rhobar) \subset \supp_{R_{\rhobar}^{\square}}M_\infty(\sig^\circ(\tau))$.
    \end{enumerate}
   
\end{defn}

The following is our main result on Serre weight conjectures in an axiomatic setup.

\begin{thm}\label{thm:abstract-SWC}
    Suppose that $\rhobar$ is 9-generic. If $M_\infty$ is a fixed similitude patching functor for $(\rhobar,\psip)$, then the following are equivalent:
    \begin{enumerate}
        \item $W_{M_\infty}^{\supp}(\rhobar)\neq \emptyset$;
        \item $W_{\obv}(\rhobar|_{I_{\Q_p}}) \cap W_{M_\infty}(\rhobar) \neq \emptyset$; and
        \item $W^?(\rhobar|_{I_{\Q_p}}) = W_{M_\infty}(\rhobar)$.
    \end{enumerate}
\end{thm}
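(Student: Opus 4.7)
The plan is to prove the chain $(3) \Rightarrow (2) \Rightarrow (1) \Rightarrow (3)$. The implication $(3) \Rightarrow (2)$ is immediate since $W_{\obv}(\rhobar|_{I_{\Q_p}})$ is a nonempty subset of $W^?(\rhobar|_{I_{\Q_p}})$ by Remark \ref{rmk:obv-wt-W-torsor}. For $(2) \Rightarrow (1)$, suppose $\sig_\obv = F_\rhobar(w) \in W_\obv(\rhobar|_{I_{\Q_p}}) \cap W_{M_\infty}(\rhobar)$. I select a tame inertial type $\tau_\obv$ such that $\sig_\obv$ is an outer weight of $\tau_\obv$ and $\JH(\osig(\tau_\obv)) \cap W^?(\rhobar|_{I_{\Q_p}}) = \{\sig_\obv\}$; this is possible by Lemma \ref{lem:JH-bij}. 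Exactness of $M_\infty$ combined with the vanishing $M_\infty(\sig)=0$ for $\sig \notin W^?(\rhobar|_{I_{\Q_p}})$ (a standard consequence of the patching axioms and Corollary \ref{cor:geom-SW}) forces $M_\infty(\osig(\tau_\obv)) \cong M_\infty(\sig_\obv) \neq 0$ via the Jordan--H\"older filtration. Thus $M_\infty(\sig^\circ(\tau_\obv)) \neq 0$, and by Cohen--Macaulayness (Remark \ref{rmk:irred-comp}) its support is a nonempty union of irreducible components of $\Spec R_\rhobar^{\eta,\tau_\obv}$; the mod-$\varpi$ reduction of any such component contains $\cC_{\sig_\obv}(\rhobar)$, the unique special fiber component under the bijection with $\JH \cap W^? = \{\sig_\obv\}$. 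Hence $\sig_\obv \in W_{M_\infty}^{\supp}(\rhobar)$.

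The heart of the argument is $(1) \Rightarrow (3)$. The inclusion $W_{M_\infty}(\rhobar) \subset W^?(\rhobar|_{I_{\Q_p}})$ is the vanishing cited above. For the reverse, I propagate membership in $W_{M_\infty}^{\supp}(\rhobar)$ across outer-weight pairs using Theorem \ref{thm:irred-comp-def-ring}. Given the outer pair $(\sig_1, \sig_2) = (F_\tau(w), F_\tau(sw))$ produced by Setup \ref{setup}, the theorem provides a common unique generization $\cC \subset \Spec R_\rhobar^{\eta,\tau}$ of $\cC_{\sig_1}(\rhobar)$ and $\cC_{\sig_2}(\rhobar)$. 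Since $\supp M_\infty(\sig^\circ(\tau))$ is a union of irreducible components of $\Spec R_\rhobar^{\eta,\tau}$ by Cohen--Macaulayness, the unique-generization property yields $\cC_{\sig_1}(\rhobar) \subset \supp M_\infty(\sig^\circ(\tau)) \Leftrightarrow \cC_{\sig_2}(\rhobar) \subset \supp M_\infty(\sig^\circ(\tau))$. The outer-weight property ensures $\cC_{\sig_i}(\rhobar)$ occurs with multiplicity one in $\Spec R_\rhobar^{\eta,\tau}/\varpi$ and is hit in the Jordan--H\"older cycle decomposition of $M_\infty(\osig(\tau))$ only by $M_\infty(\sig_i)$. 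Therefore $\sig_1 \in W_{M_\infty}^{\supp}(\rhobar) \Leftrightarrow \sig_2 \in W_{M_\infty}^{\supp}(\rhobar)$.

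It remains to show that any $\sig \in W^?(\rhobar|_{I_{\Q_p}})$ can be connected to the given $\sig_0 \in W_{M_\infty}^{\supp}(\rhobar)$ by a finite chain of such outer-pair moves. Using the parameterization $W^?(\rhobar|_{I_{\Q_p}}) \leftrightarrow \AP'(\eta)$ of Lemma \ref{lem:JH-bij}(2), a single application of Setup \ref{setup} connects a Serre weight indexed by $(\tilw_1, w^\diamond)$ to one indexed by $(\tilw_1', (sw)^\diamond)$ for a simple reflection $s \in \uW$. Since simple reflections generate $\uW$ and the admissible second coordinates range over all of $\utilW^+_1/X^0(\uT)$, a combinatorial argument --- routing through obvious weights (where $\tilw_1 = \tilw$ makes the restriction $s \neq s_{2,j}$ vacuous) when the technical restriction in Setup \ref{setup} would otherwise obstruct a direct step --- produces the required chain. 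I expect the main obstacle to be precisely this combinatorial verification that connectivity accommodates the constraint on $s$; the flexibility of choosing intermediate $\tilw_1$ and rerouting through obvious-weight waypoints should suffice. Combined with the propagation step, this yields $W^?(\rhobar|_{I_{\Q_p}}) \subset W_{M_\infty}(\rhobar)$, completing $(1) \Rightarrow (3)$ and thus the proof.
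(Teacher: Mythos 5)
Your proof follows the same overall route as the paper: (3)$\Rightarrow$(2) is trivial, (2)$\Rightarrow$(1) picks a type whose only $W^?$-predicted Jordan--H\"older factor is the obvious weight, and (1)$\Rightarrow$(3) propagates the support condition along pairs $(\sig_1,\sig_2)$ via Theorem~\ref{thm:irred-comp-def-ring} and then appeals to connectivity of the adjacency graph. These are precisely the paper's Propositions~\ref{prop:modularity-constancy} and~\ref{prop:connected}. However, two steps are asserted without the arguments that make them true, and each is a genuine gap.

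First, in the propagation step you claim that ``the outer-weight property ensures $\cC_{\sig_i}(\rhobar)$\ldots is hit in the Jordan--H\"older cycle decomposition of $M_\infty(\osig(\tau))$ only by $M_\infty(\sig_i)$.'' Multiplicity one of $\cC_{\sig_i}(\rhobar)$ in $\Spec R^{\eta,\tau}_{\rhobar}/\varpi$ does \emph{not} yield this: without an a priori upper bound on $\supp_{R^\square_\rhobar} M_\infty(\kappa)$ for other $\kappa\in\JH(\osig(\tau))$, nothing prevents several $M_\infty(\kappa)$ from being supported on $\cC_{\sig_i}(\rhobar)$. The paper supplies the missing input in Lemma~\ref{lem:BMcycle-bound}: by intersecting $\Spec R^{\eta,\tau'}_{\rhobar}/\varpi$ over \emph{all} generic types $\tau'$ with $\kappa\in\JH(\osig(\tau'))$ and invoking Lemma~\ref{lem:covering-uparrow} together with Proposition~\ref{prop:C_sig-T-torsor}, one gets $\supp M_\infty(\kappa)\subset\bigcup_{\kappa''\uparrow\kappa}\cC_{\kappa''}(\rhobar)$. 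Only then does maximality of the outer weight $\sig_i$ for the $\uparrow$-order force $\kappa=\sig_i$ (Lemma~\ref{lem:modularity-argument}). You need to import this bound; it is the crux of the propagation.

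Second, the connectivity argument is not ``routing through obvious-weight waypoints.'' The constraint in Setup~\ref{setup} ($s\neq s_{2,j}$ when $\tilw_{1,j}\in\Omega$ and $\tilw_{1,j}\neq\tilw_j$) is handled by a concrete alcove descent $j$ by $j$: if $\tilw_j(A_0)\in\{A_2,A_3\}$ you can usually reach $\Omega$ in one move, but when $\tilw_{1,j}\in\Omega$ and $\tilw_j(A_0)\in\{A_1,A_3\}$ blocks the direct $s_{2,j}$ move, one applies $s_{1,j}$ instead, possibly bouncing up to $A_3$ before descending (Proposition~\ref{prop:connected}). This two-step detour, not a detour through $W_{\obv}$, is what makes the graph connected; the case analysis is short but must actually be done.
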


We prove the Theorem after several preliminary results. We first prove the following weight elimination result, relaxing the genericity condition in \cite[Theorem 5.4.4]{lee_thesis}.

\begin{prop}\label{prop:weight-elim}
     Suppose that $\rhobar$ is $9$-generic. If $M_\infty$ is a fixed similitude patching functor for $(\rhobar,\psip)$, then $W_{M_\infty}(\rhobar)\subset W^?(\rhobar|_{I_{\Q_p}})$.
\end{prop}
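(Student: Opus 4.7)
The plan is to follow the standard weight elimination template: convert modularity of $\sigma$ into the existence of many potentially crystalline lifts of $\rhobar$, then exploit the geometric characterization of $W^?$ via irreducible components of the Emerton--Gee stack.

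To begin, suppose $\sigma\in W_{M_\infty}(\rhobar)$, so $M_\infty(\sigma)\neq 0$. Fix any sufficiently generic tame inertial type $\tau$ over $E$ with $\sigma\in \JH(\osig(\tau))$, and choose a $\GSp_4(\cO_p)$-stable lattice $\sigma^\circ(\tau)\subset \sigma(\tau)$. Since $M_\infty$ is exact, a Jordan--H\"older filtration of $\osig(\tau)$ induces a filtration of $M_\infty(\osig^\circ(\tau))$ containing $M_\infty(\sigma)\neq 0$ among its graded pieces, so $M_\infty(\osig^\circ(\tau))\neq 0$. By Nakayama $M_\infty(\sigma^\circ(\tau))\neq 0$, and the patching functor axioms then force $R_\rhobar^{\eta,\tau,\psi}$ to be nonzero. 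Equivalently, $\rhobar$ admits a potentially crystalline lift of Hodge type $\eta$ and inertial type $\tau$, i.e.~$\rhobar\in \cX_{\Sym,\F}^{\eta,\tau}(\F)$.

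Next, I would invoke the geometric reformulation of $W^?$ (the paper's Corollary \ref{cor:geom-SW}): for a tame point $\rhobar\in \cX_{\Sym}(\F)$, the irreducible components of $\cX_{\Sym,\F}^{\eta,\tau}$ passing through $\rhobar$ correspond precisely to elements of $W^?(\rhobar|_{I_{\Q_p}})\cap \JH(\osig(\tau))$. Combined with the previous paragraph, this yields the intermediate statement: for every sufficiently generic $\tau$ with $\sigma\in \JH(\osig(\tau))$, the intersection $W^?(\rhobar|_{I_{\Q_p}})\cap \JH(\osig(\tau))$ is nonempty.

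The remaining combinatorial step -- promoting this ``cover by types'' property to the desired conclusion $\sigma\in W^?(\rhobar|_{I_{\Q_p}})$ -- is what I expect to be the main obstacle. I would argue by contradiction. Assuming $\sigma\notin W^?(\rhobar|_{I_{\Q_p}})$, I would use the explicit admissible pair parametrizations $\AP(\eta)\risom \JH(\osig(\tau))$ and $\AP'(\eta)\risom W^?(\rhobar|_{I_{\Q_p}})$ of Lemma \ref{lem:JH-bij} to construct a 3-generic tame type $\tau$ -- most naturally an ``obvious/outer type'' built from $\sigma$ (so that $\sigma=F_\tau(w)$ for some $w\in \uW$ and $\JH(\osig(\tau))$ is small enough to be analyzed directly) -- with $\sigma\in \JH(\osig(\tau))$ but $W^?(\rhobar|_{I_{\Q_p}})\cap \JH(\osig(\tau))=\emptyset$, contradicting the intermediate statement. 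The existence of such a $\tau$ within the 3-generic range, given only the 9-genericity of $\rhobar$, is precisely where the improved local model theory of this paper is needed to replace the inexplicit genericity assumption of \cite[Theorem 5.4.4]{lee_thesis}; Lemma \ref{lem:covering-uparrow} should also feature in verifying that the desired $\tau$ avoids the ``$\uparrow$-neighborhood'' of $\sigma$ inside $W^?(\rhobar|_{I_{\Q_p}})$.
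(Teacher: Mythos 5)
Your opening steps are sound: from $M_\infty(\sigma)\neq 0$ and exactness one gets $M_\infty(\sigma^\circ(\tau))\neq 0$ and hence $R_{\rhobar}^{\eta,\tau,\psi_p}\neq 0$ for any tame $\tau$ with $\sigma\in\JH(\osig(\tau))$, and this is indeed the point of departure for weight elimination. From there, however, your proposed route diverges from the paper and, more importantly, has a gap that I do not think can be repaired as stated.

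The problem is the final reduction. Your ``intermediate statement'' is: for every sufficiently generic $\tau$ with $\sigma\in\JH(\osig(\tau))$, one has $W^?(\rhobar|_{I_{\Qp}})\cap\JH(\osig(\tau))\neq\emptyset$. You then want to derive a contradiction from $\sigma\notin W^?$ by exhibiting a generic $\tau$ with $\sigma\in\JH(\osig(\tau))$ but $W^?(\rhobar|_{I_{\Qp}})\cap\JH(\osig(\tau))=\emptyset$. This step fails: nonemptiness of the intersection is a far weaker condition than membership of $\sigma$. Concretely, if there is \emph{any} $\kappa\in W^?(\rhobar|_{I_{\Qp}})$ with $\kappa\uparrow\sigma$, then by Lemma~\ref{lem:covering-uparrow} the weight $\kappa$ lies in $\JH(\osig(\tau))$ for \emph{every} $3$-generic $\tau$ with $\sigma\in\JH(\osig(\tau))$, so the intersection is never empty, regardless of whether $\sigma\in W^?$. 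And the set $W^?(\rhobar|_{I_{\Qp}})$ is not closed under passing to larger alcoves, so $\kappa\uparrow\sigma$ with $\kappa\in W^?$ and $\sigma\notin W^?$ is entirely possible. Choosing $\tau$ to be an ``outer type'' for $\sigma$ does not help: $\JH(\osig(\tau))$ still contains the full $\uparrow$-interval below $\sigma$, which includes $\kappa$. There is a second, independent issue: the genericity bootstrapping. Your ``sufficiently generic $\tau$'' needs quantitative content, and this depends on the depth of $\sigma$ itself, which you have not controlled; $9$-genericity of $\rhobar$ alone does not give it.

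The paper's proof is different and is not of the nonemptiness-of-intersection form. From $F(\lambda)\in W_{M_\infty}(\rhobar)$ one passes to the \emph{specific} type $\tau(1,\lambda)$, getting a potentially semistable lift of type $(\eta,\tau(1,\lambda))$; the admissibility constraint $\tilw(\rhobar)^{-1}\tilw(R)\in\Adm(\eta)$ and the argument of \cite[Theorem 8]{Enns} then deliver the missing depth bounds on $\lambda_0$ (first $0$-deep, then $(h_{\tilw_h\tilw_\lambda(0)}+1)$-deep). The clincher is the covering criterion of \cite[Theorem 5.4(1) and 6.1]{DLR}: $F(\lambda)\in W^?(\rhobar|_{I_{\Qp}})$ is deduced from the fact that $F(\lambda)\in\JH\bigl(\ov{R}_w(\lambda_0+\eta-w\pi(\tilw_h\tilw_\lambda)^{-1}(0))\bigr)$ for \emph{all} $w\in\uW$, for a specific $\uW$-indexed family of Deligne--Lusztig representations, not from nonemptiness of $W^?\cap\JH$ for arbitrary $\tau$. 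Your step of passing through the geometric description of $W^?$ (Corollary~\ref{cor:geom-SW}) is in the right spirit but by itself gives too little; the information is genuinely needed in the sharper, $\uW$-family form.
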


\begin{proof}
    We explain how to adapt the proof of \cite[Theorem 6.1]{DLR} to our case.

    Suppose that $F(\lam)\in W_{M_\infty}(\rhobar)$. Then $\rhobar$ admits a potentially semistable lift of type $(\eta,\tau(1,\lam))$. Then the proof of \cite[Theorem 8]{Enns} shows that $\lam$ is $0$-deep. 
    Let $\lam_0\in \uC_0$ such that $\lam=\tilw_{\lam}\cdot \lam_0$ for some $\tilw_{\lam}\in \tilW^+_1$. 
    If $\lam_0$ is $2h_\eta$-deep, then there exists a Deligne--Lusztig representation $R$ such that $F(\lam) \in \JH({\ov{R}})$, and moreover if $\lam_0$ is \textit{not} $m$-deep, then $R$ is not $(m+1)$-generic \cite{general-we}. 
    Then the remainder is the same as the proof of Theorem 6.1 in \loccit. In particular, it shows that $\lam_0$ is $(h_{\tilw_h\tilw_{\lam}(0)} + 1)$-deep using the fact that $\tilw(\rhobar)^\mo \tilw(R) \in \Adm(\eta)$ (with appropriate lowest alcove presentations of $\rhobar$ and $R$). 
     By Theorem 5.4(1) in \loccit, $F(\lam)\in \JH(\ov{R}_w(\lam_0+\eta-w\pi(\tilw_h\tilw_{\lam})^\mo(0)))$ for all $w\in \uW$. Then the argument in the last paragraph of the proof of Theorem 6.1 in \loccit~shows that $F(\lam)\in W^?(\rhobar|_{I_{\Qp}})$.
\end{proof}

\begin{defn}[cf.~{\cite[Definition 2.6.3]{LL-CM}}]
    Let $\rhobar\in \cX_{\Sym}(\F)$ be a 9-generic tamely ramified representation. For $\sig_1, \sig_2 \in W^?(\rhobar|_{I_{\Qp}})$, we say that $\sig_1$ and $\sig_2$ are \textit{adjacent} if there exists $(\tilw_1,\tilw)\in \AP'(\eta)$ and a simple reflection $s\in \uW$ as in Setup \ref{setup} such that $\{\sig_1,\sig_2\} =W^?(\rhobar_0)\cap \JH(\osig(\tau))$.
\end{defn}

\begin{prop}\label{prop:connected}
    Let $\rhobar\in \cX_{\Sym}(\F)$ be a 9-generic tamely ramified representation. Consider a graph whose set of vertices is $W^?(\rhobar|_{I_{\Qp}})$ and any two vertices are connected by an edge if and only if they are adjacent. Then the graph is connected.
\end{prop}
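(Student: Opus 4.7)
\emph{Proof plan.} I would proceed in three stages. First, I reduce to the single embedding case $|\cJ|=1$: every simple reflection $s\in\uW$ is of the form $s_{i,j_0}$ and so acts non-trivially only in the $j_0$th coordinate. Consequently each adjacency modifies $(\tilw_1,\tilw_2)$ only at the coordinate $j_0$, and the graph is essentially a Cartesian product over $\cJ$ of single-embedding graphs (edges change exactly one factor), so connectivity reduces to the factors.

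Next, I show that the subgraph on the obvious weights $W_{\obv}(\rhobar)\subset W^?(\rhobar)$ is connected. Given $w\in\uW$ and a simple reflection $s$, apply Setup \ref{setup} to the triple $(\tilw_1,\tilw,s) = (w^\diamond, w^\diamond, s)$. The requirement $\tilw_{1,j}\neq\tilw_{j}$ for some $j$ fails, so the constraint on $s$ is vacuous. Using the identity $\tilw(\tau)=\tilw(\rhobar)(w^\diamond)^{-1}s w_0\tilw_h w^\diamond$ coming from Setup \ref{setup}, the formulas for $F_\tau$ and $F_\rhobar$ in Lemma \ref{lem:JH-bij}, and the relation between $(sw)^\diamond$ and $w^\diamond$ provided by Remark \ref{rmk:two-components}, a direct computation gives that the adjacency pair is
\[
\{\sigma_1,\sigma_2\}=\{F_\rhobar(w^\diamond,w^\diamond),F_\rhobar((sw)^\diamond,(sw)^\diamond)\}=\{F_\rhobar(w),F_\rhobar(sw)\}.
\]
Since simple reflections generate $\uW$ and $W_{\obv}(\rhobar)$ is a $\uW$-torsor by Remark \ref{rmk:obv-wt-W-torsor}, this produces a connected spanning subgraph on $W_{\obv}(\rhobar)$.

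Finally, I show by induction on a suitable $\uparrow$-distance invariant that every non-obvious weight $F_\rhobar(\tilw_1,\tilw_2)\in W^?(\rhobar)$ (with $\tilw_1\uparrow\tilw_2$ strict) is connected to $W_{\obv}(\rhobar)$. Given such a pair, apply Setup \ref{setup} with the triple $(\tilw_1,\tilw_2,s)$ for a carefully chosen simple reflection $s$ respecting the genericity constraint. The adjacency partner is of the form $F_\rhobar(\tilw_1'',(sw)^\diamond)$, where $\tilw_1''$ is determined by
\[
\tilw(\rhobar)(\tilw_1'')^{-1}(0)\equiv\tilw(\tau)((sw)^\diamond)^{-1}\tilw_h^{-1}(0)\pmod{X^0(\un{T})},
\]
expanded using the Setup \ref{setup} formula for $\tilw(\tau)$ and Remark \ref{rmk:two-components}. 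One then verifies that $s$ can be chosen (and the constraint respected) so that the invariant of $(\tilw_1'',(sw)^\diamond)$ is strictly smaller, terminating at an obvious weight.

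\emph{Main obstacle.} The combinatorial case analysis in the third stage is the hard part. $\GSp_4$ has only two simple reflections per embedding, so the constraint ``$s\neq s_{2,j}$ when $\tilw_{1,j}\in\Omega$ and $\tilw_{1,j}\neq\tilw_j$'' genuinely restricts the available adjacencies. One must enumerate the configurations in $\AP'(\eta)$ coming from the restricted alcoves $A_0\uparrow A_1\uparrow A_2\uparrow A_3$ together with the stabilizer $\Omega$ of $A_0$, and verify in each case that at least one permitted simple reflection produces an adjacency partner strictly closer to $W_{\obv}(\rhobar)$. Particular care is needed at the ``boundary'' configurations where only $s_1$ is available, to confirm that the $s_1$-move actually decreases the distance invariant.
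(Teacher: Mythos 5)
Your three-stage plan matches the paper's proof structure exactly: (i) treat the coordinates $j\in\cJ$ separately, (ii) show the subgraph on $W_{\obv}(\rhobar|_{I_{\Qp}})$ is connected via adjacencies $F_\rhobar(w)\leftrightarrow F_\rhobar(sw)$ coming from the triple $(w^\diamond,w^\diamond,s)$, and (iii) connect every non-obvious weight to an obvious one by a chain of adjacencies. Stages (i) and (ii) are correct and are essentially what the paper does. But what you label the ``main obstacle'' in stage (iii) is not a routine verification you can defer --- it \emph{is} the proof, and as written your proposal does not establish the proposition.

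Two concrete issues with stage (iii). First, there is no naive $\uparrow$-distance invariant that decreases monotonically under the permitted moves. The paper's computation shows that when $\tilw_{1,j}\in\Omega$ and $\tilw_j(A_0)=A_1$, the only move allowed by the constraint in Setup \ref{setup} is $s=s_{1,j}$, which lands at a pair $(\tilw'_1,\tilw')$ with $\tilw'_{1,j}(A_0)=A_2$ and $\tilw'_j(A_0)=A_3$. Any reasonable alcove distance to $A_0$ \emph{increases} here. The point is that the new configuration has $\tilw'_{1,j}\notin\Omega$, so $s_{2,j}$ becomes available and one further move sends $\tilw''_j$ into $\Omega$. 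So the induction needs an ordering that accommodates this detour --- ``strictly closer to $W_{\obv}$ in $\uparrow$-distance'' fails. Second, the case table is short and you should just write it out: if $\tilw_j(A_0)=A_2$, take $s=s_{1,j}$ (lands in $\Omega$); if $\tilw_j(A_0)=A_3$ and $\tilw_{1,j}\notin\Omega$, take $s=s_{2,j}$ (lands in $\Omega$); if $\tilw_j(A_0)=A_3$ and $\tilw_{1,j}\in\Omega$, take $s=s_{1,j}$ (lands at $(\Omega,A_1)$); if $\tilw_j(A_0)=A_1$ and $\tilw_{1,j}\in\Omega$, take $s=s_{1,j}$ (lands at $(A_2,A_3)$, then one more $s_{2,j}$-move to $\Omega$). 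Each ``lands at'' clause is the direct computation via Lemma \ref{lem:JH-bij} and Remark \ref{rmk:two-components} that you flag but do not perform; until you do, the argument has a genuine gap at its center.
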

\begin{proof}
    By choosing $({w^\diamond},{w^\diamond})\in \AP'(\eta)$ for $w\in \uW$ and a simple reflection $s\in \uW$, we can show that $F_{\rhobar}(w)$ and $F_{\rhobar}(sw)$ are adjacent. By Remark \ref{rmk:obv-wt-W-torsor}, the subgraph whose vertices given by $W_{\obv}(\rhobar|_{I_{\Qp}})$ with all edges connecting them is connected. 

    To finish the proof, we need to show that for each $\sig_1 = F_{\rhobar}(\tilw_1,\tilw) \in W^?(\rhobar|_{I_{\Qp}})\setminus W_{\obv}(\rhobar|_{I_{\Qp}})$, there is a sequence $\sig_2,\dots, \sig_r$ in $W^?(\rhobar|_{I_{\Qp}})$ such that $\sig_i$ is adjacent to $\sig_{i+1}$ for each $i=1,\dots,r-1$ and $\sig_r\in W_{\obv}(\rhobar|_{I_{\Qp}})$. The last claim will be justified by taking $\sig_r$ of the form $F_{\rhobar}(\tilw_1',\tilw')$ where $\tilw'\in \uOm$. Since $\tilw_1' \uparrow \tilw'$, this means $\tilw_1' = \tilw'$ and thus $\sig_r\in W_{\obv}(\rhobar|_{I_{\Qp}})$.

    Given $\sig_1=F_{\rhobar}(\tilw_1,\tilw)$, a choice of a simple reflection $s\in \uW$ provides $\sig_2$ as in Setup \ref{setup}. We can write $s= s_{1,j}$ or $s_{2,j}$ for some $j\in \cJ$. It follows from the definition of $\sig_2$ that $\sig_2 = F_{\rhobar}(\tilw_1',\tilw')$ for some $(\tilw_1',\tilw')\in \AP'(\eta)$ such that $\tilw'_{j'}(A_0) = \tilw_{j'}(A_0)$ for all $j' \neq j$ and $\tilw'_j(A_0)$ is determined by the choice of $s$ and $\tilw_j(A_0)$. Thus, we can treat each $j\in \cJ$ separately. 
    
    At $j'=j$, we have $\tilw'_{j}\in \Omega$ if either $\tilw_j(A_0)= A_2$ and $s=s_{1,j}$ or $\tilw_j(A_0)=A_3$ and $s=s_{2,j}$. Thus, we are done if $\tilw_j(A_0)\in \{A_2,A_3\}$ except when $\tilw_{1,j}\in \Omega$, in which case we cannot take $s=s_{2,j}$.

    If $\tilw_{1,j}\in \Omega$, $\tilw_j(A_0)=A_3$ and $s=s_{1,j}$, a direct computation shows that $\sig_2 = F_{\rhobar}(\tilw_1',\tilw')$ for some $(\tilw_1',\tilw')\in \AP'(\eta)$ such that $\tilw_{1,j}' \in \Omega$ and $\tilw_j'(A_0)=A_1$. If $\tilw_{1,j}\in \Omega$, $\tilw_j(A_0)=A_1$, and $s=s_{1,j}$, then $\sig_2 = F_{\rhobar}(\tilw_1',\tilw')$ for some $(\tilw_1',\tilw')\in \AP'(\eta)$ such that $\tilw_{1,j}'(A_0)=A_2$ and $\tilw_j'(A_0)=A_3$. 
    Since such $\sig_2$ is adjacent to $\sig_3$ corresponding to $(\tilw_1'',\tilw'')$ with $\tilw''_j\in \Omega$ by the previous paragraph, this finishes the proof.
\end{proof}

\begin{lemma}\label{lem:BMcycle-bound}
  Suppose that $\rhobar$ is 9-generic. Let $M_\infty$ be a fixed similitude patching functor for $(\rhobar,\psip)$. For any $\sig\in W^?(\rhobar)$, $\supp_{R_{\rhobar}^{\square}} M_\infty(\sig) \subset \cup_{\kappa \uparrow \sig} \Spec \cC_{\kappa}(\rhobar)$.
\end{lemma}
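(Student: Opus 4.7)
The plan is the following. If $M_\infty(\sig) = 0$ the claim is vacuous, so assume $M_\infty(\sig) \neq 0$; by Proposition~\ref{prop:weight-elim} this forces $\sig \in W^?(\rhobar|_{I_{\Qp}})$, and the $9$-genericity of $\rhobar$ then ensures that $\sig$ is (at least) $6$-deep. Fix any Serre weight $\kappa$ for which $\cC_\kappa(\rhobar) \subset \supp_{R^{\square}_\rhobar} M_\infty(\sig)$; the goal is to show $\kappa \uparrow \sig$. The strategy will be to feed this into Lemma~\ref{lem:covering-uparrow} applied with $\sig_0 = \sig$, which characterizes $\kappa \uparrow \sig$ as the statement that $\kappa \in \JH(\osig(\tau))$ for every $3$-generic tame inertial type $\tau$ such that $\sig \in \JH(\osig(\tau))$.

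The core argument is thus to produce this membership for arbitrary such $\tau$. Since $\sig$ is a Jordan--H\"older factor of $\osig(\tau)$, a d\'evissage through a Jordan--H\"older filtration combined with the exactness of $M_\infty$ realizes $M_\infty(\sig)$ as a subquotient of $M_\infty(\osig(\tau))$, so $\supp M_\infty(\sig) \subset \supp M_\infty(\osig(\tau))$. By the patching functor axioms $M_\infty(\sig^\circ(\tau))$ is a module over $R^{\eta,\tau,\psip}_\infty$, so $M_\infty(\osig(\tau))$ is supported in $\Spec R^{\eta,\tau,\psip}_\infty/\varpi$. Invoking Remark~\ref{rmk:irred-comp} together with the bijection $\sig' \mapsto \cC_{\sig'}(\rhobar)$ between $\JH(\osig(\tau)) \cap W^?(\rhobar|_{I_{\Qp}})$ and the set of irreducible components of $\Spec R^{\eta,\tau}_\rhobar/\varpi$, every component of $\supp M_\infty(\osig(\tau))$ has the form $\cC_{\sig'}(\rhobar)\times_{\Spec\F} C_p$ for some $\sig' \in \JH(\osig(\tau)) \cap W^?(\rhobar|_{I_{\Qp}})$ and some component $C_p$ of $\Spec R^p/\varpi$. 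Hence $\kappa \in \JH(\osig(\tau))$.

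Assembling the two steps and applying Lemma~\ref{lem:covering-uparrow} then yields $\kappa \uparrow \sig$. Since $\kappa$ was arbitrary, it follows that $\supp_{R^\square_\rhobar} M_\infty(\sig) \subset \bigcup_{\kappa \uparrow \sig} \cC_\kappa(\rhobar)$, as desired.

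The main obstacle will be aligning genericity hypotheses: the bijection between $\JH(\osig(\tau)) \cap W^?(\rhobar|_{I_{\Qp}})$ and irreducible components of $\Spec R^{\eta,\tau}_\rhobar/\varpi$ must be available for \emph{every} $3$-generic tame inertial type $\tau$ with $\sig \in \JH(\osig(\tau))$ — the same range of $\tau$ demanded by Lemma~\ref{lem:covering-uparrow} — rather than only for $\tau$ satisfying stronger conditions (such as the $6$-genericity used in Theorem~\ref{thm:col-one-def}). The $9$-genericity assumption on $\rhobar$ should provide enough room for the local-model machinery developed earlier to apply uniformly, but verifying compatibility between the various genericity hypotheses on $\rhobar$ and on $\tau$ is the most delicate point of the argument.
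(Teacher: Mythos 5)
Your argument is essentially the paper's: exactness gives $\supp M_\infty(\sig) \subset \bigcap_\tau \supp M_\infty(\osig(\tau)) \subset \bigcap_\tau \Spec R^{\eta,\tau}_{\rhobar}/\varpi$ over all $3$-generic $\tau$ with $\sig\in\JH(\osig(\tau))$, the local-model description of the components of $\Spec R^{\eta,\tau}_{\rhobar}/\varpi$ pins each component of the support to some $\cC_{\sig'}(\rhobar)$ with $\sig'\in\JH(\osig(\tau))\cap W^?(\rhobar)$, and Lemma~\ref{lem:covering-uparrow} converts membership in every such $\JH(\osig(\tau))$ into $\kappa\uparrow\sig$.

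The genericity point you flag as ``the most delicate'' is exactly what the paper's proof supplies and your proposal leaves open. Concretely, the description of the irreducible components of $\Spec R^{\eta,\tau}_{\rhobar}/\varpi$ via Proposition~\ref{prop:C_sig-T-torsor} needs $\tau$ to be at least $5$-generic, whereas Lemma~\ref{lem:covering-uparrow} requires ranging over \emph{all} $3$-generic $\tau$ with $\sig\in\JH(\osig(\tau))$. The gap is closed by \cite[Proposition 2.5.2]{LL-CM}: since $\sig\in W^?(\rhobar)$ and $\rhobar$ is $9$-generic, $\sig$ is automatically $(5+d_\sig)$-generic, and then by \cite[Lemma 2.4.1]{LL-CM} any tame inertial type $\tau$ with $\sig\in\JH(\osig(\tau))$ is forced to be $5$-generic. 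In other words, the a priori larger pool of $3$-generic $\tau$ demanded by Lemma~\ref{lem:covering-uparrow} collapses to the $5$-generic ones for which the component description is available, so the two ranges of $\tau$ agree and your argument goes through. Without this step the proof is incomplete; with it, it matches the paper's.
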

\begin{proof}
    By the exactness of $M_\infty$, we have
    \begin{align*}
        \supp_{R_{\rhobar}^{\square}} M_\infty(\sig) \subset \cap_{\tau}  \supp_{R_{\rhobar}^{\square}} M_\infty(\osig(\tau)) \subset  \cap_{\tau}  \Spec R_{\rhobar}^{\eta,\tau}/\varpi. 
    \end{align*}
    where the intersections run over all $3$-generic tame inertial type $\tau$ such that $\sig \in \JH(\osig(\tau))$. By \cite[Proposition 2.5.2]{LL-CM}, $\sig$ is $(5+d_{\sig})$-generic (see \S2.3 for the definition of $d_{\sig}$). By Lemma 2.4.1 in \loccit, $\tau$ is necessarily 5-generic. Then 
    $ \cap_{\tau}  \Spec R_{\rhobar}^{\eta,\tau}/\varpi$ is contained in $ \cup_{\kappa \uparrow \sig} \Spec \cC_{\kappa}(\rhobar)$ by Proposition \ref{prop:C_sig-T-torsor} and Lemma \ref{lem:covering-uparrow}.
\end{proof}

\begin{lemma}\label{lem:modularity-argument}
    Suppose that $\rhobar$ is 9-generic. If $\sig \in W^?(\rhobar|_{I_{\Qp}})\cap \JH_\out(\osig(\tau))$ and $\cC_{\sig}(\rhobar)\subset \supp_{R_{\rhobar}^{\square}}  M_\infty(\osig(\tau))$, then $M_\infty(\sig)\neq 0$.
\end{lemma}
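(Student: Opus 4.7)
The plan is to decompose $M_\infty(\osig(\tau))$ via a Jordan--H\"older filtration, identify the Jordan--H\"older factor responsible for the containment $\cC_\sig(\rhobar)\subset \supp M_\infty(\osig(\tau))$, and use the outer hypothesis together with the explicit Breuil--M\'ezard formula to force that factor to be $\sig$ itself.

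First, by the exactness of $M_\infty$ applied to a Jordan--H\"older filtration of $\osig(\tau)$, we have
\[
\supp_{R_{\rhobar}^{\square}} M_\infty(\osig(\tau)) \;=\; \bigcup_{\sig'\in \JH(\osig(\tau))} \supp_{R_{\rhobar}^{\square}} M_\infty(\sig').
\]
The irreducibility of $\cC_\sig(\rhobar)$ then yields some $\sig'\in \JH(\osig(\tau))$ with $\cC_\sig(\rhobar)\subset \supp M_\infty(\sig')$; in particular $M_\infty(\sig')\neq 0$, so $\sig'\in W^?(\rhobar|_{I_{\Qp}})$ by Proposition~\ref{prop:weight-elim}, and Lemma~\ref{lem:BMcycle-bound} (combined with the injectivity of $\kappa\mapsto \cC_\kappa(\rhobar)$ on $W^?(\rhobar|_{I_{\Qp}})$) gives $\sig\uparrow \sig'$.

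The heart of the argument is to deduce $\sig'=\sig$ from the outer hypothesis. Since $\rhobar$ is $9$-generic the type $\tau$ is $6$-generic, so Corollary~\ref{cor:BM-cycle-def} applies with $\lam=0$ and gives
\[
Z(\Spec R_\rhobar^{\eta,\tau}/\varpi) \;=\; \sum_{\sig''\in \JH(\osig(\tau))} n_{\sig''}(\tau)\, \cZ^{\BM}_{\sig''}(\rhobar).
\]
The local model theory underlying Theorem~\ref{thm:irred-comp-def-ring} implies that, because $\sig$ is outer, $\cC_\sig(\rhobar)$ occurs in the left-hand side with multiplicity exactly one. The $\sig''=\sig$ term on the right already contributes $1$ by the explicit description of $\cZ^{\BM}_\sig$ recalled in Section~\ref{sec:BMcycle}, so non-negativity of all remaining terms forces $[\cZ^{\BM}_{\sig''}(\rhobar):\cC_\sig(\rhobar)] = 0$ for every $\sig''\neq \sig$ in $\JH(\osig(\tau))$.

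The final step is to promote this Breuil--M\'ezard vanishing to $\cC_\sig(\rhobar)\not\subset \supp M_\infty(\sig')$ for $\sig'\neq \sig$, by combining Lemma~\ref{lem:BMcycle-bound} with the explicit alcove-theoretic shape of $\cZ^{\BM}_{\sig'}$: for a 3-deep Serre weight $\sig'$ whose defining highest weight does not lie in $C_2$, the cycle $\cZ^{\BM}_{\sig'}$ is the single component $\cC_{\sig'}$, while in the remaining case the additional components in $\supp \cZ^{\BM}_{\sig'}$ sit in $C_0$ strictly below $\sig'$. A case analysis on the $p$-alcoves containing $\sig$ and $\sig'$ then shows that, under the constraint $\sig\uparrow \sig'$ with $\sig$ outer and $\sig'\in \JH(\osig(\tau))\cap W^?(\rhobar|_{I_{\Qp}})$, the vanishing derived above rules out $\sig'\neq \sig$. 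Hence $\sig'=\sig$ and $M_\infty(\sig)\neq 0$. The main obstacle is precisely this last combinatorial check, where the outer condition provides the multiplicity-one rigidity that makes the Breuil--M\'ezard equation strict enough to eliminate all alternative Serre weights.
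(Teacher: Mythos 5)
Your first three moves match the paper exactly: the exactness of $M_\infty$ gives $\supp_{R_{\rhobar}^{\square}} M_\infty(\osig(\tau)) = \bigcup_{\kappa\in\JH(\osig(\tau))}\supp_{R_{\rhobar}^{\square}} M_\infty(\kappa)$, so irreducibility of $\cC_\sig(\rhobar)$ produces some $\kappa$ with $\cC_\sig(\rhobar)\subset\supp M_\infty(\kappa)$; Proposition~\ref{prop:weight-elim} puts $\kappa$ in $W^?(\rhobar|_{I_{\Qp}})$; and Lemma~\ref{lem:BMcycle-bound} yields $\sig\uparrow\kappa$. The paper then closes in one line: outer weights are $\uparrow$-maximal among elements of $\JH(\osig(\tau))$, so $\sig\uparrow\kappa$ with $\kappa\in\JH(\osig(\tau))$ and $\sig$ outer forces $\sig=\kappa$. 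You miss this direct combinatorial fact and detour through Breuil--M\'ezard cycles instead, and that detour has a genuine gap.

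The multiplicity-one portion of your argument is sound: since $\sig$ is outer, $\cC_\sig(\rhobar)$ appears with multiplicity one in $Z(\Spec R^{\eta,\tau}_{\rhobar}/\varpi)$, and already $\cZ^{\BM}_{\sig}(\rhobar)$ contributes one, so effectivity forces $[\cZ^{\BM}_{\sig''}(\rhobar):\cC_\sig(\rhobar)]=0$ for $\sig''\neq\sig$. But you cannot ``promote'' this to $\cC_\sig(\rhobar)\not\subset\supp_{R_{\rhobar}^{\square}} M_\infty(\sig')$. Lemma~\ref{lem:BMcycle-bound} only gives the upper bound $\supp M_\infty(\sig')\subset\bigcup_{\kappa\uparrow\sig'}\cC_\kappa(\rhobar)$, which \emph{already contains} $\cC_\sig(\rhobar)$ once $\sig\uparrow\sig'$ is known, and there is no \emph{a priori} identity between $\supp M_\infty(\sig')$ and the support of $\cZ^{\BM}_{\sig'}(\rhobar)$. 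That identity is exactly the strong form of the weight part of Serre's conjecture proven downstream in Proposition~\ref{prop:strong-SWC}, which depends on the present lemma through Theorems~\ref{thm:abstract-SWC} and~\ref{thm:SWC}; invoking it here would be circular. Replace your last two paragraphs with the one-line observation on $\uparrow$-maximality of outer weights inside $\JH(\osig(\tau))$.
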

\begin{proof}
    By Proposition \ref{prop:weight-elim}, $\cC_{\sig}(\rhobar)\subset \supp_{R_{\rhobar}^{\square}}  M_\infty(\osig(\tau))$ implies that there is $\kappa \in  W^?(\rhobar|_{I_{\Qp}})\cap \JH(\osig(\tau))$ such that $\supp_{R_{\rhobar}^{\square}} M_\infty(\kappa)$ contains $\cC_{\sig}(\rhobar)$. Note that $\kappa$ is necessarily 9-deep. By Lemma \ref{lem:BMcycle-bound}, $\sig \uparrow \kappa$. Since $\sig$ is an outer weight, this implies $\sig=\kappa$.
\end{proof}

\begin{prop}\label{prop:modularity-constancy}
    Suppose that $\rhobar$ is 9-generic and $\sig_1, \sig_2\in W^?(\rhobar|_{I_{\Qp}})$ are adjacent. Then $\sig_1 \in W_{M_\infty}^{\supp}(\rhobar)$ if and only if $\sig_2 \in W_{M_\infty}^{\supp}(\rhobar)$.
\end{prop}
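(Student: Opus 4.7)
The plan is to use Setup \ref{setup} to present $\sigma_1$ and $\sigma_2$ as outer Jordan--H\"older factors of a common Deligne--Lusztig representation $\osig(\tau)$, and then to transport the support condition across the common generization supplied by Theorem \ref{thm:irred-comp-def-ring}.

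First I would record the consequences of adjacency. By definition, there exist $(\tilw_1,\tilw) \in \AP'(\eta)$ and a simple reflection $s$ as in Setup \ref{setup} such that $\{\sigma_1,\sigma_2\} = W^?(\rhobar_0) \cap \JH(\osig(\tau))$ for the accompanying auxiliary objects $\tau$ and $\rhobar_0$. By construction $\sigma_1,\sigma_2$ are outer weights of $\tau$, and by Lemma \ref{lem:W-cap-JH-inclusion} they both lie in $W^?(\rhobar|_{I_{\Qp}}) \cap \JH(\osig(\tau))$. Theorem \ref{thm:irred-comp-def-ring} then supplies a unique irreducible component $\cC \subset \Spec R_{\rhobar}^{\eta,\tau}$ whose special fiber contains both $\cC_{\sigma_1}(\rhobar)$ and $\cC_{\sigma_2}(\rhobar)$, and moreover $\cC_{\sigma_i}(\rhobar)$ generizes uniquely to $\cC$ for $i \in \{1,2\}$.

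The main step is propagation of the support condition. By symmetry it suffices to show that $\sigma_1 \in W_{M_\infty}^{\supp}(\rhobar)$ forces $\sigma_2 \in W_{M_\infty}^{\supp}(\rhobar)$. Assuming the former, exactness of $M_\infty$ combined with the fact that $\sigma_1$ is a Jordan--H\"older factor of $\osig(\tau)$ gives $\cC_{\sigma_1}(\rhobar) \subset \supp M_\infty(\sigma_1) \subset \supp M_\infty(\osig(\tau))$. By Remark \ref{rmk:irred-comp} and the maximal Cohen--Macaulay property, $\supp_{R_{\rhobar}^{\eta,\tau}} M_\infty(\sigma^\circ(\tau))$ is a union of irreducible components of $\Spec R_{\rhobar}^{\eta,\tau}$; since $\cC$ is the unique such component whose special fiber contains $\cC_{\sigma_1}(\rhobar)$, I would conclude $\cC \subset \supp_{R_{\rhobar}^{\eta,\tau}} M_\infty(\sigma^\circ(\tau))$. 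Reducing mod $\varpi$ and using that the special fiber of $\cC$ also contains $\cC_{\sigma_2}(\rhobar)$ yields $\cC_{\sigma_2}(\rhobar) \subset \supp_{R_{\rhobar}^{\square}} M_\infty(\osig(\tau))$.

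Finally I would apply Lemma \ref{lem:modularity-argument} to the outer weight $\sigma_2 \in W^?(\rhobar|_{I_{\Qp}}) \cap \JH_\out(\osig(\tau))$: the proof of that lemma actually produces the stronger conclusion $\cC_{\sigma_2}(\rhobar) \subset \supp M_\infty(\sigma_2)$, via Proposition \ref{prop:weight-elim} and Lemma \ref{lem:BMcycle-bound} combined with the outerness of $\sigma_2$, which is exactly $\sigma_2 \in W_{M_\infty}^{\supp}(\rhobar)$. The main technical obstacle is absorbed into Theorem \ref{thm:irred-comp-def-ring}, whose proof itself rests on the unibranch/normality statements from the local model theory; the remainder is a short exercise in support bookkeeping.
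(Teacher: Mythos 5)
Your proposal is correct and follows essentially the same route as the paper's proof: you invoke Setup \ref{setup} to place $\sigma_1,\sigma_2$ as outer Jordan--H\"older factors of $\osig(\tau)$, use Theorem \ref{thm:irred-comp-def-ring} to produce the common unique generization $\cC$, transport the support statement across $\cC$ using the maximal Cohen--Macaulay property (Remark \ref{rmk:irred-comp}), and close with Lemma \ref{lem:modularity-argument}. The only difference is cosmetic: you spell out more explicitly why the proof of Lemma \ref{lem:modularity-argument} in fact delivers $\cC_{\sigma_2}(\rhobar)\subset\supp M_\infty(\sigma_2)$ rather than merely $M_\infty(\sigma_2)\neq 0$, which the paper leaves implicit.
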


\begin{proof}
    Since $\sig_1$ and $\sig_2$ are adjacent, we can choose $(\tilw_1,\tilw)\in \AP'(\eta)$ and $s\in \uW$ with the resulting $\tau$ as in Setup \ref{setup}. Suppose that $\sig_1 \in W^{\supp}_{M_\infty}(\rhobar)$. Since $\sig_1 \in \JH(\osig(\tau))$, we have $M_\infty(\osig(\tau))\neq 0$. Moreover, by Theorem \ref{thm:irred-comp-def-ring}, there is a unique irreducible component $\cC\subset \Spec R_{\rhobar}^{\eta,\tau}$ containing $\cC_{\sig_i}(\rhobar)$ for $i\in \{1,2\}$. Thus, we have
    \begin{align*}
       \cC_{\sig_i}(\rhobar)\subset  \cC \subset \supp_{R_{\rhobar}^{\square}}M_\infty(\sig^\circ(\tau)).
    \end{align*}
    Then $\sig_2 \in W^{\supp}_{M_\infty}(\rhobar)$ by Lemma \ref{lem:modularity-argument}.
\end{proof}

\begin{proof}[Proof of Theorem \ref{thm:abstract-SWC}]
The implication (3)$\Rightarrow$(2) is clear. To see that (2) implies (1), note that if $M_\infty(F_{\rhobar}(w) ) \neq 0$ for some $w\in \uW$, by choosing a tame inertial type $\tau$ such that $\tilw(\rhobar,\tau) = ({w^{\diamond}})^\mo \tilw_h^\mo w_0 w^\diamond$, we can see that
\begin{align*}
    M_\infty(F_{\rhobar}(w) )  = M_\infty(\osig(\tau))
\end{align*}
whose support over $R_{\rhobar}^{\square}$ is precisely $\Spec R_{\rhobar}^{\eta,\tau}/\varpi = \cC_{F_{\rhobar}(w)}(\rhobar)$ by Theorem \ref{thm:col-one-def}. Finally, (1) implies (3) by Propositions \ref{prop:connected} and \ref{prop:modularity-constancy}.
\end{proof}

\subsection{Patching construction}\label{sub:patching-construction}
We provide a construction of a patching functor using \'etale cohomology and patched modules using coherent cohomology. The former construction essentially follows \cite[\S5.4]{lee_thesis} with minor modifications. For the latter, we follow the strategy of \cite{AH} where certain unitary Shimura varieties were considered. 

Let $F$, $\rbar$ and $\chi$ be as in \S\ref{sec:global}. We also denote by $\rbar_p:G_{\Qp} \ra \prescript{L}{}{\underline{G}}(\F)$ the tame $L$-parameter given by $(\rbar|_{G_{F_v}})_{v\in S_p}$. Similarly, we have $\psi_p$ given by $\psi_v\defeq\chi \eps_p^{-3}|_{G_{F_v}}$ for $v\mid p$. We take $\cO_p$ to be $\cO_F\otimes \Z_p$ and $U\le \GSp_4(\A^{\infty}_F)$ to be a compact open subgroup such that $U_p=\GSp_4(\cO_p)$.

Let $P$ be a set of places containing $S_p\cup P_U \cup P_{\rbar}$. Let $R\subset P$ be a subset of places $v$ such that $U_v=\Iw_1(v)$ is the pro-$v$ Iwahori subgroup, $q_v \equiv 1 \mod p$, and $\rbar|_{G_{F_v}}$ is trivial. For each $v\in R$, we choose a pair of characters of $\cO_v^\times$ $\zeta_v=(\zeta_{v,1},\zeta_{v,2})$ and define $\zeta_R$ as in \cite[\S5.3]{lee_thesis}. We denote the image of $\T^{P,\univ}$ in $\End_{\cO}(H^3_{\et}(\Sh_{U,\overline{F}},\cF_V)_{\chi,\zeta_R})$ by $\T^P_{\chi}(U,V)_{\zeta_R}$, where the subscript $\zeta_R$ denotes taking the $\zeta_R$-coinvariant. We adapt a similar notation for coherent cohomology and its Hecke algebra.

We define $q\defeq h^1(F_S/F,\ad(\rbar)(1))$ (cf.~\cite[Proposition 4.4.9]{EL}). Let $S_\infty\defeq\cO\DB{x_1,\dots, x_{2q}}$ and $\mathfrak{a}_\infty$ be its augmentation ideal.

The following theorem provides a patching functor in the \'etale setting.

\begin{thm}\label{thm:patching-exist}
    Suppose that $\rbar:G_F \ra \GSp_4(\F)$ is automorphic of level $U$ and satisfies the Taylor--Wiles conditions. Then, there exists a fixed similitude patching functor $M_\infty$ for $(\rbar_p,\psi_p)$ together with an $\cO$-algebra morphism $S_\infty \ra R_{\infty}^{\psip}$ such that for $V\in \Rep_{\cO}^{\psip}(\GSp_4(\cO_p))$, 
    \begin{align*}
        M_\infty(V)/\mathfrak{a}_\infty = H^3_{\et}(\Sh_{U,\overline{F}},\cF_{V^\vee})_{\chi,\zeta_R,\fm_{\rbar,\chi}^S}^\vee.
    \end{align*}
    In particular, for a Serre weight $\sig$ of $\GSp_4(\cO_p/p)$, $\sig \in W(\rbar)$ if and only if $M_\infty(\sig)\neq 0$.
\end{thm}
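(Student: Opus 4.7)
The plan is to adapt the standard Taylor--Wiles--Kisin patching machinery, as carried out in \cite[\S5.4]{lee_thesis} for a definite inner form, to work directly on the Hilbert--Siegel variety $\Sh_U$, using the torsion vanishing of Proposition \ref{prop:torsion-vanishing} in place of the role previously played by compactness of the inner form at infinity.

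The first step would be to produce, for each integer $n\geq 1$, an allowable set of Taylor--Wiles primes $Q_n$ with $|Q_n|=q$, disjoint from $P$, such that for $v\in Q_n$ the representation $\rbar|_{G_{F_v}}$ is unramified with distinct Frobenius eigenvalues and $q_v\equiv 1\pmod{p^n}$, and such that the relevant dual Selmer group vanishes; this is a standard Galois-cohomology input enabled by the Taylor--Wiles hypotheses on $\rbar$. At each $v\in Q_n$ I would pass to the pro-$v$ Iwahori level and cut the localized cohomology by the diamond operators at $Q_n$ together with $\zeta_R$ at places in $R$. Local--global compatibility, combined with the local Langlands correspondence of \cite{GT} and the arguments of \cite[\S5.3]{lee_thesis}, then identifies the resulting localized Hecke algebra with an appropriate quotient of the global framed fixed-similitude deformation ring $R_n^{\psi_p}$ of level $Q_n$.

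Next, for $V\in\Rep_\cO^{\psi_p}(\GSp_4(\cO_p))$ I would form
\[
M_n(V)\defeq H^3_{\et}\bigl(\Sh_{U_n,\overline{F}},\cF_{V^\vee}\bigr)_{\chi,\zeta_R,\fm_n}^{\vee},
\]
where $U_n$ has the deeper level at $Q_n$ and $\fm_n$ is a lift of $\fm_{\rbar,\chi}^P$. By Proposition \ref{prop:torsion-vanishing} the \'etale cohomology vanishes outside degree $3$ after localization at $\fm_n$, so $V\mapsto M_n(V)$ is exact. The tower $\{M_n\}_n$ carries compatible actions of the Iwasawa-type algebras $S_n$ coming from the diamond quotient at $Q_n$, and the usual ultrafilter/cofinal-subsequence patching procedure (as recorded in \cite[\S5.4]{lee_thesis}) yields the desired exact functor $M_\infty\colon\Rep_\cO^{\psi_p}(\GSp_4(\cO_p))\to\Mod(R_\infty^{\psi_p})$, together with the algebra map $S_\infty\to R_\infty^{\psi_p}$, realising $M_\infty(V)/\mathfrak{a}_\infty$ as the claimed dual of \'etale cohomology. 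The Cohen--Macaulay axioms then reduce to a dimension count: $M_\infty(\sigma^\circ(\lambda,\tau))$ is finite over $S_\infty$ and so has depth at least $\dim S_\infty=2q$, while $R_\infty^{\lambda+\eta,\tau,\psi_p}$ is equidimensional of matching dimension (using Kisin's formula for potentially crystalline deformation rings together with the chosen properties of $R^p$); the mod $\varpi$ variant follows by the same argument after reduction.

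The final assertion is formal: $\sigma\in W(\rbar)$ is, using Proposition \ref{prop:torsion-vanishing} to concentrate cohomology in degree $3$, equivalent to $H^3_{\et}(\Sh_{U,\overline{F}},\cF_{\sigma^\vee})_{\chi,\fm_{\rbar,\chi}^P}\neq 0$, hence to $M_\infty(\sigma)/\mathfrak{a}_\infty\neq 0$, which by Nakayama is equivalent to $M_\infty(\sigma)\neq 0$. The hardest point is applying Proposition \ref{prop:torsion-vanishing} uniformly along the entire Taylor--Wiles tower: one must verify that the genericity hypothesis of \cite{Hamann-Lee} continues to hold after passing to the level $U_n$, which is the case because the existence of infinitely many auxiliary primes $v$ with $\rbar|_{G_{F_v}}$ generic depends only on $\rbar$ itself (via vastness/tidiness) and is insensitive to the level. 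A secondary technical point is the book-keeping for $\zeta_R$ that absorbs the Ihara-avoidance data at places in $R$, which is handled exactly as in \cite[\S5.3--5.4]{lee_thesis}.
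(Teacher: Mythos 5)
Your overall plan matches the paper's: adapt the Taylor--Wiles--Kisin patching of \cite[\S5.4]{lee_thesis} to the Hilbert--Siegel variety $\Sh_U$, with torsion vanishing (Proposition \ref{prop:torsion-vanishing}) concentrating cohomology in degree $3$ and substituting for the compactness of the inner form. You correctly identify the role of $\zeta_R$, the lift to the global deformation ring at auxiliary level, and the Nakayama reduction in the final bullet.

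However, you have misidentified the technical crux, and there is a genuine gap around it. The point where compactness was actually used in the inner form case is to know that, at each finite layer of the tower, the localized cohomology module is \emph{free} over the diamond-operator algebra $\cO/\varpi^r[\Delta_{Q_n}]$ (and projective over $\cO/\varpi^r[\Delta_{Q_n}\times H/H']$ so that the construction is functorial in the level $H$ at $p$). Without this freeness, patching produces a module with an $S_\infty$-action but no depth information, and your claim that ``$M_\infty(\sigma^\circ(\lambda,\tau))$ is finite over $S_\infty$ and so has depth at least $\dim S_\infty$'' is false as stated (a finite torsion module has depth $0$). The paper establishes the required projectivity by observing that $\Sh_{H'\cdot U^p_\Delta(Q_n)}\to\Sh_{H\cdot U^p_0(Q_n)}$ is an \'etale Galois cover with group $\Delta_{Q_n}\times H/H'$, invoking \cite[Proposition 1.2]{crew-et_p_cover} to get a perfect complex of $\cO/\varpi^r[\Delta_{Q_n}\times H/H']$-modules computing (compactly supported) \'etale cohomology, then using Proposition \ref{prop:torsion-vanishing} to show the localized complex is concentrated in degree $3$ and hence projective, and finally using that $\cO/\varpi^r[\Delta_{Q_n}]$ is local to upgrade projectivity to freeness. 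This chain of reasoning is the substantive new content relative to \cite{lee_thesis}, and your proposal omits it entirely while flagging a more routine concern (uniform applicability of genericity along the tower) as the ``hardest point.''

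Two smaller remarks. First, the paper works with the specific pro-$v$ quotient $\Delta_v=\cI(v)/U_{\Delta,v}$ built from $T(k_v)/Z_{\GSp_4(k_v)}$ (the fixed-similitude variant), not the full diamond group; this matters for matching the fixed-similitude deformation ring. Second, once freeness over $\cO/\varpi^r[\Delta_{Q_n}]$ is in hand, the Cohen--Macaulayness is exactly a dimension count as you say, but the point is that the patched module is \emph{free} (not merely finite) over $S_\infty$, and one must also verify the comparison between $\dim S_\infty$ and the dimension of $R_\infty^{\lambda+\eta,\tau,\psi_p}$, which the paper delegates to the framework of \cite[Definition 4.3.5]{EL}.
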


\begin{proof}
    We construct the $R_\infty^{\psi_p}$-module $M_\infty$ (from which the patching functor can be defined) by repeating the argument in \cite[\S5.4.6]{lee_thesis}. We refer \loccit~for undefined notation. For the patching datum, we make the following changes. For $n\ge 1$, recall that we have a set of Taylor--Wiles primes $Q_n$. For each $v\in Q_n$, define $U_{\Delta,v}$ to be the kernel of the composition $\cI(v) \onto \cI(v)/Z_{\GSp_4(\cO_K)}\cI(v) \onto (T(k_v)/Z_{\GSp_4(k_v)})(p)$ where $Z_{*}$ denotes the center of $*$ and $(*)(p)$ denotes the maximal pro-$p$ quotient of $*$. We also denote by $\Delta_v \defeq \cI(v)/U_{\Delta,_v}$ and $\Delta_{Q_n}\defeq \prod_{v\in Q_n} \Delta_v$. For a compact open subgroup $H\le \GSp_4(\cO_p)$, $n,r\ge 1$, we choose
    \begin{itemize}
        \item $M_r^{\psi_p}(H)_0\defeq H^3_{\et}(\Sh_{H\cdot U^p,\overline{F}},\cO/\varpi^r)_{\chi,\zeta_R,\fm_{\rbar,\chi}^S}^\vee$
        \item for $n\ge 1$, we take 
        \begin{align*}
            M_r^{\psi_p}(H)_n&\defeq H^3_{\et}(\Sh_{H\cdot U_{\Delta}^p(Q_n),\overline{F}},\cO/\varpi^r)_{\chi,\zeta_R,\fm_{\rbar,\chi}^{S\cup Q_n},\fm_{Q_n}}^\vee\otimes_{ R_{\mathcal{S}_{Q_n}}} R_{n}^{\psi_p} \\
            \al_n^{\psi_p}&: M_r^{\psi_p}(H)_n/\mathfrak{a}_\infty \simeq M_r^{\psi_p}(H)_0
        \end{align*}
    \end{itemize}
    where $U_{\Delta}^p(Q_n) \le \GSp_4(\A_{F}^{\infty,p})$ is a compact open subgroup defined as
    \begin{enumerate}
        \item if $v\notin Q_n$ and $v\nmid p,\infty$, $U_{\Delta}^p(Q_n)_v = U^p_v$;
        \item if $v\in Q_n$,  $U_{\Delta}^p(Q_n)_v = U_{\Delta,v}$. 
    \end{enumerate}
    By Proposition 5.3.2 in \loccit\footnote{Note that our $\T^P_{\chi}(U,\cO)_{\zeta_R}$ is different from that in \loccit. The same statement holds in our setting. Unlike in \loccit, we do not need to use the Jacquet--Langlands transfer, and we apply Matsushima's formula to relate \'etale cohomology groups to spaces of automorphic forms on $\GSp_4(\A_F)$ instead of \cite[(4.2.2)]{EL}.}, there is a surjective morphism $R_{\mathcal{S}_{Q_n}} \onto \T^P_{\chi}(H\cdot U^p_1(Q_n),\cO)_{\zeta_R,\fm_{\rbar,\chi}^P}$. This induces  a $R_{\mathcal{S}_{Q_n}}$-module structure on $H^3_{\et}(\Sh_{H\cdot U_1^p(Q_n),\overline{F}},\cO/\varpi^r)_{\chi,\zeta_R, \fm_{\rbar,\chi}^{S\cup Q_n},\fm_{Q_n}}$.
    
    We briefly explain why the above forms a $G_p\defeq\prod_{v\in S_p}\GSp_4(F_v)$-patching datum in the sense of \cite[Definition 4.3.5]{EL}. Let $H'\lhd H$ be a normal subgroup. Note that $\Sh_{H'\cdot U_{\Delta}^p(Q_n)} \ra \Sh_{H\cdot U_0^p(Q_n)}$ is an \'etale Galois cover with Galois group $\Delta_{Q_n}\times H/H'$. By \cite[Proposition 1.2]{crew-et_p_cover}, the complex computing compactly supported \'etale cohomology of $\Sh_{H\cdot U_1^p(Q_n)}$ with coefficients in $\cO/\varpi^r$ is a perfect complex of $\cO/\varpi^r[\Delta_{Q_n}\times H/H']$-modules. Once we localize the complex at $\fm^{S\cup Q_n}_{\rbar,\chi}$, it becomes isomorphic to $H^3_{\et}(\Sh_{H\cdot U_{\Delta}^p(Q_n),\overline{F}},\cO/\varpi^r)_{\chi,\fm_{\rbar,\chi}^{S\cup Q_n}}$ by Proposition \ref{prop:torsion-vanishing}, which is projective over $\cO/\varpi^r[\Delta_{Q_n} \times H/H']$. It is also free over $\cO/\varpi^r[\Delta_{Q_n}]$ since $\cO/\varpi^r[\Delta_{Q_n}]$ is a local ring. 
\end{proof}

Now we turn to the coherent setting as in \S\ref{sub:coh-coh}. In particular, we take $F=\Q$. 

\begin{thm}\label{thm:coh-pat-mod}
    Let $\lam\in X_1(T)$ be a 2-generic character. Then there exists a patched module $M_\infty^{\coh}(W(\lam))$ for $(\lam,\mathrm{triv})$ equipped with free $S_\infty$-module structure such that $M_\infty^{\coh}(W(\lam))/\mathfrak{a}_\infty = H^0(\Sh^{\tor}_{U_pU^p},\omega(\lam+(3,3;0)))_{\chi,\zeta_R,\fm^{S}_{\rbar,\chi}}^\vee$ as $R_{\infty}^{\lam+\eta,\psip}$-modules. (Here, $\mathrm{triv}$ is the trivial inertial type.) 
\end{thm}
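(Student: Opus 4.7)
The plan is to adapt the Taylor--Wiles patching machinery from Theorem \ref{thm:patching-exist} to the coherent setting, following the strategy of \cite{AH}. Since we need only a patched \emph{module} rather than a full patching functor, the construction is cleaner: we patch a single cohomology group with fixed coefficient sheaf. Concretely, for each $n \ge 1$ I would choose Taylor--Wiles primes $Q_n$ and auxiliary compact open subgroups $H \le \GSp_4(\cO_p)$, $H' \lhd H$ as in \emph{loc.~cit.}, and consider
\[
M^{\coh}_r(H)_n \defeq H^0\bigl(\Sh^{\tor}_{H \cdot U_\Delta^p(Q_n)}, \omega^{\can}(\lambda+(3,3;0))\otimes \cO/\varpi^r\bigr)_{\chi,\zeta_R,\fm_{\rbar,\chi}^{S\cup Q_n},\fm_{Q_n}}^\vee \otimes_{R_{\cS_{Q_n}}} R_n^{\psi_p},
\]
together with the usual compatibility $M^{\coh}_r(H)_n/\mathfrak{a}_\infty \simeq M^{\coh}_r(H)_0$.

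The key technical point is to verify that $\{M^{\coh}_r(H)_n\}$ forms a valid $G_p$-patching datum in the sense of \cite[Definition 4.3.5]{EL}. The Taylor--Wiles level defines an étale Galois cover $\Sh_{H'\cdot U_\Delta^p(Q_n)}\to \Sh_{H\cdot U_0^p(Q_n)}$ with Galois group $\Delta_{Q_n}\times H/H'$, and pushforward of $\omega^{\can}(\lambda+(3,3;0))\otimes \cO/\varpi^r$ along the cover yields a complex of projective $\cO/\varpi^r[\Delta_{Q_n}\times H/H']$-modules computing coherent cohomology (since $\cO/\varpi^r[\Delta_{Q_n}]$ is local, projective equals free). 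After localizing at the non-Eisenstein maximal ideal $\fm_{\rbar,\chi}^{S\cup Q_n}$, Proposition \ref{prop:coh-vanishing} identifies canonical with subcanonical coefficients and shows the localized complex is concentrated in degree zero. Hence $M^{\coh}_r(H)_n$ is itself free over $\cO/\varpi^r[\Delta_{Q_n}]$, which is exactly the freeness property needed for ultrapatching.

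To equip the patched module with an $R_\infty^{\lambda+\eta,\psi_p}$-action, I would invoke Corollary \ref{cor:hecke-comp}: the Hecke algebras acting on (étale) $H^3$ and (coherent) $H^0$ after non-Eisenstein localization are canonically isomorphic, and hence the Galois representation (with its crystalline local conditions of type $(\lambda+\eta,\mathrm{triv})$ at $p$ coming from classicality of weight-$\lambda$ sections, combined with the fixed similitude character $\psi_p$) constructed in the étale setting transfers to give the required $R_\infty^{\psi_p}$-module structure, and it factors through $R_\infty^{\lambda+\eta,\mathrm{triv},\psi_p}$ by local--global compatibility at $p$. Ultrapatching over $(r,n)$ then produces $M^{\coh}_\infty(W(\lambda))$ as a module over $R_\infty^{\lambda+\eta,\mathrm{triv},\psi_p}$ with a compatible free $S_\infty$-action, and the recovery $M^{\coh}_\infty(W(\lambda))/\mathfrak{a}_\infty = H^0(\Sh^{\tor}_{U_pU^p},\omega(\lambda+(3,3;0)))_{\chi,\zeta_R,\fm^S_{\rbar,\chi}}^\vee$ falls out of the construction. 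The maximal Cohen--Macaulay property is then a standard dimension count using the freeness over $S_\infty$.

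The main obstacle is the freeness of $M^{\coh}_r(H)_n$ over $\cO/\varpi^r[\Delta_{Q_n}]$. Unlike the étale setting where one uses torsion vanishing due to Hamann--Lee, in the coherent setting one must carefully control boundary contributions from the toroidal compactification, using that passage to $\omega^{\can}$ vs.~$\omega^{\sub}$ (and the vanishing of higher cohomology) is only safe after non-Eisenstein localization. The second assertion of Proposition \ref{prop:coh-vanishing} is essential here, as it justifies the simultaneous use of vanishing (valid for $\omega^{\sub}$) and the comparison of Hecke modules under canonical coefficients.
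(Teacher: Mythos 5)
Your proposal is on the right track (same references, same freeness mechanism via Proposition~\ref{prop:coh-vanishing}, ultrapatching) but it contains a genuine gap that the paper deliberately avoids: you vary the level $H \le \GSp_4(\cO_p)$ at $p$ and try to verify the full $G_p$-patching datum of \cite[Definition 4.3.5]{EL}. This is precisely the step that does not work in the coherent setting, and it is the reason the paper --- and Remark~\ref{rmk:patching-functor-vs-module} --- only claims a patched \emph{module} here rather than a patching functor. The paper's proof fixes the level at $p$ to be $U_p = \GSp_4(\cO_p)$ throughout, works with the smooth integral model $\Sh_{U_p\cdot U^p,\cO/\varpi^r}$, varies only the Taylor--Wiles levels $U^p_\Delta(Q_n)$ away from $p$, and explicitly remarks at the end that ``we do not need the second axiom in \cite[Definition 4.3.5]{EL} because the level at $p$ is fixed here.''

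Concretely, the step that fails in your argument is the claim that ``the Taylor--Wiles level defines an \'etale Galois cover $\Sh_{H'\cdot U_\Delta^p(Q_n)}\to \Sh_{H\cdot U_0^p(Q_n)}$ with Galois group $\Delta_{Q_n}\times H/H'$'' and that pushing forward $\omega^{\can}(\lambda+(3,3;0))\otimes\cO/\varpi^r$ along it gives a perfect complex of $\cO/\varpi^r[\Delta_{Q_n}\times H/H']$-modules. Over $\cO/\varpi^r$ the map is only \'etale for the $\Delta_{Q_n}$ factor (the auxiliary primes are away from $p$); the $H/H'$ part of the tower adds level at $p$, where the Siegel modular variety no longer has a smooth integral model, the toroidal compactification theory and the vanishing of Proposition~\ref{prop:coh-vanishing} are not available, and the passage between $\omega^{\sub}$ and $\omega^{\can}$ has not been established. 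In the \'etale setting of Theorem~\ref{thm:patching-exist} one only needs $\Sh_{H'\cdot\cdots,\overline{F}}$ as a quasi-projective scheme over $\overline{F}$, so deeper level at $p$ is harmless; in the coherent mod $p$ setting it is not. Replacing your construction with the paper's --- fix $U_p$, patch only against $\Delta_{Q_n}$, and use \cite[Corollary 33]{AH} together with Proposition~\ref{prop:coh-vanishing} for freeness over $\cO/\varpi^r[\Delta_{Q_n}]$ --- removes the gap. Your use of Corollary~\ref{cor:hecke-comp} to transport the $R_\infty$-module structure is reasonable in spirit but is not what the paper appeals to at this stage; the $R_{\cS_{Q_n}}$-action already comes from \cite[Proposition 5.3.2]{lee_thesis} exactly as in the \'etale case, and Corollary~\ref{cor:hecke-comp} is only invoked later, in Corollary~\ref{cor:patching-comp}, to compare supports.
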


\begin{rmk}\label{rmk:coh-patch-functorial}
    Although it is not clear how to construct a patching functor in the coherent setting, the coherent patched modules are (contravariant) functorial in the following sense. If there is a family of morphisms
    \begin{align*}
        H^0(\Sh^{\tor}_{U_pU^p,\F},\omega(\lam+(3,3;0)))_{\chi,\zeta_R,\fm^{S}_{\rbar,\chi}} \ra H^0(\Sh^{\tor}_{U_pU^p,\F},\omega(\lam'+(3,3;0)))_{\chi,\zeta_R,\fm^{S}_{\rbar,\chi}}
    \end{align*}
    for all $U$ that are compatible with level-raising inclusions, then this induces a map between patched modules
    \begin{align*}
        M_\infty^{\coh}(W(\lam'))/\varpi \ra M_\infty^{\coh}(W(\lam))/\varpi.
    \end{align*}
    In particular, the surjection $W(\lam_1)/\varpi \onto W(\lam_{0})/\varpi$ in Lemma \ref{lem:weyl-decomp} (resp.~the injective map $\theta^1_{\lam}$ in Theorem \ref{thm:ortiz}) induces an injection (resp.~a surjection) between corresponding patched modules.
\end{rmk}

\begin{proof}
    For a given $\lam$ and integers $n,r\ge 1$, we choose
    \begin{itemize}
        \item $M_{r,0}^{\psi_p}\defeq H^0(\Sh_{U_p\cdot U^p,\cO/\varpi^r},\omega(\lam+(3,3;0)))_{\chi,\zeta_R,\fm_{\rbar,\chi}^S}^\vee$
        \item for $n\ge 1$, we take 
        \begin{align*}
            M_{r,n}^{\psi_p}&\defeq H^0(\Sh_{U_p U_{\Delta}^p(Q_n),\cO/\varpi^r},\omega(\lam+(3,3;0)))_{\chi,\zeta_R,\fm_{\rbar,\chi}^{S\cup Q_n},\fm_{Q_n}}^\vee\otimes_{ R_{\mathcal{S}_{Q_n}}} R_{n}^{\psi_p} \\
            \al_n^{\psi_p}&: M_{r,n}^{\psi_p}/\mathfrak{a}_\infty \simeq M_{r,0}^{\psi_p}.
        \end{align*}
    \end{itemize}
    Then $M_{r,n}^{\psi_p}$ is a free $\cO/\varpi^r[\Delta_{Q_n}]$-module following the proof of \cite[Corollary 33]{AH} and the mod $p$ vanishing result in our setting (Proposition \ref{prop:coh-vanishing}). Then the ultrapatching construction as in \cite[\S5.4.6]{lee_thesis} provides $M_\infty^{\coh}(W(\lam))$. Note that we do not need the second axiom in \cite[Definition 4.3.5]{EL} because the level at $p$ is fixed here. 
\end{proof}

As in \S\ref{sub:two-coh-comp}, \'etale and coherent patched modules can be compared. For our application, we only need the following result on their support.  

\begin{cor}\label{cor:patching-comp}
    Let $\lam\in X_1(T)$ be a 2-generic character. For the patching functor $M_\infty$ as in Theorem \ref{thm:patching-exist} and the patched module $M_\infty^{\coh}(W(\lam))$ as in Theorem \ref{thm:coh-pat-mod}, we have
    \begin{align*}
        \supp_{R_\infty^{\lam+\eta,\psip}} M_\infty(W(\lam)^\vee) =  \supp_{R_\infty^{\lam+\eta,\psip}} M_\infty^{\coh}(W(\lam)). 
    \end{align*}
\end{cor}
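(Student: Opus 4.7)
The plan is to show that both supports equal the same union of irreducible components of $\Spec R_\infty^{\lam+\eta,\psip}$ by comparing characteristic-zero fibers, via the rational comparison of \'etale and coherent cohomology recalled in \S\ref{sub:two-coh-comp}.

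First I would reduce to checking equality of the characteristic-zero supports. Since $R_\infty^{\lam+\eta,\psip}$ is equidimensional and $\cO$-flat, and since both $M_\infty(W(\lam)^\vee)$ and $M_\infty^{\coh}(W(\lam))$ are maximal Cohen--Macaulay over it (by the patching axiom and the construction in Theorem \ref{thm:coh-pat-mod} respectively, using the vanishing of higher coherent cohomology from Proposition \ref{prop:coh-vanishing}), each support is a union of irreducible components of $\Spec R_\infty^{\lam+\eta,\psip}$. An irreducible component $C$ lies in one of these supports if and only if $M[1/p]$ has nonzero localization at the generic point of $C_E\defeq C[1/p]$. Thus it suffices to prove $\supp M_\infty(W(\lam)^\vee)[1/p] = \supp M_\infty^{\coh}(W(\lam))[1/p]$ inside $\Spec R_\infty^{\lam+\eta,\psip}[1/p]$.

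Next I would identify each characteristic-zero support with the spectrum of a patched Hecke algebra. Taylor--Wiles patching realizes $\supp_{R_\infty^{\lam+\eta,\psip}[1/p]} M_\infty(W(\lam)^\vee)[1/p]$ as $\Spec \T^{\et}_\infty(W(\lam))[1/p]$, where $\T^{\et}_\infty$ is the ultraproduct patching of the finite-level algebras $\T^{P\cup Q_n}_\chi(U_pU_{\Delta}^p(Q_n),W(\lam))_{\zeta_R,\fm_{\rbar,\chi}^{P\cup Q_n},\fm_{Q_n}}$. The analogous construction with coherent cohomology at each Taylor--Wiles level (which makes sense because the construction in Theorem \ref{thm:coh-pat-mod} produces Hecke-equivariant compatible systems, cf.\ Remark \ref{rmk:coh-patch-functorial}) gives $\supp M_\infty^{\coh}(W(\lam))[1/p] = \Spec \T^{\coh}_\infty(W(\lam))[1/p]$. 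By Corollary \ref{cor:hecke-comp} applied at the level $U_pU_{\Delta}^p(Q_n)$, whose non-Eisensteinness at $\fm_{\rbar,\chi}^{P\cup Q_n}$ follows from that at $\fm_{\rbar,\chi}^P$, the two finite-level Hecke algebras coincide in $\Spec R_\infty^{\lam+\eta,\psip}[1/p]$. Ultrapatching preserves this identification, yielding $\T^{\et}_\infty(W(\lam)) = \T^{\coh}_\infty(W(\lam))$ and hence equality of the two supports.

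The main obstacle is verifying that the rational \'etale--coherent comparison of \S\ref{sub:two-coh-comp} goes through at each Taylor--Wiles level, specifically that the Matsushima-type decomposition localized at $\fm_{\rbar,\chi}^{P\cup Q_n}$ still has matching multiplicities on each coherent summand indexed by $W^M$. This boils down to the fact that, after localization at a non-Eisenstein ideal, only non-CAP automorphic representations contribute, and within each $L$-packet the (anti)holomorphic and non-holomorphic members appear with the same multiplicity by Arthur's classification for $\GSp_4$ (\cite{sorensen-gsp4}); this is exactly what is used in the proof of Corollary \ref{cor:hecke-comp}, and the argument is insensitive to the additional tame level at Taylor--Wiles primes because the relevant multiplicity statements are local at $\infty$ and stable under the Hecke action at $Q_n$. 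Once this is in place the remainder of the argument is formal.
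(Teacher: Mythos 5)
Your proposal reaches the same conclusion but by a genuinely different route, and the route you chose carries some nontrivial debts that the paper's argument avoids. The paper does \emph{not} pass to characteristic zero or compare Hecke algebras level-by-level along the Taylor--Wiles tower: it invokes \cite[Lemma 3.9]{paskunas-2adic}, with system of parameters $p,x_1,\dots,x_{2q}\in S_\infty$, to reduce the equality of supports of two maximal Cohen--Macaulay $R_\infty^{\lam+\eta,\psip}$-modules to a comparison of their reductions mod $\fa_\infty$ --- i.e.\ of the unpatched cohomology modules at the base level $U$ only --- and then Corollary \ref{cor:hecke-comp} finishes it. Your version instead asserts that (a) the characteristic-zero support of each patched module is the spectrum of an ``ultraproduct-patched Hecke algebra,'' and (b) the \'etale--coherent rational comparison of \S\ref{sub:two-coh-comp} holds at every auxiliary level $U_pU_\Delta^p(Q_n)$. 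Both are plausible but neither is free: (a) needs a precise statement that $\Ann_{R_\infty}M_\infty$ is computed from the finite-level annihilators under patching, which is not automatic and is exactly the sort of bookkeeping Paskunas's lemma is designed to replace; and (b), as you yourself flag, requires re-running the Matsushima/Arthur multiplicity comparison underlying Corollary \ref{cor:hecke-comp} with extra Iwahori level at the $Q_n$, which the paper never has to do because it only ever compares at the base level $U$. So while I believe your route can be made to work, the paper's use of \cite[Lemma 3.9]{paskunas-2adic} buys a much shorter and more robust proof: one finite-level comparison instead of infinitely many, and no need to unwind what patching does to Hecke-algebra images.
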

\begin{proof}
    Note that both patched modules are maximal Cohen--Macaulay over $R_\infty^{\lam+\eta,\psip}$. Thus, their supports are unions of irreducible components. Since $R_\infty^{\lam+\eta,\psip}$ is $\cO$-flat with regular generic fiber (cf.~\cite[Theorem 3.3.8]{BellovinGee}), we can apply \cite[Lemma 3.9]{paskunas-2adic} while taking $A$ and $x_1,\dots, x_d$ in \loccit~to be $R_\infty^{\lam+\eta,\psip}$ and $p,x_1,\dots,x_{2q}\in S_\infty$ (which form a system of parameters in $R_\infty^{\lam+\eta,\psip}$) to reduce the claimed equality modulo $\mathfrak{a}_\infty$. Then Corollary \ref{cor:hecke-comp} implies the claim.%
\end{proof}

\subsection{The main result}

We prove Conjecture \ref{conj:SWC} for $\rbar$ under some technical assumptions. 


\begin{thm}\label{thm:SWC}
    Suppose that $\rbar$ is automorphic, satisfies Taylor--Wiles conditions, and for each 
    $v|p$, $\rbar|_{G_{F_v}}$ is tame and $9$-generic. Then the following are equivalent:
    \begin{enumerate}
        \item $\rbar$ is potentially diagonalizably automorphic (in the sense of \cite[Definition 4.2.5]{EL};
        \item $W_{\obv}(\rbar_p|_{I_{\Q_p}}) \cap W(\rbar) \neq \emptyset$;
        \item $W^?(\rbar_p|_{I_{\Q_p}}) = W(\rbar)$.
    \end{enumerate}
\end{thm}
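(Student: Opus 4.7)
The plan is to reduce Theorem \ref{thm:SWC} to the abstract Serre weight conjecture of Theorem \ref{thm:abstract-SWC} by producing the appropriate patching functor, and then to deduce the equivalence with potential diagonalizability from the geometry of the low-weight tame potentially crystalline deformation rings handled in Theorem \ref{thm:col-one-def}. Concretely, for a suitable lift $\psi_p$ of $\simc(\rbar_p)$, I would apply Theorem \ref{thm:patching-exist} to construct a fixed-similitude patching functor $M_\infty$ for $(\rbar_p,\psi_p)$; by the last assertion of that theorem, $W_{M_\infty}(\rbar_p) = W(\rbar)$. Since $\rbar_p$ is tame and $9$-generic at each place above $p$, Theorem \ref{thm:abstract-SWC} applies and yields the equivalence of $W_{M_\infty}^{\supp}(\rbar_p)\neq \emptyset$, $W_\obv(\rbar_p|_{I_{\Q_p}})\cap W(\rbar)\neq \emptyset$, and $W^?(\rbar_p|_{I_{\Q_p}}) = W(\rbar)$, so $(2) \Leftrightarrow (3)$ is immediate.

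For $(1)\Rightarrow(3)$, fix any $w\in \uW$ and choose a tame inertial $L$-parameter $\tau$ with $\tilw(\rbar_p,\tau) = (w^\diamond)^{-1}\tilw_h^{-1}w_0 w^\diamond$, so that $\JH(\osig(\tau))\cap W^?(\rbar_p|_{I_{\Q_p}}) = \{\sigma\}$ for $\sigma \defeq F_{\rbar_p}(w)$ (by Setup \ref{setup} and Lemma \ref{lem:JH-bij}), and moreover $\Spec R_{\rbar_p}^{\eta,\tau}$ is a domain with reduced special fiber equal to $\cC_\sigma(\rbar_p)$ by Theorem \ref{thm:col-one-def}. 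Starting from a pot.\ diag.\ automorphic lift provided by (1), I would invoke the BLGGT pot.\ diag.\ weight-change result (\cite[Theorem 4.2.1]{BLGGT}, as applied in the $\GSp_4$-setting by \cite{lee_thesis}) to produce a new automorphic lift of $\rbar$ which is pot.\ crystalline of type $(\eta,\tau)$ at all places above $p$. Local-global compatibility then forces such a lift to correspond to an $\ov{\Q}_p$-point of $\supp_{R_\infty^{\eta,\tau,\psi_p}} M_\infty(\sig^\circ(\tau))$, so $M_\infty(\sig^\circ(\tau)) = M_\infty(\sigma) \neq 0$; hence $\sigma\in W_\obv(\rbar_p|_{I_{\Q_p}})\cap W(\rbar)$, and Theorem \ref{thm:abstract-SWC} gives (3).

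For $(3)\Rightarrow(1)$, by what was already established it suffices to exhibit a pot.\ diag.\ automorphic lift assuming (2), i.e.\ assuming that some obvious weight $\sigma = F_{\rbar_p}(w)$ lies in $W(\rbar)$. Choosing $\tau$ as above, the nonvanishing $M_\infty(\sig^\circ(\tau)) = M_\infty(\sigma)\neq 0$ combined with Theorem \ref{thm:col-one-def} forces $M_\infty(\sig^\circ(\tau))$ to be supported on the entire domain $\Spec R_\infty^{\eta,\tau,\psi_p}$, whose generic fiber is formally smooth. A classical automorphic form contributing to $M_\infty(\sig^\circ(\tau))/\mathfrak{a}_\infty$ then produces an automorphic lift of $\rbar$ whose restriction at each $v\mid p$ is pot.\ crystalline of Hodge type $\eta$ and tame type $\tau_v$. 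Since the Hodge--Tate weights attached to $\eta$ lie in the Fontaine--Laffaille range under our genericity hypothesis, standard pot.\ diag.\ criteria for tame pot.\ crystalline representations of small Hodge--Tate weight (as in \cite[\S 1.4]{BLGGT}) show that this lift is automatically potentially diagonalizable, establishing (1).

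The main technical obstacle is the implication $(1)\Rightarrow(3)$: one must carefully deploy the BLGGT potentially diagonalizable weight-change mechanism at all places above $p$ simultaneously while preserving all Taylor--Wiles hypotheses away from $p$, and then verify that the resulting pot.\ diag.\ lift of $\rbar$ genuinely contributes to the patched module of the prescribed outer Deligne--Lusztig representation. Once this is in hand, the remaining content of the theorem is effectively packaged into Theorems \ref{thm:abstract-SWC}, \ref{thm:patching-exist}, and \ref{thm:col-one-def}, and the equivalence collapses to an application of the abstract SWC together with the unibranch/irreducibility analysis of the obvious-weight deformation rings.
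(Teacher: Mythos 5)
Your proposal is correct and takes essentially the same approach as the paper: you deduce $(2)\Leftrightarrow(3)$ from Theorem \ref{thm:abstract-SWC} via the patching functor of Theorem \ref{thm:patching-exist}, and settle the potential-diagonalizability equivalence by choosing, for an obvious weight $\sigma = F_{\rbar_p}(w)$, the type $\tau$ with $\tilw(\rbar_p,\tau)=(w^\diamond)^{-1}\tilw_h^{-1}w_0w^\diamond$ so that $R^{\eta,\tau}_{\rbar_p}$ is a domain (Theorem \ref{thm:col-one-def}) and all such lifts are potentially diagonalizable. Two small bookkeeping differences from the paper: for $(1)\Rightarrow(2)$ the paper first produces a global Galois lift of the prescribed type $(\eta,\tau)$ via the Patrikis--Tang lifting theorem (\cite[Theorem 3.4]{PT}) and then feeds it into \cite[Lemma 4.4.4]{EL} to see nonvanishing of the patched module, whereas you appeal directly to BLGGT change of weight — these are the same ingredients packaged differently, though you should note that applying BLGGT requires knowing $\rbar|_{G_{F_v}}$ has a potentially diagonalizable lift of type $(\eta,\tau_v)$ in the first place; and for the potential diagonalizability of tame potentially crystalline lifts in small weight, the precise reference in this setting is \cite[Theorem 5.1.4]{lee_thesis} (a base-change to a tame extension reducing to the Fontaine--Laffaille criterion), rather than \cite[\S 1.4]{BLGGT}, which treats the crystalline case.
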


\begin{proof}
    By Theorem \ref{thm:patching-exist}, there exists a fixed similitude patching functor $M_\infty$ for $(\rbar_p,\psip)$.     Thus, the equivalence between (2) and (3) follows from Theorem \ref{thm:abstract-SWC}. 
    Suppose that (2) holds. Then there is $w\in \uW$ such that  $F_{\rbar_p}(w) \in W_{\obv}(\rbar_p|_{I_{\Q_p}}) \cap W(\rbar)$. For a tame inertial $L$-parameter $\tau$ such that $\tilw(\rbar_p,\tau) = (w^\diamond)^\mo \tilw_h^\mo w_0 {w^\diamond}$, we have $M_\infty(\sig^\circ(\tau))\neq 0$. Since $R_{\rbar_p}^{\eta,\tau}$ is domain by Theorem \ref{thm:col-one-def} and all lifts of $\rbar_p$ of type $(\eta,\tau)$ are potentially diagonalizable (\cite[Theorem 5.1.4]{lee_thesis}, $\rbar$ is potentially diagonalizably automorphic. Conversely, suppose that $\rbar$ is potentially diagonalizably automorphic. Let $\tau$ be as before. Then we can apply \cite[Theorem 3.4]{PT} to obtain a lift of $\rbar$ that is unramified at all but finitely many places and potentially diagonalizable of type $(\eta,\tau)$ at all places $v\mid p$. By \cite[Lemma 4.4.4]{EL}, this shows that $M_\infty(\sig^\circ(\tau)) \neq 0$ and thus $F_{\rbar_p}(w) \in W_{\obv}(\rbar_p|_{I_{\Q_p}}) \cap W(\rbar)$.    
\end{proof}

\subsection{Modularity lifting}
In this subsection, we set $F=\Q$. We prove the following modularity lifting result for small Hodge--Tate weights and generic tame inertial type.

\begin{thm}\label{thm:MLT}
    Suppose that $\rbar$ is potentially diagonalizably automorphic, satisfies Taylor--Wiles conditions, and $\rbar|_{G_{\Qp}}$ is tame and 9-generic. Let $\lam\in X^*(T)$ be a dominant character. If $r: G_\Q \ra \GSp_4(E)$ is a lift of $\rbar$ that is unramified at all but finitely many places and potentially crystalline of type $(\lam+\eta,\tau)$ at $p$ with $(2h_{\lam}+6)$-generic tame inertial type $\tau$, then $r$ is automorphic. 
\end{thm}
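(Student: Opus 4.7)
The plan is to combine Theorem \ref{thm:SWC} with the explicit Breuil--M\'ezard formula of Corollary \ref{cor:BM-cycle-def}. By the construction of Theorem \ref{thm:patching-exist}, there is a patching functor $M_\infty$ for $(\rbar_p,\psi_p)$ arising from the automorphy of $\rbar$, and by a Taylor's Ihara avoidance argument one can reduce to the minimal global situation. In the minimal case, showing that $r$ is automorphic reduces in the standard way to proving that the patched module $M_\infty(\sig^\circ(\lam,\tau))$ has full support on $\Spec R_\infty^{\lam+\eta,\tau,\psi_p}$. Since $M_\infty(\sig^\circ(\lam,\tau))$ is maximal Cohen--Macaulay over $R_\infty^{\lam+\eta,\tau,\psi_p}$, full support can be detected on the special fiber, and using Proposition \ref{prop:cycle-specialization} the claim becomes the cycle identity $Z(M_\infty(\osig(\lam,\tau))) = \cZ^{\lam,\tau}(\rbar_p)_\infty$ in the pulled-back Breuil--M\'ezard group. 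Combining exactness of $M_\infty$ on a Jordan--H\"older filtration of $\osig(\lam,\tau)$ with Corollary \ref{cor:BM-cycle-def}, this further reduces to proving the per-weight cycle identity
\begin{align*}
Z(M_\infty(\sig)) = \cZ^{\BM}_{\sig,\infty}(\rbar_p) \quad \text{for every } \sig \in W(\rbar).
\end{align*}

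For the upper bound I would use Lemma \ref{lem:BMcycle-bound}, which gives $\supp M_\infty(\sig) \subset \bigcup_{\kappa \uparrow \sig} \cC_\kappa(\rbar_p)$. Combined with the explicit description of $\cZ^{\BM}_\sig$ recalled in \S\ref{sec:BMcycle}, the identity splits into two cases. First, when $\sig = F(\lam)$ has no coordinate $\lam^{(j)} \in C_2$, the cycle $\cZ^{\BM}_\sig$ equals $\cC_\sig$, and I would show $Z(M_\infty(\sig)) = \cC_\sig(\rbar_p)$ by choosing $\tau$ with $\sig \in \JH_\out(\osig(\tau))$ and $\tilw(\rbar_p,\tau)^*$ of colength one, applying Theorem \ref{thm:col-one-def} to pin down the reduced component structure of $R_{\rbar_p}^{\eta,\tau}/\varpi$, and combining with Theorem \ref{thm:SWC} together with maximal Cohen--Macaulayness to force the support equality.

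Second, for $\sig = F(\lam)$ with some coordinates $\lam^{(j)} \in C_2$, the Breuil--M\'ezard cycle contains extra components $\cC_{F(\lam')}(\rbar_p)$ where $\lam'$ is obtained from $\lam$ by linking $\lam^{(j)}$ to its unique $C_0$-representative at some subset of those $j$. The key new input is Ortiz's weight shifting differential operator (Theorem \ref{thm:ortiz}), which at each such $j$ produces an injective Hecke-equivariant map of global sections on the Siegel threefold at Hodge type shifted from $C_1$ to $C_2$. By Remark \ref{rmk:coh-patch-functorial} this becomes a surjection of coherent patched modules $M_\infty^{\coh}(W(\lam_2))/\varpi \twoheadrightarrow M_\infty^{\coh}(W(\lam_1))/\varpi$, which I would iterate over the relevant embeddings and transfer to the \'etale patched side via Corollary \ref{cor:patching-comp}. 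Using the Weyl module short exact sequences of Lemma \ref{lem:weyl-decomp} together with exactness of the \'etale functor $M_\infty$ then produces the extra component inside $\supp M_\infty(W(\lam_i)^\vee)$; the sharp upper bounds already obtained for Serre weights with coordinates in $C_0 \cup C_1$ rule out the possibility that this extra component arises from the cosocle of the Weyl module, isolating it in $\supp M_\infty(\sig)$.

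The main obstacle will be the last step: the iterated application of Ortiz's operator across the embeddings $j \in \cJ$ with $\lam^{(j)} \in C_2$ must faithfully track all $2^{\#\{j : \lam^{(j)} \in C_2\}}$ extra components required by the Breuil--M\'ezard cycle. A secondary subtlety is that the coherent patching construction of \S\ref{sub:patching-construction} yields only patched modules rather than an exact functor (Remark \ref{rmk:patching-functor-vs-module}), so support information must be transported to the exact \'etale side via Corollary \ref{cor:patching-comp} at each stage before being decomposed along Jordan--H\"older factors. Taken together, these ingredients should yield the cycle identity $Z(M_\infty(\sig)) = \cZ^{\BM}_{\sig,\infty}(\rbar_p)$ for every $\sig \in W(\rbar)$, and hence the full-support statement and the theorem.
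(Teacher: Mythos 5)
Your proposal gets the global architecture right: reduce the automorphy of $r$ to full support of the patched module, then to the per-weight cycle identity $Z_\sigma = \cZ^{\BM}_\sigma(\rhobar)$, prove this via colength-one types (Theorem \ref{thm:col-one-def}) for weights with highest weight in $C_0 \cup C_1$, and use Ortiz's weight-shifting operator together with the coherent/\'etale patching comparison to pick up the extra Breuil--M\'ezard component for weights in $C_2$. This matches the paper's Lemma \ref{lem:supp-C0-C1} and Proposition \ref{prop:strong-SWC}. However, there is a genuine gap in the reduction step, and one of your stated obstacles is illusory.

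The gap: you pass from ``$M_\infty(\sig^\circ(\lam,\tau))$ has full support on $\Spec R_\infty^{\lam+\eta,\tau,\psip}$'' to ``the cycle identity $Z(M_\infty(\osig(\lam,\tau))) = \cZ^{\lam,\tau}(\rbar_p)_\infty$'' without explaining what the right-hand side means or why the reduction is valid. The cycle $Z(M_\infty(\sig^\circ(\lam,\tau)))$ lives in $\Z[\Spec R_\infty^{\lam+\eta,\tau,\psip}/\varpi]$, while the Breuil--M\'ezard cycle $\cZ^{\lam,\tau}(\rhobar)$ is a cycle in $\Z[\Spec R_{\rhobar}^{\alg}]$. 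A priori the patched cycle need not decompose as a product of a prime-to-$p$ factor and a local-at-$p$ factor, nor need the prime-to-$p$ support be full. The paper does this in two steps you did not address: Lemma \ref{lem:full-supp-R^p} (full prime-to-$p$ support, via Patrikis--Tang lifting plus Ihara avoidance, not the other way around) and, crucially, Lemma \ref{lem:factorizable-Minfty}, which uses Arthur's multiplicity formula, the Weissauer/Matsushima description of $H^3_{\et}$, and the multiplicity-one result for tame types to show that $Z(M_\infty(\sig^\circ(\lam,\tau))) = 4 Z^p \times Z^{\lam,\tau}$ with $Z^{\lam,\tau}$ having $0$/$1$ multiplicities. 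Without this factorization you cannot define $Z_\sigma$, cannot compare with $\cZ^{\BM}_\sigma(\rhobar)$, and cannot conclude full support from the cycle count. This is not an organizational detail; it is where the global-to-local transfer actually happens, and it is the reason the constant $4$ appears in the paper.

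A smaller point: the ``main obstacle'' you identify --- tracking $2^{\#\{j : \lam^{(j)} \in C_2\}}$ extra components by iterating Ortiz's operator across embeddings --- does not arise, because Theorem \ref{thm:MLT} is stated for $F = \Q$ (the subsection opens with this hypothesis, and the coherent cohomology input of \S\ref{sub:coh-coh} is itself restricted to $F=\Q$), so $\#\cJ = 1$ and there is exactly one embedding. There is only one extra component $\cC_{F(\lam_0)}(\rhobar)$ to produce, which is what Proposition \ref{prop:strong-SWC} does via a single application of $\theta^1_{\lam_1}$ combined with Lemma \ref{lem:weyl-decomp}. You should focus instead on the factorization lemma, which is the actual bottleneck.
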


Let $\rhobar\defeq\rbar|_{G_{\Qp}}$. By the standard argument, we need to show that the patched module $M_\infty( \sig^\circ(\lam,\tau))$ is fully supported on $R_\infty^{\lam + \eta,\tau,\psip}$. As discussed in Remark \ref{rmk:irred-comp}, any irreducible component in $\Spec R_\infty^{\lam+\eta,\tau,\psip}$ is of the form $\cC^p \times \cC_p$ for some irreducible components $\cC^p \subset \Spec R^p$ and $\cC_p \subset \Spec R^{\lam+\eta,\tau,\psip}_{\rhobar}$. We first prove the full support on $\Spec R^p$.

\begin{lemma}\label{lem:full-supp-R^p}
   For a 1-generic tame inertial type $\tau$, there exists a reduced closed subscheme $X^{\lam,\tau} \subset \Spec R_{\rhobar}^{\lam+\eta,\tau}$ that is a union of irreducible components and $\supp_{R_\infty^{\lam+\eta,\tau}} M_\infty(\sig^\circ(\lam,\tau)) = \Spec R^p \times X^{\lam,\tau}$.
\end{lemma}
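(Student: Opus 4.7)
The plan is to take $X^{\lam,\tau}$ to be the reduced union of irreducible components of $\Spec R_\rhobar^{\lam+\eta,\tau}$ appearing as the second factor of components of $\supp_{R_\infty^{\lam+\eta,\tau}} M_\infty(\sig^\circ(\lam,\tau))$, and to verify that the support factors as a product $\Spec R^p \times X^{\lam,\tau}$. First, by the patching functor axioms, $M_\infty(\sig^\circ(\lam,\tau))$ is maximal Cohen--Macaulay over $R_\infty^{\lam+\eta,\tau,\psip}$, so its support is a union of top-dimensional irreducible components. By Remark~\ref{rmk:irred-comp} together with the geometric irreducibility assumption on the components of $\Spec R^p$, each such component has the form $\cC^p \times \cC_p$ for an irreducible component $\cC^p \subset \Spec R^p$ and $\cC_p \subset \Spec R_\rhobar^{\lam+\eta,\tau,\psip}$. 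This produces the candidate $X^{\lam,\tau}$, which is automatically a union of irreducible components of $\Spec R_\rhobar^{\lam+\eta,\tau}$ (viewed as a closed subscheme via the natural immersion).

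The main step is to verify the product form, i.e., to show that the $R^p$-support of $M_\infty(\sig^\circ(\lam,\tau))$ is uniform across irreducible components of $\Spec R^p$: whenever some $\cC^p_0 \times \cC_{p,0}$ lies in the support, so does $\cC^p \times \cC_{p,0}$ for every other component $\cC^p$. The plan is to apply Taylor's Ihara avoidance argument to the patching setup of Theorem~\ref{thm:patching-exist}. The $R^p$ structure arises from local framing variables together with local deformation rings at the auxiliary primes $v \in R$, and its different irreducible components correspond to different choices of the auxiliary characters $\zeta_R$. Because $q_v \equiv 1 \bmod p$ and $\rbar|_{G_{F_v}}$ is trivial for $v\in R$, any two choices of $\zeta_R$ reduce to the same character modulo $\varpi$, so the cohomology groups entering the patching construction for different $\zeta_R$ are congruent modulo $\varpi$. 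Passing to the ultrapatching limit and combining with the Cohen--Macaulay property, these congruences translate into equalities between the mod-$\varpi$ supports of the corresponding patched modules; localizing at generic points of irreducible components of $\Spec R^p$ then yields the desired uniformity.

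The main obstacle is the careful bookkeeping of the Ihara-avoidance congruences in the present framework: one has to verify that the patching construction of Theorem~\ref{thm:patching-exist} can be run compatibly for different $\zeta_R$ over a common choice of Taylor--Wiles primes, and that the mod-$\varpi$ congruences between the resulting patched modules faithfully translate (via the Cohen--Macaulay property and flat base change along $R^p \to \kappa(\cC^p)$) into the claimed equalities of supports across all components of $\Spec R^p$. Once the product form is established, the fact that $X^{\lam,\tau}$ is a reduced union of irreducible components of $\Spec R_\rhobar^{\lam+\eta,\tau}$ is immediate from its definition.
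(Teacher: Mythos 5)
Your proposal identifies the right target (factor the support into $\cC^p\times\cC_p$ and propagate across all $\cC^p$) and correctly reaches for Ihara avoidance as the engine, but the actual route you propose is genuinely different from the paper's, and it contains an imprecision worth flagging.

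The paper does not compare patched modules for different $\zeta_R$ directly. Instead it works one point at a time: given that $\cC_0^p\times\cC_p$ lies in the support, for any other component $\cC^p\subset\Spec R^p$ it invokes the Patrikis--Tang lifting theorem to produce a global geometric lift $r$ of $\rbar$, unramified outside $S$, whose local restrictions define a closed point $x\in\cC^p\times\cC_p$; it then applies the packaged base-change-plus-Ihara-avoidance modularity lifting theorem from \cite{lee_thesis} (together with the assumption on $\cC_p$) to show $r$ is automorphic; finally it uses the smoothness of $x$ in $\cC^p$ (BCGP) and the disjointness of the irreducible components of $\Spec R^{\eta,\tau}_{\rhobar}[1/p]$ (Bellovin--Gee) to conclude that $\cC^p\times\cC_p$ is the unique component containing $x$, hence lies in the support. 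The key extra inputs the paper uses that are missing from your proposal are precisely this Patrikis--Tang lifting step and the smoothness argument that pins down the component.

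Your proposed route---comparing the patching construction for two choices of $\zeta_R$, deducing a mod-$\varpi$ congruence of patched modules, and using the irreducibility of the $\zeta$-deformed local rings together with MCM to transfer full $R^p$-support---is the mechanism that lives \emph{inside} the cited modularity lifting theorem, so it is morally the same tool unpacked differently. It can be made to work, but it requires running patching compatibly over a common Taylor--Wiles datum for two $\zeta_R$'s, which the paper's Theorem~\ref{thm:patching-exist} as stated does for a single fixed $\zeta_R$. Also, the claim that ``different irreducible components of $\Spec R^p$ correspond to different choices of $\zeta_R$'' is not quite right: for a fixed trivial $\zeta_R$, the components of $\Spec R^p$ are an intrinsic feature of the unipotent local deformation rings at $v\in R$; a different generic $\zeta_R$ produces a \emph{different} (irreducible) ring $R^p_\zeta$, not a different component of the same $\Spec R^p$. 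The congruence is between $R^p/\varpi$ and $R^p_\zeta/\varpi$, not an identification of components of one ring with characters. None of this is a fatal flaw, but it means your proposal, if pursued, would demand a fair amount of extra infrastructure that the paper sidesteps by black-boxing the Ihara avoidance inside the cited modularity lifting theorem and doing a pointwise argument via Patrikis--Tang.
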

\begin{proof}
    Suppose that $\cC_0^p \times \cC_p \subset \supp_{R_\infty^{\lam+\eta,\tau}} M_\infty(\sig^\circ(\lam,\tau))$ for some irreducible components $\cC_0^p \subset \Spec R^p$ and $\cC_p \subset \Spec R_{\rhobar}^{\lam+\eta,\tau}$. Then, we claim that for any irreducible component $\cC^p \subset \Spec R^p$, $\cC^p\times \cC_p \subset \supp_{R_\infty^{\lam+\eta,\tau}} M_\infty(\sig^\circ(\lam,\tau))$. By the lifting result of Patrikis--Tang \cite[Theorem 3.4]{PT}, there is a geometric lift $r$ of $\rbar$ that is unramified outside $S$ and $r|_{G_{\Q_v}}$ for all $v\in S$ defines a closed point $x$ in $\cC^p \times \cC_p$.  By the base change and Ihara avoidance argument (cf.~\cite[Theorem 6.3.1]{lee_thesis}) and the assumption on $\cC_p$, $r$ is automorphic. By \cite[Theorem 2.7.1(4) and Lemma 7.1.3]{BCGP}, the point $x$ corresponding to $r|_{G_{\Q_v}}$ for all $v\in S\backslash \{p\}$ is a smooth point in $\cC^p$. Moreover, $\Spec R^{\eta,\tau}_{\rhobar}[1/p]$ is the disjoint union of its irreducible component by \cite[Theorem 3.3.8]{BellovinGee}. In particular, $\cC^p \times \cC_p$ is the unique irreducible component containing $x$. This proves our claim.
\end{proof}

Using the previous Lemma, we can factorize the cycles attached to the patched modules.

\begin{lemma}\label{lem:factorizable-Minfty}
    There exists an effective cycle $Z^p \in \Z[\Spec R^p]$, with specialization $\ov{Z}^p \in \Z[\Spec R^p/\varpi]$, whose support is $\Spec R^p$ satisfying the following properties:
    \begin{enumerate}
        \item For a 1-generic tame inertial type $\tau$, there exists a cycle $Z^{\lam,\tau} \in \Z[\Spec R_{\rhobar}^{\lam+\eta,\tau}]$ such that $Z(M_\infty(\sig^\circ(\lam,\tau))) = 4Z^p \times Z^{\lam,\tau}$. Moreover, we can and do choose $Z^p$ and $Z^{\lam,\tau}$ such that the multiplicity of irreducible components in $Z^{\lam,\tau}$ is either zero or one.
        \item For a Serre weight $\sig$, there exists a cycle $Z_{\sig} \in \Z[\Spec R_{\rhobar}^{\alg}]$ such that $Z(M_\infty(\sig)) = 4 \ov{Z}^p \times Z_\sig$.
    \end{enumerate}
\end{lemma}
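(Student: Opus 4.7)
The plan is to compute $\mu_{\cC^p\times\cC_p}(M_\infty(\sig^\circ(\lam,\tau)))$ at a smooth point coming from a potentially diagonalizable automorphic lift, exhibit the multiplicity as $4\cdot a(\cC^p)\cdot b(\cC_p)$ with $b\in\{0,1\}$, and deduce part (2) from part (1) by specialization and Breuil--M\'ezard cycle linear independence.

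For part (1), I start from Lemma \ref{lem:full-supp-R^p}: irreducible components of $\supp M_\infty(\sig^\circ(\lam,\tau))$ are of the form $\cC^p \times \cC_p$ with $\cC_p \subset X^{\lam,\tau}$. Since $R^p[1/p]$ is generically formally smooth and $R_\rhobar^{\lam+\eta,\tau}[1/p]$ is formally smooth by \cite[Theorem 3.3.8]{BellovinGee}, $R_\infty^{\lam+\eta,\tau,\psip}$ is regular at every generic point of $\cC^p\times\cC_p$, so $\mu_{\cC^p\times\cC_p}$ equals the generic rank of the patched module. Using \cite[Theorem 3.4]{PT}, for any such component I produce a geometric automorphic lift $r$ of $\rbar$ (automorphic via base change and Ihara avoidance as in the proof of Lemma \ref{lem:full-supp-R^p}) whose local data at $v \in S$ determine a closed point $x\in\cC^p\times\cC_p$. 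Since $\cC^p$ is smooth at $x$ by \cite[Theorem 2.7.1(4)]{BCGP} and $\cC_p$ is smooth there by \cite[Theorem 3.3.8]{BellovinGee}, the generic multiplicity coincides with the length of $M_\infty(\sig^\circ(\lam,\tau))_x/\mathfrak{a}_\infty$. By the patching axioms this recovers a Hecke eigenspace inside $H^3_{\et}(\Sh_{U,\ov{F}},\cF_{W(\lam)^\vee})_{\chi,\zeta_R,\fm_{\rbar,\chi}^S}$.

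The factor $4$ now emerges from the comparison with coherent cohomology recorded in \S\ref{sub:two-coh-comp}: the \'etale eigenspace decomposes as a direct sum of four terms indexed by $w\in W^M$, each a coherent cohomology of a distinct automorphic vector bundle, with equal dimension (via Matsushima's formula and the balance between holomorphic and non-holomorphic discrete series contributions). Combined with multiplicity one for the stable tempered global $L$-packet associated to the absolutely irreducible $\rbar$ (Arthur's classification for $\GSp_4$, cf.~\cite{gee-taibi}), each coherent eigenspace is one-dimensional, so the total contribution from a single lift $r$ is exactly $4$. The factorization then rests on the observation that the local data of $r$ split independently into the prime-to-$p$ side (determining $\cC^p$) and the $p$-adic side (determining $\cC_p$): by varying the Patrikis--Tang lift one can independently change $\cC^p$ or $\cC_p$, so the multiplicity factorizes as $4\cdot a(\cC^p)\cdot b(\cC_p)$, with $b(\cC_p)\in\{0,1\}$ because whether a fixed $\cC_p$ appears is an all-or-nothing condition and there are no further automorphic multiplicities at $p$. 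Setting $Z^p \defeq \sum_{\cC^p} a(\cC^p)\cC^p$ and $Z^{\lam,\tau} \defeq \sum_{\cC_p\subset X^{\lam,\tau}}\cC_p$ yields (1); the support of $Z^p$ is all of $\Spec R^p$ since Patrikis--Tang produces a lift over each component.

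For part (2), I combine (1) with the Breuil--M\'ezard formalism. Since $M_\infty(\sig^\circ(\lam,\tau))$ is $\cO$-flat (being maximal Cohen--Macaulay with $\varpi$ in a system of parameters), Proposition \ref{prop:cycle-specialization} gives $Z(M_\infty(\osig(\lam,\tau))) = 4\ov{Z}^p \times \ov{Z^{\lam,\tau}}$. On the other hand, exactness of $M_\infty$ and the Jordan--H\"older decomposition of $\osig(\lam,\tau)$ yield $Z(M_\infty(\osig(\lam,\tau))) = \sum_\sig n_\sig(\lam,\tau)\, Z(M_\infty(\sig))$. By varying $\tau$ over $(2h_\lam+6)$-generic types and invoking Corollary \ref{cor:BM-cycle-def} together with the linear independence of the explicit Breuil--M\'ezard cycles $\cZ^{\BM}_\sig$, I solve the resulting linear system for each $Z(M_\infty(\sig))$. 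Because every right-hand side factors as $4\ov{Z}^p \times (\cdot)$, the solution inherits the same product decomposition: $Z(M_\infty(\sig)) = 4\ov{Z}^p \times Z_\sig$ for some $Z_\sig \in \Z[\Spec R_\rhobar^{\alg}]$. The main obstacle I expect is the clean factorization in part (1), specifically justifying $b(\cC_p)\in\{0,1\}$ with no hidden automorphic multiplicities; this requires a careful bookkeeping between Arthur's multiplicity one for $\GSp_4$ and the precise $(\mathfrak{g},K)$-cohomology contribution of each discrete series $\pi_\infty$ in the archimedean $L$-packet, confirming that the full automorphic multiplicity is captured by the factor $4$ alone.
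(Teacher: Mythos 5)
Your overall skeleton matches the paper's: compute the multiplicity $\mu(\cC)$ of an irreducible component $\cC = \cC^p\times\cC_p$ as the generic rank of $M_\infty(\sig^\circ(\lam,\tau))[1/p]$ at a smooth closed point produced by a Patrikis--Tang lift, then factor the fiber into local automorphic data, and deduce (2) from (1) by Grothendieck-group linear algebra. However there are several genuine gaps.

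\emph{The factor $4$.} You invoke the BGG/coherent comparison of \S\ref{sub:two-coh-comp} to get the factor $4$, but that comparison identifies $H^3_{\et}(\Sh,\cF_{W(\lam)})$ with $\bigoplus_{w\in W^M}H^{\ell(w)}(\Sh^{\tor},\omega(w\cdot\lam+(3,3;0)))$ and is specific to \emph{Weyl module} coefficients. The fiber you are computing lives in $H^3_{\et}(\Sh,\cF_{\sig^\circ(\lam,\tau)^\vee})$, not $H^3_{\et}(\Sh,\cF_{W(\lam)^\vee})$ (you even write the latter, which is the wrong sheaf once a nontrivial tame type is present). The paper does not route through coherent cohomology here at all: it directly computes the $\fm_r$-eigenspace via Matsushima's formula as $\bigoplus_{\pi^\infty}W_{\pi^\infty}\otimes\Hom_{\GSp_4(\Zp)}(\sig(\tau),\pi_p)\otimes(\pi^{\infty,p})^{U^p}[\fm_r]$, and reads off the constant $4$ from Weissauer's computation $\dim W_{\pi^\infty}=4$ together with Arthur's multiplicity formula. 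Your route is inapplicable to the type-$\tau$ case.

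\emph{The factorization and $\mu(\cC_p)\in\{0,1\}$.} You justify the product decomposition by saying one can ``independently change $\cC^p$ or $\cC_p$ by varying the Patrikis--Tang lift,'' but that heuristic does not show the multiplicity is multiplicative. The actual argument uses the tensor decomposition $\pi=\otimes'_v\pi_v$ of the cuspidal representations contributing, plus Arthur's multiplicity formula, so that the eigenspace is literally a tensor product of local terms; this gives $\mu(\cC)=4\,\mu(\cC^p)\,\mu(\cC_p)$ on the nose. Similarly, $\mu(\cC_p)\in\{0,1\}$ is not an ``all-or-nothing'' formal observation: it requires that $\Hom_{\GSp_4(\Zp)}(\sig(\tau),\pi_p)$ is nonzero for at most one element of the local $L$-packet of $r|_{G_{\Qp}}$, and when nonzero has dimension one by multiplicity one for tame types. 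You need to cite both inputs.

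\emph{Part (2).} Your route through ``linear independence of the Breuil--M\'ezard cycles $\cZ^{\BM}_\sig$'' is both unnecessary and circular-looking: linear independence of those cycles is not available at this stage (indeed Proposition \ref{prop:strong-SWC} and Theorem \ref{thm:MLT} downstream are what ultimately pin down $Z_\sig$). The paper's argument is far simpler: since the classes $[\osig(\lam,\tau)]$ span the Grothendieck group of $\F[\GSp_4(k)]$-modules (Serre's book), one writes $[\sig]$ as a $\Z$-linear combination of them and applies $Z(M_\infty(-))$, which is additive by exactness; each term factors as $4\ov{Z}^p\times(\cdot)$ by part (1), hence so does $Z(M_\infty(\sig))$. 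No Breuil--M\'ezard input is needed for this step.
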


\begin{proof}
    We prove the existence of $Z^p$ satisfying the first property. The second one follows from the first and by expressing $\sig$ as a linear combination of $\osig(\tau)$ in the Grothendieck group (\cite[Theorem 33]{serre-book}). Note that $M_\infty(\sig)=0$ unless $\sig$ is 6-generic.

    The claim is equivalent to the following: for $Z(M_\infty(\sig^\circ(\lam,\tau))) = \sum_{\cC}\mu(\cC)[\cC]$ where the sum runs over all irreducible component $\cC= \cC^p \times \cC_p$ in $\Spec R_\infty^{\lam+\eta,\tau}$, we can write $\mu(\cC)=4\mu(\cC^p)\mu(\cC_p)$ where $\mu(\cC^p)$ and $\mu(\cC_p)$ depend only on $\cC^p$ and $\cC_p$ and the latter is either zero or one. 
    Therefore, we can take $Z^p= \sum_{\cC^p}\mu(\cC^p)[\cC^p]$ and $Z^{\lam,\tau} = \sum_{\cC_p} \mu(\cC_p)[\cC_p]$. Note that the cycle $Z^p$ is fully supported on $\Spec R^p$ by Lemma \ref{lem:full-supp-R^p}.    
    
    The multiplicity $\mu(\cC)$ is given by the length of the module $M_\infty(\sig^\circ(\lam,\tau))$ localized at the prime ideal of $R_\infty^{\lam+\eta,\tau}$ corresponding to $\cC$. Since $M_\infty(\sig^\circ(\lam,\tau))[1/p]$ is a maximal Cohen--Macaulay module over $R_{\infty}^{\lam+\eta,\tau}[1/p]$, it is a projective module once restricted to the smooth locus. In turn, $\mu(\cC)$ can be computed by the dimension of the fiber of $M_\infty(\sig^\circ(\lam,\tau))[1/p]$ at a smooth point in $\cC[1/p]$. As in the proof of Lemma \ref{lem:full-supp-R^p}, we can choose a smooth closed point in $\cC[1/p]$ given by a geometric lift $r$ of $\rbar$ unramified outside $S$. In this case, the fiber of $M_\infty(\sig^\circ(\lam,\tau))[1/p]$ can be identified with the Hecke eigenspace inside the \'etale cohomology $H^3_{\et}(\Sh_{U,\overline{\Q}}, \cF_{\sig^\circ(\lam,\tau)^\vee})[1/p][\fm_{r}]$ where the Hecke eigensystem $\fm_r \subset \T^{S,\univ}[1/p]$ is determined by $r$. Following \cite[\S1]{weissauer}, we have
    \begin{align*}
        H^3_{\et}(\Sh_{U,\overline{\Q}}, \cF_{\sig^\circ(\lam,\tau)})[1/p][\fm_{r}] \otimes_{E}\Qpbar 
        = \oplus_{\pi^\infty} W_{\pi^\infty} \otimes \Hom_{\GSp_4(\Zp)}(\sig(\tau),\pi_p) \otimes (\pi^{\infty,p})^{U^p}[\fm_{r}]
    \end{align*}
    where the sum runs over cuspidal automorphic representations $\pi$ of $\GSp_4(\A_\Q)$ whose attached Galois representation is isomorphic to $r$ (viewed as $\Qpbar$-vector spaces via $\Qpbar\simeq \C$) and $W_{\pi^\infty}$ is a certain multiplicity space. By Arthur's multiplicity formula \cite[Theorem 2.9.3]{BCGP}, such $\pi$ are exactly of the form $\pi=\otimes'\pi_v$ where $\pi_v$ is in the $L$-packet for $r|_{G_{\Q_v}}$. Moreover, $\Hom_{\GSp_4(\Zp)}(\sig(\tau),\pi_p)$ is non-zero for a unique element in the $L$-packet of $r|_{G_{\Qp}}$ (see \cite[\S2.4]{lee_thesis}). This verifies that we can factorize $\mu(\cC)$ into local multiplicities. Then we take $\mu(\cC_p) = \dim_{\Qpbar} \Hom_{\GSp_4(\Zp)}(\sig(\tau),\pi_p)$, which is either one or zero by the multiplicity one result for tame types (cf.~\cite[Theorem 2.4.1]{lee_thesis}), and $\mu(\cC^p) = \sum_{\pi^{\infty,p}} \dim_{\Qpbar} (\pi^{\infty,p})^{U_p}$ where the sum runs over the product of local $L$-packets away from $\infty$ and $p$. The constant $4$ comes from $\dim_{\Qpbar}W_{\pi^\infty}=4$ (see  \cite[pg.~82]{weissauer}; $m^+(\Pi_f)$ and $m^-(\Pi_f)$ in \loccit~equal one by Arthur's multiplicity formula). 
\end{proof}

It remains to prove that $\ov{Z}^{\lam,\tau}$ is equal to the cycle $\cZ^{\lam,\tau}(\rhobar)$. By the proof of Lemma \ref{lem:factorizable-Minfty}, we have $\ov{Z}^{\lam,\tau} \le \cZ^{\lam,\tau}(\rhobar)$. Moreover, by the exactness of $M_\infty$, we have $\ov{Z}^{\lam,\tau} = \sum_{\sig \in \JH(\osig(\lam,\tau))} n_{\sig}(\lam,\tau) Z_{\sig}$. By Corollary \ref{cor:BM-cycle-def} it suffices to show that $Z_{\sig} = \cZ_{\sig}^{\BM}(\rhobar)$. For $i\in \{0,1,2\}$, let $\lam_i\in C_i$ be a character such that $\lam_0\uparrow \lam_1 \uparrow \lam_2$. When $\sig\defeq F(\lam_2)$ is 3-deep, recall that $\cZ_{\sig}^{\BM} = \cC_{\sig}+ \cC_{F(\lam_0)}$. Since we have $Z_{\sig} \ge \cC_{\sig}(\rhobar)$ by Theorem \ref{thm:SWC}, we only need to prove the following.

\begin{prop}\label{prop:strong-SWC}
    Following the above notation, $Z_{\sig} \ge \cC_{F(\lam_0)}(\rhobar)$.     
\end{prop}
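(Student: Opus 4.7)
The plan is to transport the lower bound on $\supp M_\infty(F(\lam_0))$ coming from Theorem \ref{thm:SWC} up to a lower bound on $\supp M_\infty(F(\lam_2))$, using the mod $p$ weight-shifting differential operator $\theta^1_{\lam}$ of Theorem \ref{thm:ortiz}, the Weyl module filtrations of Lemma \ref{lem:weyl-decomp}, and the \'etale--coherent support comparison of Corollary \ref{cor:patching-comp}.

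By Theorem \ref{thm:SWC} combined with Theorem \ref{thm:abstract-SWC}, we have $\cC_{F(\lam_0)}(\rhobar)\subset\supp M_\infty(F(\lam_0))$, i.e.~$Z_{F(\lam_0)}\ge \cC_{F(\lam_0)}(\rhobar)$. Applying the exact functor $M_\infty$ to the short exact sequence $0\to F(\lam_1)\to W(\lam_1)_{\F}\to F(\lam_0)\to 0$ of Lemma \ref{lem:weyl-decomp} gives
\[ Z(M_\infty(W(\lam_1)_{\F}))=Z_{F(\lam_1)}+Z_{F(\lam_0)}, \]
so $\cC_{F(\lam_0)}(\rhobar)\subset\supp M_\infty(W(\lam_1)_{\F})$. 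Together with Corollary \ref{cor:patching-comp} (after matching the dualities between \'etale and coherent patched modules), this yields $\cC_{F(\lam_0)}(\rhobar)\subset\supp M_\infty^{\coh}(W(\lam_1))/\varpi$.

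Next, Theorem \ref{thm:ortiz} together with the functoriality recorded in Remark \ref{rmk:coh-patch-functorial} produces a surjection
\[ M_\infty^{\coh}(W(\lam_2))/\varpi\twoheadrightarrow M_\infty^{\coh}(W(\lam_1))/\varpi, \]
propagating the support inclusion to $\cC_{F(\lam_0)}(\rhobar)\subset\supp M_\infty^{\coh}(W(\lam_2))/\varpi$. Transferring back via Corollary \ref{cor:patching-comp} and applying $M_\infty$ to $0\to F(\lam_2)\to W(\lam_2)_{\F}\to F(\lam_1)\to 0$ yields
\[ \cC_{F(\lam_0)}(\rhobar)\subset\supp Z_{F(\lam_2)}\cup\supp Z_{F(\lam_1)}. \]
To finish, it suffices to show $\cC_{F(\lam_0)}(\rhobar)\not\subset\supp Z_{F(\lam_1)}$. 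For $\lam_1\in C_1$ and $\rhobar$ $9$-generic, the explicit Breuil--M\'ezard formula of \S\ref{sec:BMcycle} gives $\cZ_{F(\lam_1)}^{\BM}(\rhobar)=\cC_{F(\lam_1)}(\rhobar)$, and a colength one deformation ring computation in the spirit of Theorem \ref{thm:col-one-def} (applied to a carefully chosen tame type $\tau$ with $F(\lam_1)\in\JH(\osig(\tau))$) forces $\supp Z_{F(\lam_1)}=\cC_{F(\lam_1)}(\rhobar)$. Since $\cC_{F(\lam_0)}(\rhobar)\neq\cC_{F(\lam_1)}(\rhobar)$, we conclude $\cC_{F(\lam_0)}(\rhobar)\subset\supp Z_{F(\lam_2)}$, i.e.~$Z_{F(\lam_2)}\ge\cC_{F(\lam_0)}(\rhobar)$.

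The main technical obstacle will be establishing the sharpness of the upper bound $\supp Z_{F(\lam_1)}=\cC_{F(\lam_1)}(\rhobar)$ for $\lam_1\in C_1$, where a priori $\cC_{F(\lam_0)}(\rhobar)$ is not ruled out by Lemma \ref{lem:BMcycle-bound}; ruling this out appears to require a colength-one analysis of the relevant tamely potentially crystalline deformation rings. A secondary subtlety is cleanly matching the duality conventions between the \'etale patched modules $M_\infty(W(\lam)^\vee)$ and the coherent patched modules $M_\infty^{\coh}(W(\lam))$ when invoking Corollary \ref{cor:patching-comp} in both directions.
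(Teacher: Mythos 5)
Your proposal takes essentially the same route as the paper's proof: start from the lower bound on $\supp M_\infty(F(\lam_0))$, pass to coherent patched modules via Corollary \ref{cor:patching-comp}, push up through the $p$-alcoves with the $\theta^1$ operator of Theorem \ref{thm:ortiz}, transfer back to the \'etale side, and rule out the contribution of $F(\lam_1)$ by a colength-one deformation-ring analysis. That last step is precisely Lemma \ref{lem:supp-C0-C1} in the paper (which exhibits a $6$-generic type $\tau$ whose deformation ring is a domain with reduced special fiber $\cC_{F(\lam_0)}(\rhobar)\cup\cC_{F(\lam_1)}(\rhobar)$, forcing $Z_{F(\lam_i)}=\cC_{F(\lam_i)}(\rhobar)$ for $i=0,1$), so you correctly identified the needed input.

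There is one genuine bookkeeping issue you flag but do not resolve: Corollary \ref{cor:patching-comp} compares $\supp M_\infty(W(\lam)^\vee)$ with $\supp M_\infty^{\coh}(W(\lam))$, and $(W(\lam)_\F)^\vee$ has Jordan--H\"older factors $F(\lam_i)^\vee=F(-w_0\lam_i)$, not $F(\lam_i)$. The paper therefore introduces $\lam_i'\defeq -w_0\lam_i$ (checking $\lam_i'\in C_i$, $\lam_0'\uparrow\lam_1'\uparrow\lam_2'$, and $F(\lam_i')^\vee=F(\lam_i)$) and phrases every coherent-side statement in terms of $W(\lam_i')$. As written, your chain of support inclusions involving $M_\infty^{\coh}(W(\lam_1))$ and $M_\infty^{\coh}(W(\lam_2))$ does not literally hold; replacing these by $W(\lam_1')$ and $W(\lam_2')$ throughout fixes it, after which your rearrangement (doing the Weyl-module SES on the \'etale side before transferring, rather than transferring $W(\lam_0')$ and lifting on the coherent side via the injection of Remark \ref{rmk:coh-patch-functorial} as the paper does) is a valid and marginally shorter variant.
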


Before proving the above proposition, we provide a preliminary result.

\begin{lemma}\label{lem:supp-C0-C1}
    Following the above notation, $Z_{F(\lam_i)} = \cC_{F(\lam_i)}(\rhobar)$ for $i=0,1$.
\end{lemma}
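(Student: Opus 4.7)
The proof combines the support upper bound of Lemma~\ref{lem:BMcycle-bound}, the structure of colength-one potentially crystalline deformation rings from Theorem~\ref{thm:col-one-def}, and the automorphic multiplicity one for tame types used in the proof of Lemma~\ref{lem:factorizable-Minfty}. We may assume $F(\lam_i)\in W^?(\rhobar|_{I_{\Qp}})$ for $i=0,1$, since otherwise both $\cC_{F(\lam_i)}(\rhobar)$ and $Z_{F(\lam_i)}$ vanish by Corollary~\ref{cor:geom-SW} and Theorem~\ref{thm:SWC}. Applying Lemma~\ref{lem:BMcycle-bound}, $\supp Z_{F(\lam_0)}\subseteq \cC_{F(\lam_0)}(\rhobar)$ (as $\lam_0\in C_0$ is $\uparrow$-minimal among $p$-restricted weights) and $\supp Z_{F(\lam_1)}\subseteq \cC_{F(\lam_0)}(\rhobar)\cup \cC_{F(\lam_1)}(\rhobar)$, so only the multiplicities remain to be pinned down.

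\emph{Case $i=0$.} Every Serre weight in $W^?(\rhobar|_{I_{\Qp}})\cap C_0$ is obvious: for $F(\lam_0)=F_\rhobar(\tilw_1,\tilw_2)$ with $\tilw_2$ in alcove $A_0$, we have $\tilw_2\in \uOm$, and since the upper arrow order preserves right $W_a$-cosets, any dominant $\tilw_1\uparrow \tilw_2$ within $\uOm\cdot \tilw_2$ forces $\tilw_1=\tilw_2$. Thus $F(\lam_0)=F_\rhobar(w)$ for some $w\in\uW$ with $w^\diamond\in \uOm$, and we choose the tame inertial type $\tau_0$ associated to this obvious weight so that $W^?(\rhobar|_{I_{\Qp}})\cap \JH(\osig(\tau_0))=\{F(\lam_0)\}$ (a $\#\cJ_2=0$ instance of Theorem~\ref{thm:col-one-def}). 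Then $R_\rhobar^{\eta,\tau_0}$ is an integral domain and $R_\rhobar^{\eta,\tau_0}/\varpi$ is reduced with unique component $\cC_{F(\lam_0)}(\rhobar)$. Since $M_\infty(F(\lam_0))\neq 0$ by Theorem~\ref{thm:SWC}, exactness gives $M_\infty(\sig^\circ(\tau_0))\neq 0$, and by Lemma~\ref{lem:full-supp-R^p} combined with the domain property, its support is all of $\Spec R_\infty^{\eta,\tau_0,\psip}$. The multiplicity one for tame types in the proof of Lemma~\ref{lem:factorizable-Minfty} then shows $Z^{0,\tau_0}$ is the fundamental cycle, so Proposition~\ref{prop:cycle-specialization} yields $\ov{Z}^{0,\tau_0}=\cC_{F(\lam_0)}(\rhobar)$. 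On the other hand, since $F(\lam_0)$ is the unique element of $\JH(\osig(\tau_0))\cap W^?(\rhobar|_{I_{\Qp}})$ (occurring with outer weight multiplicity one in $\osig(\tau_0)$) and all other $Z_\sig$ vanish, the Breuil--M\'ezard expansion reads $\ov{Z}^{0,\tau_0}=Z_{F(\lam_0)}$, hence $Z_{F(\lam_0)}=\cC_{F(\lam_0)}(\rhobar)$.

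\emph{Case $i=1$.} If $F(\lam_1)\in W_\obv(\rhobar)$, the same argument applies verbatim with a type $\tau_1$ isolating $F(\lam_1)$. For non-obvious $F(\lam_1)$, we invoke Setup~\ref{setup}: choose $(\tilw_1,\tilw)\in \AP'(\eta)$ and a simple reflection $s\in\uW$ so that the resulting $6$-generic colength-one tame inertial type $\tau_1$ satisfies $W^?(\rhobar|_{I_{\Qp}})\cap\JH(\osig(\tau_1))=\{F(\lam_1),F(\lam'_0)\}$ for an adjacent obvious weight $F(\lam'_0)\in W^?(\rhobar)\cap C_0$. By Theorem~\ref{thm:col-one-def} with $\#\cJ_2=1$, $R_\rhobar^{\eta,\tau_1}$ is a domain with reduced special fiber $\cC_{F(\lam_1)}(\rhobar)+\cC_{F(\lam'_0)}(\rhobar)$. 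Running the same full-support/multiplicity-one analysis, $\ov{Z}^{0,\tau_1}=\cC_{F(\lam_1)}(\rhobar)+\cC_{F(\lam'_0)}(\rhobar)=Z_{F(\lam_1)}+Z_{F(\lam'_0)}$, where both outer weights contribute with multiplicity one. Applying Case $i=0$ to $\lam'_0$ yields $Z_{F(\lam'_0)}=\cC_{F(\lam'_0)}(\rhobar)$, and we conclude $Z_{F(\lam_1)}=\cC_{F(\lam_1)}(\rhobar)$.

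\emph{Main obstacle.} The principal technical point is the construction of the tame inertial types $\tau_i$ with the required outer weight structure; in particular, the combinatorial verification that every non-obvious $F(\lam_1)\in W^?(\rhobar|_{I_{\Qp}})\cap C_1$ can be paired with an obvious weight $F(\lam'_0)\in C_0$ via Setup~\ref{setup}, which rests on the explicit description of pairs in $\AP'(\eta)$ with $\tilw_2$ in alcove $A_1$ under the bijection $F_\rhobar$ of Lemma~\ref{lem:JH-bij}.
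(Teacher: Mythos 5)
Your Case $i=0$ is essentially right, and the observation that every weight of $W^?(\rhobar|_{I_{\Qp}})$ lying in the lowest alcove $C_0$ is automatically obvious is a clean way to run the "obvious weight type" argument from the proof of Theorem~\ref{thm:abstract-SWC}(2)$\Rightarrow$(1). However, there is a genuine gap in Case $i=1$ for non-obvious $F(\lam_1)$.

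The problem is that Setup~\ref{setup} does not produce the kind of tame inertial type you need. In Setup~\ref{setup}, $\tau$ is chosen so that $\tilw(\rhobar,\tau) = \tilw^\mo\tilw_h^\mo w_0 s\,\tilw_1$; this element is a colength-one element of $\Adm(\eta)$ relative to the \emph{auxiliary} type $\rhobar_0$ (which satisfies $\tilw(\rhobar_0,\tau) = \tilw^\mo\tilw_h^\mo w_0 s\,\tilw$), not relative to $\rhobar$. When $F(\lam_1)$ is non-obvious one has $\tilw_1 \neq \tilw$, hence $\rhobar_0 \neq \rhobar$, and $\tilw(\rhobar,\tau)^*$ is \emph{not} of the form covered by Theorem~\ref{thm:col-one-nbhd}. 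Consequently Theorem~\ref{thm:col-one-def} does not apply to $(\rhobar,\tau_1)$, and the claim that $R_\rhobar^{\eta,\tau_1}$ is a domain is unjustified (indeed it is generically false in this regime --- handling exactly this failure is the whole point of Theorem~\ref{thm:irred-comp-def-ring}). Similarly, Setup~\ref{setup} only gives $W^?(\rhobar_0)\cap\JH(\osig(\tau_1)) = \{\sig_1,\sig_2\}$; Lemma~\ref{lem:W-cap-JH-inclusion} only gives an inclusion into $W^?(\rhobar)\cap\JH(\osig(\tau_1))$, which is typically strictly larger. So your asserted equality $W^?(\rhobar|_{I_{\Qp}})\cap\JH(\osig(\tau_1)) = \{F(\lam_1),F(\lam'_0)\}$ does not hold.

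The paper avoids this by using a different type, not the one from Setup~\ref{setup}: it picks $\tilw_1\in\uOm$ with $F_\rhobar(\tilw_1,\tilw_1)=F(\lam_0)$, picks $\tilw_2$ with $\tilw_1\uparrow\tilw_2$ and $\tilw_2\cdot C_0 = C_1$, and takes $\tau$ with $\tilw(\rhobar,\tau)^* = \tilw_2^\mo\tilw_h^\mo w_0\tilw_1$ (a \emph{regular} colength-one element for $\rhobar$ itself, as in \S\ref{sec:reg-col-one}). Then Theorem~\ref{thm:col-one-def} applies \emph{to the pair $(\rhobar,\tau)$}, giving that $R_\rhobar^{\eta,\tau}$ is a domain with reduced special fiber having exactly the two components $\cC_{F(\lam_0)}(\rhobar)$ and $\cC_{F(\lam_1)}(\rhobar)$, from which the lemma for $i=0$ and $i=1$ follows simultaneously by the factorization and specialization arguments (Lemma~\ref{lem:factorizable-Minfty}, Proposition~\ref{prop:cycle-specialization}, exactness of $M_\infty$, and the effectivity/support bounds). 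To repair your argument, replace the use of Setup~\ref{setup} by this direct construction, or show that the specific $\tilw(\rhobar,\tau_1)^*$ you produce satisfies the hypotheses of Theorem~\ref{thm:col-one-nbhd} with respect to $\rhobar$; as written, that verification is absent and fails.
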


\begin{proof}
    Let $\tilw_1\in \Omega$ be the element such that $(\tilw_1,\tilw_1)\in \AP'(\eta)$ corresponds to $F(\lam_0)$ under the bijection in Lemma \ref{lem:JH-bij}(2) and $\tilw_2$ be the element such that $\tilw_1 \uparrow \tilw_2$ and $\tilw_2\cdot C_0 = C_1$. Then $F_{\rhobar}(\tilw_1,\tilw_2)=F(\lam_1)$. We have a tame inertial type $\tau$ such that $\tilw({\rhobar},\tau)^*=\tilw_2^\mo\tilw_h^\mo w_0 \tilw_1$ as in \S\ref{sec:reg-col-one}. Then $F(\lam_i)$ for $i=0,1$ is contained in $\JH(\osig(\tau))$ by Lemma \ref{lem:JH-bij}. By Theorem \ref{thm:col-one-def}, $R_{\rhobar}^{\eta,\tau}$ is a domain with a reduced special fiber and $\Spec R_{\rhobar}^{\eta,\tau}/\varpi = \cC_{F(\lam_0)}(\rhobar) \cup \cC_{F(\lam_1)}(\rhobar)$. Therefore, $Z(M_\infty(\osig(\tau)))$ is equal to $\cC_{F(\lam_0)}(\rhobar) + \cC_{F(\lam_1)}(\rhobar)$. By the exactness of $M_\infty$, this is equal to the sum of $Z_{F(\lam_i)}$ for $i=0,1$. Since $Z_{F(\lam_i)}\ge \cC_{F(\lam_i)}(\rhobar)$ for $i=0,1$, these inequalities are indeed equalities.
\end{proof}

\begin{proof}[Proof of Proposition \ref{prop:strong-SWC}]
    If $F(\lam_0)\not\in W^?(\rhobar|_{I_{\Qp}})$, then $\cC_{F(\lam_0)}(\rhobar)=\emptyset$ and there is nothing to prove. Suppose that $F(\lam_0) \in W^?(\rhobar|_{I_{\Qp}})$. Note that this implies $F(\lam_i) \in W^?(\rhobar|_{I_{\Qp}})$ for $i\in\{1,2\}$. 

    Let $\lam_i'\defeq -w_0(\lam_i)$. A direct computation shows that $\lam_i'\in C_i$ and $\lam_0'\uparrow \lam_1' \uparrow \lam_2'$. Note that $F(\lam_i')^\vee = F(\lam_i)$.

    We need to show that the support of $M_\infty(\sig)$ contains $\cC_{F(\lam_0)}(\rhobar)$. We first claim that the support of $M_\infty(W(\lam_2')^\vee)$ contains $\cC_{F(\lam_0)}$. By Lemma \ref{lem:weyl-decomp}, we have
    \begin{align*}
        \supp_{R_{\rhobar}^{\square}} M_\infty(W(\lam_2')^\vee)/\varpi = \supp_{R_{\rhobar}^{\square}} M_\infty(F(\lam_1))\cup \supp_{R_{\rhobar}^{\square}} M_\infty(F(\lam_2)).
    \end{align*}
    Since $\supp_{R_{\rhobar}^{\square}} M_\infty(F(\lam_1))$ does not contain $\cC_{F(\lam_0)}$ by Lemma \ref{lem:supp-C0-C1}, the claim implies that $M_\infty(\sig)$ is supported on $\cC_{F(\lam_0)}(\rhobar)$. 

    By Corollary \ref{cor:patching-comp}, the claim is equivalent to the support of $M_\infty^{\coh}(W(\lam_2'))$ containing $\cC_{F(\lam_0)}$. As in Remark \ref{rmk:coh-patch-functorial}, the injective morphism $\theta_{\lam_1}^1$ in Theorem \ref{thm:ortiz} induces a surjective map
    \begin{align*}
        \theta_{\lam_1,\infty}^1: M_\infty^{\coh}(W(\lam_2'))/\varpi \onto M_\infty^{\coh}(W(\lam_1'))/\varpi.
    \end{align*}
    Moreover, the quotient map $W(\lam_1')/\varpi \onto W(\lam_0')/\varpi$ in Lemma \ref{lem:weyl-decomp} induces an injective map $M_\infty^{\coh}(W(\lam_0'))/\varpi \into M_\infty^{\coh}(W(\lam_1'))/\varpi$. By applying Corollary \ref{cor:patching-comp} to $\lam=\lam'_0$, we can show that $M_\infty^{\coh}(W(\lam_0'))/\varpi$ is supported on $\cC_{F(\lam_0)}(\rhobar)$. By the previous two morphisms between patched modules, this shows that $M_\infty^{\coh}(W(\lam_i'))$ for $i\in\{1,2\}$ is supported on $\cC_{F(\lam_0)}(\rhobar)$.
\end{proof}

\begin{proof}[Proof of Theorem \ref{thm:MLT}]
By Proposition \ref{prop:strong-SWC}, we have $Z_{\sig} = \cZ_{\sig}^{\BM}(\rhobar)$. This implies that $Z^{\lam,\tau} = \cZ^{\lam,\tau}(\rhobar)$. Therefore, $\supp_{R_\infty^{\eta,\tau}}(M_\infty(\sig^\circ(\tau))) = \Spec R_\infty^{\eta,\tau}$. This proves the theorem.
\end{proof}

\bibliography{Biblio}
\bibliographystyle{amsalpha}

\end{document}